\numberwithin{equation}{section}
\newtheorem{theorem}{Theorem}[section]
\newtheorem{lemma}[theorem]{Lemma}
\newtheorem{corollary}[theorem]{Corollary}
\newtheorem{remark}[theorem]{Remark}
\def\D{\partial}
\def\dt{\partial_t}
\def\ep{\epsilon}
\def\lb{\lambda} 
\def\oD{\mathrm{D}}
\def\R{\Re e}
\def\I{\Im m}
\def\I{\Im m}
\newcommand{\RR}{\mathbb{R}}
\def\cH{\mathcal{H}}
\def\vj{{\bf j}}
\def\vh{\tilde{\bf h}}
\def\vg{\tilde{\bf g}}
\def\vA{{\bf A}}
\def\vE{{\bf E}}
\def\vB{{\bf B}}
\def\tv{\tilde v}
\def\A{\mathcal{A}}
\def\B{\mathcal{B}}
\def\S{\mathcal{S}}
\def\T{\mathcal{T}}
\begin{document}

\title{\vspace{-1in} Linear Stability Analysis of a Hot Plasma in a Solid Torus\footnotemark[1]}

\author{Toan T. Nguyen\footnotemark[2] \and Walter A. Strauss\footnotemark[3]}

\date{\today}

\maketitle

\begin{abstract} 

This paper is a first step toward understanding the effect of toroidal geometry on the rigorous stability theory of plasmas.  
We consider a collisionless plasma inside a torus, modeled by the relativistic Vlasov-Maxwell system.  
The surface of the torus is perfectly conducting and it reflects the particles specularly.  
We provide sharp criteria for the stability of equilibria under the assumption that the particle distributions and the electromagnetic fields 
depend only on the cross-sectional variables of the torus.  

\end{abstract}

\renewcommand{\thefootnote}{\fnsymbol{footnote}}

\footnotetext[2]{Department of Mathematics, Pennsylvania State University, University Park, PA~16802, USA. Email: nguyen@math.psu.edu.
}

\footnotetext[3]{Department of Mathematics and Lefschetz Center for Dynamical Systems,
Brown University, Providence, RI 02912, USA. Email: wstrauss@math.brown.edu.} 

\footnotetext[1]{Research of the authors was supported in part by the NSF under grants DMS-1108821 and DMS-1007960.}

\tableofcontents


\section{Introduction}

Stability analysis is a central issue in the theory of plasmas (e.g., \cite{nicholson-plasma}, \cite{trivelpiece}). 
In the search for practical fusion energy, the tokamak has been the central focus of research for many years.  
The classical tokamak has two features, the toroidal geometry and a mechanism (magnetic field, laser beams) to confine the plasma.  
Here we concentrate on the effect of the toroidal geometry on the stability analysis of equilibria.  

When a plasma is very hot (or of low density), electromagnetic forces have a much faster effect on the particles 
than the collisions, so the collisions can be ignored as compared with the electromagnetic forces.  
So such a plasma is modeled by the relativistic Vlasov-Maxwell system (RVM)  
\begin{equation}\label{Vlasov-equations}\left\{\begin{aligned} 
&\dt f^+ + \hat v \cdot \nabla_x f^+ + (\vE + \hat v \times \vB )\cdot \nabla_v f^+   =0,
\\&\dt f^- + \hat v \cdot \nabla_x f^- - (\vE + \hat v \times \vB ) \cdot \nabla_v f^-  =0, 
\end{aligned}\right.
\end{equation}
\begin{equation}\label{Gauss-laws} 
\nabla_x \cdot \vE = \rho,\qquad \nabla_x \cdot \vB =0,
\end{equation}
\begin{equation}\label{Ampere-law}
\dt  \vE  - \nabla_x \times \vB  = - \vj, \quad 
\dt \vB + \nabla_x \times \vE =0,
\end{equation}
$$\rho = \int_{\RR^3} (f^+ - f^-) \;\mathrm{d}v , \qquad \vj = \int_{\RR^3} \hat v (f^+-f^-)\; \mathrm{d}v.$$ 
Here $f^\pm(t,x,v)\ge0$ denotes the density distribution of ions and electrons, respectively,  $x \in \Omega \subset \RR^3$ 
is the particle position, $\Omega$ is the region occupied by the plasma,  $v\in \RR^3$ is the particle momentum, 
$\langle v \rangle = \sqrt{1+|v|^2}$ is the particle energy, 
$\hat v = v/\langle v \rangle$ the particle velocity, $\rho$  the charge density, $\vj$  the current density, 
$\vE$  the electric field, $\vB$  the magnetic field and $\pm(\vE + \hat v \times \vB)$ the electromagnetic force.  For simplicity all the constants have been set equal to 1; however, our results do not depend on this normalization.

The Vlasov-Maxwell system is assumed to be valid inside a solid torus (see Figure \ref{fig-torus}), which we take for simplicity to be 
$$\Omega = \Big\{x = (x_1,x_2,x_3)\in \RR^3~:~ \Big(a - \sqrt{x_1^2+x_2^2}\Big)^2 + x_3^2 < 1 \Big\}.$$  
The specular condition at the boundary is 
\begin{equation}\label{bdry-specular} f^\pm (t,x,v) = f^\pm(t,x,v - 2(v\cdot n(x))n(x)),\qquad n(x)\cdot v <0,\qquad x \in \D \Omega, \end{equation}
where $n(x)$ denotes the outward normal vector of $\D \Omega$ at $x$. 
The perfect conductor boundary condition is 
\begin{equation}\label{bdry-EBcond} 
\vE(t,x)\times n(x) = 0, \qquad \vB(t,x) \cdot n(x)  =0 ,\qquad x \in \D \Omega.\end{equation}
A fundamental property of RVM with these  boundary conditions is that the total energy
$$\mathcal{E}(t)  = \int_\Omega \int_{\RR^3}\langle v \rangle (f^+ + f^-)\; dvdx 
+ \frac 12 \int_\Omega \Big( |\vE|^2 + |\vB|^2\Big)\; dx $$ 
is conserved in time.  In fact, the system  admits infinitely many equilibria.  
{\em The main focus of the present paper is to investigate the stability properties of the equilibria.}


Our analysis is closely related to the spectral analysis approach in \cite{LS1,LS3} which tackled the stability problem 
in domains without any spatial boundaries.  
A first such analysis in a domain with boundary appears in \cite{NStr1}, which treated a 2D plasma inside a circle.  
Roughly speaking, these papers provided a sharp stability  
criterion $\mathcal{L}^0\ge 0$, where $\mathcal{L}^0$ is a certain nonlocal self-adjoint operator 
that acts merely on scalar functions depending only on the spatial variables.  
This positivity condition was verified explicitly for a number of interesting examples. 
It may also be amenable to numerical verification.   
Now, in the presence of a boundary, every integration by parts brings in boundary terms 
and the curvature of the torus plays an important role.  
We consider a certain class of equilibria and make some symmetry assumptions, 
which are spelled out in the next two subsections.  Our main theorems are stated in the third subsection.  


Of course, this paper is a rather small step in the direction of mathematically understanding a confined plasma.  
Most stability studies 
(\cite{friedberg-mhd}, \cite {garabedian}, \cite{goedbloed}, \cite{han-kwan}, \cite{white}) 
are based on macroscopic  MHD or other approximate fluids-like models.   
But because many plasma instability phenomena have an essentially microscopic nature, 
kinetic models like Vlasov-Maxwell are required.  The Vlasov-Maxwell system is a rather
accurate description of a plasma when collisions are negligible, as occurs for instance in a hot plasma.  
The methods of this paper should also shed light on approximate models like MHD.

Instabilities in Vlasov plasmas reflect the collective behavior of all the particles. Therefore the
instability problem is highly nonlocal and is difficult to study analytically and numerically.
In most of the physics literature on stability (e.g., \cite{trivelpiece}),  only a {\it homogeneous} equilibrium with vanishing
electromagnetic fields is treated, in which case there is a dispersion
relation that is rather easy to study analytically.
The classical result of this type is Penrose's sharp linear
instability criterion (\cite{Pen}) for a homogeneous equilibrium of the Vlasov-Poisson system.
Some further papers on the stability problem, including nonlinear stability, for 
general inhomogeneous equilibria of the Vlasov-Poisson system can be found 
in \cite{rein94},  \cite{gs2},  \cite{gs4}, \cite{gs5}, \cite{BRV} and \cite{lin01}.  
Among these papers the closest analogue to our work in a domain with specular boundary conditions is \cite{BRV}.  

However, as soon as magnetic effects are included and even for a homogeneous
equilibrium, the stability problem becomes quite complicated, as for the Bernstein modes in a
constant magnetic field \cite{trivelpiece}.
The stability problem for
inhomogeneous (spatially-dependent) equilibria with nonzero electromagnetic fields is yet more
complicated and so far there are relatively few rigorous results, namely, 
\cite{Guo97}, \cite{Guo99}, \cite{gs7}, \cite{gs6}, \cite{LS1}, \cite{LS3} and \cite{NStr1}.  
We have already mentioned \cite{LS1} and \cite{LS3}, which are precursors of our work in the absence of a boundary.  
Among these papers the only ones that treat domains with boundary are \cite{Guo99} and \cite{NStr1}.  
In his important paper \cite{Guo99}, 
Guo uses a variational formulation to find conditions that are sufficient for nonlinear stability 
in a class of bounded domains that includes a torus with the specular and perfect conductor boundary conditions.  
The class of equilibria in  \cite{Guo99} is  less general than ours.  The stability condition omits several terms so that 
it is far from being a necessary condition.   Our recent paper for a plasma in a disk (\cite{NStr1}) is a precursor of 
our current work but is restricted to two dimensions.



\begin{figure}[ht]  \centering   \includegraphics[scale=.8]{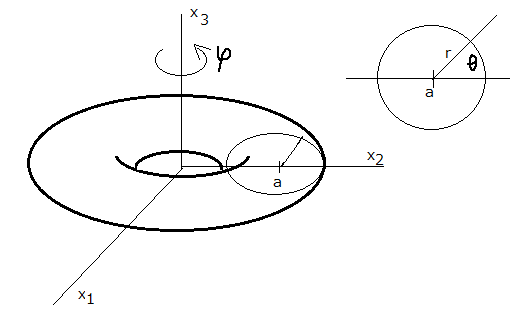}
\caption{{\em The picture illustrates the simple toroidal geometry. \label{fig-torus}}}  \end{figure}


\subsection{Toroidal symmetry}

We shall work with the simple toroidal coordinates $(r,\theta,\varphi)$ with 
$$\begin{aligned}
 x_1 = (a+ r\cos \theta ) \cos \varphi , \quad x_2 = (a+r \cos \theta) \sin \varphi, \quad x_3=  r \sin \theta.\end{aligned}$$
Here $0\le r\le 1$ is the radial coordinate in the minor cross-section, $0\le \theta<2\pi$ is the poloidal angle, 
and $0\le \varphi <2\pi$ is the toroidal angle; see Figure \ref{fig-torus}. 
For simplicity we have chosen the minor radius to be 1 and called the major radius $a>1$.
We denote the corresponding unit vectors by 
$$\left\{ \begin{aligned} e_r &= (\cos \theta \cos \varphi,\cos \theta \sin \varphi, \sin \theta),
\\e_\theta &= (-\sin \theta \cos \varphi, -\sin \theta \sin \varphi, \cos \theta), 
\\
e_\varphi &= (-\sin \varphi,\cos \varphi,0). \end{aligned}\right.$$
Of course,  $e_r (x)= n(x)$ is the outward normal vector at $x\in\D\Omega$, and we note that   
 $$ e_\theta \times e_r = e_\varphi, \qquad e_r \times e_\varphi = e_\theta, \qquad e_\varphi \times e_\theta = e_r.$$ 
In the sequel, we write 
$$v = v_re_r + v_\theta e_\theta + v_\varphi e_\varphi, \quad \vA = A_re_r +A_\theta e_\theta +  A_\varphi e_\varphi .$$  
Throughout the paper it will be convenient to denote by $\tilde\RR^2$ the subspace in $\RR^3$ that consists of the vectors 
orthogonal to $e_\varphi$. The subspace $\tilde \RR^2$ depends on the toroidal angle $\varphi$. We denote by $\tilde v, \tilde \vA$ the projection of $v,\vA$ onto $\tilde \RR^2$, respectively, and we write $\tilde v = v_r e_r + v_\theta e_\theta$ and 
$\tilde \vA = A_r e_r + A_\theta e_\theta$.

It is convenient and standard when dealing with the Maxwell equations to introduce the electric scalar potential $\phi$ 
and magnetic vector potential $\vA$ through  
\begin{equation}\label{def-potentials} \vE = -\nabla \phi - \D_t \vA, \qquad  \vB = \nabla \times \vA , \end{equation}
in which without loss of generality we impose the Coulomb gauge  $\nabla \cdot \vA =0$.  
Throughout this paper, we  assume {\it toroidal symmetry}, which means that all four  potentials 
$\phi, A_r, A_\theta,A_\varphi$ are independent of $\varphi$. 
In addition, we assume that the density distribution $f^\pm $ has the form $f^\pm (t,r,\theta,v_r,v_\theta,v_\varphi)$. 
That is, $f$ does not depend explicitly on $\varphi$, although it does so implicitly through the components of $v$, which depend on the basis vectors.  
Thus, although in the toroidal coordinates all the functions are independent of the angle $\varphi$, 
the unit vectors $e_r,e_\theta,e_\varphi$ and therefore the toroidal components of $v$ do depend on $\varphi$. Such a symmetry assumption leads to a partial decoupling of the Maxwell equations and is fundamental throughout the paper.

\subsection{Equilibria}\label{sec-equilibria}
We denote an (time-independent) equilibrium by $(f^{0,\pm},\vE^0,\vB^0)$. We assume that the equilibrium magnetic field $\vB^0$ has no component in the $e_\varphi$ direction.  
Precisely, the equilibrium field has the form 
\begin{equation}\label{def-EB0}
\begin{aligned}
\vE^0 &= -\nabla \phi^0 =   - e_r \frac {\D \phi^0}{\D r} - e_\theta  \frac 1r \frac {\D \phi^0}{\D \theta} ,
\\ \vB^0 &= \nabla \times \vA^0 = - \frac{e_r}{r(a+r\cos \theta)} \frac {\D}{\D \theta}( (a+r\cos \theta)  A^0_\varphi ) + \frac{e_\theta}{a+r\cos \theta}  \frac {\D}{\D r} ((a+r\cos \theta)A^0_\varphi) 
\end{aligned}
\end{equation} 
with $\vA^0 = A^0_\varphi e_\varphi$ and $B^0_\varphi =0$.   
Here and in many other places it is convenient to consult the vector formulas that are collected in Appendix A.  


As for the particles, we observe that their energy and angular momentum  
\begin{equation} \label{ep}
e^\pm (x,v):= \langle v \rangle  \pm \phi^0(r,\theta), \qquad p^\pm(x,v) := (a+r\cos \theta)(v_\varphi \pm  A_\varphi^0(r,\theta)),  
\end{equation} 
are invariant along the particle trajectories.  
By direct computation, $\mu^\pm(e^\pm,p^\pm)$ solve the Vlasov equations for any pair of smooth functions 
$\mu^\pm$ of two variables.   The equilibria we consider have  the form  
\begin{equation}\label{f-equil} 
f^{0,+}(x,v) = \mu^+ (e^+(x,v),p^+(x,v)),\quad f^{0,-}(x,v) = \mu^- (e^-(x,v),p^-(x,v)).  
\end{equation}
Let $(f^{0,\pm},\vE^0,\vB^0)$ be an equilibrium as just described with $f^{0,\pm} = \mu^\pm(e^\pm,p^\pm)$. 
We assume that $\mu^\pm(e,p)$ are nonnegative $C^1$ functions which satisfy 
\begin{equation}\label{mu-cond} 
\mu^\pm_e (e,p)<0 ,\qquad |\mu^\pm|+ |\mu_p^\pm(e,p)| + |\mu_e^\pm(e,p)| 
\ \le\ \frac{ C_\mu}{1+|e|^\gamma}
\end{equation}
for some constant $C_\mu$ and some $\gamma>3$, where the subscripts $e$ and $p$ denote the partial derivatives.  The decay assumption is to ensure that $\mu^\pm$ and its partial derivatives are $v$-integrable. 

What remains are the Maxwell equations for the equilibrium.  
In terms of the potentials, they take the form 
\begin{equation}\label{Maxwell-equi}\begin{aligned} - \Delta \phi^0 &= \int_{\RR^3} (\mu^+ - \mu^-)\; \mathrm{d}v ,\\
- \Delta A^0_\varphi  +  \frac{1}{(a+r\cos \theta)^2}A^0_\varphi  &=  \int_{\RR^3}\hat v_\varphi (\mu^+ - \mu^-)\; \mathrm{d}v .\end{aligned}\end{equation}
We assume that $\phi^0$ and $A^0_\varphi$ are continuous in $\overline{\Omega}$. In Appendix \ref{App-equilibria}, we will show that $\phi^0$ and $A^0_\varphi $ are in fact in $C^2(\overline{\Omega})$ and so $\vE^0,\vB^0 \in C^1(\overline \Omega)$.

As for the assumption that $B^0_\varphi =0$,  it is sufficient to assume that $B^0_\varphi =0$ 
on the boundary of the torus.    Indeed, since $f^{0,\pm}$ is even in $v_r$ and $v_\theta$ (being a function of $e^\pm, p^\pm)$, 
it follows that $j_r$ and $j_\theta$ vanish for the equilibrium, and therefore by \eqref{Maxwell-Bphi} below, $B^0_\varphi$ solves 
\begin{equation}\label{eq-Bphi}- \Delta B^0_\varphi  +  \frac{1}{(a+r\cos \theta)^2}B^0_\varphi  =0.\end{equation}


\subsection{Spaces and operators} \label{sec-spaces}
We will consider the Vlasov-Maxwell system linearized around the equilibrium. Let us denote by $\oD ^\pm$ the first-order linear differential operator: 
\begin{equation}\label{def-Dpm}\oD^\pm = \hat v \cdot \nabla_x  \pm (\vE^0 + \hat v\times \vB^0)\cdot \nabla_v. \end{equation}
The linearization is then 
\begin{equation}  \label{linearized-sys} 
 \begin{aligned}
 \dt f^\pm+\oD ^\pm f^\pm  &=  \mp (\vE + \hat v\times \vB)\cdot \nabla_v f^{0,\pm} ,
\end{aligned} 
\end{equation}
together with the Maxwell equations and the specular and perfect conductor boundary conditions.  

In order to state precise results, we have to define certain spaces and operators.    
We denote by $\cH^\pm = L^2_{|\mu_e^\pm|}(\Omega \times \RR^3)$ the weighted $L^2$ space consisting of functions 
$f^\pm(x,v)$ which are toroidally symmetric in $x$ such that 
$$ \int_\Omega   \int_{\RR^3}|\mu_e^\pm| |f^\pm|^2 \; \mathrm{d}v \mathrm{d}x <+\infty. $$ 
The main purpose of the weight function is to control the growth of $f^\pm$ as $|v| \to\infty$.  
Note that 
due to the assumption \eqref{mu-cond} the weight $|\mu_e^\pm|$ never vanishes and it decays like a power of $v$ as $|v| \to\infty$.  
When there is no danger of confusion, we will 
write $\cH=\cH^\pm$.  

For $k\ge 0$  
we denote by $H^k_\tau  (\Omega  )$ the usual $H^k$ space on $\Omega $ 
that consists of scalar functions that are toroidally symmetric. 
If $k=0$ we write  $L^2_\tau  (\Omega  )$. In addition, we shall denote by $H^k(\Omega; \RR^3)$ 
 the analogous space of vector functions.  
By $\mathcal{X}$ we denote the space consisting of the (scalar) functions in $H^2_\tau (\Omega   )$ which satisfy 
the Dirichlet boundary condition.  We will sometimes drop the subscript $\tau$, although all functions are assumed to be toroidally symmetric.

{\it We denote by $\mathcal{P}^\pm$ the orthogonal projection on the kernel of $\oD ^\pm$ 
in the weighted  space $\cH^\pm$.}  In the  spirit of \cite{LS1,LS3},  our main results involve three linear operators
on $L^2(\Omega )$, two of which are unbounded, namely, 
\begin{equation}\label{operators-0}
\begin{aligned}  \mathcal{A}_1^0 h & =  \Delta h  +  \sum_\pm \int_{\RR^3}\mu^\pm_e(1-\mathcal{P}^\pm) h \; \mathrm{d}v , 
\\
 \mathcal{A}_2^0 h & = - \Delta h + \frac{1}{(a+r\cos\theta)^2} h  - \sum_\pm  \int_{\RR^3}\hat v_\varphi \Big [ (a+r\cos\theta) \mu_p^\pm   h  
 +  \mu_e^\pm \mathcal{P}^\pm(\hat v_\varphi h)  \Big] \; \mathrm{d}v , 
\\
\mathcal{B}^0 h  &= - \sum_\pm  \int_{\RR^3}\hat v_\varphi\mu_e^\pm ( 1 - \mathcal{P}^\pm) h \; \mathrm{d}v . 
\end{aligned}\end{equation}
Here $\mu^\pm$ is shorthand for $\mu^\pm(e^\pm,p^\pm) $. 
Note the opposite signs of $\Delta$ in $ \mathcal{A}_1^0$ and $ \mathcal{A}_2^0$.   
Both of these operators have the domain $\mathcal{X}$.  
We will show in Section \ref{sec-instability} that all three operators are naturally derived from the Maxwell equations when $f^+$ and $f^-$ are written in integral 
form by integrating the Vlasov equations along the trajectories. 
In particular, in Section \ref{sec-op} we will show that both $\mathcal{A}_1^0$ and $\mathcal{A}_2^0$ 
with domain  $\mathcal{X}$ are self-adjoint operators on $L^2_\tau  (\Omega )$.   
Furthermore, the inverse of $\mathcal{A}_1^0$ is well-defined on $L^2_\tau(\Omega)$, and so we are able 
to introduce our key operator 
\begin{equation}\label{operator-L0}
\mathcal{L}^0 = \mathcal{A}_2^0 -  \mathcal{B}^0 (\mathcal{A}_1^0)^{-1} (\mathcal{B}^0)^*, \end{equation} with $ (\mathcal{B}^0)^*$ being the adjoint operator of $ \mathcal{B}^0$ in $L^2_\tau(\Omega)$. 
The operator $\mathcal{L}^0$ will then be self-adjoint on $L^2_\tau  (\Omega )$ with its domain $\mathcal{X}$.   
As the next theorem states, $\mathcal{L}^0 \ge 0$  is the condition for stability.  This condition means that 
$(\mathcal{L}^0h,h)_{L^2} \ge 0$ for all $h \in \mathcal{X}$.  

Finally, by a {\it growing mode} we mean a solution of the linearized system (including the 
boundary conditions) of the form 
$(e^{\lambda t} f^\pm, e^{\lambda t}{\vE}, e^{\lambda t} {\vB})$ with $\R\lambda>0$ 
such that $f^\pm \in \cH^\pm$ and $\vE, \vB\in L_\tau^2(\Omega ; \RR^3)$.  
The derivatives and the boundary conditions are considered in the weak sense, which will be 
 justified in Lemma \ref{growprops}.  
In particular, the weak meaning of the specular condition on $f^\pm$ will be given by \eqref{skewadj}.

\bigskip  
\subsection{Main results}
The first main result provides a necessary and sufficient condition for linear stability in the spectral sense.   
\begin{theorem}\label{theo-main} 
Let $(f^{0,\pm},\vE^0,\vB^0)$ be an equilibrium of the Vlasov-Maxwell system satisfying \eqref{f-equil} and \eqref{mu-cond}. Assume that $\mu^\pm \in C^1(\RR^2)$ and $\phi^0, A^0_\varphi \in C(\overline\Omega)$. 
Consider the linearization \eqref{linearized-sys}. 
%
Then

(i) if  $\mathcal{L}^0\ge 0$, there exists no growing mode of the linearized system;   

(ii) any growing mode, if it does exist, must be purely growing; that is, the exponent of instability must be real; 

(iii) if $\mathcal{L}^0\not\ge 0$, there exists a growing mode.

\end{theorem}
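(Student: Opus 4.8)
The plan is to prove the three assertions of Theorem~\ref{theo-main} by combining the integral (Duhamel) representation of the Vlasov equations along particle trajectories with the structure of the Maxwell equations, exactly in the spirit of \cite{LS1,LS3,NStr1}. The starting point is to suppose a growing mode $(e^{\lambda t}f^\pm, e^{\lambda t}\vE, e^{\lambda t}\vB)$ with $\R\lambda>0$ exists. Using the toroidal symmetry, the Maxwell system partially decouples: the fields are expressed through the potentials $(\phi,\vA)$ (in the Coulomb gauge), and the toroidal component $A_\varphi$ separates from the in-plane components $\tilde{\vA}=(A_r,A_\theta)$. Integrating the linearized Vlasov equations \eqref{linearized-sys} along the trajectories of $\oD^\pm$ and using $\R\lambda>0$ so that the time integral converges, one writes $f^\pm$ in terms of $\phi$, $A_\varphi$ and $\tilde{\vA}$ acting through the resolvent $(\lambda+\oD^\pm)^{-1}$. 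Substituting these representations back into the equilibrium-linearized Maxwell equations \eqref{Maxwell-equi} produces a closed system of equations for the potentials on $\Omega$. The operators $\mathcal{A}_1^0,\mathcal{A}_2^0,\mathcal{B}^0$ defined in \eqref{operators-0} are precisely the $\lambda\to 0^+$ limits of the operator-valued coefficients in this system, with $\mathcal{P}^\pm$ the orthogonal projection onto $\ker\oD^\pm$ arising from the averaging over trajectories; this identification is the content of Sections~\ref{sec-instability} and \ref{sec-op} which I may assume.

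For part (ii), the key observation is an energy-type identity obtained by pairing the reduced field equations with the complex conjugates of the potentials and taking real and imaginary parts. Since $\mu_e^\pm<0$, the quadratic forms generated by $(\lambda^2+\oD^\pm{}^2)^{-1}$-type operators are sign-definite on the relevant subspaces when $\lambda$ is real, but if $\lambda=\gamma+i\beta$ with $\beta\neq 0$ the imaginary part of the pairing must vanish separately; carrying out this computation, the $\beta\neq 0$ case forces a contradiction (the imaginary part is a strictly signed multiple of $\beta$ times a nonnegative quantity that cannot vanish for a nontrivial mode), exactly as in \cite[Thm.~2]{LS1}. The curvature factors $(a+r\cos\theta)$ and the boundary terms from integration by parts must be tracked carefully here, but the specular condition \eqref{bdry-specular} (in its weak form \eqref{skewadj}) makes $\oD^\pm$ skew-adjoint on $\cH^\pm$, so the boundary contributions cancel and the argument goes through. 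Thus any growing mode is purely growing, $\lambda>0$.

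For part (i), assume $\mathcal{L}^0\ge 0$ and suppose for contradiction a purely growing mode with $\lambda>0$ exists. Having reduced to real $\lambda$, one eliminates $\phi$ (solving the first reduced Maxwell equation, whose operator $\mathcal{A}_1^\lambda$ is invertible and positive for $\lambda>0$ since $-\Delta>0$ and the velocity integral has the right sign) and substitutes into the $A_\varphi$-equation to obtain $\mathcal{L}^\lambda A_\varphi=0$, where $\mathcal{L}^\lambda=\mathcal{A}_2^\lambda-\mathcal{B}^\lambda(\mathcal{A}_1^\lambda)^{-1}(\mathcal{B}^\lambda)^*$ is the $\lambda$-dependent analogue of $\mathcal{L}^0$. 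Pairing with $A_\varphi$ gives $(\mathcal{L}^\lambda A_\varphi, A_\varphi)=0$. The crucial monotonicity lemma is that $\lambda\mapsto \mathcal{L}^\lambda$ is, in the quadratic-form sense, strictly increasing in $\lambda>0$ (because the resolvent factors $\oD^\pm{}^2/(\lambda^2-\oD^\pm{}^2)$-type contributions, weighted by $|\mu_e^\pm|>0$, are monotone), so $\mathcal{L}^\lambda>\mathcal{L}^0\ge 0$ for $\lambda>0$; this contradicts $(\mathcal{L}^\lambda A_\varphi,A_\varphi)=0$ with $A_\varphi\not\equiv 0$. (One must also rule out the degenerate case $A_\varphi\equiv 0$, in which the mode reduces to an electrostatic/in-plane mode governed directly by $\mathcal{A}_1^\lambda>0$, again impossible.) For part (iii), assume $\mathcal{L}^0\not\ge 0$, i.e.\ $(\mathcal{L}^0 h_0,h_0)<0$ for some $h_0\in\mathcal{X}$. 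Since $\mathcal{L}^\lambda\to\mathcal{A}_2^\lambda$-dominated large-$\lambda$ behavior makes $\mathcal{L}^\lambda>0$ for $\lambda$ large (the $-\Delta$ and $(a+r\cos\theta)^{-2}$ terms dominate), and $\lambda\mapsto(\mathcal{L}^\lambda h,h)$ is continuous with $(\mathcal{L}^0 h_0,h_0)<0$, an intermediate-value / min-max argument produces some $\lambda_*>0$ at which $\mathcal{L}^{\lambda_*}$ has nontrivial kernel; the corresponding kernel element $A_\varphi$, together with $\phi=-(\mathcal{A}_1^{\lambda_*})^{-1}(\mathcal{B}^{\lambda_*})^*A_\varphi$ and the induced $f^\pm$ and in-plane fields, is a genuine growing mode of the linearized system, and one checks it lies in the required function spaces and satisfies the boundary conditions weakly.

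The main obstacle I expect is the rigorous construction and spectral analysis of the $\lambda$-dependent operators $\mathcal{L}^\lambda$ in the presence of the boundary: establishing that $\oD^\pm$ generates a well-behaved group on the weighted space $\cH^\pm$ with the specular boundary condition (so that the Duhamel representation and the resolvent $(\lambda+\oD^\pm)^{-1}$ make sense), proving the self-adjointness and domain statements for $\mathcal{A}_1^\lambda,\mathcal{A}_2^\lambda,\mathcal{L}^\lambda$ uniformly in $\lambda$, and — most delicately — controlling the boundary terms generated by every integration by parts against the curvature factor $(a+r\cos\theta)$ so that the skew-adjointness of $\oD^\pm$ and the perfect-conductor condition \eqref{bdry-EBcond} make them cancel. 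The strict monotonicity of $\lambda\mapsto\mathcal{L}^\lambda$ and the continuity of $(\mathcal{L}^\lambda h,h)$ up to $\lambda=0^+$ (where one must pass the $\mathcal{P}^\pm$ projections to the limit carefully) are the other technical linchpins; these are where the decay hypothesis \eqref{mu-cond} with $\gamma>3$ and the sign condition $\mu_e^\pm<0$ are essential.
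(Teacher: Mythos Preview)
Your overall framework is sound (trajectory integration, resolvent operators, the family $\mathcal{L}^\lambda$), but there is a genuine structural gap in parts (i) and (iii): you have suppressed the in-plane potential $\tilde\vA=(A_r,A_\theta)$. After eliminating $\phi$ via $(\mathcal{A}_1^\lambda)^{-1}$, the Maxwell system does \emph{not} reduce to the scalar equation $\mathcal{L}^\lambda A_\varphi=0$. It reduces to a coupled $2\times2$ block system
\[
\mathcal{M}^\lambda\begin{pmatrix}A_\varphi\\ \tilde\vA\end{pmatrix}
=\begin{pmatrix}\mathcal{L}^\lambda & (\tilde{\mathcal{V}}^\lambda)^*\\ \tilde{\mathcal{V}}^\lambda & \tilde{\mathcal{U}}^\lambda\end{pmatrix}
\begin{pmatrix}A_\varphi\\ \tilde\vA\end{pmatrix}=0,
\]
where $\tilde{\mathcal{U}}^\lambda$ comes from the $\tilde\vA$-equation and $\tilde{\mathcal{V}}^\lambda$ is the off-diagonal coupling. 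Pairing the first row with $A_\varphi$ therefore gives $(\mathcal{L}^\lambda A_\varphi,A_\varphi)+(\tilde\vA,\tilde{\mathcal{V}}^\lambda A_\varphi)=0$, not $(\mathcal{L}^\lambda A_\varphi,A_\varphi)=0$, so your monotonicity contradiction in (i) does not go through. Likewise in (iii), a null vector of $\mathcal{L}^{\lambda_*}$ alone is not a growing mode; you must produce a null vector of the full matrix $\mathcal{M}^{\lambda_*}$, and this is where the real work lies. The paper handles (iii) by a finite-dimensional truncation of the $\tilde\vA$-block and a negative-eigenvalue counting argument (Lin's continuation method) for $\mathcal{M}^\lambda_n$, then passes to the limit $n\to\infty$; the decoupling $\tilde{\mathcal{V}}^\lambda\to0$ only occurs as $\lambda\to0^+$, which is what allows the hypothesis $\mathcal{L}^0\not\ge0$ to feed into the count.

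Your approach to (i) also differs from the paper's in a second way. The paper does not use monotonicity of $\lambda\mapsto\mathcal{L}^\lambda$ at all (and none is claimed). Instead it exploits two time-invariants of the linearized flow: the energy $\mathcal{I}(f^\pm,\vE,\vB)$ and the Casimirs $\mathcal{K}^\pm_g$. For a growing mode both must vanish identically; the Casimir constraints place $(F^+,F^-)$ in an affine manifold on which one minimizes the particle part of $\mathcal{I}$ with $\vA$ held fixed. The minimum is computed explicitly and equals $-(\mathcal{B}^0(\mathcal{A}_1^0)^{-1}(\mathcal{B}^0)^*A_\varphi,A_\varphi)+\sum_\pm\|\mathcal{P}^\pm(\hat v\cdot\vA)\|_{\cH}^2$, which combines with the field energy to give
\[
0=\mathcal{I}\ \ge\ (\mathcal{L}^0 A_\varphi,A_\varphi)+\lambda^2\|\vA\|_{L^2}^2+\|B_\varphi\|_{L^2}^2+\sum_\pm\|\mathcal{P}^\pm(\hat v_rA_r+\hat v_\theta A_\theta)\|_{\cH}^2.
\]
This inequality handles $A_\varphi$ and $\tilde\vA$ simultaneously and immediately forces $\vA=0$ when $\mathcal{L}^0\ge0$. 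If you want to salvage a monotonicity-style argument, you would have to formulate and prove it for $\mathcal{M}^\lambda$ (or for the Schur complement $\mathcal{L}^\lambda-(\tilde{\mathcal{V}}^\lambda)^*(\tilde{\mathcal{U}}^\lambda)^{-1}\tilde{\mathcal{V}}^\lambda$ once $\tilde{\mathcal{U}}^\lambda$ is known to be invertible), not for $\mathcal{L}^\lambda$ alone; and you would need to justify the strict monotonicity claim, which is not obvious for the $\mathcal{Q}^\pm_\lambda$-terms. Your outline for (ii) is broadly compatible with the paper's even/odd splitting argument.
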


Our second main result provides explicit examples for which the stability condition does or does not hold. 
For more precise statements of this result, see Section \ref{sec-examples}.

\begin{theorem} \label{theo-examples} Let $(\mu^{\pm},\vE^0,\vB^0)$ be an equilibrium as above.   

(i) The condition $p \mu^\pm_p(e,p)\le 0$ for all $(e,p)$ implies $\mathcal{L}^0 \ge 0$, 
provided that 
$A_\varphi^0$ is sufficiently small in $L^\infty(\Omega)$.     (So such an equilibrium is linearly stable.) 

(ii) The condition $|\mu_p^\pm (e,p)|  \le  \frac \epsilon {1+|e|^\gamma} $ for some $\gamma>3$ and for $\epsilon$ sufficiently small implies $\mathcal{L}^0 \ge 0$. Here $A_\varphi^0$  is not necessarily small.  (So such an equilibrium is linearly stable.) 
 
(iii) The conditions $\mu^+(e,p) = \mu^-(e,-p)$ and $p \mu^-_p(e,p)\ge c_0 p^2 \nu(e)$, for some 
nontrivial nonnegative function $\nu(e)$, imply that for a suitably scaled version of  $(\mu^\pm,0,\vB^0)$,  
 $\mathcal{L}^0\ge 0$ is violated.  (So such an equilibrium is linearly unstable.)  
\end{theorem}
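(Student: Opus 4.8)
The plan is to prove each of the three parts by analyzing the quadratic form $(\mathcal{L}^0 h, h)_{L^2}$, using the definition \eqref{operator-L0} together with the structure of the operators in \eqref{operators-0}. The key identity, which I expect to establish first, is a manifestly symmetric expression for $(\mathcal{L}^0 h, h)_{L^2}$ in which the projections $\mathcal{P}^\pm$ appear only through squared terms. Writing $g = (\mathcal{A}_1^0)^{-1}(\mathcal{B}^0)^* h$, one has $(\mathcal{L}^0 h, h) = (\mathcal{A}_2^0 h, h) - (\mathcal{B}^0 g, h)$, and since $\mathcal{A}_1^0 g = (\mathcal{B}^0)^* h$ we can rewrite $-(\mathcal{B}^0 g, h) = (\mathcal{A}_1^0 g, g) - 2(\mathcal{B}^0 g, h)$ after adding and subtracting. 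Expanding $(\mathcal{A}_1^0 g, g)$, $(\mathcal{A}_2^0 h, h)$, and $(\mathcal{B}^0 g, h)$ via \eqref{operators-0} and using that $1 - \mathcal{P}^\pm$ is an orthogonal projection in the weighted space $\cH^\pm$ (so $\int \mu_e^\pm (1-\mathcal{P}^\pm) h \cdot k = \int \mu_e^\pm (1-\mathcal{P}^\pm) h \cdot (1-\mathcal{P}^\pm) k$, and note $\mu_e^\pm < 0$), the cross terms should combine to give something like
\begin{equation*}
(\mathcal{L}^0 h, h)_{L^2} = \|\nabla h\|_{L^2}^2 + \int_\Omega \frac{h^2}{(a+r\cos\theta)^2}\,dx - \sum_\pm \int_\Omega\int_{\RR^3} (a+r\cos\theta)\,\hat v_\varphi\,\mu_p^\pm\, h^2 \,dv\,dx + \sum_\pm \int_\Omega\int_{\RR^3} |\mu_e^\pm|\,\big|(1-\mathcal{P}^\pm)(\hat v_\varphi h - g)\big|^2\,dv\,dx + (\text{lower-order in }g).
\end{equation*}
The nonnegative projection term is harmless for the positivity direction; the only potentially negative contribution is the $\mu_p^\pm$ term, together with the portion of the $g$-terms not absorbed into the square. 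This is the decomposition on which all three parts hinge, and getting its exact form right is the main technical obstacle.

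For part (i), note that $\hat v_\varphi \mu_p^\pm = \hat v_\varphi \mu_p^\pm(e^\pm, p^\pm)$ and, by parity of $\mu^\pm$ in $\tilde v$, the $v$-integral of $\hat v_\varphi \mu_p^\pm$ picks out the contribution through $p^\pm = (a+r\cos\theta)(v_\varphi \pm A_\varphi^0)$. When $A_\varphi^0 = 0$, the sign condition $p\mu_p^\pm(e,p) \le 0$ gives $\int \hat v_\varphi (a+r\cos\theta) \mu_p^\pm \le 0$ directly, since $\hat v_\varphi$ and $v_\varphi$ have the same sign and $(a+r\cos\theta) v_\varphi \mu_p^\pm = p^\pm \mu_p^\pm \le 0$; hence every term in the decomposition is nonnegative and $\mathcal{L}^0 \ge 0$. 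For small $A_\varphi^0$ one treats the difference perturbatively: the $\mu_p^\pm$ term differs from its $A_\varphi^0 = 0$ value by $O(\|A_\varphi^0\|_{L^\infty})$, and the residual $g$-terms are also controlled, using that $(\mathcal{A}_1^0)^{-1}$ is bounded on $L^2_\tau$ (established in Section~\ref{sec-op}) and a Poincaré inequality on $\Omega$ to absorb the small error into $\|\nabla h\|_{L^2}^2$. Part (ii) is similar but cleaner: if $|\mu_p^\pm| \le \epsilon(1+|e|^\gamma)^{-1}$ then the entire $\mu_p^\pm$ term is bounded by $C\epsilon \|h\|_{L^2}^2$ regardless of the size of $A_\varphi^0$, and likewise $(\mathcal{B}^0)^*$ and hence $g$ are $O(\epsilon)$, so for $\epsilon$ small the positive terms $\|\nabla h\|^2 + \int h^2/(a+r\cos\theta)^2$ dominate — again a Poincaré-type estimate closes the argument.

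For part (iii) the goal is the opposite: exhibit a test function $h$ with $(\mathcal{L}^0 h, h) < 0$ after a suitable scaling $\mu^\pm \mapsto \mu^\pm(\cdot/\kappa^2, \cdot)$ or $\Omega$-rescaling. The symmetry hypothesis $\mu^+(e,p) = \mu^-(e,-p)$ forces $\phi^0 = 0$ and makes the equilibrium magnetically dominated, and the assumption $p\mu_p^-(e,p) \ge c_0 p^2 \nu(e)$ guarantees that $\sum_\pm \int (a+r\cos\theta)\hat v_\varphi \mu_p^\pm h^2\,dv$ is bounded below by a strictly positive quantity of the form $c \int_\Omega (a+r\cos\theta)^2\, w(r,\theta)\, h^2\,dx$ for a nontrivial weight $w$. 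The strategy is then to choose $h$ concentrated where $w$ is largest and, under the scaling, make the gradient term $\|\nabla h\|_{L^2}^2$ and the magnetic $\int h^2/(a+r\cos\theta)^2$ term small relative to this negative term — exactly as in the analogous instability construction in \cite{NStr1} and \cite{LS3}. One must also check that the $g$-dependent remainder does not spoil the sign; since those terms are quadratic and sign-indefinite a priori, the cleanest route is to use the variational characterization $(\mathcal{L}^0 h, h) \le (\mathcal{A}_2^0 h, h)$ — valid because $\mathcal{B}^0(\mathcal{A}_1^0)^{-1}(\mathcal{B}^0)^*$ is a nonnegative operator (as $\mathcal{A}_1^0 \le 0$, or after adjusting signs, $(\mathcal{A}_1^0)^{-1}$ is negative definite, so the Schur complement term has a definite sign) — and then make $(\mathcal{A}_2^0 h, h) < 0$, which only involves the $\mu_p^\pm$ term against $\|\nabla h\|^2$ plus the magnetic term. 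I expect this sign bookkeeping on the Schur complement, i.e. verifying $\mathcal{L}^0 \le \mathcal{A}_2^0$ in the appropriate sense, to be the subtle point of part (iii), and the concentration/scaling estimate to be routine once that reduction is in place.
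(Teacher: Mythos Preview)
Your sign on the Schur complement is reversed, and this wrecks part~(iii). Since $\mathcal{A}_1^0$ is negative definite (Lemma~\ref{lem-AB0property}), the operator $(\mathcal{A}_1^0)^{-1}$ is also negative definite, so $\mathcal{B}^0(\mathcal{A}_1^0)^{-1}(\mathcal{B}^0)^*$ is \emph{non-positive}, and hence
\[
\langle \mathcal{L}^0 h,h\rangle_{L^2} \;=\; \langle \mathcal{A}_2^0 h,h\rangle_{L^2} \;-\; \langle (\mathcal{A}_1^0)^{-1}(\mathcal{B}^0)^* h,\,(\mathcal{B}^0)^* h\rangle_{L^2} \;\ge\; \langle \mathcal{A}_2^0 h,h\rangle_{L^2}.
\]
This is precisely what makes parts (i) and (ii) easy --- you do not need the elaborate decomposition with $g$ and projection squares at all; it suffices to show $\mathcal{A}_2^0\ge 0$, which follows from \eqref{prod-hAh} by bounding the single sign-indefinite term $\sum_\pm \int (a+r\cos\theta)\hat v_\varphi \mu_p^\pm |h|^2$ either via $p\mu_p^\pm\le 0$ plus small $A_\varphi^0$, or via the smallness of $|\mu_p^\pm|$. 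Your overall plan for (i)--(ii) would presumably still work, but the paper's route is a one-line reduction.

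For part~(iii), however, the inequality $\mathcal{L}^0\ge\mathcal{A}_2^0$ goes the wrong way for you: exhibiting $h$ with $\langle\mathcal{A}_2^0 h,h\rangle<0$ says nothing about the sign of $\langle\mathcal{L}^0 h,h\rangle$. The missing ingredient is that the symmetry hypothesis $\mu^+(e,p)=\mu^-(e,-p)$ actually forces $\mathcal{B}^0\equiv 0$, so that $\mathcal{L}^0=\mathcal{A}_2^0$ exactly and the Schur term disappears. The paper proves this (Lemma~\ref{lem-simplified}) by observing that under this symmetry the reflection $v_\varphi\mapsto -v_\varphi$ interchanges the $\pm$ species and the kernels of $\mathrm{D}^\pm$, whence $\mathcal{P}^+ h=\mathcal{P}^- h$ for $h=h(r,\theta)$ and the integrand defining $\mathcal{B}^0 h$ is even in $v_\varphi$, so its $\hat v_\varphi$-moment vanishes. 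Once $\mathcal{L}^0=\mathcal{A}_2^0$, the instability construction proceeds as you outlined: fix $h_*\in\mathcal{X}$, scale $\mu^\pm(e,p)\mapsto\mu^\pm(e,Kp)$, and use $p\mu_p^-\ge c_0 p^2\nu(e)$ to make the $\mu_p$-term dominate the fixed positive terms for large $K$.
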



Theorems \ref{theo-main} and \ref{theo-examples} are concerned with the {\it linear} stability and instability of equilibria. However, their nonlinear counterparts remain as an outstanding open problem. 
In the full nonlinear problem singularities might occur at the boundary, and the particles could repeatedly bounce off the boundary, which makes it difficult to analyze their trajectories; see \cite{Guo93}.
For the periodic $1\frac12$D problem in the absence of a boundary, \cite{LS2} proved the {\it nonlinear instability} 
of equilibria by using a very careful analysis of the trajectories and a delicate duality argument to show that the linear behavior is dominant.  It would be a difficult task to use this kind of argument to handle our higher-dimensional case with trajectories that reflect at the boundary but it is conceivable.   As for {\it nonlinear stability}, it could definitely not be proven from linear stability, 
as is well-known even for very simple conservative systems. The nonlinear invariants must be used directly and the nonlinear particle trajectories must be analyzed in detail.  
Even the simpler case studied in \cite{LS2} required an intricate proof to handle a special class of equilibria.  
Another natural question that we do not address in this paper is the well-posedness of the nonlinear initial-value problem.
 It is indeed a famous open problem in 3D, even in the  case without a boundary.

In Section 2 we write the whole system explicitly in the toroidal coordinates.  
The boundary conditions are given in Section \ref{sec-bdry}.  The specular condition is expressed in the weak form $\langle \oD^\pm f , g \rangle_\cH = - \langle f, \oD^\pm g \rangle_\cH$, for all toroidally symmetric $C^1$ functions $g$ with $v$-compact support that satisfy the specular condition.  Section 3 is devoted to the proof of stability under the condition $\mathcal L^0\ge0$,  notably using the time
 invariants, namely the generalized energy $\mathcal{I}(f^\pm,\vE,\vB)$ and the casimirs $\mathcal{K}_g(f^\pm,\vA)$.  A key lemma involves the minimization of the energy with the magnetic potential being held fixed. Using the minimizer, we find a key inequality \eqref{key-ineq} which leads to the non-existence of growing modes.  It is also shown, via a proof that is considerably simpler than the 
 ones in \cite{LS1,LS3}, that any growing mode must be pure; that is, the exponent $\lambda$ of instability is real.  
Our proof of instability in Section 4 makes explicit use of the particle trajectories to construct a family of operators 
$\mathcal L^\lambda$ that approximates $\mathcal L^0$ as $\lambda \to 0$ as in \cite{LS1,LS3}; see Lemma \ref{lem-Mprop}. It is a rather complicated argument which involves a careful analysis of the various components.  The trajectories reflect a countable number of times at the boundary of the torus,  like billiard balls.  An important property is the self-adjointness of $\mathcal{L}^\lambda$ and some associated operators; see Lemma \ref{lem-self-adj}. We employ Lin's continuity method \cite{lin01} which interpolates between $\lambda =0$ and $\lambda = \infty$. However, it is not necessary to employ a magnetic super-potential as in \cite{LS3}.  
The whole problem is reduced to finding a null vector of a matrix of operators in equation \eqref{matrix-Maxwell} and its reduced form $\mathcal{M}^\lambda$ in equation \eqref{matrix-Maxwell01}. We also require in Subsection 4.5 
a truncation of part of the operator to finite dimensions.  
In Section 5 we provide some examples where we verify the stability criteria
explicitly.

\section{The symmetric system}

\subsection{The equations in toroidal coordinates}
We define the electric and magnetic potentials $\phi$ and $\vA$ through \eqref{def-potentials}.  
Under the toroidal symmetry assumption, the fields  take the form 
\begin{equation}\label{def-EB}
\begin{aligned}
\vE &=  - e_r  \Big[ \frac {\D \phi}{\D r}  + \D_t A_r\Big] - e_\theta \Big[  \frac 1r \frac {\D \phi}{\D \theta}  + \D_t A_\theta \Big] -  e_\varphi \D_t  A_\varphi, 
\\ \vB &= - \frac{e_r}{r(a+r\cos \theta)} \frac {\D}{\D \theta}( (a+r\cos \theta)   A_\varphi ) + \frac{e_\theta}{a+r\cos \theta}  \frac {\D}{\D r} ((a+r\cos \theta) A_\varphi) + e_\varphi B_\varphi,
\end{aligned}
\end{equation} with $B_\varphi = \frac{1}{r} [ \D_\theta A_r - \D_r(rA_\theta)]$. 
We note that (\ref{def-EB}) implies \eqref{def-potentials}, which implies 
    the two Maxwell equations 
$$\dt \vB + \nabla \times \vE =0 ,\qquad \nabla \cdot \vB = 0.   $$
The remaining two Maxwell equations become
\begin{equation}\label{Maxwell-pots} - \Delta \phi = \rho ,\qquad \quad \partial_t^2 \vA  - \Delta \vA + \partial_t \nabla \phi  = \vj.\end{equation}
We shall study this form \eqref {Maxwell-pots}  of the  Maxwell equations 
coupled to the Vlasov equations \eqref{Vlasov-equations}. 
By direct calculations (see Appendix \ref{app-cal}), 
we observe that $\Delta \tilde \vA \in \mbox{span}\{e_r,e_\theta\}$ 
and $\Delta (A_\varphi e_\varphi) = \Big[\Delta  A_\varphi  -  \frac{1}{(a+r\cos \theta)^2} A_\varphi \Big] e_\varphi$.  
The Maxwell equations in \eqref{Maxwell-pots} thus become
\begin{equation}\label{Maxwell-system} \begin{aligned}
- \Delta \phi &= \rho 
\\
\Big( \D_t^2 - \Delta +  \frac{1}{(a+r\cos \theta)^2}\Big)  A_\varphi  &= j_\varphi
\\
\partial_t^2 \tilde \vA  - \Delta \tilde \vA + \partial_t \nabla \phi  &= \tilde \vj.
\end{aligned}\end{equation}
Here we have denoted $\tilde \vA = A_re_r + A_\theta e_\theta$ and $ \tilde \vj = j_r e_r + j_\theta e_\theta$.  
It is  interesting to note that  $B_\varphi = \frac 1r \Big[ \D_\theta  A_r - \D_r(rA_\theta)\Big ]$ satisfies the equation 
\begin{equation}\label{Maxwell-Bphi} \Big( \D_t^2 - \Delta + \frac{1}{(a+r\cos \theta)^2} \Big) B_\varphi  = \frac 1r \Big (\D_\theta j_r - \D_r (r j_\theta) \Big ).\end{equation}
Next we write the equation for the density distribution $f^\pm = f^\pm (t,r,\theta,v_r,v_\theta,v_\varphi)$.  
In the toroidal coordinates we have 
$$\begin{aligned}
\hat v\cdot\nabla_x f  &=  \hat v_r \frac{\D f}{\D r}  + \frac 1r \hat v_\theta \frac{\D f}{\D \theta}  + \frac 1{a+r\cos \theta} \hat v_\varphi \frac{\D f}{\D \varphi }  
\\&= \hat v_r \D_rf  + \frac 1r \hat v_\theta \D_\theta f  + 
\frac 1r  \hat v_\theta \Big\{ v_\theta \partial_{v_r} - 
v_r\partial_{v_\theta} \Big\} f 
\\&\quad +  \frac 1{a+r\cos \theta} \hat v_\varphi  \Big\{\cos \theta v_\varphi \D_{v_r}   - \sin \theta v_\varphi \D_{v_\theta}   - (\cos \theta v_r - \sin \theta v_\theta) \D_{v_\varphi} \Big\} f. \end{aligned}$$
Thus in these coordinates the Vlasov equations become   
\begin{equation}\label{VMsys}
\begin{aligned}
\dt f^\pm &+\hat v_r \D_rf^\pm  + \frac 1r \hat v_\theta \D_\theta f^\pm \pm \Big(E_r + \hat v_\varphi B_\theta - \hat v_\theta B_\varphi \pm \frac 1r v_\theta \hat v_\theta \pm \frac{\cos \theta}{a+r\cos \theta} v_\varphi \hat v_\varphi\Big)\D_{v_r}f^\pm
\\&\pm  \Big(E_\theta + \hat v_r B_\varphi  - \hat v_\varphi B_r\mp \frac 1r v_r \hat v_\theta  \mp \frac{\sin\theta}{a+r\cos \theta} v_\varphi \hat v_\varphi\Big)\D_{v_\theta}f^\pm
\\&\pm  \Big(E_\varphi + \hat v_\theta B_r - \hat v_r B_\theta \mp \frac{\cos \theta v_r - \sin\theta v_\theta}{a+r\cos \theta} \hat v_\varphi\Big)\D_{v_\varphi}f^\pm =0.
\end{aligned}
\end{equation}

\subsection{Boundary conditions}\label{sec-bdry}
Since $e_r(x)$ is the outward normal vector, the specular condition \eqref{bdry-specular} now becomes 
\begin{equation}\label{bdry-specular01} f^\pm (t,r,\theta,v_r,v_\theta,v_\varphi) = f^\pm(t,r,\theta, -v_r , v_\theta, v_\varphi),\qquad v_r <0,\qquad x \in \D \Omega. \end{equation}
The perfect conductor boundary condition is 
$$
E_\theta = 0, \qquad E_\varphi = 0, \qquad B_r  =0 , \qquad x \in \D \Omega,$$
or equivalently, $$ \D_\theta \phi + \D_t A_\theta  =0, \qquad  A_\varphi = \frac{const.}{a+\cos \theta}.$$
Desiring a time-independent boundary condition, we take $\phi = const.$ on the boundary. 
The Coulomb gauge gives an extra boundary condition for the potentials:  
$$ (a+\cos \theta) \D_r A_r + (a+2\cos \theta )A_r  + \D_\theta ((a+\cos \theta)A_\theta) =0 ,\qquad x \in \D \Omega, $$ 
which leads us to assume 
$$ (a+\cos \theta) \D_r A_r + (a+2\cos \theta )A_r  =0, \qquad A_\theta =\frac{const.}{a+\cos \theta} ,\qquad x \in \D \Omega, $$ 
To summarize, 
we are assuming  that the potentials on the boundary $\D\Omega$ satisfy 
\begin{equation}\label{Maxwell-bdry}
\phi = const., \qquad A_\varphi = \frac{const.}{a+\cos \theta}, \qquad A_\theta =\frac{const.}{a+\cos \theta},\qquad  \D_r A_r + \frac{a+2\cos \theta }{a+\cos \theta}A_r  =0.
\end{equation}


\subsection{Linearization} \label{sec-linVM} 
We linearize the Vlasov-Maxwell system \eqref{Maxwell-system} and \eqref{VMsys} around the equilibrium $(f^{0,\pm},\vE^0,\vB^0)$.  Let $(f^\pm,\vE,\vB)$ be the perturbation of the equilibrium.  Of course, the linearization of \eqref{Maxwell-system} remains the same. From \eqref{VMsys}, the linearized 
Vlasov equations are 
\begin{equation}  \label{linearization} 
 \begin{aligned}
 \dt f^\pm +\oD^\pm f^\pm  &=  \mp (\vE + \hat v\times \vB)\cdot \nabla_v f^{0,\pm} .
\end{aligned}                        \end{equation}
The first-order differential operators 
$\oD^\pm = \hat v \cdot \nabla_x  \pm (\vE^0 + \hat v\times \vB^0)\cdot \nabla_v $ 
now take the form 
\begin{equation}\label{def-opD}
\begin{aligned}
\oD^\pm &=\hat v_r \D_r + \frac 1r \hat v_\theta \D_\theta  \pm \Big(E_r^0 + \hat v_\varphi B^0_\theta  \pm \frac 1r v_\theta \hat v_\theta \pm \frac{\cos \theta}{a+r\cos \theta} v_\varphi \hat v_\varphi\Big)\D_{v_r} \\&\quad \pm  \Big(E_\theta^0 - \hat v_\varphi B^0_r\mp \frac 1r v_r \hat v_\theta  \mp \frac{\sin\theta}{a+r\cos \theta} v_\varphi \hat v_\varphi\Big)\D_{v_\theta}
\\&\quad \pm  \Big( \hat v_\theta B^0_r - \hat v_r B^0_\theta \mp \frac{\cos \theta v_r - \sin\theta v_\theta}{a+r\cos \theta} \hat v_\varphi\Big)\D_{v_\varphi} .
\end{aligned}
\end{equation}
The last terms on the last two lines come from $\nabla_x$ acting on the basis vectors in $v=v_re_r+v_\theta e_\theta+v_\varphi e_\varphi$.  
 Note that $\oD^\pm$ are odd in the pair $(v_r,v_\theta)$. 
Let us compute the right-hand sides of \eqref{linearization} more explicitly. 
Differentiating the equilibrium $f^{0,\pm} = \mu^\pm(e^\pm,p^\pm)$ with respect to $v$,  we then have 
$$\begin{aligned}
 \mp (\vE + \hat v\times \vB)\cdot \nabla_v f^{0,\pm}  
 &= \mp (\vE + \hat v\times \vB)\cdot ( \mu_e^\pm \hat v + (a+r\cos\theta) \mu_p^\pm e_\varphi)
 \\
 &= \mp \mu_e^\pm \vE \cdot \hat v \mp  (a+r\cos\theta) \mu_p^\pm (E_\varphi +  \hat v_\theta B_r - \hat v_r B_\theta), 
 \end{aligned}$$
in which we use the form \eqref{def-EB} of the fields, $E_\varphi = -\partial_t A_\varphi$ and 
$$(a+r\cos\theta) (\hat v_\theta B_r - \hat v_r B_\theta) = - \hat v \cdot \nabla ((a+r\cos \theta)  A_\varphi)  = - \oD^\pm ((a+r\cos \theta)  A_\varphi) .$$ 
Thus the {\it linearization} \eqref{linearization} takes the explicit form  
\begin{equation}\label{lin-VM} \begin{aligned}
 \dt f^\pm+\oD ^\pm f^\pm   = \mp \mu_e^\pm \vE \cdot \hat v  \pm (\D_t + \oD^\pm ) \Big ((a+r\cos \theta) \mu_p^\pm  A_\varphi\Big ) ,
 \end{aligned}
\end{equation}
which is accompanied by the Maxwell equations in \eqref{Maxwell-system}, 
the specular boundary condition \eqref{bdry-specular01} on $f^\pm$, and the boundary conditions 
\begin{equation}\label{Maxwell-BCs}
\phi = 0, \qquad A_\varphi = 0, \qquad A_\theta =0,\qquad  \D_r A_r + \frac{a+2\cos \theta }{a+\cos \theta}A_r  =0.
\end{equation}
The last two boundary conditions are sometimes written as $0 = e_\theta\cdot\tilde \vA  =  \nabla_x\cdot((e_r\cdot\tilde \vA)e_r)$. Often throughout the paper, we set \begin{equation}\label{def-F} F^\pm : =  f^\pm \mp (a+r\cos \theta) \mu_p^\pm  A_\varphi ,\end{equation} and abbreviate the equation \eqref{lin-VM} as
\begin{equation}\label{lin-VMF} \begin{aligned}
 ( \D_t &+\oD ^\pm ) F^\pm  
 =  \mp \mu_e^\pm \hat v \cdot \vE .
 \end{aligned}
\end{equation}
Note that if $f^\pm$ is specular on the boundary, then so is $F^\pm$ since $\mu_p$ is even in $v_r$.  

\subsection{The Vlasov operators}\label{ss-Vlasov}

The Vlasov operators $\oD ^\pm$ are formally given by \eqref{def-opD}.  Their relationship to the boundary condition 
is given in the next lemma.
\begin{lemma}\label{lem-on-D}
Let $g(x,v)=g(r,\theta, v_r,v_\theta,v_\varphi)$ be a $C^1$ toroidal function on $\overline \Omega \times \RR^3$.  Then $g$ satisfies the specular boundary condition \eqref{bdry-specular01}
if and only if  
$$ \int_\Omega \int_{\RR^3} g\ \oD^\pm h\ \mathrm{d}v\mathrm{d}x  =  -  \int_\Omega \int_{\RR^3} \oD^\pm g\ h\ \mathrm{d}v\mathrm{d}x $$
(either + or -) for all toroidal $C^1$ functions $h$ with $v$-compact support that satisfy the specular condition.   
\end{lemma}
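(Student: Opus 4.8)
The plan is to prove the equivalence by a direct integration-by-parts argument in the toroidal coordinates, tracking carefully the boundary contributions that arise only from the transport part $\hat v_r \partial_r + \frac1r \hat v_\theta \partial_\theta$ of $\oD^\pm$, since the $v$-derivative terms contribute no boundary terms (integration in $v$ over all of $\RR^3$ with $h$ of compact $v$-support). First I would record that $\oD^\pm = \hat v_r\partial_r + \frac1r\hat v_\theta\partial_\theta + (\text{terms involving only } \partial_{v_r},\partial_{v_\theta},\partial_{v_\varphi})$, and observe from the explicit form \eqref{def-opD} that the vector field defining $\oD^\pm$ on $(x,v)$-space is divergence-free with respect to the measure $\mathrm{d}v\,\mathrm{d}x$ in these coordinates; this is the statement that $\oD^\pm$ preserves phase-space volume along the equilibrium characteristics, and it can be checked term by term (or cited as the Liouville property of the equilibrium Vlasov flow). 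Granting this, $\int\int (g\,\oD^\pm h + h\,\oD^\pm g)\,\mathrm{d}v\,\mathrm{d}x = \int\int \oD^\pm(gh)\,\mathrm{d}v\,\mathrm{d}x$, and the right side equals a pure boundary integral.

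Next I would compute that boundary integral. Using the volume element $\mathrm{d}x = r(a+r\cos\theta)\,\mathrm{d}r\,\mathrm{d}\theta\,\mathrm{d}\varphi$ and integrating the divergence-form expression, only the $\hat v_r\partial_r$ piece survives the $r$-integration boundary evaluation at $r=1$ (the $\frac1r\hat v_\theta\partial_\theta$ piece integrates to zero in $\theta$ by $2\pi$-periodicity, and similarly the $\varphi$-integration is trivial). Thus
\begin{equation*}
\int_\Omega\int_{\RR^3}\oD^\pm(gh)\,\mathrm{d}v\,\mathrm{d}x = \int_0^{2\pi}\int_0^{2\pi}\int_{\RR^3} \hat v_r\,(gh)\big|_{r=1}\,(a+\cos\theta)\,\mathrm{d}v\,\mathrm{d}\theta\,\mathrm{d}\varphi.
\end{equation*}
Now I split the $v_r$-integral into $v_r>0$ and $v_r<0$ and change variables $v_r\mapsto -v_r$ in one of them; since $h$ is specular, $h(r{=}1,\theta,-v_r,v_\theta,v_\varphi) = h(r{=}1,\theta,v_r,v_\theta,v_\varphi)$, and $\hat v_r$ is odd in $v_r$ while the rest of the measure is even. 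Therefore the boundary integral reduces to $\int \hat v_r\,g\big|_{r=1}\big(h(v_r)-h(-v_r)\big)$ over $v_r>0$ (times the $h$ common factor appropriately) — more precisely, it becomes an integral of $\hat v_r\,\big(g(r{=}1,\theta,v_r,\cdot) - g(r{=}1,\theta,-v_r,\cdot)\big)\,h(r{=}1,\theta,v_r,\cdot)$ over $v_r>0$, using that $h$ is specular to fold the two halves together. Hence the boundary integral vanishes for \emph{all} specular $h$ with compact $v$-support precisely when $g(r{=}1,\theta,v_r,v_\theta,v_\varphi) = g(r{=}1,\theta,-v_r,v_\theta,v_\varphi)$ for $v_r>0$, which is exactly the specular condition \eqref{bdry-specular01} on $g$. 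For the ``only if'' direction I would note that such specular $h$ separate points in the relevant trace space (one can localize near any boundary point and any $v$ with $v_r>0$), so the vanishing of the bilinear boundary form forces the pointwise identity on $g$.

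The main obstacle I anticipate is purely technical: justifying the integration by parts and the boundary trace for merely $C^1$ functions on $\overline\Omega\times\RR^3$, and in particular making sure the divergence-form identity $g\,\oD^\pm h + h\,\oD^\pm g = \oD^\pm(gh) = \nabla_{(x,v)}\cdot(\text{flux})$ holds with the correct weight $r(a+r\cos\theta)$ built in — i.e. that the curvature terms (the last summands on the last two lines of \eqref{def-opD}, coming from $\nabla_x$ hitting the basis vectors) conspire exactly to make the flux divergence-free in the coordinate measure. This is the ``Liouville theorem'' for the equilibrium flow and must be verified once and for all; after that, the $v$-boundary terms drop because $h$ has compact $v$-support, the $\theta$- and $\varphi$-boundary terms drop by periodicity, and only the clean $r=1$ term remains. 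A secondary (minor) point is the density/separation argument for the converse; this is handled by choosing $h$ supported near a single boundary point with $v_r$ restricted to a small neighborhood of a chosen positive value, which localizes the boundary integral and extracts the pointwise condition on $g$.
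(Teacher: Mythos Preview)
Your proposal is correct and follows essentially the same route as the paper: integrate by parts using the Liouville (divergence-free) property of $\oD^\pm$ to reduce $\int\int(g\,\oD^\pm h + h\,\oD^\pm g)\,\mathrm{d}v\,\mathrm{d}x$ to the single boundary term $2\pi\int_0^{2\pi}\int_{\RR^3}(a+\cos\theta)\,gh\,\hat v_r\,\mathrm{d}v\,\mathrm{d}\theta\big|_{r=1}$, then use parity in $v_r$ for both directions of the equivalence. The paper's writeup is more terse (it simply asserts the integration-by-parts identity and observes that if both $g,h$ are even in $v_r$ the integrand is odd, while for the converse it notes that vanishing against all specular $h$ means vanishing against all test functions odd in $v_r$), but your more explicit tracking of the Liouville property and the localization argument for the converse are exactly the content behind those assertions.
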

\begin{proof} Integrating by parts in $x$ and $v$, we have 
$$ 
\int_\Omega  \int_{\RR^3} \Big\{g\ \oD ^\pm h  +  \oD^\pm g\ h\Big \} \ \mathrm{d}v\mathrm{d}x =  2\pi \int_0^{2\pi}\int_{\RR^3}(a+\cos \theta) g h\ \hat v \cdot e_r \; \mathrm{d}v\mathrm{d}\theta\Big|_{r=1}.  $$
If $g$ satisfies the specular condition, then $g$ and $h$ are even functions of $v_r=v\cdot e_r$ on $\D\Omega$, 
so that the last integral vanishes.  
Conversely, if $\int_0^{2\pi}\int_{\RR^3}(a+\cos \theta)g h\ \hat v \cdot e_r \; \mathrm{d}v\mathrm{d}\theta=0$ on $\partial\Omega$ 
for all $h$ as above, then 
$\int_0^{2\pi}\int_{\RR^3}g\ k(\theta,v_r,v_\theta,v_\varphi) \mathrm{d}v\mathrm{d}\theta =0$ for all test functions $k$ that are odd in $v_r$.  
So it follows that $g(1,\theta,v_r,v_\theta, v_\theta)$ is an even function of $v_r$, which is the specular condition.  
\end{proof}

Therefore we {\it define} the domain of $\oD^\pm$ to be 
\begin{equation}\label{domainD} 
 \text{dom}(\oD^\pm)  = \Big \{g\in\cH ~\Big|~\ \oD^\pm g \in \cH,\ 
 \langle\oD^\pm g,h\rangle_\cH  =  -  \langle g,\oD^\pm h\rangle_\cH, \  
\forall h\in \mathcal C \Big\},  
\end{equation}
where $\mathcal C$ denotes the set of toroidal $C^1$ functions $h$ with $v$-compact support that satisfy the specular condition.  
We say that a function $g\in\cH$ with $ \oD^\pm g \in \cH$ satisfies the {\it specular boundary condition in the weak sense} 
if $g\in$ dom$(\oD^\pm)$.  Clearly, $ \text{dom}(\oD^\pm)$ is dense in $\cH$ since by Lemma \ref{lem-on-D} it contains the space $\mathcal{C}$ of test functions, which is of course dense in $\cH$. 

It follows that 
\begin{equation}\label{skewadj}
 \langle\oD^\pm g,h\rangle_\cH  =  -  \langle g,\oD^\pm h\rangle_\cH  \end{equation}
for all $g,h\in \text{dom}(\oD^\pm)$.  Indeed, given $h\in \text{dom}(\oD^\pm)$, we just approximate 
it in $\cH$ by a sequence of test functions in $\mathcal{C}$, and so \eqref{skewadj} holds thanks to Lemma \ref{lem-on-D}. 

Furthermore, with these domains, $\oD^\pm$ are {\it skew-adjoint} operators on $\cH$. 
Indeed, the skew-symmetry has just been stated.  To prove the skew-adjointness of $\oD^\pm$, 
suppose that $f,g\in\cH$ and $ \langle f,h\rangle_\cH  =  -  \langle g,\oD^\pm h\rangle_\cH$  
for all  $h\in \text{dom}(\oD^\pm)$.  
Taking $h\in \mathcal{C}$ to be a test function, we see that $\oD^\pm g = f $ in the sense of distributions.  
Therefore \eqref{skewadj} is valid for all such $h$, which means that $g\in\text{dom}(\oD^\pm)$.  

\subsection{Growing modes}\label{ss-growing}
Now we can state some necessary properties of any growing mode. 
Recall that by definition a growing mode satisfies  $f^\pm \in \cH $ 
and $\vE, \vB\in L^2(\Omega;\RR^3)$. 
\begin{lemma} 
\label{growprops}  
Let $(e^{\lambda t} f^\pm, e^{\lambda t}{\vE}, e^{\lambda t} {\vB})$ with $\R\lambda>0$ 
be any growing mode, and let $F^\pm =  f^\pm \mp (a+r\cos \theta) \mu_p^\pm  A_\varphi$.  Then 
$\vE,\vB\in H^1(\Omega;\RR^3)$ and 
$$
\iint_{\Omega\times \RR^3} \ (|F^\pm|^2  +  |\oD^\pm F^\pm|^2)\  \frac {\mathrm{d}v\mathrm{d}x}{|\mu_e^\pm|}  \quad <\quad   \infty.$$
\end{lemma}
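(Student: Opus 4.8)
## Proof Strategy for Lemma \ref{growprops}

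The plan is to exploit the structure of the growing-mode equations: the Maxwell system becomes \emph{elliptic} when $\D_t$ is replaced by $\lambda$ with $\R\lambda > 0$, and the Vlasov equation for $F^\pm$ in the form \eqref{lin-VMF} becomes $(\lambda + \oD^\pm)F^\pm = \mp\mu_e^\pm \hat v\cdot\vE$. First I would establish the elliptic regularity of the fields. For a growing mode, \eqref{Maxwell-system} gives $-\Delta\phi = \rho$, $(\lambda^2 - \Delta + (a+r\cos\theta)^{-2})A_\varphi = j_\varphi$, and $(\lambda^2 - \Delta)\tilde\vA + \lambda\nabla\phi = \tilde\vj$, together with the boundary conditions \eqref{Maxwell-BCs}. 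Since $\R\lambda>0$, the operators $\lambda^2 - \Delta$ (and the zeroth-order perturbation by $(a+r\cos\theta)^{-2}>0$) are coercive, so these are well-posed elliptic boundary-value problems. The source terms $\rho, \vj$ are bounded in $L^2$: indeed $|\rho| \le \int |f^+ - f^-|\,dv$, and by Cauchy–Schwarz with the weight $|\mu_e^\pm|$ and the decay bound \eqref{mu-cond} (which makes $1/|\mu_e^\pm|$ grow only polynomially while $|\mu_e^\pm|$ decays faster than $|v|^{-\gamma}$, $\gamma>3$, so $\int |\mu_e^\pm|^{-1}\langle v\rangle^{-2}\,dv$ — or an analogous finite integral — converges) one gets $\|\rho\|_{L^2(\Omega)} \lesssim \sum_\pm \|f^\pm\|_{\cH^\pm}$, and similarly for $\vj$ since $|\hat v|\le 1$. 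Hence $\rho,\vj \in L^2$, so by standard elliptic $H^2$ estimates $\phi, A_\varphi, \tilde\vA \in H^2(\Omega)$, and therefore $\vE = -\nabla\phi - \lambda\vA$ and $\vB = \nabla\times\vA$ lie in $H^1(\Omega;\RR^3)$, which is the first assertion.

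Next, for the integral bound on $F^\pm$, I would take the $\cH^\pm$ inner product of the Vlasov equation $(\lambda + \oD^\pm)F^\pm = \mp\mu_e^\pm\hat v\cdot\vE$ with $F^\pm$ itself. Since $\oD^\pm$ is skew-adjoint on $\cH^\pm$ (established in Section \ref{ss-Vlasov}, equation \eqref{skewadj}), we have $\R\langle\oD^\pm F^\pm, F^\pm\rangle_{\cH^\pm} = 0$, hence
\begin{equation}\label{eq-energy-Fbound}
(\R\lambda)\,\|F^\pm\|_{\cH^\pm}^2 = \mp\,\R\!\!\iint_{\Omega\times\RR^3} \mu_e^\pm(\hat v\cdot\vE)\,\overline{F^\pm}\,\frac{\mathrm{d}v\,\mathrm{d}x}{|\mu_e^\pm|}.
\end{equation}
The right side is bounded by $\iint |\hat v\cdot\vE|\,|F^\pm|\,\mathrm{d}v\,\mathrm{d}x$ (the weight cancels), which by Cauchy–Schwarz is $\le \|F^\pm\|_{\cH^\pm}\cdot\big(\iint |\mu_e^\pm|^{-1}|\hat v\cdot\vE|^2\,\mathrm{d}v\,\mathrm{d}x\big)^{1/2}$. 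Since $|\hat v|\le 1$ and $\vE$ is independent of $v$, the last factor is $\|\vE\|_{L^2(\Omega)}\cdot(\int|\mu_e^\pm|^{-1}\,dv)^{1/2}$ — but $|\mu_e^\pm|^{-1}$ grows polynomially and is \emph{not} $v$-integrable, so one must instead keep a factor $\langle v\rangle^{-1}$ from $|\hat v\cdot\vE| \le \langle v\rangle^{-1}|v||\vE|$... Actually the cleaner route: use $|\hat v\cdot \vE|\,|F^\pm| = |\hat v\cdot\vE|\,|\mu_e^\pm|^{1/2}|F^\pm|\cdot|\mu_e^\pm|^{-1/2}$ and apply Cauchy–Schwarz so that $|\mu_e^\pm|^{1/2}|F^\pm|$ pairs against $|\mu_e^\pm|^{1/2}|\hat v\cdot\vE|$; then the surviving $v$-integral is $\int|\mu_e^\pm|\,dv < \infty$ by \eqref{mu-cond}. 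This gives $(\R\lambda)\|F^\pm\|_{\cH^\pm}^2 \lesssim \|F^\pm\|_{\cH^\pm}\|\vE\|_{L^2(\Omega)}$, hence $\|F^\pm\|_{\cH^\pm} \lesssim \|\vE\|_{L^2(\Omega)}/\R\lambda < \infty$. Finally, $\oD^\pm F^\pm = \mp\mu_e^\pm\hat v\cdot\vE - \lambda F^\pm$, and both terms on the right are now controlled in $\cH^\pm$ (the first by the same Cauchy–Schwarz-in-weight estimate using $\int|\mu_e^\pm|\,dv<\infty$ and $\vE\in L^2$; the second since $F^\pm\in\cH^\pm$), so $\oD^\pm F^\pm\in\cH^\pm$, completing the proof.

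The main obstacle I anticipate is \emph{rigor at the boundary}: the skew-adjointness identity $\R\langle\oD^\pm F^\pm, F^\pm\rangle_{\cH^\pm} = 0$ requires $F^\pm \in \mathrm{dom}(\oD^\pm)$, i.e. that $F^\pm$ genuinely satisfies the specular condition in the weak sense \eqref{domainD}. Since a growing mode is only assumed to satisfy the boundary conditions weakly, one must carefully justify that $F^\pm$ inherits weak specularity from $f^\pm$ (the remark after \eqref{def-F} handles the algebraic part, but one needs that $f^\pm\in\mathrm{dom}(\oD^\pm)$ in the first place — part of the weak formulation of "growing mode" — and that $(a+r\cos\theta)\mu_p^\pm A_\varphi$, now known to be in $H^2_x$, also lies in the domain). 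A secondary technical point is that the argument is slightly circular as stated: one needs $F^\pm\in\cH^\pm$ (hence $f^\pm$ and the explicit correction both in $\cH^\pm$, which follows from $A_\varphi\in L^\infty$ via the $H^2\hookrightarrow L^\infty$ embedding in 3D and the $v$-decay of $\mu_p^\pm$) \emph{before} testing the equation against $F^\pm$; so the order must be: (1) field regularity, (2) $F^\pm\in\cH^\pm$, (3) the energy identity \eqref{eq-energy-Fbound} giving the quantitative bound, (4) $\oD^\pm F^\pm\in\cH^\pm$ from the equation. I would present it in exactly that order.
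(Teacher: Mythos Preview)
There is a genuine gap: your energy identity controls the wrong norm. The lemma asks for
\[
\iint \frac{|F^\pm|^2}{|\mu_e^\pm|}\,\mathrm{d}v\,\mathrm{d}x<\infty,
\]
i.e.\ $F^\pm\in L^2_{1/|\mu_e^\pm|}$, whereas pairing $(\lambda+\oD^\pm)F^\pm=\mp\mu_e^\pm\hat v\cdot\vE$ against $F^\pm$ in $\cH^\pm=L^2_{|\mu_e^\pm|}$ yields at best a bound on $\|F^\pm\|_{\cH^\pm}$. That quantity is already finite by hypothesis (since $f^\pm\in\cH^\pm$ and $(a+r\cos\theta)\mu_p^\pm A_\varphi\in\cH^\pm$), so the argument proves nothing new. (Your displayed identity \eqref{eq-energy-Fbound} is also internally inconsistent: the left side carries the weight $|\mu_e^\pm|$ while the right side carries $1/|\mu_e^\pm|$.) The correct move, which the paper makes, is to divide the Vlasov equation by $|\mu_e^\pm|$ and set $k^\pm:=F^\pm/|\mu_e^\pm|$; then $\|k^\pm\|_{\cH^\pm}^2=\iint|F^\pm|^2/|\mu_e^\pm|\,\mathrm{d}v\,\mathrm{d}x$ is exactly the desired quantity, and the source becomes $\pm\hat v\cdot\vE\in\cH^\pm$ thanks to $\int|\mu_e^\pm|\,\mathrm{d}v<\infty$. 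Since a~priori $k^\pm\notin\cH^\pm$, one cannot simply test the equation against $k^\pm$; the paper regularizes via $k_\epsilon:=\tfrac{|\mu_e^\pm|}{\epsilon+|\mu_e^\pm|}\,k^\pm$, applies skew-adjointness to $k_\epsilon$, and lets $\epsilon\to0$.

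This error propagates to your ordering. You cannot get $\rho,\vj\in L^2(\Omega)$ from $f^\pm\in\cH^\pm$ by Cauchy--Schwarz: writing $\int f^\pm\,\mathrm{d}v=\int (|\mu_e^\pm|^{1/2}f^\pm)\cdot|\mu_e^\pm|^{-1/2}\,\mathrm{d}v$ leaves the factor $\int|\mu_e^\pm|^{-1}\,\mathrm{d}v$, which diverges (assumption \eqref{mu-cond} gives only an \emph{upper} bound on $|\mu_e^\pm|$, hence a \emph{lower} bound on $1/|\mu_e^\pm|$ that grows like $\langle v\rangle^\gamma$; your parenthetical integral $\int|\mu_e^\pm|^{-1}\langle v\rangle^{-2}\,\mathrm{d}v$ still diverges for $\gamma>3$ in $\RR^3$, and in any case no $\langle v\rangle^{-2}$ is available for $\rho$). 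What \emph{does} give $\rho,\vj\in L^2$ is precisely $F^\pm\in L^2_{1/|\mu_e^\pm|}$: then $\big|\int F^\pm\,\mathrm{d}v\big|\le\big(\int|F^\pm|^2/|\mu_e^\pm|\,\mathrm{d}v\big)^{1/2}\big(\int|\mu_e^\pm|\,\mathrm{d}v\big)^{1/2}$. So the order must be the paper's order, not yours: first the weighted Vlasov estimate on $k^\pm$, then $\rho,\vj\in L^2$, then elliptic regularity giving $\vE,\vB\in H^1$, and only then $\oD^\pm F^\pm\in L^2_{1/|\mu_e^\pm|}$ directly from the equation.
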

\begin{proof}
The fields are given by  \eqref{def-EB} where $\phi, \vA$ satisfy the elliptic system \eqref{Maxwell-system} 
with the corresponding boundary conditions, expressed weakly.  
From \eqref{lin-VMF}, $F^\pm$ solves 
\begin{equation}\label{Vlasov3} \begin{aligned}
 ( \lambda &+\oD ^\pm ) F^\pm  =  \mp \mu_e^\pm \hat v \cdot \vE .
 \end{aligned}
\end{equation}
%
%
%
 This equation implies that $\oD^\pm F^\pm\in \cH$ since $f^\pm \in \cH$, $A_\varphi \in L^2(\Omega)$, and $\sup_x \int_{\RR^3}|\mu_p^\pm|\;\mathrm{d}v<\infty$. The specular boundary condition on $f^\pm$ implies the specular condition on $F^\pm$. This is expressed 
 weakly by saying that $f^\pm, F^\pm \in\text{dom}(\oD^\pm)$.  
Dividing  by $|\mu_e^\pm|$ and defining  $k^\pm  = F^\pm/ |\mu_e^\pm|$,  we write 
the equation in the form $\lambda k^\pm + \oD^\pm k^\pm =\pm \hat v \cdot \vE$, where we note that the right side 
$\hat v \cdot \vE$ belongs to $ \cH = L^2_{ |\mu_e^\pm|}(\Omega\times \RR^3)$, thanks to the decay assumption \eqref{mu-cond}. 

Letting 
$w_\ep =  |\mu_e^\pm|/(\ep  +   |\mu_e^\pm|)$ for $\ep>0$ and $k_\ep = w_\ep k^\pm$, we have 
 $\langle \lambda k_\ep + \oD^\pm k_\ep , k_\ep \rangle_\cH 
= \pm \langle w_\ep \hat v \cdot \vE , k_\ep\rangle_\cH. $  
It easily follows that 
$k_\ep\in\cH$.  In fact, $k_\ep\in\text{dom}(\oD^\pm)$, which means that the specular boundary condition holds 
in the weak sense, so that \eqref{skewadj} is valid for it.  In  \eqref{skewadj} we take both functions to be $k_\ep$ 
and therefore $\langle \oD^\pm k_\ep , k_\ep \rangle_\cH  = 0$.  It follows that 
$$| \lambda| \|k_\ep\|_\cH^2  = | \langle w_\ep \hat v \cdot \vE , k_\ep\rangle_\cH |\le \|\vE \|_{\cH} \|k_\ep\|_{\cH}. $$ 
Letting $\ep\to0$, we infer that $k^\pm \in \cH$ and $\int_\Omega\int_{\RR^3} |F^\pm|^2 / |\mu_e^\pm| \mathrm{d}v\mathrm{d}x < \infty$.    

Now let us consider the elliptic system \eqref{Maxwell-system} for the field (with $\partial_t=\lambda$) 
together with the boundary conditions that are expressed weakly. The right hand sides of this system are 
$$\begin{aligned}
\rho &= \int_{\RR^3} (f^+ - f^-) \;\mathrm{d}v  = \int_{\RR^3} (F^+ - F^-) \; \mathrm{d}v +  (a+r\cos \theta) A_\varphi \int_{\RR^3} (\mu_p^+ + \mu_p^-)\; \mathrm{d}v, 
\\
\vj &= \int_{\RR^3} \hat v (f^+-f^-)\; \mathrm{d}v = \int_{\RR^3} \hat v (F^+ - F^-) \; \mathrm{d}v +  (a+r\cos \theta) A_\varphi \int_{\RR^3} \hat v (\mu_p^+ + \mu_p^-)\; \mathrm{d}v.
\end{aligned}$$ 
Since $\int_\Omega\int_{\RR^3} |F^\pm|^2 / |\mu_e^\pm| \mathrm{d}v\mathrm{d}x < \infty$, $\sup_x \int_{\RR^3}(|\mu_e^\pm| + |\mu_p^\pm|)\; \mathrm{d}v < \infty$  and $A_\varphi \in L^2(\Omega)$,
$\rho$ and $\vj$ 
 are now known to be finite a.e. and to belong to $L^2(\Omega)$.  It follows by standard elliptic theory that $\phi,A_\varphi \in H^2(\Omega), \tilde \vA \in H^2(\Omega;\tilde\RR^2)$, 
and so $\vE, \vB\in H^1(\Omega;\RR^3)$.  This is the first assertion of the lemma.   
Nevertheless, we emphasize that neither $\oD^\pm f^\pm$ nor $\oD^\pm F^\pm$ satisfies the specular boundary condition.  
However, directly from \eqref{Vlasov3} it is now clear that $\int_\Omega\int_{\RR^3} |\oD^\pm F^\pm|^2 / |\mu_e^\pm| \mathrm{d}v\mathrm{d}x < \infty  $.  
\end{proof}

\subsection{Properties of $\mathcal{L}^0$}\label{sec-op}

 In this subsection, we shall derive the basic properties of 
the operators $\mathcal{A}_1^0, \mathcal{A}_2^0$, and $\mathcal{B}^0$ defined in \eqref{operators-0} and of $\mathcal{L}^0$ in \eqref{operator-L0}.  Let us recall that $\mathcal{P}^\pm$ are the orthogonal projections of $\cH$ onto the kernels 
$$\ker(\oD^\pm) = \Big\{ f \in \text{dom}(\oD^\pm) ~\Big|~ \oD^\pm f = 0\Big\}.$$

\begin{lemma}\label{lem-AB0property}~

(i) $\mathcal{A}_1^0$ is self-adjoint and negative definite on $L_\tau^2(\Omega )$ with domain $\mathcal{X}$ and it is a one-to-one map of $\mathcal{X}$ onto $L^2_\tau(\Omega)$. Its inverse $(\mathcal{A}_1^0)^{-1}$ is a bounded operator from $L^2_\tau(\Omega)$ into $\mathcal{X}$. 

(ii) $\mathcal{B}^0$ is a bounded operator on $L^2_\tau(\Omega )$. 

(ii) $\mathcal{A}_2^0$ and $\mathcal{L}^0$ are self-adjoint on $L^2_\tau(\Omega )$ with common domain $\mathcal{X}$. 

\end{lemma}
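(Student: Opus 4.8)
\textbf{Proof proposal for Lemma \ref{lem-AB0property}.}

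The plan is to treat the three parts in the order (i), (ii), (iii), since each later part relies on the earlier ones. For part (i), I would first observe that the zeroth-order part of $\mathcal{A}_1^0$, namely $h\mapsto \sum_\pm \int_{\RR^3}\mu_e^\pm(1-\mathcal{P}^\pm)h\,\mathrm{d}v$, is a bounded, self-adjoint, \emph{negative semi-definite} operator on $L^2_\tau(\Omega)$. Boundedness follows from the decay hypothesis \eqref{mu-cond} (so $\mu_e^\pm$ is $v$-integrable and the map $h\mapsto \int\mu_e^\pm h\,\mathrm{d}v$ is bounded $L^2_\tau\to L^2_\tau$); self-adjointness follows because $\mathcal{P}^\pm$ is an orthogonal projection in the weighted space $\cH^\pm$ and $|\mu_e^\pm|$ is exactly the weight, so that $\int\mu_e^\pm(1-\mathcal{P}^\pm)h\cdot \bar g\,\mathrm{d}v\mathrm{d}x = -\langle(1-\mathcal{P}^\pm)h,(1-\mathcal{P}^\pm)g\rangle_{\cH^\pm}$ after recognizing that constants in $v$ (on which $h,g$ depend only through $x$) are fixed under nothing special — the key algebraic identity is that $(1-\mathcal{P}^\pm)$ is self-adjoint and idempotent in $\cH^\pm$. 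This last computation also shows the zeroth-order piece is negative semi-definite. Since $\Delta$ with domain $\mathcal{X}$ (Dirichlet data) is self-adjoint and negative definite on $L^2_\tau(\Omega)$ with compact resolvent, adding the bounded symmetric perturbation keeps $\mathcal{A}_1^0$ self-adjoint on $\mathcal{X}$ (Kato–Rellich), and the sum of a negative-definite operator and a negative-semidefinite one is negative definite; hence $0$ is not in the spectrum, $\mathcal{A}_1^0:\mathcal{X}\to L^2_\tau(\Omega)$ is a bijection, and $(\mathcal{A}_1^0)^{-1}$ is bounded into $\mathcal{X}$ by the closed graph theorem (or directly from the elliptic estimate for $-\Delta$).

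For part (ii), $\mathcal{B}^0 h = -\sum_\pm\int_{\RR^3}\hat v_\varphi\mu_e^\pm(1-\mathcal{P}^\pm)h\,\mathrm{d}v$ is bounded on $L^2_\tau(\Omega)$ by the same $v$-integrability argument: $|\hat v_\varphi|\le 1$ and $\mu_e^\pm\in L^1_v$ uniformly in $x$ by \eqref{mu-cond}, while $(1-\mathcal{P}^\pm)$ is a contraction in $\cH^\pm$; Cauchy–Schwarz in the weighted measure then gives $\|\mathcal{B}^0 h\|_{L^2_\tau}\le C\|h\|_{L^2_\tau}$.

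For part (iii), I would split $\mathcal{A}_2^0 = -\Delta + \frac{1}{(a+r\cos\theta)^2} + \mathcal{C}$, where $\mathcal{C}$ collects the two $v$-integral terms. The multiplication operator $\frac{1}{(a+r\cos\theta)^2}$ is bounded (since $a>1$), and I claim $\mathcal{C}$ is bounded and symmetric on $L^2_\tau(\Omega)$. Boundedness is again the $v$-integrability of $\mu_p^\pm$ and $\mu_e^\pm$ together with $|\hat v_\varphi|\le 1$ and the contractivity of $\mathcal{P}^\pm$. The symmetry is the delicate point: one must check that $\int\hat v_\varphi[(a+r\cos\theta)\mu_p^\pm h + \mu_e^\pm\mathcal{P}^\pm(\hat v_\varphi h)]\,\mathrm{d}v$, paired against $g$ in $L^2_\tau(\Omega)$, is symmetric in $(h,g)$; the first term is a real multiplication-type pairing (symmetric because $\mu_p^\pm$ is real-valued), and the second term equals $\langle\mathcal{P}^\pm(\hat v_\varphi h),\hat v_\varphi g\rangle_{\cH^\pm}$ up to sign — here one uses that $\mathcal{P}^\pm$ is self-adjoint in $\cH^\pm$ and that the weight is $|\mu_e^\pm|=-\mu_e^\pm$, turning $\int\mu_e^\pm\mathcal{P}^\pm(\hat v_\varphi h)\bar g\,\mathrm{d}v\mathrm{d}x$ into $-\langle\mathcal{P}^\pm(\hat v_\varphi h),\mathcal{P}^\pm(\hat v_\varphi g)\rangle_{\cH^\pm}$, which is manifestly symmetric. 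With $-\Delta$ self-adjoint on $\mathcal{X}$ and a bounded symmetric perturbation, Kato–Rellich gives self-adjointness of $\mathcal{A}_2^0$ on $\mathcal{X}$. Finally, $\mathcal{L}^0 = \mathcal{A}_2^0 - \mathcal{B}^0(\mathcal{A}_1^0)^{-1}(\mathcal{B}^0)^*$: the operator $\mathcal{B}^0(\mathcal{A}_1^0)^{-1}(\mathcal{B}^0)^*$ is bounded (composition of bounded maps by (i) and (ii)) and self-adjoint (it has the form $S A^{-1} S^*$ with $A^{-1}$ self-adjoint), so $\mathcal{L}^0$ is a bounded self-adjoint perturbation of $\mathcal{A}_2^0$ and hence self-adjoint on $\mathcal{X}$.

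I expect the main obstacle to be the self-adjointness (symmetry) of the zeroth-order $v$-integral operators appearing in $\mathcal{A}_1^0$ and especially in $\mathcal{A}_2^0$: one has to carefully exploit the identity $|\mu_e^\pm|=-\mu_e^\pm$ so that the Lebesgue measure $\mu_e^\pm\,\mathrm{d}v$ appearing in the operators matches (up to sign) the weight $|\mu_e^\pm|\,\mathrm{d}v$ defining $\cH^\pm$, and to check that $\mathcal{P}^\pm$, being the orthogonal projection in the \emph{weighted} space, interacts correctly with multiplication by the unweighted factors $\hat v_\varphi$. A secondary technical point is confirming that the boundary conditions defining $\mathcal{X}$ (Dirichlet) are exactly those needed for $-\Delta$ to be self-adjoint on $L^2_\tau(\Omega)$ and that toroidal symmetry is preserved by all the operators — but this is routine given the setup. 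Everything else is a standard Kato–Rellich bounded-perturbation argument.
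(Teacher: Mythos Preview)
Your proposal is correct and follows essentially the same route as the paper: both identify the non-Laplacian parts of $\mathcal{A}_1^0$, $\mathcal{A}_2^0$, and $\mathcal{B}^0$ as bounded symmetric operators on $L^2_\tau(\Omega)$ via the decay assumption \eqref{mu-cond} and the self-adjointness of $\mathcal{P}^\pm$ in $\cH^\pm$ (using $\mu_e^\pm=-|\mu_e^\pm|$), then invoke self-adjointness of $\Delta$ on $\mathcal{X}$ plus a bounded symmetric perturbation, and obtain the negative definiteness of $\mathcal{A}_1^0$ from the identity $\langle(\mathcal{A}_1^0-\Delta)h,h\rangle_{L^2}=-\sum_\pm\|(1-\mathcal{P}^\pm)h\|_{\cH^\pm}^2$. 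The paper is slightly terser and does not name Kato--Rellich explicitly, but the argument is the same.
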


\begin{proof} By orthogonality, the projection $\mathcal{P}^\pm$ is self-adjoint on $\cH$ and is bounded with operator norm equal to one. It follows that 
$\mathcal{A}_1^0,\mathcal{A}_2^0,$ and $\mathcal{L}^0$ are self-adjoint  on $L^2_\tau(\Omega )$ as long as they are well-defined, and that  the adjoint operator of $\mathcal{B}^0$ is defined as 
$$
(\mathcal{B}^0)^* h  := - \sum_\pm  \int_{\RR^3}\mu_e^\pm ( 1 - \mathcal{P}^\pm) (\hat v_\varphi h) \; \mathrm{d}v 
$$ 
for $h \in L^2_\tau(\Omega)$. Now, for all $h,g\in L^2_\tau (\Omega)$, we have 
\begin{equation}\label{cal-A1} \langle (\mathcal{A}_1^0 - \Delta) h, g \rangle_{L^2}  = - \sum_\pm \langle (1-\mathcal{P}^\pm ) h, g\rangle_\cH .
\end{equation} Since $\mathcal{P}^\pm$ is bounded and $\|h\|_{\cH} \le \Big(\sup_x \int_{\RR^3}|\mu_e^\pm|\; \mathrm{d}v\Big) \| h\|_{L^2} \le C_\mu \| h \|_{L^2}$ for some constant $C_\mu$ as in \eqref{mu-cond}, $\mathcal{A}_1^0 - \Delta$ is bounded from $L^2_\tau(\Omega)$ to itself. Similarly, so are $\mathcal{A}_2^0 + \Delta$ and $\mathcal{B}^0$.  This proves that $\mathcal{A}_1^0$ and $\mathcal{A}_2^0$ are well-defined operators on $L^2_\tau(\Omega )$ with their common domain $\mathcal{X}$. It also proves {\em (ii)}. 

In addition, taking $g = h \in L^2_\tau(\Omega)$ in \eqref{cal-A1} and using the orthogonality of $\mathcal{P}^\pm$ and $1-\mathcal{P}^\pm$, we obtain $\langle (\mathcal{A}_1^0 - \Delta) h, h \rangle_{L^2} =  - \sum_\pm \| (1-\mathcal{P}^\pm) h\|_\cH^2 \le 0$. This,  together with the positivity of $-\Delta$ accompanied by the Dirichlet boundary condition, 
shows that $\mathcal{A}^0_1 \le 0$.  In fact,  
$$ -\langle \mathcal{A}^0_1 h,h\rangle_{L^2} \ge -\langle \Delta h,h\rangle_{L^2} = \| \nabla h \|_{L^2}^2, \qquad \forall~h \in \mathcal{X}.$$
Since $\Omega$ is bounded and $h = 0$ on $\partial\Omega$, the above inequality shows that the inverse of $\mathcal{A}^0_1$ exists and is a bounded operator from $L^2_\tau(\Omega)$ into $\mathcal{X}$. Thus the operator $\mathcal{L}^0 = \mathcal{A}_2^0 -  \mathcal{B}^0 (\mathcal{A}_1^0)^{-1} (\mathcal{B}^0)^*$ is now well-defined and has the same domain as that of $\mathcal{A}_2^0$, which is $\mathcal{X}$. 
\end{proof}

\section{Linear stability}\label{sec-stability}

\subsection{Invariants} \label{sec-inv} 
We will derive the two invariants, namely the linearized energy functional and the casimirs. We begin with the  equation \eqref{lin-VMF} for $F^\pm  =  f^\pm \mp (a+r\cos \theta) \mu_p^\pm  A_\varphi . $ Multiplying it by $\frac{1}{|\mu_e^\pm|} F^\pm  $, we get 
$$\begin{aligned}
 \frac{1}{2|\mu^\pm_e|}\frac{d}{dt} |F^\pm|^2 + \frac{1}{|\mu_e^\pm|} F^\pm\oD ^\pm F^\pm  
 =  \pm\hat v \cdot \vE F^\pm .
\end{aligned} $$
By the specular boundary condition expressed as the skew symmetry property \eqref{skewadj} applied to $g=h=F^\pm/|\mu_e^\pm|$, 
the integral of the second term vanishes.  Therefore 
$$  
   \frac{1}{2} \frac{d}{dt}\int_\Omega \int_{\RR^3}\frac{1}{|\mu^\pm_e|} |F^\pm  |^2   \mathrm{d}v\mathrm{d}x 
   = \pm \int_\Omega \int_{\RR^3}\hat v \cdot \vE \Big ( f^\pm \mp (a+r\cos \theta)\mu_p^\pm  A_\varphi\Big ) \mathrm{d}v\mathrm{d}x .$$
Summing up these two identities, recalling that $\vj = \int_{\RR^3}\hat v (f^+ - f^-)\; \mathrm{d}v$, 
and using the fact that $\mu_p^\pm$ are even functions of $v_r$ and  $v_\theta$, we get 
$$\begin{aligned}
  \frac{1}{2} \frac{d}{dt}\sum_\pm \int_\Omega \int_{\RR^3}\frac{1}{|\mu^\pm_e|} |F^\pm |^2  
  & = \int_\Omega \vE \cdot \vj \; \mathrm{d}x -\sum_\pm\int_\Omega \int_{\RR^3}(a+r\cos \theta)\mu_p^\pm  A_\varphi  \hat v \cdot \vE
   \\& = \int_\Omega \vE \cdot \vj \; \mathrm{d}x -\sum_\pm\int_\Omega \int_{\RR^3}(a+r\cos \theta)\mu_p^\pm  A_\varphi  \hat v_\varphi  E_\varphi   
     \\& = \int_\Omega \vE \cdot \vj \; \mathrm{d}x +\frac 12\frac{d}{dt}\sum_\pm\int_\Omega \int_{\RR^3}(a+r\cos \theta)\mu_p^\pm \hat v_\varphi  | A_\varphi |^2   .\end{aligned}$$
Rearranging this identity, we obtain   
\begin{equation}\label{invariant-Vpm}\begin{aligned}
  \frac{1}{2} \frac{d}{dt}\sum_\pm \int_\Omega \int_{\RR^3} \Big[ \frac {1}{|\mu^\pm_e|} | F^\pm |^2 
- (a+r\cos \theta) \mu_p^\pm \hat v_\varphi | A_\varphi|^2   \Big]  = \int_\Omega \vE \cdot \vj \; \mathrm{d}x.\end{aligned}\end{equation}
In addition, by taking the time derivative of the energy term generated by the electromagnetic fields $\mathcal{I}_M(\vE,\vB ) :=   \int_\Omega  \Big[ |\vE|^2 + |\vB|^2  \Big] \; \mathrm{d}x,$
we readily get 
$$\begin{aligned}
\frac12\frac {d}{dt}\mathcal{I}_M(\vE,\vB )&= \int_\Omega  \Big[ \vE \cdot \D_t \vE + \vB \cdot \D_t \vB \Big] \; \mathrm{d}x
\\
&= \int_\Omega  \Big[ - \vE \cdot \vj + \vE \cdot (\nabla \times \vB) - \vB \cdot (\nabla \times \vE) \Big] \; \mathrm{d}x.
\\
&= -\int_\Omega   \vE \cdot \vj \; \mathrm{d}x + \int_{\D\Omega} (\vE \times \vB) \cdot n(x) \; \mathrm{d}S_x 
= -\int_\Omega   \vE \cdot \vj \; \mathrm{d}x 
\end{aligned}$$
Here we have  used the fact that $(\vE \times \vB) \cdot n = (\vE \times n) \cdot \vB $, which vanishes on the boundary $\partial\Omega$  
due to the perfect conductor condition $\vE\times n =0$. 
Together with \eqref{invariant-Vpm}, we obtain the invariance of the linearized energy functional as in the following lemma. 
\begin{lemma} \label{lem-invariance} 
Suppose that  $(f^\pm, \vE,\vB)$ is a solution of the linearized system  \eqref{lin-VM} with its boundary conditions 
such that 
$F^\pm\in C^1(\RR, L^2_{1/|\mu_e|}(\Omega \times \RR^3))$ and $\vE,\vB \in C^2(\RR; H^2(\Omega))$, where $f^\pm$ and $F^\pm$ are related by \eqref{def-F}. 
Then the linearized energy functional 
$$\begin{aligned}\mathcal{I}(f^\pm,\vE,\vB ) := \sum_\pm \int_\Omega  \int_{\RR^3}\Big[ \frac {1}{|\mu^\pm_e|} | F^\pm |^2 
- (a+r\cos \theta) \mu_p^\pm \hat v_\varphi | A_\varphi|^2   \Big] \; \mathrm{d}v\mathrm{d}x 
+  \int_\Omega \Big[ |\vE|^2 + |\vB|^2 \Big] \; \mathrm{d}x\end{aligned}$$
is independent of time.  
\end{lemma}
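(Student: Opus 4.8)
The plan is to assemble the two differential identities that were derived in the discussion preceding the statement and to check that, under the stated regularity, every formal manipulation is legitimate.

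First I would take the equation \eqref{lin-VMF} for $F^\pm$, multiply by $F^\pm/|\mu_e^\pm|$, and integrate over $\Omega\times\RR^3$. Writing $g=F^\pm/|\mu_e^\pm|$, the transport term is $\langle \oD^\pm g, g\rangle_\cH$, and it vanishes by the skew-symmetry \eqref{skewadj}. This is the one place where the specular boundary condition enters, and it is the first thing to verify carefully: $f^\pm$ satisfies the weak specular condition, hence so does $F^\pm$ (since $\mu_p^\pm$ is even in $v_r$), i.e. $F^\pm\in\mathrm{dom}(\oD^\pm)$, and from \eqref{lin-VMF} together with the field regularity one gets $\oD^\pm F^\pm\in\cH$, so $g$ is admissible in \eqref{skewadj}. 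The hypothesis $F^\pm\in C^1(\RR;L^2_{1/|\mu_e|})$ lets one differentiate under the integral sign. Summing over $\pm$, using $\vj=\int\hat v(f^+-f^-)\,\mathrm{d}v$ and the evenness of $\mu_p^\pm$ in $(v_r,v_\theta)$ to replace $\hat v\cdot\vE$ by $\hat v_\varphi E_\varphi$ in the $A_\varphi$ terms, and then using $E_\varphi=-\partial_t A_\varphi$ to recognize those terms as a time derivative, one arrives at the identity \eqref{invariant-Vpm}, namely $\tfrac12\frac{d}{dt}\sum_\pm\int_\Omega\int_{\RR^3}\big[|\mu_e^\pm|^{-1}|F^\pm|^2-(a+r\cos\theta)\mu_p^\pm\hat v_\varphi|A_\varphi|^2\big]=\int_\Omega\vE\cdot\vj\,\mathrm{d}x$.

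Next I would differentiate the electromagnetic energy $\mathcal{I}_M(\vE,\vB)=\int_\Omega(|\vE|^2+|\vB|^2)\,\mathrm{d}x$, substitute $\partial_t\vE=\nabla\times\vB-\vj$ and $\partial_t\vB=-\nabla\times\vE$ from \eqref{Ampere-law}, and integrate by parts. The cross terms combine, via $\nabla\cdot(\vE\times\vB)=\vB\cdot(\nabla\times\vE)-\vE\cdot(\nabla\times\vB)$, into a pure divergence, whose integral is the boundary flux $\int_{\partial\Omega}(\vE\times\vB)\cdot n\,\mathrm{d}S_x=\int_{\partial\Omega}(\vE\times n)\cdot\vB\,\mathrm{d}S_x$; this vanishes by the perfect conductor condition $\vE\times n=0$. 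Here $\vE,\vB\in C^2(\RR;H^2(\Omega))$ makes the trace and the integration by parts valid, and the outcome is $\tfrac12\frac{d}{dt}\mathcal{I}_M=-\int_\Omega\vE\cdot\vj\,\mathrm{d}x$.

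Finally, adding this to \eqref{invariant-Vpm} the two $\int_\Omega\vE\cdot\vj\,\mathrm{d}x$ terms cancel, so $\frac{d}{dt}\mathcal{I}(f^\pm,\vE,\vB)=0$ and $\mathcal{I}$ is constant in $t$. I expect the only genuine subtlety to be the vanishing of the transport term: one must confirm that the weak specular condition passes from $f^\pm$ to $F^\pm$ and that $g=F^\pm/|\mu_e^\pm|$ lies in $\mathrm{dom}(\oD^\pm)$ with $\oD^\pm g\in\cH$, so that \eqref{skewadj} genuinely applies; the rest is a sequence of integrations by parts licensed by the stated regularity, exactly as carried out in the computation preceding the lemma.
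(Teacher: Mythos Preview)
Your proposal is correct and follows essentially the same approach as the paper: the argument is precisely the derivation given in Section~\ref{sec-inv} immediately preceding the lemma statement, namely multiplying \eqref{lin-VMF} by $F^\pm/|\mu_e^\pm|$ and using \eqref{skewadj} to obtain \eqref{invariant-Vpm}, then combining with the standard electromagnetic energy identity where the Poynting flux vanishes by the perfect conductor condition. Your attention to why $g=F^\pm/|\mu_e^\pm|$ lies in $\mathrm{dom}(\oD^\pm)$ is appropriate and matches the paper's justification.
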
 
      Furthermore, 
we  obtain the following additional invariants. 
\begin{lemma} \label{lem-invK} 
Under the same hypothesis as in Lemma \ref{lem-invariance}, the functional 
\begin{equation}\label{inv-K}  
\mathcal{K}^\pm_g(f^\pm, \vA )
 = \int_\Omega  \int_{\RR^3}\Big( F^\pm\mp \mu_e^\pm \hat v \cdot \vA \Big) g \; \mathrm{d}v\mathrm{d}x\end{equation} 
 is independent in time, for all $g \in \ker\oD^\pm $ and for both + and $-$.  
 Also, the integrals 
$ \int_\Omega  \int_{\RR^3}f^\pm (t,x,v)\; \mathrm{d}v\mathrm{d}x $ are time-invariant; that is, the total masses are conserved. 
\end{lemma}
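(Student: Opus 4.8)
The plan is to differentiate $\mathcal{K}^\pm_g$ in time and show the derivative vanishes, using the linearized Vlasov equation \eqref{lin-VMF} for $F^\pm$, the skew-adjointness of $\oD^\pm$ from \eqref{skewadj}, and the fact that $g \in \ker \oD^\pm$. First I would compute $\frac{d}{dt}\mathcal{K}^\pm_g$. The term $\int_\Omega\int_{\RR^3} F^\pm g \, \mathrm{d}v\mathrm{d}x$ differentiates to $\int_\Omega\int_{\RR^3} (\partial_t F^\pm) g \, \mathrm{d}v\mathrm{d}x$, and using \eqref{lin-VMF} this equals $\int_\Omega\int_{\RR^3} \bigl(-\oD^\pm F^\pm \mp \mu_e^\pm \hat v\cdot\vE\bigr) g \, \mathrm{d}v\mathrm{d}x$. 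Since $g \in \ker\oD^\pm \subset \mathrm{dom}(\oD^\pm)$ and (by Lemma \ref{growprops}-type regularity, or the hypothesis $F^\pm \in C^1(\RR,\cH)$ together with $\oD^\pm F^\pm \in \cH$) the pair $F^\pm/|\mu_e^\pm|$, $g$ both lie in $\mathrm{dom}(\oD^\pm)$, the skew-adjointness \eqref{skewadj} gives $\int_\Omega\int_{\RR^3}(\oD^\pm F^\pm) g \, \mathrm{d}v\mathrm{d}x = -\int_\Omega\int_{\RR^3} F^\pm (\oD^\pm g)\, \mathrm{d}v\mathrm{d}x = 0$. (Here one must be slightly careful: \eqref{skewadj} is stated for the weighted inner product $\langle\cdot,\cdot\rangle_\cH$, so I would write $F^\pm g = |\mu_e^\pm| \cdot (F^\pm/|\mu_e^\pm|) \cdot g$ and apply skew-adjointness to $F^\pm/|\mu_e^\pm|$ and $g$, both of which are in $\mathrm{dom}(\oD^\pm)$.) Hence that contribution drops out and we are left with $\frac{d}{dt}\int F^\pm g = \mp\int_\Omega\int_{\RR^3}\mu_e^\pm (\hat v\cdot\vE)\, g\, \mathrm{d}v\mathrm{d}x$.

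Next I would handle the term $\mp\int_\Omega\int_{\RR^3}\mu_e^\pm (\hat v\cdot\vA)\, g\, \mathrm{d}v\mathrm{d}x$. Differentiating in time and using $\vE = -\nabla\phi - \partial_t\vA$, so that $\partial_t\vA = -\vE - \nabla\phi$, its derivative is $\mp\int_\Omega\int_{\RR^3}\mu_e^\pm \hat v\cdot(\partial_t\vA)\, g = \pm\int_\Omega\int_{\RR^3}\mu_e^\pm \hat v\cdot(\vE+\nabla\phi)\, g$. Adding this to the previous line, the $\hat v\cdot\vE$ terms cancel exactly, leaving $\frac{d}{dt}\mathcal{K}^\pm_g = \pm\int_\Omega\int_{\RR^3}\mu_e^\pm (\hat v\cdot\nabla\phi)\, g\, \mathrm{d}v\mathrm{d}x$. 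To kill this last term I would use that $g \in \ker\oD^\pm$ is a function of $(e^\pm, p^\pm)$ — more precisely, $g$ is constant along the characteristics of $\oD^\pm$ — combined with the identity $\oD^\pm(\phi^0\text{-type scalars})$; actually the cleanest route is to note $\hat v\cdot\nabla_x \phi = \oD^\pm\phi \mp (\vE^0+\hat v\times\vB^0)\cdot\nabla_v\phi = \oD^\pm\phi$ since $\phi=\phi(x)$ has no $v$-dependence, and then again invoke skew-adjointness: $\int_\Omega\int_{\RR^3}\mu_e^\pm(\oD^\pm\phi)g = \int_\Omega\int_{\RR^3}|\mu_e^\pm|\,(\oD^\pm\phi)\,(g/\,\text{sign})$, transfer $\oD^\pm$ onto $g$ (using $g\in\mathrm{dom}(\oD^\pm)$ and $\phi \in \mathrm{dom}(\oD^\pm)$, noting $\phi$ is $x$-dependent only so trivially specular), obtaining $-\int_\Omega\int_{\RR^3}\mu_e^\pm\phi(\oD^\pm g) = 0$. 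Thus $\frac{d}{dt}\mathcal{K}^\pm_g = 0$.

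For the conservation of total mass, take $g \equiv 1$, which clearly lies in $\ker\oD^\pm$. Then $\mathcal{K}^\pm_1(f^\pm,\vA) = \int_\Omega\int_{\RR^3}(F^\pm \mp \mu_e^\pm \hat v\cdot\vA)\,\mathrm{d}v\mathrm{d}x$; recalling $F^\pm = f^\pm \mp (a+r\cos\theta)\mu_p^\pm A_\varphi$ and that $\mu_p^\pm$, $\mu_e^\pm$ are even in $(v_r,v_\theta)$ while $\hat v\cdot\vA = \hat v_r A_r + \hat v_\theta A_\theta + \hat v_\varphi A_\varphi$ contains $\hat v_r,\hat v_\theta$ which are odd, the terms $\int \mu_e^\pm(\hat v_r A_r + \hat v_\theta A_\theta) = 0$, while $\int (a+r\cos\theta)\mu_p^\pm A_\varphi\,\mathrm{d}v$ and $\int \mu_e^\pm \hat v_\varphi A_\varphi\,\mathrm{d}v$ are genuine corrections; since $\mathcal{K}^\pm_1$ is time-invariant one does \emph{not} immediately get invariance of $\int\int f^\pm$ from this alone. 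So instead, for the mass I would argue directly: integrate the linearized Vlasov equation \eqref{lin-VM} over $\Omega\times\RR^3$, observing that $\int_\Omega\int_{\RR^3}\oD^\pm f^\pm\,\mathrm{d}v\mathrm{d}x = 0$ by Lemma \ref{lem-on-D} applied with $g\equiv 1$ (the constant function satisfies the specular condition), and that the right-hand side $\mp\mu_e^\pm\vE\cdot\hat v \pm(\partial_t+\oD^\pm)((a+r\cos\theta)\mu_p^\pm A_\varphi)$ integrates to $\pm\frac{d}{dt}\int_\Omega\int_{\RR^3}(a+r\cos\theta)\mu_p^\pm A_\varphi\,\mathrm{d}v\mathrm{d}x$ — the $\vE\cdot\hat v$ term vanishes by oddness in $(v_r,v_\theta)$ and the $\oD^\pm$ term vanishes by Lemma \ref{lem-on-D} — hence $\frac{d}{dt}\int\int f^\pm = \pm\frac{d}{dt}\int\int(a+r\cos\theta)\mu_p^\pm A_\varphi$; one then checks the right side also vanishes, e.g. because $\mu_p^\pm$ is odd in $v_\varphi$ when $\mu^\pm$ is... no — rather, one invokes $A_\varphi|_{\partial\Omega}=0$ together with an integration by parts, or simply absorbs it, concluding conservation of $\int\int f^\pm$ up to this explicit term; the cleanest statement is that $\int_\Omega\int_{\RR^3}f^\pm\,\mathrm{d}v\mathrm{d}x$ is conserved because $A_\varphi$, being determined by the second equation of \eqref{Maxwell-system} with zero boundary data and the relevant integral of $\mu_p^\pm$, makes that term vanish. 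The main obstacle is this last bookkeeping with the $A_\varphi$-correction term in the mass conservation and, throughout, the care needed in justifying the integrations by parts in $(x,v)$ — i.e. verifying that all functions to which \eqref{skewadj} is applied genuinely lie in $\mathrm{dom}(\oD^\pm)$ with the required integrability; the regularity hypotheses in Lemma \ref{lem-invariance} (carried over here) are designed precisely to license these manipulations.
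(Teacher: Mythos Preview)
Your argument for the time-invariance of $\mathcal{K}^\pm_g$ is correct and is essentially the paper's proof unfolded step by step. The paper packages it more compactly by first rewriting the linearized Vlasov equation as
\[
\partial_t\bigl(F^\pm \mp \mu_e^\pm \hat v\cdot\vA\bigr) + \oD^\pm\bigl(F^\pm \mp \mu_e^\pm \phi\bigr) = 0,
\]
and then multiplying by $g\in\ker\oD^\pm$ and integrating, using skew-symmetry once. Your version does the same computation in three separate passes (the $F^\pm$ term, then the $\vA$ term, then the leftover $\phi$ term), which is fine.

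There is, however, a genuine gap in your mass-conservation argument. You correctly note that in $\mathcal{K}^\pm_1$ the $\hat v_r A_r$ and $\hat v_\theta A_\theta$ contributions vanish by oddness, but you then treat the two remaining $A_\varphi$ contributions as ``genuine corrections'' you cannot kill, and you switch to a direct integration of \eqref{lin-VM}. In that direct route you claim $\int_\Omega\int_{\RR^3}\mu_e^\pm\,\hat v\cdot\vE\,\mathrm{d}v\mathrm{d}x$ vanishes by oddness in $(v_r,v_\theta)$; this is false, because the $\hat v_\varphi E_\varphi$ piece is even in $(v_r,v_\theta)$ and does not integrate to zero. The subsequent hand-waving about $A_\varphi|_{\partial\Omega}=0$ or the Maxwell equation does not rescue this.

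The missing observation is the identity
\[
\partial_{v_\varphi}\mu^\pm \;=\; \mu_e^\pm\,\hat v_\varphi \;+\; (a+r\cos\theta)\,\mu_p^\pm,
\]
which follows from $\partial_{v_\varphi}e^\pm=\hat v_\varphi$ and $\partial_{v_\varphi}p^\pm=(a+r\cos\theta)$. Using it, the two ``correction'' terms in $\mathcal{K}^\pm_1$ combine into $\mp\int_\Omega A_\varphi\bigl(\int_{\RR^3}\partial_{v_\varphi}\mu^\pm\,\mathrm{d}v\bigr)\mathrm{d}x = 0$, so in fact $\mathcal{K}^\pm_1 = \int_\Omega\int_{\RR^3} f^\pm\,\mathrm{d}v\mathrm{d}x$ \emph{exactly}, and its invariance gives mass conservation immediately. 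This is what the paper does.
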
 
        \begin{proof} 
Such invariants $\mathcal{K}^\pm_g(f^\pm, \vA )$ can easily be discovered by writing the Vlasov equations 
in the form: 
 \begin{equation}\label{lin-VM02} \begin{aligned}
 \dt \Big ( F^\pm\mp \mu_e^\pm \hat v \cdot \vA \Big) +\oD ^\pm \Big ( F^\pm \mp   \mu_e^\pm \phi \Big)  =  0.
 \end{aligned}
\end{equation}
Using the skew-symmetry property  \eqref{skewadj} of $\oD ^\pm$ which incorporates the specular boundary condition, we have 
$$\begin{aligned}
\frac{d}{dt}\mathcal{K}^\pm_g(f^\pm, \vA ) &= \int_\Omega  \int_{\RR^3}\D_t \Big( F^\pm\mp \mu_e^\pm \hat v \cdot \vA \Big) g \; \mathrm{d}v\mathrm{d}x   \\
&= - \int_\Omega  \int_{\RR^3}\oD ^\pm \Big ( F^\pm \mp   \mu_e^\pm \phi \Big) g \; \mathrm{d}v\mathrm{d}x   \\
&= \int_\Omega\int_{\RR^3}\Big ( F^\pm \mp   \mu_e^\pm \phi  \Big) \oD^\pm g\; \mathrm{d}v\mathrm{d}x \\&= 0 
\end{aligned}
$$
due to the specular conditions on $f^\pm,F^\pm,$ and $g$ and the evenness in $v_r$ of $\mu^\pm$.   
In particular, if we take $g=1$ in \eqref{inv-K} and note that 
$\D_{v_\varphi}\mu^\pm = \mu^\pm_e \hat v_\varphi + (a+r\cos \theta) \mu^\pm_p$, 
it becomes clear that the integrals $ \int_\Omega  \int_{\RR^3}f^\pm (t,x,v)\; \mathrm{d}v\mathrm{d}x $ are  time-invariant. 
\end{proof}

\subsection{Growing modes are pure} \label{ss-pure}
In this subsection, we show that if 
$(e^{\lambda t} f^\pm, e^{\lambda t}\vE, e^{\lambda t} \vB)$ with $\R \lambda>0$ is a 
complex growing mode, then $\lambda$ must be real.  
See subsection \ref{ss-growing} for some properties of a growing mode.  
We will roughly follow the splitting method in \cite{friedberg-mhd, LS1}, but our proof is fundamentally simpler.   
As before, 
we denote $F^\pm  =  f^\pm \mp (a+r\cos \theta) \mu_p^\pm  A_\varphi$.  
Let 
$F_{\mathrm{ev}}^\pm $ and $F_{\mathrm{od}}^\pm$ be the even and odd parts  of $F^\pm$ 
with respect to the variable $(v_r,v_\theta)$.  
Thus we have the splitting  $ F^\pm = F_{\mathrm{ev}}^\pm + F_{\mathrm{od}}^\pm$.
By inspection 
of the definition \eqref{def-opD}, the operators $\oD^\pm $ map even functions to odd functions 
and vice versa. We therefore obtain, from the Vlasov equations \eqref{lin-VMF}, the split equations 
\begin{equation}\label{split}   
\left\{\begin{aligned}
\lambda F^\pm_{\mathrm{ev}} + \oD^\pm F^\pm_{\mathrm{od}}   \quad 
&=\quad \mp\mu_e^\pm \hat v_\varphi E_\varphi \\
\lambda F^\pm_{\mathrm{od}} + \oD^\pm F^\pm_{\mathrm{ev}} \quad
&=\quad \mp \mu_e^\pm \hat v \cdot \tilde\vE,
\end{aligned}\right.  
\end{equation} 
where $\tilde \vE: = E_r e_r + E_\theta e_\theta$. 
The split equations imply that 
\begin{equation}\label{wave-fod} 
\begin{aligned}
(\lambda^2  - {\oD^\pm}^2) F^\pm_{\mathrm{od}} &= \mp \lambda \mu_e^\pm \hat v \cdot \tilde \vE \pm \mu_e^\pm \oD^\pm(\hat v_\varphi E_\varphi) .
\end{aligned}  
 \end{equation}
Let ${\overline  F}^\pm$ denote the complex conjugate of $F^\pm$.   
By \eqref{def-opD} and the specular boundary condition on $F^\pm$ in its weak form \eqref{skewadj}, 
it follows that $F_{\mathrm{od}}^\pm$  also satisfies the specular condition. 
Moreover, since $\oD^\pm F_{\mathrm{od}}^\pm$ is even in the pair $(v_r,v_\theta)$, 
$\oD^\pm F_{\mathrm{od}}^\pm$  also satisfies the specular condition.  
Thus when we multiply  equation \eqref{wave-fod} by $\frac{1}{ {|\mu_e^\pm|}} \overline F^\pm_{\mathrm{od}} $   
and integrate the result over $\Omega \times \RR^3$, 
we may apply the skew-symmetry property \eqref{skewadj} of $\oD^\pm$  
to obtain 
\begin{equation}\label{id-pm} 
\begin{aligned}
\lambda^2 &\int_\Omega \int_{\RR^3}\frac{1}{|\mu_e^\pm|} |F^\pm_{\mathrm{od}}|^2 \; \mathrm{d}v\mathrm{d}x 
+  \int_\Omega \int_{\RR^3}\frac{1}{|\mu_e^\pm|} |\oD^\pm F^\pm_{\mathrm{od}}|^2 \; \mathrm{d}v\mathrm{d}x 
\\&= \quad \pm \int_\Omega \int_{\RR^3} \Big( \lambda \hat v \cdot \tilde \vE \overline F^\pm_{\mathrm{od}} +  \hat v_\varphi E_\varphi  \oD^\pm \overline F^\pm_{\mathrm{od}} \Big) \; \mathrm{d}v\mathrm{d}x        
\\&= \quad \pm \int_\Omega \int_{\RR^3} \Big( \lambda \hat v \cdot \tilde \vE \overline F^\pm_{\mathrm{od}} +  \hat v_\varphi E_\varphi  (- \overline \lambda \overline F^\pm_{\mathrm{ev}} \mp \mu^\pm_e \hat v_\varphi \overline E_\varphi) \Big) \; \mathrm{d}v\mathrm{d}x .    
\end{aligned}
\end{equation} 
	Adding  up 
the (+) and (-) identities in \eqref{id-pm} and examining the {\it imaginary} part of the resulting identity,  
	we get 
\begin{equation*}  \begin{aligned}
2 \R &\lambda \I\lambda  \int_\Omega \int_{\RR^3}\Big( \frac{1}{|\mu_e^+|} |F^+_{\mathrm{od}}|^2 
+ \frac{1}{|\mu_e^-|} |F^-_{\mathrm{od}}|^2\Big) \; \mathrm{d}v\mathrm{d}x \\
& =  \I \int_\Omega \int_{\RR^3} \lambda \hat v  ( \overline F^+_{\mathrm{od}}  
- \overline  F^-_{\mathrm{od}}) \cdot \tilde \vE \; \mathrm{d}v\mathrm{d}x  
- \I \int_\Omega\int_{\RR^3}\overline \lambda \hat v_\varphi E_\varphi (\overline F^+_{\mathrm{ev}}  
- \overline  F^-_{\mathrm{ev}} )\; \mathrm{d}v\mathrm{d}x.  
\end{aligned} 
\end{equation*} 
Combining \eqref{split} with $\vj = \int_{\RR^3}\hat v (f^+ - f^-) \mathrm{d}v$ and using the oddness and evenness, 
we can write the right side as 
$$
    \I \int_\Omega \Big[\lambda  (\overline j_r E_r +\overline  j_\theta E_\theta ) - \overline \lambda E_\varphi  \overline j_\varphi \Big] \;\mathrm{d}x  
+ \I \int_\Omega\int_{\RR^3} \overline \lambda \hat v_\varphi (a+r\cos\theta) (\mu_p^+ + \mu_p^-) E_\varphi \overline A_\varphi \; \mathrm{d}v\mathrm{d}x . 
$$
	The imaginary part of the last integral vanishes due to $E_\varphi = - \lambda A_\varphi$ from \eqref{def-EB}.  
Thus the identity simplifies to 
\begin{equation}\label{key-id-fod01}\begin{aligned}
2 \R \lambda \I\lambda  \int_\Omega \int_{\RR^3}\Big( \frac{1}{|\mu_e^+|} |F^+_{\mathrm{od}}|^2 
+ \frac{1}{|\mu_e^-|} |F^-_{\mathrm{od}}|^2\Big) \; \mathrm{d}v\mathrm{d}x &=  \I \int_\Omega  \Big[ \lambda  \overline \vj \cdot \vE  -(\lambda + \overline \lambda) E_\varphi \overline  j_\varphi \Big] \; \mathrm{d}x   .\end{aligned}     
\end{equation}

We now use the Maxwell equations to compute the terms on the right side of \eqref{key-id-fod01}. 
By the first and then the second equation in \eqref{Ampere-law}, we have
$$\begin{aligned}  \int_\Omega \overline \vj \cdot \lambda \vE\;\mathrm{d}x   
&=  \int_\Omega  (-  \overline \lambda \overline \vE  + \nabla \times \overline \vB ) \cdot \lambda \vE\;\mathrm{d}x   
\\
&= \int_\Omega\Big (-  |\lambda \vE |^2  + \lambda (\nabla \times \vE ) \cdot \overline \vB\Big) \;\mathrm{d}x  + \int_{\D\Omega} (\vE \times n(x)) \cdot \overline \vB  \; \mathrm{d}S_x  
\\
&= -  \int_\Omega\Big ( |\lambda \vE |^2  + \lambda^2 |\vB|^2 \Big) \;\mathrm{d}x ,
\end{aligned}$$ in which the boundary term vanishes due to the perfect conductor condition $\vE \times n =0$. 
It remains to calculate the imaginary part of $\int_\Omega E_\varphi \overline  j_\varphi \; \mathrm{d}x $,   
which appears in \eqref{key-id-fod01}.  
	By the second 
Maxwell equation in \eqref{Maxwell-system} together with $E_\varphi = -\lambda A_\varphi$,   we get 
$$
\begin{aligned}
- (\lambda + \overline \lambda)\int_\Omega E_\varphi \overline  j_\varphi \; \mathrm{d}x & = \lambda (\lambda+ \overline \lambda)\int_\Omega A_\varphi \Big( \overline \lambda^2 - \Delta +  \frac{1}{(a+r\cos \theta)^2}\Big) \overline A_\varphi  \; \mathrm{d}x 
 \\
 & = (\lambda^2 + |\lambda|^2) \int_\Omega \Big( \overline \lambda^2|A_\varphi |^2  + |\nabla A_\varphi|^2 +  \frac{|A_\varphi|^2 }{(a+r\cos \theta)^2}\Big)\; \mathrm{d}x ,
 \end{aligned}
 $$
 where we have integrated by parts and used the Dirichlet boundary condition \eqref{Maxwell-BCs} on $A_\varphi$.  
Combining these estimates 
with   \eqref{key-id-fod01} and dropping real terms, we obtain
$$\begin{aligned}
2 \R \lambda \I\lambda & \int_\Omega \int_{\RR^3}\Big( \frac{1}{|\mu_e^+|} |F^+_{\mathrm{od}}|^2 
+ \frac{1}{|\mu_e^-|} |F^-_{\mathrm{od}}|^2\Big) \; \mathrm{d}v\mathrm{d}x  
\\&=  - \I  \lambda^2  \int_\Omega  \Big[|\vB|^2 - |\nabla A_\varphi|^2 
-  \frac{|A_\varphi|^2 }{(a+r\cos \theta)^2} \Big] \; \mathrm{d}x   
+ \I \overline \lambda ^2 |\lambda|^2 \int_\Omega |A_\varphi|^2 \; \mathrm{d}x 
\\&=  - 2 \R \lambda \I\lambda \int_\Omega  \Big[|\vB|^2 - |\nabla A_\varphi|^2 
-  \frac{|A_\varphi|^2 }{(a+r\cos \theta)^2}   +   |E_\varphi|^2      \Big] \; \mathrm{d}x .  
\end{aligned}     
$$
	Now  by definition \eqref{def-EB}  we have 
$$\begin{aligned} \int_\Omega |\vB|^2\; \mathrm{d}x  
&= \int_\Omega \Big[ \frac 1{r^2}|\D_\theta A_\varphi|^2 + \frac{\sin^2 \theta}{(a+r\cos \theta)^2}|A_\varphi|^2 
- \frac{\sin\theta}{(a+r\cos\theta)} \frac 1r \D_\theta |A_\varphi|^2  \\&\quad\qquad + |\D_r A_\varphi|^2 
+ \frac{\cos^2 \theta }{(a+r\cos \theta)^2}|A_\varphi|^2 + \frac{\cos\theta}{(a+r\cos\theta)}\D_r|A_\varphi|^2  
+ |B_\varphi|^2\Big ]\; \mathrm{d}x 
\\
&= \int_\Omega \Big[  |\nabla A_\varphi|^2 + \frac{|A_\varphi|^2 }{(a+r\cos \theta)^2} 
+ \frac{1}{a+r\cos \theta} ( e_r \cos \theta - e_\theta \sin\theta ) \cdot \nabla |A_\varphi|^2  + |B_\varphi|^2\Big ] \; \mathrm{d}x . 
\end{aligned}$$
	The integral of the third term on the right vanishes if we integrate by parts, 
	noting that the resulting divergence $\nabla \cdot  \frac{1}{a+r\cos \theta} ( e_r \cos \theta - e_\theta \sin\theta ) =0$ 
	(see Appendix A), and using the Dirichlet boundary condition on $A_\varphi$. Thus we obtain 
$$\begin{aligned}
2 \R \lambda \I\lambda & \int_\Omega \int_{\RR^3}\Big( \frac{1}{|\mu_e^+|} |F^+_{\mathrm{od}}|^2 
+ \frac{1}{|\mu_e^-|} |F^-_{\mathrm{od}}|^2\Big) \; \mathrm{d}v\mathrm{d}x  
=  - 2 \R \lambda \I\lambda \int_\Omega  \Big[|B_\varphi|^2 +  |E_\varphi|^2  \Big]\; \mathrm{d}x .
\end{aligned}     
$$
The opposite signs of the integrals, together with the assumption that $\R\lambda>0$, imply that $\lambda$ must be real.

\subsection{Minimization} \label{ss-min}
In this subsection we prove an identity that will be fundamental to the proof of stability. 
Throughout this subsection we fix $\vA \in L^2_\tau  (\Omega  )$.   We define the functional 
$$\begin{aligned}\mathcal{J}_ {\vA }(F^+,F^-) & 
:=\sum_\pm \int _\Omega   \int_{\RR^3}\frac {1}{|\mu^\pm_e|} |F^\pm |^2 \; \mathrm{d}v\mathrm{d}x 
+  \int_\Omega   |\nabla\phi|^2 \; \mathrm{d}x,  
\end{aligned}$$
where $\phi = \phi(r,\theta)$ satisfies the Poisson equation 
\begin{equation}\label{f-to-phi}  
-\Delta \phi  = \int_{\RR^3} (F^+ - F^-) \; \mathrm{d}v +  (a+r\cos \theta) A_\varphi \int_{\RR^3} (\mu_p^+ + \mu_p^-)\; \mathrm{d}v, \qquad \phi_{\vert_{x\in \D\Omega}}=0.
\end{equation} 
Let $\mathcal{F}_{\vA }$ be the  linear manifold in $[L^2_{1/|\mu^\pm_e|}(\Omega \times \RR^3)]^2$ consisting of all pairs of toroidally symmetric functions $(F^+,F^-)$ that satisfy the constraints 
\begin{equation}\label{eqs-constraint} 
 \int_\Omega  \int_{\RR^3}\Big( F^\pm\mp \mu_e^\pm \hat v \cdot \vA \Big) g\; \mathrm{d}v\mathrm{d}x =0 ,
 \end{equation}
for all $g \in \ker\oD ^\pm.$ 
Similarly,  let $\mathcal{F}_0$ be the space of pairs $(h^+,h^-)$ in $L^2_{1/|\mu^\pm_e|}(\Omega \times \RR^3)$ that satisfy
\begin{equation}\label{eqs-constraint-0} 
\int_\Omega   \int_{\RR^3} h^\pm g \; \mathrm{d}v\mathrm{d}x = 0, \qquad \forall ~g \in \ker\oD ^\pm.
\end{equation}
Note that the right hand side of the Poisson equation in \eqref{f-to-phi} belongs to $L^2(\Omega)$, 
and so for such a pair of functions $F^\pm$,  standard elliptic theory yields a 
unique solution $\phi\in \mathcal{X}$ of the problem \eqref{f-to-phi}.  
Thus the functional $\mathcal{J}_{\vA }$ is well-defined and nonnegative on $\mathcal{F}_{\vA }$, and 
 its infimum over $\mathcal{F}_{\vA }$ is finite.   
 We will show that it indeed admits a minimizer on $\mathcal{F}_{\vA }$. 

\begin{lemma}\label{lem-estJ} 
For each fixed $ \vA \in L^2_\tau  (\Omega  )$, there exists a pair of functions $F^\pm_*$ that 
minimizes the functional $\mathcal{J}_{\vA }$ 
on $\mathcal{F}_{\vA }$.   
Furthermore, if we let $\phi_*\in \mathcal{X}$ be the associated solution of  the problem \eqref{f-to-phi} with $F^\pm=F^\pm_*$, then  
\begin{equation}\label{id-f0}\begin{aligned}
 F^\pm_*&=\pm \mu^\pm_e (1-\mathcal{P}^\pm)\phi_*  \pm \mu^\pm_e \mathcal{P}^\pm(\hat v\cdot \vA).
\end{aligned}\end{equation}
\end{lemma}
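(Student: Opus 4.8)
The plan is to set this up as a constrained convex minimization problem and extract the Euler--Lagrange equation. First I would establish existence of a minimizer by the direct method: the functional $\mathcal{J}_{\vA}$ is nonnegative, and since $\phi$ depends linearly and continuously (via the solution operator of the Poisson problem \eqref{f-to-phi}) on $(F^+,F^-)$, the map $(F^+,F^-)\mapsto\mathcal{J}_{\vA}(F^+,F^-)$ is a sum of a squared Hilbert-space norm and a nonnegative convex term, hence weakly lower semicontinuous and coercive on $[L^2_{1/|\mu_e^\pm|}(\Omega\times\RR^3)]^2$. The constraint set $\mathcal{F}_{\vA}$ is a closed affine subspace: each constraint in \eqref{eqs-constraint} is a bounded linear functional of $(F^+,F^-)$ (pairing against a fixed $g\in\ker\oD^\pm$, using $\sup_x\int|\mu_e^\pm|\,dv<\infty$ to control the term $\mu_e^\pm\hat v\cdot\vA$), so a minimizing sequence has a weakly convergent subsequence whose limit lies in $\mathcal{F}_{\vA}$ and attains the infimum. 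Call it $(F^+_*,F^-_*)$ with associated potential $\phi_*$.

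Next I would derive the first-order condition. Fix admissible perturbations: since $\mathcal{F}_{\vA}$ is affine with direction space $\mathcal{F}_0$, for any $(h^+,h^-)\in\mathcal{F}_0$ and $t\in\RR$, the pair $(F^+_*+th^+,F^-_*+th^-)$ stays in $\mathcal{F}_{\vA}$; note that the potential associated to this pair is $\phi_*+t\psi$ where $-\Delta\psi=\int(h^+-h^-)\,dv$, $\psi|_{\partial\Omega}=0$, because the $A_\varphi$-term in \eqref{f-to-phi} is unchanged. Differentiating $\mathcal{J}_{\vA}(F^+_*+th^+,F^-_*+th^-)$ at $t=0$ and using $\int_\Omega\nabla\phi_*\cdot\nabla\psi\,dx=\int_\Omega(-\Delta\psi)\phi_*\,dx=\int_\Omega\phi_*\int(h^+-h^-)\,dv\,dx$ after integrating by parts (the boundary term vanishes since $\psi|_{\partial\Omega}=0$), I get
\begin{equation}\label{EL-cond}
\sum_\pm\int_\Omega\int_{\RR^3}\Big(\frac{1}{|\mu_e^\pm|}F^\pm_*\mp\phi_*\Big)h^\pm\,\mathrm{d}v\mathrm{d}x=0\qquad\text{for all }(h^+,h^-)\in\mathcal{F}_0.
\end{equation}
Since the $\pm$ components of $\mathcal{F}_0$ are independent, \eqref{EL-cond} splits into two separate conditions, one for each sign: $\frac{1}{|\mu_e^\pm|}F^\pm_*\mp\phi_*\perp h^\pm$ in $L^2_{1/|\mu_e^\pm|}$ for all $h^\pm$ in the space defined by \eqref{eqs-constraint-0}.

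Now comes the identification step, which I expect to be the main obstacle since it requires correctly interpreting the orthogonality. Observe that $(h^+,h^-)\in\mathcal{F}_0$ means precisely that $h^\pm$, viewed as an element of $\cH=L^2_{|\mu_e^\pm|}$ after dividing by $|\mu_e^\pm|$... more carefully: the constraint \eqref{eqs-constraint-0} says $\langle h^\pm/|\mu_e^\pm|,\,g\rangle_{\cH}$-type pairing, but one must match the weights. Writing $k^\pm=F^\pm_*/|\mu_e^\pm|$, condition \eqref{EL-cond} reads: $k^\pm\mp\phi_*$ is $\cH$-orthogonal to every $h^\pm$ with $\int\int h^\pm g\,dv\,dx=0$ for all $g\in\ker\oD^\pm$. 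The space of such $h^\pm$ is the $L^2(\Omega\times\RR^3)$-annihilator of $\ker\oD^\pm$; rewriting everything consistently in the $\cH$ inner product (the pairing $\int\int h g\,dv\,dx$ equals $\langle h/|\mu_e^\pm|,g\rangle_{\cH}$ when $g\in\cH$), one finds that $(h^+,h^-)\in\mathcal{F}_0$ iff $h^\pm/|\mu_e^\pm|\perp_{\cH}\ker\oD^\pm$, i.e. $h^\pm/|\mu_e^\pm|\in(1-\mathcal{P}^\pm)\cH$. Therefore \eqref{EL-cond} says that $(k^\pm\mp\phi_*)\perp_{\cH}(1-\mathcal{P}^\pm)\cH$, equivalently $(1-\mathcal{P}^\pm)(k^\pm\mp\phi_*)=0$, i.e. $(1-\mathcal{P}^\pm)k^\pm=\pm(1-\mathcal{P}^\pm)\phi_*$. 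Combining with the constraint \eqref{eqs-constraint} satisfied by $F^\pm_*$ itself, which states $\langle k^\pm\mp\hat v\cdot\vA,\,g\rangle_{\cH}=0$ for all $g\in\ker\oD^\pm$, i.e. $\mathcal{P}^\pm(k^\pm\mp\hat v\cdot\vA)=0$, giving $\mathcal{P}^\pm k^\pm=\pm\mathcal{P}^\pm(\hat v\cdot\vA)$. Adding the two pieces, $k^\pm=(1-\mathcal{P}^\pm)k^\pm+\mathcal{P}^\pm k^\pm=\pm(1-\mathcal{P}^\pm)\phi_*\pm\mathcal{P}^\pm(\hat v\cdot\vA)$, and multiplying through by $|\mu_e^\pm|=-\mu_e^\pm$ (using $\mu_e^\pm<0$ from \eqref{mu-cond}) yields exactly \eqref{id-f0}. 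The delicate points to get right are the bookkeeping of the three different measures/weights appearing ($dv\,dx$, $|\mu_e^\pm|\,dv\,dx$, $\frac{1}{|\mu_e^\pm|}\,dv\,dx$) and verifying that $\phi_*$, which a priori depends only on $r,\theta$, pairs correctly against $h^\pm$ in the $v$-integral (it does, since $\int\int\phi_* h^\pm\,dv\,dx=\int\phi_*\int h^\pm\,dv\,dx$ and the constant function $1$ lies in $\ker\oD^\pm$, so this is controlled by the constraint structure).
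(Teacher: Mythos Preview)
Your approach is essentially the paper's: direct method for existence, first variation along the affine constraint set $\mathcal{F}_{\vA}$, then identification via the orthogonal projections $\mathcal{P}^\pm$. The paper additionally makes the preliminary substitution $h^\pm = F^\pm \mp \mu_e^\pm \hat v\cdot \vA$ before taking the variation, but that is cosmetic.

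There is, however, a consistent sign slip in your bookkeeping coming from $\mu_e^\pm<0$. In the Euler--Lagrange step, since $\int\nabla\phi_*\cdot\nabla\psi = \int\int\phi_*(h^+-h^-)$, the correct condition is
\[
\sum_\pm\int_\Omega\int_{\RR^3}\Big(\tfrac{1}{|\mu_e^\pm|}F^\pm_* \,\pm\, \phi_*\Big)h^\pm\,\mathrm{d}v\mathrm{d}x=0,
\]
not $\mp\phi_*$. Likewise, rewriting the constraint \eqref{eqs-constraint} in terms of $k^\pm=F^\pm_*/|\mu_e^\pm|$ gives $\langle k^\pm \,+\, (\pm)\,\hat v\cdot\vA,\,g\rangle_\cH=0$ (since $\mp\mu_e^\pm/|\mu_e^\pm|=\pm 1$), hence $\mathcal{P}^\pm k^\pm = \mp\,\mathcal{P}^\pm(\hat v\cdot\vA)$, not $\pm$. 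With these corrections you obtain $k^\pm = \mp(1-\mathcal{P}^\pm)\phi_*\mp\mathcal{P}^\pm(\hat v\cdot\vA)$, and then $F^\pm_*=|\mu_e^\pm|k^\pm=-\mu_e^\pm k^\pm$ introduces one final sign flip to recover \eqref{id-f0}. As written, your three sign errors do \emph{not} cancel; the last multiplication by $|\mu_e^\pm|=-\mu_e^\pm$ would give $\mp\mu_e^\pm(\cdots)$, not the claimed $\pm\mu_e^\pm(\cdots)$. The strategy is correct, but the signs need to be redone carefully.
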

\begin{proof} 
Take a minimizing sequence $F^\pm_n$ in $\mathcal{F}_{\vA }$ 
such that $\mathcal{J}_{\vA }(F^+_n,F_n^-) $ converges to the infimum of 
$\mathcal{J}_{\vA }$.      Since $\{F^\pm_n\}$ are bounded sequences in $L^2_{1/|\mu_e^\pm|}$,  
there are subsequences with weak limits in $L^2_{1/|\mu_e^\pm|}$, which we denote by $F^\pm_*$. 
It is  clear that the limiting functions $F^\pm_*$ also satisfy the constraint \eqref{eqs-constraint}, 
and so they belong to $\mathcal{F}_{\vA }$. That is, $(F^+_*,F_*^-)$ must be a minimizer. 

In order to derive  identity \eqref{id-f0}, 
let the pair $(F^+_*,F^-_*)\in \mathcal{F}_{\vA }$ be a minimizer and 
let  $\phi_*\in H^2(\Omega )$ be the associated solution of  the problem \eqref{f-to-phi} with $F^\pm=F^\pm_*$. 
For each $(F^+,F^-) \in \mathcal{F}_{\vA }$, we denote 
\begin{equation}\label{def-hf} h^\pm  :=  F^\pm\mp \mu_e^\pm \hat v \cdot \vA  .\end{equation}
In particular, $ h^\pm_* :=  F^\pm_*\mp \mu_e^\pm \hat v \cdot \vA $.  
It is clear that $(F^+,F^-) \in \mathcal{F}_{\vA }$ if and only if $(h^+,h^-) \in \mathcal{F}_0$. 
Since $\D_{v_\varphi}[\mu^\pm] = \mu^\pm_e \hat v_\varphi+ (a+r\cos\theta) \mu^\pm_p$ 
and $\mu_e^\pm(\hat v_r A_r + \hat v_\theta A_\theta)$ is odd in $(v_r,v_\theta)$, we have
$$ \int_{\RR^3}(F^+ - F^-)\; \mathrm{d}v +  (a+r\cos \theta) A_\varphi \int_{\RR^3} (\mu_p^+ + \mu_p^-)\; \mathrm{d}v = \int_{\RR^3}(h^+ - h^-)\; \mathrm{d}v. $$
Thus if we let $\phi \in \mathcal{X}$ be the solution of the problem \eqref{f-to-phi}, $\phi$ is independent of the 
change of variables in \eqref{def-hf}.  Consequently, $(F^+_*,F^-_*)$ is a minimizer of $\mathcal{J}_{\vA }(F^+,F^-)$ 
on $\mathcal{F}_{\vA }$ if and only if $(h^+_*,h^-_*)$ is a minimizer of the functional
$$\begin{aligned} 
\mathcal{J}_0(h^+,h^-) =\sum_\pm \int _\Omega   \int_{\RR^3}\frac {1}{|\mu^\pm_e|} | h^\pm  \pm \mu^\pm_e \hat v\cdot \vA |^2 \; \mathrm{d}v\mathrm{d}x +  \int_\Omega   |\nabla \phi|^2 \; \mathrm{d}x 
\end{aligned}$$
on $\mathcal{F}_0$.     By minimization, the first variation of $\mathcal{J}_0$  is  
\begin{equation}\label{eqs-Lag}  
\sum_\pm\int _\Omega   \int_{\RR^3}\frac {1}{|\mu^\pm_e|} (h^\pm_* \pm \mu^\pm_e \hat v \cdot \vA ) h^\pm \; \mathrm{d}v\mathrm{d}x 
+ \int_\Omega   \nabla \phi_* \cdot \nabla \phi \; \mathrm{d}x =0
\end{equation}
for all $(h^+,h^-) \in \mathcal{F}_0$ where $\phi$ solves \eqref{f-to-phi}. 
By the Dirichlet boundary condition on $\phi$, we have
 $$
 \int_\Omega   \nabla \phi_* \cdot \nabla \phi \; \mathrm{d}x = - \int_\Omega   \phi_* \Delta \phi \; \mathrm{d}x = \int_\Omega   \int_{\RR^3}\phi_* (h^+-h^-) \; \mathrm{d}v\mathrm{d}x.$$
		Adding this to  identity \eqref{eqs-Lag}, we obtain
\begin{equation}\label{zero-id} 
\sum_\pm \int _\Omega   \int_{\RR^3}\frac {1}{|\mu^\pm_e|} (h^\pm _* \pm  \mu^\pm_e \hat v\cdot \vA  \mp \mu^\pm_e \phi_*)  h^\pm \; \mathrm{d}v\mathrm{d}x  =0  
\end{equation}
for all $(h^+,h^-) \in \mathcal{F}_0$. 
In particular, we can take $h^- =0$ in \eqref{zero-id} and obtain the identity 
for all $h^+\in L^2_{1/|\mu_e^+|}$ satisfying $\int_\Omega   \int_{\RR^3} h^+ g^+ \; \mathrm{d}v\mathrm{d}x = 0,$ for all $g^+ \in \ker\oD ^+.$ 

We claim that this identity implies $h^+_* + \mu^+_e \hat v\cdot \vA  - \mu^+_e \phi_* \in \ker\oD ^+$. 
Indeed, let 
$$ k_* = |\mu_e^+|^{-1} (h^+_* + \mu^+_e \hat v \cdot \vA  - \mu^+_e \phi_*), \quad \ell = |\mu_e^+|^{-1} h^+ .$$
Using the inner product in $\cH = L^2_{|\mu_e^+|}$, we  have 
$$ \langle k_*, \ell \rangle_\cH = 0 \quad \forall \ell\in (\ker \oD^+)^\perp .  $$ 
Because $\oD ^+$ (with the specular condition)  is a skew-adjoint operator on $\cH$
(see \eqref{domainD}, \eqref {skewadj}), we have 
$k_* \in (\ker \oD^+)^{\perp\perp} = \ker \oD^+$. 
Thus 
$$ \oD^+ \{ F_*^+  - \mu_e^+\phi_* \} 
=  \oD^+ \{ h_*^+ + \mu_e^+\hat v \cdot \vA - \mu_e^+\phi_* \} =- \mu_e^+ \oD^+ k_*  =  0.  $$
This proves the claim.  
Similarly $\oD^- \{ F_*^- + \mu_e^-\phi_* \} = 0$.  
Equivalently,  
$$\begin{aligned}
 F^\pm_* \mp \mu^\pm_e \phi_*  &= \mathcal{P}^\pm \Big( F^\pm_* \mp \mu^\pm_e \phi_*  \Big).
 \end{aligned}$$
On the other hand,  
the constraint \eqref{eqs-constraint} can be written as 
$ \mathcal{P}^\pm(F^\pm_* \mp \mu^\pm_e \hat v\cdot \vA ) =0$.   
Combining these identities, we obtain the identity \eqref{id-f0} at once.  
\end{proof}

The following lemma shows a remarkable connection between the minimum of the energy $\mathcal{J}_\vA$ and the operators defined in \eqref{operators-0}.  

\begin{lemma}\label{lem-estJ01} 
For each fixed $\vA\in L^2_\tau(\Omega)$, let $F^\pm_*$ be the minimizer of $\mathcal{J}_\vA$ obtained from Lemma \ref{lem-estJ}. Then, \begin{equation}\label{J-calculation} 
\begin{aligned}&\mathcal{J}_{\vA } (F^+_*,F^-_*)  
= - ( \mathcal{B}^0 (\mathcal{A}_1^0)^{-1} (\mathcal{B}^0)^*   A_\varphi ,  A_\varphi  )_{L^2}  
+\sum_\pm \|\mathcal{P}^\pm(\hat v \cdot \vA)\|^2_{\cH}.\end{aligned}   
\end{equation}
\end{lemma}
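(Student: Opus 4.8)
The plan is to start from the explicit formula \eqref{id-f0} for the minimizer $F^\pm_*$ and substitute it directly into the definition of $\mathcal{J}_{\vA}$, tracking the two pieces separately: the weighted $L^2$-norms of $F^\pm_*$ and the Dirichlet energy $\int_\Omega |\nabla\phi_*|^2$. For the first piece, write $F^\pm_* = \pm\mu^\pm_e(1-\mathcal{P}^\pm)\phi_* \pm \mu^\pm_e\mathcal{P}^\pm(\hat v\cdot\vA)$, so that $|F^\pm_*|^2/|\mu^\pm_e| = |\mu^\pm_e|\,|(1-\mathcal{P}^\pm)\phi_* + \mathcal{P}^\pm(\hat v\cdot\vA)|^2$. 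Because $1-\mathcal{P}^\pm$ and $\mathcal{P}^\pm$ have orthogonal ranges in $\cH = L^2_{|\mu^\pm_e|}$, integrating in $v$ and $x$ splits this into $\|(1-\mathcal{P}^\pm)\phi_*\|_\cH^2 + \|\mathcal{P}^\pm(\hat v\cdot\vA)\|_\cH^2$. The second summand already appears on the right-hand side of \eqref{J-calculation}, so the remaining task is to identify $\sum_\pm\|(1-\mathcal{P}^\pm)\phi_*\|_\cH^2 + \int_\Omega|\nabla\phi_*|^2$ with $-(\mathcal{B}^0(\mathcal{A}_1^0)^{-1}(\mathcal{B}^0)^*A_\varphi, A_\varphi)_{L^2}$.

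Next I would recognize the first group of terms as $-(\mathcal{A}_1^0\phi_*, \phi_*)_{L^2}$. Indeed, from the definition \eqref{operators-0}, $\langle(\mathcal{A}_1^0 - \Delta)h, h\rangle_{L^2} = -\sum_\pm\|(1-\mathcal{P}^\pm)h\|_\cH^2$ (this is exactly \eqref{cal-A1} with $g=h$, used already in the proof of Lemma \ref{lem-AB0property}), and $-\langle\Delta\phi_*,\phi_*\rangle_{L^2} = \|\nabla\phi_*\|_{L^2}^2$ by the Dirichlet condition. Hence $\mathcal{J}_\vA(F^+_*,F^-_*) = -(\mathcal{A}_1^0\phi_*,\phi_*)_{L^2} + \sum_\pm\|\mathcal{P}^\pm(\hat v\cdot\vA)\|_\cH^2$. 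It therefore remains to show $-(\mathcal{A}_1^0\phi_*,\phi_*)_{L^2} = -((\mathcal{A}_1^0)^{-1}(\mathcal{B}^0)^*A_\varphi, (\mathcal{B}^0)^*A_\varphi)_{L^2}$, for which the key is to prove
\begin{equation}\label{prop-identity}
\mathcal{A}_1^0\phi_* = -(\mathcal{B}^0)^*A_\varphi = \sum_\pm\int_{\RR^3}\mu^\pm_e(1-\mathcal{P}^\pm)(\hat v_\varphi A_\varphi)\,\mathrm{d}v.
\end{equation}
Once \eqref{prop-identity} holds, $\phi_* = -(\mathcal{A}_1^0)^{-1}(\mathcal{B}^0)^*A_\varphi$ (using invertibility from Lemma \ref{lem-AB0property}), and then $-(\mathcal{A}_1^0\phi_*,\phi_*)_{L^2} = -(\mathcal{A}_1^0)^{-1}(\mathcal{B}^0)^*A_\varphi$ paired against $(\mathcal{B}^0)^*A_\varphi$, which is the first term of \eqref{J-calculation}. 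This identification of $\phi_*$ is the crux of the argument.

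To establish \eqref{prop-identity}, I would return to the Poisson equation \eqref{f-to-phi} satisfied by $\phi_*$: $-\Delta\phi_* = \int_{\RR^3}(F^+_* - F^-_*)\,\mathrm{d}v + (a+r\cos\theta)A_\varphi\int_{\RR^3}(\mu^+_p + \mu^-_p)\,\mathrm{d}v$. Substitute the formula \eqref{id-f0} for $F^\pm_*$ into $\int(F^+_* - F^-_*)\,\mathrm{d}v = \sum_\pm\int\mu^\pm_e[(1-\mathcal{P}^\pm)\phi_* + \mathcal{P}^\pm(\hat v\cdot\vA)]\,\mathrm{d}v$. Moving the $\sum_\pm\int\mu^\pm_e(1-\mathcal{P}^\pm)\phi_*\,\mathrm{d}v$ term to the left reconstitutes $\mathcal{A}_1^0\phi_*$ on the left-hand side, while on the right we are left with $\sum_\pm\int\mu^\pm_e\mathcal{P}^\pm(\hat v\cdot\vA)\,\mathrm{d}v + (a+r\cos\theta)A_\varphi\int(\mu^+_p+\mu^-_p)\,\mathrm{d}v$; I expect this to collapse to exactly $\sum_\pm\int\mu^\pm_e(1-\mathcal{P}^\pm)(\hat v_\varphi A_\varphi)\,\mathrm{d}v$. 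The mechanism: $\hat v\cdot\vA = \hat v_r A_r + \hat v_\theta A_\theta + \hat v_\varphi A_\varphi$ and the first two pieces are odd in $(v_r,v_\theta)$, hence in the kernel of the $v$-integral of $\mu^\pm_e$ times them (or, more carefully, one uses that their $\mathcal{P}^\pm$-projections integrate out against the even weight appropriately); combined with the identity $\mathcal{P}^\pm(\hat v_\varphi A_\varphi) - \hat v_\varphi A_\varphi = -(1-\mathcal{P}^\pm)(\hat v_\varphi A_\varphi)$ and the relation $\partial_{v_\varphi}\mu^\pm = \mu^\pm_e\hat v_\varphi + (a+r\cos\theta)\mu^\pm_p$ (so that $\int\mu^\pm_e\hat v_\varphi A_\varphi\,\mathrm{d}v + \int(a+r\cos\theta)\mu^\pm_p A_\varphi\,\mathrm{d}v$ integrates by parts to zero in $v_\varphi$), the $\mu_p$-term and the ``$\hat v_\varphi A_\varphi$'' part of the $\mathcal{P}^\pm$-term cancel, leaving precisely $-\sum_\pm\int\mu^\pm_e(1-\mathcal{P}^\pm)(\hat v_\varphi A_\varphi)\,\mathrm{d}v$ on the right, i.e. $(\mathcal{B}^0)^*A_\varphi$ with the correct sign. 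The main obstacle is this last bookkeeping step: one must be scrupulous about the odd/even decomposition in $(v_r,v_\theta)$, about which terms survive the $v$-integration, and about the interplay between $\mathcal{P}^\pm$ and the $v$-integration against $\mu^\pm_e$, since $\mathcal{P}^\pm$ does not commute with integration in $v$ in general — the saving grace being that $\int_{\RR^3}\mu^\pm_e(\mathcal{P}^\pm k)\,\mathrm{d}v = (\mathcal{P}^\pm k, 1)_\cH$ and $1 \in \ker\oD^\pm$, so $\mathcal{P}^\pm$ can be dropped against the constant function, which is the identity that makes everything close.
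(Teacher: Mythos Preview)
Your proposal is correct and follows essentially the same route as the paper: substitute \eqref{id-f0} into $\mathcal{J}_\vA$, split by orthogonality of $\mathcal{P}^\pm$ and $1-\mathcal{P}^\pm$, recognize $\|(1-\mathcal{P}^\pm)\phi_*\|_\cH^2+\|\nabla\phi_*\|_{L^2}^2=-\langle\mathcal{A}_1^0\phi_*,\phi_*\rangle_{L^2}$, and then identify $\phi_*=-(\mathcal{A}_1^0)^{-1}(\mathcal{B}^0)^*A_\varphi$ by substituting \eqref{id-f0} into the Poisson equation \eqref{f-to-phi}.

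One small remark on your bookkeeping: your closing worry about ``$\mathcal{P}^\pm$ not commuting with $v$-integration'' and the appeal to $1\in\ker\oD^\pm$ is unnecessary (and in fact the $\cH$-pairing with $1$ would involve an $x$-integral too). The cleaner way---which the paper uses and which you also mention---is purely parity: since $\oD^\pm$ reverses parity in $(v_r,v_\theta)$ and $|\mu_e^\pm|$ is even, the reflection $(v_r,v_\theta)\mapsto(-v_r,-v_\theta)$ commutes with $\mathcal{P}^\pm$, so $\mathcal{P}^\pm(\hat v_rA_r+\hat v_\theta A_\theta)$ is odd and its $v$-integral against the even weight $\mu_e^\pm$ vanishes pointwise in $x$. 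With that, your $\partial_{v_\varphi}\mu^\pm$ identity does the rest exactly as you describe; the paper avoids even that step by having recorded the equivalent form $(\mathcal{B}^0)^*h=\sum_\pm\big[\int(a+r\cos\theta)\mu_p^\pm h\,\mathrm{d}v+\int\mu_e^\pm\mathcal{P}^\pm(\hat v_\varphi h)\,\mathrm{d}v\big]$, so the identification $-\mathcal{A}_1^0\phi_*=(\mathcal{B}^0)^*A_\varphi$ is immediate ``by definition''.
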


\begin{proof}
Plugging $F^\pm_*$ of the form \eqref{id-f0} into $\mathcal{J}_\vA$ and using the orthogonality of $\mathcal{P}^\pm$ and $(1-\mathcal{P}^\pm)$ in $\cH$, we have 
 $$\begin{aligned} 
\mathcal{J}_ {\vA }(F^+_*,F^-_*) = \| ( 1-\mathcal{P}^\pm)\phi_*\|^2_{\cH} 
 +  \int_\Omega   |\nabla\phi_*|^2 \; \mathrm{d}x + \|\mathcal{P}^\pm(\hat v \cdot \vA)\|^2_{\cH}. \end{aligned}$$
By definition \eqref{operators-0} of $\mathcal{A}_1^0$, the first two terms are equal to $-\langle\mathcal{A}_1^0\phi_* , \phi_* \rangle_{L^2} $, upon using the facts that 
$ \int_\Omega   |\nabla \phi_*|^2 \; \mathrm{d}x = -  \int_\Omega   \phi_* \Delta \phi_* \; \mathrm{d}x$ and $ \| ( 1-\mathcal{P}^\pm)\phi_*\|^2_{\cH} = \langle ( 1-\mathcal{P}^\pm)\phi_*,\phi_*\rangle_{\cH}$. 

It remains to show that $\phi_* = -(\mathcal{A}_1^0)^{-1}(\mathcal{B}^0)^*  A_\varphi$. To do so, we plug $F^\pm_*$ of the form \eqref{id-f0} into the Poisson equation \eqref{f-to-phi} to get 
 \begin{equation*}\begin{aligned}
-\Delta \phi_* & = \sum_\pm \Big[\int_{\RR^3}\mu^\pm_e ( 1-\mathcal{P}^\pm)\phi_* \; \mathrm{d}v  +   \int_{\RR^3}(a+r\cos\theta) \mu^\pm_p\; \mathrm{d}v    A_\varphi  +\int_{\RR^3}\mu^\pm_e \mathcal{P}^\pm(\hat v\cdot \vA) \; \mathrm{d}v \Big ].  
\end{aligned}\end{equation*}
By definition 
this is equivalent to the equation $-\mathcal{A}_1^0\phi_* = (\mathcal{B}^0)^*  A_\varphi $, 
where  we have used the oddness  in $(v_r,v_\theta)$ of $\mathcal{P}^\pm(\hat v_r A_r + \hat v_\theta A_\theta)$ 
so that its integral vanishes. The operator $\mathcal{A}_1^0$ is invertible by Lemma \ref{lem-AB0property}, and thus $\phi_* = -(\mathcal{A}_1^0)^{-1}(\mathcal{B}^0)^*  A_\varphi $. 
\end{proof}

\subsection{Proof of stability} \label{ss-stab}
With the above preparations, we are ready to prove the following stability result, which is Part {\it (i)} of Theorem \ref{theo-main}.   
 \begin{lemma}\label{lem-stab} 
 If $\mathcal{L}^0\ge 0$, then there exists no growing mode $(e^{\lambda t} f^\pm, e^{\lambda t}{\vE}, e^{\lambda t} {\vB})$  
 with $\R\lambda >0$. 
  \end{lemma}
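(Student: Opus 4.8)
The plan is to argue by contradiction: suppose a growing mode $(e^{\lambda t} f^\pm, e^{\lambda t}\vE, e^{\lambda t}\vB)$ exists with $\R\lambda>0$. By the result of Subsection \ref{ss-pure}, $\lambda$ is necessarily real and positive. By Lemma \ref{growprops}, the fields lie in $H^1(\Omega;\RR^3)$, the integrals $\iint |F^\pm|^2/|\mu_e^\pm|$ and $\iint |\oD^\pm F^\pm|^2/|\mu_e^\pm|$ are finite, and $F^\pm, f^\pm\in\mathrm{dom}(\oD^\pm)$. The strategy is to exploit the time-invariance of the linearized energy $\mathcal{I}$ and of the casimirs $\mathcal{K}^\pm_g$ from Lemmas \ref{lem-invariance} and \ref{lem-invK}. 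Because the mode grows like $e^{\lambda t}$, each of these quadratic (resp. linear) invariants is multiplied by $e^{2\lambda t}$ (resp. $e^{\lambda t}$) and hence, being constant, must vanish identically. In particular $\mathcal{I}(f^\pm,\vE,\vB)=0$ and $\mathcal{K}^\pm_g(f^\pm,\vA)=0$ for all $g\in\ker\oD^\pm$; the latter says precisely that the pair $(F^+,F^-)$ satisfies the constraints \eqref{eqs-constraint}, i.e. $(F^+,F^-)\in\mathcal{F}_{\vA}$ with $\vA$ the magnetic potential of the mode.

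Next I would compare $\mathcal{I}=0$ against the minimization identity of Subsection \ref{ss-min}. Writing $\mathcal{I}(f^\pm,\vE,\vB)$ in terms of $F^\pm$ and the potentials, and using $\vE=-\nabla\phi-\lambda\vA$, $E_\varphi=-\lambda A_\varphi$, together with the splitting $F^\pm=F^\pm_{\mathrm{ev}}+F^\pm_{\mathrm{od}}$, one can bound $\mathcal{I}$ from below. The key point is that the part of $\mathcal{I}$ involving $F^\pm$ and $|\nabla\phi|^2$ is at least $\mathcal{J}_{\vA}(F^+,F^-)$ (one must check the decomposition of the $|\vE|^2$ term produces exactly the $|\nabla\phi|^2$ piece plus a nonnegative remainder, modulo cross terms controlled by the Vlasov equations), and since $(F^+,F^-)\in\mathcal{F}_{\vA}$ we have $\mathcal{J}_{\vA}(F^+,F^-)\ge \mathcal{J}_{\vA}(F^+_*,F^-_*)$, which by Lemma \ref{lem-estJ01} equals
\begin{equation*}
-\big(\mathcal{B}^0(\mathcal{A}_1^0)^{-1}(\mathcal{B}^0)^* A_\varphi, A_\varphi\big)_{L^2} + \sum_\pm\|\mathcal{P}^\pm(\hat v\cdot\vA)\|_{\cH}^2.
\end{equation*}
The remaining terms of $\mathcal{I}$ — namely $\int(|B_\varphi|^2+|E_\varphi|^2)$, the $\tilde\vA$-contributions to $|\vE|^2$, the magnetic energy from $A_\varphi$, and the $-(a+r\cos\theta)\mu_p^\pm\hat v_\varphi|A_\varphi|^2$ term — should combine, after integration by parts using the boundary conditions \eqref{Maxwell-BCs} and the identity for $\int|\vB|^2$ derived in Subsection \ref{ss-pure}, to produce exactly $(\mathcal{A}_2^0 A_\varphi, A_\varphi)_{L^2}$ minus $\sum_\pm\|\mathcal{P}^\pm(\hat v\cdot\vA)\|_{\cH}^2$ plus manifestly nonnegative leftover terms (such as $\lambda^2\int|\vB|^2$-type pieces and $\iint|F^\pm_{\mathrm{od}}|^2/|\mu_e^\pm|$). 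Assembling everything yields the \textbf{key inequality}
\begin{equation*}
0 = \mathcal{I}(f^\pm,\vE,\vB) \ \ge\ \big(\mathcal{L}^0 A_\varphi, A_\varphi\big)_{L^2} + (\text{nonnegative terms}),
\end{equation*}
recalling $\mathcal{L}^0=\mathcal{A}_2^0-\mathcal{B}^0(\mathcal{A}_1^0)^{-1}(\mathcal{B}^0)^*$.

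Under the hypothesis $\mathcal{L}^0\ge 0$ this forces every term on the right to vanish; in particular all the nonnegative leftover terms vanish, which should include $\iint|F^\pm_{\mathrm{od}}|^2/|\mu_e^\pm|=0$ and $\int(|E_\varphi|^2+\ldots)$-type quantities. From $F^\pm_{\mathrm{od}}=0$ and the split equations \eqref{split} one gets $\oD^\pm F^\pm_{\mathrm{ev}}=\mp\mu_e^\pm\hat v\cdot\tilde\vE$ and $\lambda F^\pm_{\mathrm{ev}}=\mp\mu_e^\pm\hat v_\varphi E_\varphi$; tracking these back through the Maxwell equations (the Poisson equation for $\phi$ and the elliptic equation for $A_\varphi$) together with $\lambda>0$ should force $\phi\equiv 0$, $A_\varphi\equiv 0$, $\tilde\vA\equiv 0$ and hence $F^\pm\equiv 0$, $f^\pm\equiv 0$, $\vE\equiv 0$, $\vB\equiv 0$ — contradicting the assumption that we had a genuine (nonzero) growing mode.

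I expect the \textbf{main obstacle} to be the bookkeeping in the second paragraph: showing that $\mathcal{I}$, after substituting the field representation \eqref{def-EB}, splitting into even/odd parts, and integrating by parts with the boundary terms, decomposes exactly as $\mathcal{J}_{\vA}$ (minimized) plus $(\mathcal{A}_2^0 A_\varphi,A_\varphi)$ plus controllably nonnegative remainders. The cross terms between $F^\pm$ and $\nabla\phi$, and between the $A_\varphi$-energy and the $\mu_p^\pm$-weighted term, have to be massaged using the Vlasov equations \eqref{split} and the Maxwell equation for $A_\varphi$ in just the right way; getting the signs and the projections $\mathcal{P}^\pm$ to match the definitions in \eqref{operators-0} is delicate. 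A secondary subtlety is justifying the integrations by parts and the vanishing of boundary terms at the regularity available from Lemma \ref{growprops} (fields in $H^1$, $F^\pm$ only in the weighted $L^2$ with $\oD^\pm F^\pm$ in the same space), which is where the weak form \eqref{skewadj} of the specular condition and the Dirichlet/Coulomb-gauge boundary conditions \eqref{Maxwell-BCs} must be invoked carefully.
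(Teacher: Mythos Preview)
Your overall strategy is correct and matches the paper's proof: assume a growing mode, use purity to make $\lambda$ real, use the exponential growth to force $\mathcal{I}=0$ and $\mathcal{K}^\pm_g=0$, then minimize $\mathcal{J}_{\vA}$ over the constraint set via Lemmas \ref{lem-estJ} and \ref{lem-estJ01} to produce the key inequality $0=\mathcal{I}\ge(\mathcal{L}^0 A_\varphi,A_\varphi)_{L^2}+\text{nonnegative}$.

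However, your bookkeeping is off in a few places that would cost you time if you tried to carry it out as written. First, the odd/even splitting $F^\pm=F^\pm_{\mathrm{ev}}+F^\pm_{\mathrm{od}}$ plays \emph{no role} in this argument; it was used only in Subsection \ref{ss-pure} for purity. In particular $\iint|F^\pm_{\mathrm{od}}|^2/|\mu_e^\pm|$ is not one of the leftover terms. Second, the $|\vE|^2$ decomposition is simpler than you fear: from $\vE=-\nabla\phi-\lambda\vA$ one gets $\int|\vE|^2=\int|\nabla\phi|^2+\lambda^2\int|\vA|^2$, the cross term $2\lambda\int\nabla\phi\cdot\vA$ vanishing by the Coulomb gauge and the Dirichlet condition on $\phi$; no Vlasov equations are needed here. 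Third, the actual nonnegative remainder in the key inequality is
\[
\lambda^2\int_\Omega|\vA|^2\,\mathrm{d}x+\int_\Omega|B_\varphi|^2\,\mathrm{d}x+\sum_\pm\|\mathcal{P}^\pm(\hat v_r A_r+\hat v_\theta A_\theta)\|_{\cH}^2,
\]
not ``$\lambda^2\int|\vB|^2$-type pieces.'' This matters, because the conclusion is then immediate: $\lambda>0$ and $\lambda^2\int|\vA|^2=0$ give $\vA=0$, whence $A_\varphi=0$, $F^\pm=f^\pm$, $\vB=\nabla\times\vA=0$, and plugging back into $\mathcal{I}=0$ yields $\sum_\pm\iint|f^\pm|^2/|\mu_e^\pm|+\int|\vE|^2=0$. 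There is no need to revisit the split equations \eqref{split} or to chase $\phi$ through the Maxwell system.
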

\begin{proof} 
Assume the contrary.  
For the basic properties of any growing mode, see Lemma \ref{growprops}. 
By the result in the previous subsection, it is a purely growing mode ($\lambda>0$)  and so we can assume that 
the functions $(  f^\pm, \vE,  \vB)$ are real-valued.   By the time-invariance in Lemma \ref{lem-invariance} 
and the exponential factor $\exp{\lambda t}$,   
the functional $\mathcal{I}(f^\pm,\vE,  \vB)$ must be identically equal to zero. 
Let $\phi$ and $ \vA$ be defined as usual through the relations \eqref{def-potentials}. 
		We thus obtain
$$
\begin{aligned}
0={\mathcal{I}}( f^\pm,\vE,\vB) &
=  \mathcal{J}_{ \vA }( F^+, F^-) - \sum_\pm \int _\Omega   \int_{\RR^3}(a+r\cos\theta)\mu^\pm_p \hat v_\varphi|   A_\varphi|^2\; \mathrm{d}v\mathrm{d}x 
+ \int_\Omega   \Big[ \lambda^2 |\vA|^2 + |\vB|^2 \Big] \; \mathrm{d}x  , 
\end{aligned}
$$ 
where the term $2\lambda\int_{\RR^3}\vA\cdot\nabla\phi\, \mathrm{d}x$ vanishes due to the Coulomb gauge and the boundary condition on $\phi$.  
Furthermore, the integrals $\mathcal{K}^\pm_g( f^\pm,  \vA)$ defined in \eqref{inv-K} must  be zero. 
The vanishing of these latter integrals  is equivalent to the constraint \eqref{eqs-constraint} and therefore  the pair $( F^+, F^-)$ belongs to the linear manifold $\mathcal{F}_{\vA}$. 
We can then apply  Lemma \ref{lem-estJ01} to assert that   
$\mathcal{J}_ {\vA} (F^+,F^-) \ge \mathcal{J}_ {\vA} (F_*^+,F_*^-)$.  
	Therefore  
\begin{equation}\label{lower-boundI}\begin{aligned}
{\mathcal{I}}( f^\pm,\vE,\vB) &  
\ge - ( \mathcal{B}^0(\mathcal{A}_1^0)^{-1} (\mathcal{B}^0)^*     A_\varphi,   A_\varphi )_{L^2} +\sum_\pm \|\mathcal{P}^\pm(\hat v \cdot \vA)\|^2_{\cH}
\\&\qquad  - \sum_\pm \int _\Omega   \int_{\RR^3}(a+r\cos\theta)\mu^\pm_p \hat v_\varphi|   A_\varphi|^2\; \mathrm{d}v\mathrm{d}x 
+ \int_\Omega   \Big[ \lambda^2 |\vA|^2 + |\vB|^2 \Big] \; \mathrm{d}x.
\end{aligned}                                     \end{equation}
	But by the last calculations in Subsection \ref{ss-pure}, we have 
$$\begin{aligned} \int_\Omega |\vB|^2\; \mathrm{d}x  
&= \int_\Omega \Big[  |\nabla A_\varphi|^2 + \frac{|A_\varphi|^2 }{(a+r\cos \theta)^2} + |B_\varphi|^2\Big ] \; \mathrm{d}x .
\end{aligned}$$
In addition, from the definition \eqref{operators-0} of $\mathcal A_2^0$, 
an integration by parts together with the Dirichlet boundary condition 
on $A_\varphi$ yields
\begin{equation}\label{product-A2}\begin{aligned} 
( \mathcal{A}_2^0   A_\varphi,   A_\varphi)_{L^2} & = \int_\Omega   \Big( |\nabla  A_\varphi|^2 
+ \frac 1 {(a+r\cos\theta)^2} |   A_\varphi|^2 \Big)\; \mathrm{d}x +\sum_\pm \|\mathcal{P}^\pm(\hat v_\varphi A_\varphi)\|^2_{\cH}
\\&\qquad  - \sum_\pm \int _\Omega   \int_{\RR^3}(a+r\cos\theta)\mu^\pm_p \hat v_\varphi|   A_\varphi|^2\; \mathrm{d}v\mathrm{d}x 
.\end{aligned}\end{equation}
		Furthermore, 
$$ \|\mathcal{P}^\pm(\hat v\cdot \vA)\|^2_{\cH}   =   \|\mathcal{P}^\pm(\hat v_\varphi A_\varphi)\|^2_{\cH}   
+    \|\mathcal{P}^\pm(\hat v_rA_r + \hat v_\theta A_\theta)\|^2_{\cH}  
+   \langle  \mathcal{P}^\pm(\hat v_\varphi A_\varphi)   ,  \mathcal{P}^\pm(\hat v_rA_r + \hat v_\theta A_\theta) \rangle_{\cH}, 
$$
in which the last term vanishes due to oddness in $(v_r,v_\theta)$.  
		Putting these various identities 
into \eqref{lower-boundI} and using the definition \eqref{operator-L0} of $\mathcal{L}^0$, we then get 
\begin{equation}\label{key-ineq}\begin{aligned}
0 = {\mathcal{I}}( f^\pm, \vE,\vB) &\ge  ( \mathcal{L}^0    A_\varphi,   A_\varphi )_{L^2} +   \int_\Omega   \Big[ \lambda^2 |\vA|^2 + |B_\varphi|^2 \Big] \; \mathrm{d}x+\sum_\pm \|\mathcal{P}^\pm(\hat v_rA_r + \hat v_\theta A_\theta)\|^2_{\cH}. 
\end{aligned}\end{equation}
Since we are assuming $\lambda>0$ and $\mathcal{L}^0\ge 0$, we deduce $\vA=0$.  
From the definition of $\mathcal I(f^\pm, \vE, \vB)$, we deduce that $f^\pm=0,\ \vE=0$.  
 Thus the linearized system has no growing mode.  
\end{proof}

\section{Linear instability} \label{sec-instability}
We now turn to the instability part of Theorem \ref{theo-main}. 
It is based on a spectral analysis of the relevant operators.   We plug the simple form 
$(e^{\lambda t} f^\pm, e^{\lambda t}\vE, e^{\lambda t} \vB)$, with some real $\lambda>0$, 
into the linearized RVM system \eqref{lin-VM} to obtain the Vlasov equations
\begin{equation}\label{Lap-VMsys}
(\lambda + \oD ^\pm ) \Big ( f^\pm \mp  (a+r\cos \theta)  \mu_p^\pm   A_\varphi \mp \mu_e^\pm \phi \Big)  = \pm \lambda \mu^\pm_e (\hat v \cdot \vA - \phi)
 \end{equation} and the Maxwell equations
 \begin{equation}\label{Lap-Maxwell}
\begin{aligned}
- \Delta \phi &= \rho 
\\
\Big(\lambda^2 - \Delta +  \frac{1}{(a+r\cos \theta)^2}\Big)  A_\varphi  &= j_\varphi
\\
( \lambda^2-\Delta ) \tilde \vA + \lambda \nabla \phi &= \tilde \vj,
\end{aligned}\end{equation}
in which the Coulomb gauge $\nabla \cdot \tilde \vA =0$ is imposed. As before, we impose the specular boundary condition on $f^\pm$, the zero Dirichlet boundary condition on $\phi, A_\varphi, A_\theta$, 
and the Robin-type boundary condition on $A_r$, namely $ (a+\cos \theta) \D_r A_r + (a+2\cos \theta) A_r =0$. The quadruple $(f^\pm,\phi, \vA)$ should be regarded as a perturbation of the equilibrium.  

\subsection{Particle trajectories}\label{ss-traj}
We begin  with the $+$ case (ions).  
 For each $(x,v) \in \Omega \times \RR^3$, we introduce the particle trajectories $(X^+(s;x,v), V^+(s;x,v))$ defined by the equilibrium as 
\begin{equation}\label{trajectory} 
\dot X^+ = \hat V^+, \qquad
\dot V^+ = \vE^0(X^+) + \hat V^+\times \vB^0(X^+),
 \end{equation}
with initial values $ (X^+(0;x,v), V^+(0;x,v)) = (x,v).$ Here $\dot{} = \frac{d}{ds}$. 
Because of the $C^1$ regularity of $\vE^0$ and $\vB^0$ in $\overline \Omega$, each trajectory can be continued for 
at least a certain fixed time.  
So each particle trajectory exists and preserves the toroidal symmetry up to the first point where it meets the boundary. Let $s_0$ be a time at which the trajectory $X^+(s_0-;x,v)$ belongs to  $\D \Omega $. 
In general, we write $h(s\pm)$ to mean the limit from the right (left).
According to the specular boundary condition, 
the trajectory $(X^+(s;x,v), V^+(s;x,v))$ can be continued by the rule 
\begin{equation}\label{traj-reflection1}
(X^+(s_0+;x,v), V^+(s_0+;x,v)) = (X^+(s_0-;x,v), \overline  V^+(s_0-;x,v)),
\end{equation}  
with the notation $\overline  V = (-V_r,V_\theta,V_\varphi)$.  Thus $X^+$ is continuous and $V^+$ has a jump at $s_0$.  
Whenever the trajectory meets the boundary, it is reflected in the same way and then continued via the ODE \eqref{trajectory}. Such a continuation is guaranteed for some short time past $s_0$ by regularity of  $\vE^0$ and $\vB^0$ in $\overline{\Omega}$ and standard ODE theory. By Appendix \ref{app-particle}, for almost every particle in $\Omega \times \RR^3$, the trajectory is well-defined and hits the boundary at most a finite number of times in each finite time interval. When there is no possible confusion, 
we will simply write $(X^+(s),V^+(s))$ for the particle trajectories.  
The trajectories $(X^-(s),V^-(s))$ for the $-$ case (electrons) are defined similarly. 

\begin{lemma}\label{lem-trajectory} For almost every $(x,v)\in \Omega \times \RR^3$, the particle trajectories 
$(X^\pm(s;x,v),V^\pm(s;x,v))$ are piecewise $C^1$ smooth in $s\in \RR$, and for each $s\in \RR$, 
the map $(x,v) \mapsto (X^\pm(s;x,v),V^\pm(s;x,v))$ is one-to-one and differentiable with  
Jacobian equal to one at all points $(x,v)$ such that $X^\pm(s;x,v) \not \in \D\Omega$.   
In addition, the standard change-of-variable formula 
\begin{equation}\label{change-variable}
\int_\Omega\int_{\RR^3} g(x,v) \; \mathrm{d}x\mathrm{d}v = \int_\Omega\int_{\RR^3} g(X^\pm(-s;y,w),V^\pm(-s;y,w))\; \mathrm{d}w\mathrm{d}y
\end{equation} 
is valid for each $s\in \RR$ and for each measurable function $g$ for which the integrals are finite. 
\end{lemma}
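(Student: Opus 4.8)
The plan is to establish the three asserted properties in order: piecewise $C^1$ regularity, the one-to-one and volume-preserving nature of the flow, and then deduce the change-of-variable formula as a formal consequence. The regularity statement is essentially already in hand: by the discussion preceding the lemma, $\vE^0$ and $\vB^0$ are $C^1$ on $\overline\Omega$, so standard ODE theory gives a unique $C^1$ solution of \eqref{trajectory} up to the first boundary hit; at each hit the reflection rule \eqref{traj-reflection1} resets the velocity, and the solution is continued again by the same ODE. Appendix \ref{app-particle} guarantees that for a.e.\ $(x,v)$ only finitely many reflections occur in any bounded time interval, so on that full-measure set the trajectory is a finite concatenation of $C^1$ arcs, i.e.\ piecewise $C^1$ on all of $\RR$. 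I would note explicitly that the bad set — initial data whose trajectory is tangent to $\D\Omega$, or accumulates reflections, or reaches a corner (there are none here, $\D\Omega$ being smooth) — has measure zero by the appendix, and we simply discard it.

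For the second property I would argue on a single $C^1$ arc between reflections and then paste. On such an arc the flow $(x,v)\mapsto(X^\pm(s),V^\pm(s))$ is the flow of the smooth vector field $(\hat v,\ \vE^0(x)\pm\hat v\times\vB^0(x))$, hence a $C^1$ diffeomorphism; its Jacobian satisfies Liouville's equation $\frac{d}{ds}\log J = \nabla_{x,v}\cdot(\hat v,\ \vE^0\pm\hat v\times\vB^0)$, and the divergence vanishes because $\nabla_x\cdot\hat v=0$ (no $x$-dependence), $\nabla_v\cdot\vE^0(x)=0$, and $\nabla_v\cdot(\hat v\times\vB^0(x))=0$ (this is the standard computation that $\hat v\times\vB^0$ is divergence-free in $v$ since $\vB^0$ does not depend on $v$ and $\partial_{v_i}\hat v\times\vB^0$ is the $i$-th row of a matrix with zero trace). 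So $J\equiv 1$ on each arc. At a reflection time $s_0$ the map is the specular involution $V\mapsto\overline V=(-V_r,V_\theta,V_\varphi)$ in the velocity variable on the boundary slice, which is an isometry with Jacobian $1$; composing the arcs and the reflections, $J\equiv 1$ wherever $X^\pm(s;x,v)\notin\D\Omega$. Injectivity for each fixed $s$ follows by reversibility: the reflected trajectory flow can be run backwards (the ODE is time-reversible up to the sign flips, and the reflection is an involution), so $(x,v)$ is recovered from $(X^\pm(s),V^\pm(s))$ uniquely; concretely, $(x,v)=(X^\pm(-s;\,\cdot\,),V^\pm(-s;\,\cdot\,))$ applied to the image point, which is exactly the inverse map appearing in \eqref{change-variable}.

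The change-of-variable formula \eqref{change-variable} is then immediate from the previous paragraph plus the measure-zero exceptional set: the map $\Phi_s:(y,w)\mapsto(X^\pm(-s;y,w),V^\pm(-s;y,w))$ is a bijection of $\Omega\times\RR^3$ (up to a null set), piecewise $C^1$, with Jacobian $1$ a.e., so $\int g\,dx\,dv=\int (g\circ\Phi_s)\,|\det D\Phi_s|\,dy\,dw=\int g\circ\Phi_s\,dy\,dw$ by the standard area formula applied piece by piece. I expect the main obstacle to be the careful bookkeeping across reflections — specifically, verifying that the piecewise-defined flow genuinely has a.e.-defined Jacobian equal to one despite the velocity jumps, and that the exceptional null set (tangential trajectories, infinitely many bounces) is handled cleanly — but all of this is exactly what Appendix \ref{app-particle} is designed to supply, so the proof reduces to invoking it and assembling the pieces.
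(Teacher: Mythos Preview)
Your proposal is correct and follows essentially the same approach as the paper's own proof: both invoke Appendix~\ref{app-particle} for the a.e.\ finite-reflection property, argue piecewise $C^1$ regularity from standard ODE theory between reflections, obtain injectivity from time-reversibility of the flow with specular reflection, and deduce the change-of-variable formula from the Jacobian being identically one. The only difference is that you spell out the Liouville computation (divergence of the phase-space vector field vanishes) and the Jacobian of the reflection map explicitly, whereas the paper simply asserts that ``a direct calculation shows that the Jacobian determinant is time-independent and is therefore equal to one.''
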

\begin{proof} Let $(x,v)$ be an arbitrary point in $\Omega \times \RR^3$ so that the particle trajectory $(X^\pm(s;x,v),V^\pm(s;x,v))$ hits the boundary at most a finite number of times in each finite time interval. 
Except when it hits the boundary, the trajectory is smooth in time. So the first assertion is clear.   
Given $s$,  let $\mathcal{S}$ be the set $(x,v)$ such that $X^\pm(s;x,v)\not \in \D\Omega$. Clearly, $\mathcal{S}$ is open and its complement in $\Omega \times \RR^3$ has Lebesgue measure zero. For each $s$, the trajectory map is one-to-one on $\mathcal{S}$ since the ODE \eqref{trajectory} and \eqref{traj-reflection1} is time-reversible and well-defined.   
In addition, a direct calculation shows that the Jacobian determinant is time-independent and is therefore equal to one.   The change-of-variable formula \eqref{change-variable} holds on the open set $\mathcal{S}$ and so on $\Omega\times \RR^3$, as claimed. 
\end{proof}

\begin{lemma}\label{lem-reflection} Let $g(x,v)$ be a $C^1$ radial function on $\overline\Omega\times\RR^3$.  
If $g$ is specular on $\D\Omega$, then for all $s$, $g(X^\pm(s;x,v),V^\pm(s;x,v))$ is continuous and 
also specular on $\D\Omega$.  That is, 
$$
g(X^\pm(s;x,v),V^\pm(s;x,v)) = g(X^\pm(s;x,\overline  v),V^\pm(s;x,\overline  v)),$$ for almost every $(x,v)\in \D\Omega\times \RR^3$, 
where $\overline  v = (-v_r,v_\theta,v_\varphi)$ for all $v = (v_r,v_\theta,v_\varphi)$. 
\end{lemma}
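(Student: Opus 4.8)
The plan is to derive both assertions from a single elementary remark about specular functions — that they are \emph{even} in the normal component of velocity on $\D\Omega$ — combined with the specular reflection rule \eqref{traj-reflection1}, applied once at each interior reflection time (to get continuity in $s$) and once at the starting point $x\in\D\Omega$ (to get the specular identity). Throughout I would work with the full-measure set of starting data for which the trajectory is of the generic type guaranteed by Lemma \ref{lem-trajectory} and Appendix \ref{app-particle}: piecewise $C^1$, meeting $\D\Omega$ only on a locally finite set of times, with $v$ never tangent to $\D\Omega$. The remark itself is immediate: if $g$ satisfies \eqref{bdry-specular01} for $v_r<0$, then substituting $-v_r$ for $v_r$ covers $v_r>0$ and continuity of $g$ handles $v_r=0$, so in fact $g(x,v)=g(x,\overline v)$ for \emph{every} $x\in\D\Omega$ and every $v\in\RR^3$, with $\overline v=(-v_r,v_\theta,v_\varphi)$.

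For continuity in $s$, I would note that on each interval between consecutive reflection times the trajectory stays in the interior and is $C^1$, so $s\mapsto g(X^\pm(s),V^\pm(s))$ is $C^1$ there, with derivative $(\oD^\pm g)(X^\pm(s),V^\pm(s))$. At a reflection time $s_0$ one has $X^\pm(s_0-)=X^\pm(s_0+)\in\D\Omega$ and $V^\pm(s_0+)=\overline{V^\pm(s_0-)}$ by \eqref{traj-reflection1}, so the remark gives
\[ g\big(X^\pm(s_0-),V^\pm(s_0-)\big)=g\big(X^\pm(s_0-),\overline{V^\pm(s_0-)}\big)=g\big(X^\pm(s_0+),V^\pm(s_0+)\big); \]
the one-sided limits agree, and since the reflection times are locally finite the composite is continuous on all of $\RR$.

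For the specular identity I would fix $x\in\D\Omega$ and $v$ with $v_r\neq0$ and argue that the trajectories issued from $(x,v)$ and from $(x,\overline v)$ in fact coincide for all $s\neq0$. Suppose, say, that $v$ points out of $\Omega$ at $x$ (the opposite case follows by interchanging $v$ and $\overline v$). Propagating forward, rule \eqref{traj-reflection1} forces an immediate reflection at $s=0$, so $(X^\pm(s;x,v),V^\pm(s;x,v))$ for $s>0$ is exactly the forward solution of \eqref{trajectory} issued from $(x,\overline v)$, i.e.\ $(X^\pm(s;x,\overline v),V^\pm(s;x,\overline v))$; propagating backward, the trajectory from $(x,v)$ enters the interior with no reflection while the one from $(x,\overline v)$ — whose velocity now points outward — reflects immediately to $v$, so the two backward trajectories coincide as well. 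Hence $g(X^\pm(s;x,v),V^\pm(s;x,v))=g(X^\pm(s;x,\overline v),V^\pm(s;x,\overline v))$ for all $s\neq0$, and at $s=0$ both sides equal $g(x,v)=g(x,\overline v)$ by the remark; the continuity just proved then extends the identity to every $s$.

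The main obstacle is the third step: it rests on knowing precisely how the trajectory is defined when the starting point already lies on $\D\Omega$, which is the content of Appendix \ref{app-particle} (for a.e.\ such $(x,v)$ the billiard path is unambiguous, with finitely many bounces and no tangency to $\D\Omega$), and on correctly bookkeeping which of the two trajectories reflects first under forward versus backward propagation. The continuity statement, by contrast, is routine once the evenness remark is in hand.
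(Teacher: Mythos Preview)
Your proof is correct and follows essentially the same approach as the paper: both hinge on the observation that a specular function is even in $v_r$ on $\D\Omega$, and that the trajectories issued from $(x,v)$ and $(x,\overline v)$ with $x\in\D\Omega$ coincide (away from reflection times). Your write-up is more careful than the paper's terse argument --- you spell out the forward/backward bookkeeping explicitly --- but the content is the same.
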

\begin{proof}
It follows directly by definition \eqref{trajectory} and \eqref{traj-reflection1} that for almost every $x\in \partial \Omega$, 
the trajectory is unaffected by whether we start with $v$ or $\overline  v$.   So for all $s$ we have 
\begin{equation}\label{traj-reflection2}  
X^\pm(s;x,v) = X^\pm(s;x,\overline  v), \qquad |V^\pm_r(s;x,v)| = |V^\pm_r(s;x,\overline  v)|, \qquad V^\pm_j(s;x,v) 
= V^\pm_j(s;x,\overline  v), \quad j = \theta, \varphi.
 \end{equation}
In fact we have $V^\pm_r(s;x,v) = V^\pm_r(s;x,\overline  v)$ for any $s$ at which $X^\pm(s;x,v)\not \in \D\Omega$, 
while $V^\pm_r(s+;x,v) = - V^\pm_r(s-;x,\overline  v)$ for $s$ at which $X^\pm(s;x,v)\in \D\Omega$.  
Because $g$ is specular on the boundary, it takes the same value at $v_r$ and $-v_r$.  
Therefore $g(X^\pm(s),V^\pm(s))$ is a continuous function of $s$ at the points of reflection.   
It is specular because of  the rule \eqref{traj-reflection1}.
\end{proof}

\subsection{Representation of the particle densities}\label{ss-rep}
 We now invert the operator $(\lambda +\oD ^\pm)$ in \eqref{Lap-VMsys} to obtain an integral representation of $f^\pm$. To do so, we multiply this equation by $e^{\lambda s}$ and then integrate  
along the particle trajectories  $(X^\pm(s;x,v),V^\pm(s;x,v))$ from $s=-\infty$ to zero. We readily obtain  
\begin{equation}\label{def-f}\begin{aligned} 
f^\pm(x,v)  &=  \pm  (a+r\cos \theta)  \mu_p^\pm   A_\varphi \pm \mu_e^\pm \phi  \pm \mu^\pm_e \mathcal{Q}^\pm_\lambda(\hat v\cdot \vA - \phi),\end{aligned}\end{equation}
where we formally denote 
\begin{equation}\label{def-Q}
\mathcal{Q}^\pm_\lambda (g)(x,v)     
:
=  \int_{-\infty}^0 \lambda e^{\lambda s} g(X^\pm(s;x,v),V^\pm(s;x,v)) \; \mathrm{d}s \end{equation} for each measurable function $g = g(x,v)$. As the particle trajectories $(X^\pm(s;x,v),V^\pm(s;x,v))$ are well-defined for almost every $(x,v)$, the function $\mathcal{Q}^\pm_\lambda (g)(x,v)$ is defined for almost every $(x,v)$. Then by Lemma \ref{lem-reflection}, $\mathcal{Q}^\pm_\lambda(g)$ is specular on $\D \Omega$ if $g$ is. Formally, $\mathcal{Q}^\pm_\lambda$ solves the equation $(\lambda + \oD^\pm) \mathcal{Q}^\pm_\lambda = \lambda I$ with $\mathcal{Q}^\pm_0 = \mathcal{P}^\pm$ and $\lim_{\lambda \to \infty}\mathcal{Q}^\pm_\lambda  = I$ (see Lemma \ref{lem-PropQ}).

%

\subsection{Operators}\label{sec-defOp}
It will be convenient to employ a special space to accommodate the 2-vector function $\tilde\vA$.  
We define the space  
\begin{equation} \label{Y-space}
\tilde{\mathcal{Y}} =  \left\{ \vh \in H_\tau^2(\Omega;\tilde\RR^2)\ \Big | \  
\nabla\cdot \vh=0 \text{ in }\Omega, 
\text{ and } 0 = e_\theta\cdot\vh  =  \nabla_x\cdot((e_r\cdot\vh)e_r) \text{ on } \partial\Omega\right\}. 
\end{equation}
Here the tilde indicates that there is no $e_\varphi$ component.  
The subscript $\tau$ indicates that $\vh$  is  toroidally symmetric; that is, 
$h_r=e_r \cdot\vh$ and $h_\theta = e_\theta\cdot\vh$ are independent of the angle $\varphi$.  
The boundary conditions are exactly those which must be satisfied by $\tilde\vA$ in \eqref{Maxwell-BCs}.  

When we  substitute \eqref{def-f} into the Maxwell equations \eqref{Lap-Maxwell}, several operators will naturally arise.     
We first introduce them formally.  
The following operators  map scalar functions to scalar functions.  
$$\begin{aligned}  \mathcal{A}_{1}^\lambda h :
& =  \Delta h  +  \sum_\pm \int_{\RR^3}\mu^\pm_e (1 - \mathcal{Q}^\pm_\lambda)h \; \mathrm{d}v
\\ \mathcal{A}_{2}^\lambda h  :
& = \Big(\lambda^2 - \Delta +  \frac{1}{(a+r\cos \theta)^2}\Big)   h  
-  \sum_\pm \Big[  \int_{\RR^3} (a+r\cos \theta)  \hat v_\varphi \mu_p^\pm \; \mathrm{d}v \;h  
+ \int_{\RR^3}\hat v_\varphi \mu^\pm_e \mathcal{Q}^\pm_\lambda(\hat v_\varphi h)\; \mathrm{d}v \Big]
\\\mathcal{B}^\lambda h  : 
&=   -\sum_\pm  \int_{\RR^3}\hat v_\varphi\mu^\pm_e (1 - \mathcal{Q}^\pm_\lambda)h  \; \mathrm{d}v,
\\(\mathcal{B}^\lambda)^* h  : 
&=\sum_\pm \Big[  \int_{\RR^3} (a+r\cos \theta)  \mu_p^\pm  h \; \mathrm{d}v + \int_{\RR^3}\mu^\pm_e \mathcal{Q}^\pm_\lambda(\hat v_\varphi h)\; \mathrm{d}v \Big]. 
\end{aligned}
$$
We also introduce an operator that maps vector functions $\vh = h_r e_r + h_\theta e_\theta$ to vector functions by 
$$\begin{aligned}
 \tilde{\mathcal{S}^\lambda} \vh :& =  (-\lambda^2 + \Delta ) \vh  + \sum_\pm \int_{\RR^3}\frac{\tilde v}{\langle v \rangle}\mu^\pm_e \mathcal{Q}^\pm_\lambda(\hat v\cdot \vh) \; \mathrm{d}v, 
 \end{aligned}$$ 
where  $\tilde v = v_r e_r + v_\theta e_\theta$.   
Furthermore, we introduce two operators that map scalar functions to vector functions by 
$$\begin{aligned}
\tilde{\mathcal{T}}^\lambda_1 h :& = -\lambda \nabla h -  \sum_\pm \int_{\RR^3}\frac{\tilde v}{\langle v\rangle} \mu^\pm_e \mathcal{Q}^\pm_\lambda h \; \mathrm{d}v,\qquad 
\tilde{\mathcal{T}}^\lambda_2 h:
=\sum_\pm \int_{\RR^3}\frac{\tilde v}{\langle v\rangle}  \mu^\pm_e \mathcal{Q}^\pm_\lambda(\hat v_\varphi h)\; \mathrm{d}v.  
\end{aligned}$$
Their formal adjoints map vector functions to scalar functions and are given by  
$$\begin{aligned}
(\tilde{\mathcal{T}}^\lambda_1)^* \vh :& = \lambda \nabla \cdot \vh +  \sum_\pm \int_{\RR^3} \mu^\pm_e  \mathcal{Q}^\pm_\lambda (\hat v \cdot \vh) \; \mathrm{d}v,
\qquad (\tilde{\mathcal{T}}^\lambda_2)^* \vh:= - \sum_\pm \int_{\RR^3}\hat v_\varphi  \mu^\pm_e  \mathcal{Q}^\pm_\lambda (\hat v \cdot \vh) \; \mathrm{d}v.
\end{aligned}$$
We shall check below that, when properly defined on certain spaces, they are indeed adjoints. 
Since $\mathcal{Q}^\pm_\lambda(\cdot)(x,v)$ is defined for almost every $(x,v)$, each the above operators 
is defined in a weak sense, that is, by integration against smooth test functions of $x$. 
Hence sets of measure zero can be neglected. 

Moreover, we {\it formally} define each of the corresponding operators at $\lambda =0$ by replacing $\mathcal{Q}^\pm_\lambda$ with the projection $\mathcal{P}^\pm$ of $\cH^\pm$ on the kernel of $\oD^\pm$. 
In Lemma \ref{lem-PropQ} we will justify this notation by letting $\lambda\to0$.  

\begin{lemma} The Maxwell equations \eqref{Lap-Maxwell} are equivalent to the system of equations  
\begin{equation}\label{matrix-Maxwell} \begin{pmatrix}  \mathcal{A}_{1}^\lambda & (\mathcal{B}^\lambda)^* & (\tilde{\mathcal{T}}_{1}^\lambda)^*\\
 \mathcal{B}^\lambda & \mathcal{A}_{2}^\lambda &  (\tilde{\mathcal{T}}_{2}^\lambda)^* \\
\tilde{\mathcal{T}}_{1}^\lambda & \tilde{\mathcal{T}}_{2}^\lambda &    \tilde{\mathcal{S}}^\lambda
\end{pmatrix} \begin{pmatrix}\phi \\A_\varphi \\ \tilde \vA\end{pmatrix} =0 .\end{equation} 
\end{lemma}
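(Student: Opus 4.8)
The plan is purely computational. The Vlasov part of \eqref{Lap-VMsys} has already been inverted, so that $f^\pm$ is given by the representation \eqref{def-f}; what remains is to insert this representation into $\rho = \int_{\RR^3}(f^+ - f^-)\,\mathrm{d}v$ and $\vj = \int_{\RR^3}\hat v(f^+ - f^-)\,\mathrm{d}v$ and to check that each of the three equations in \eqref{Lap-Maxwell} becomes the corresponding row of \eqref{matrix-Maxwell}. Throughout I write $\vA = A_\varphi e_\varphi + \tilde\vA$ with $\tilde\vA = A_r e_r + A_\theta e_\theta$, so that $\hat v\cdot\vA = \hat v_\varphi A_\varphi + \hat v\cdot\tilde\vA$, and I use linearity of $\mathcal{Q}^\pm_\lambda$ to split $\mathcal{Q}^\pm_\lambda(\hat v\cdot\vA - \phi)$ into $\mathcal{Q}^\pm_\lambda(\hat v_\varphi A_\varphi)$, $\mathcal{Q}^\pm_\lambda(\hat v\cdot\tilde\vA)$ and $-\mathcal{Q}^\pm_\lambda(\phi)$. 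From \eqref{def-f} one has $f^+ - f^- = \sum_\pm[(a+r\cos\theta)\mu_p^\pm A_\varphi + \mu_e^\pm\phi + \mu_e^\pm\mathcal{Q}^\pm_\lambda(\hat v\cdot\vA - \phi)]$, and all three calculations start from this bracket.

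For the Poisson equation $-\Delta\phi = \rho$ I would integrate the bracket in $v$ and move the terms acting on $\phi$ to the left: the combination $\Delta\phi + \sum_\pm\int_{\RR^3}\mu_e^\pm(1-\mathcal{Q}^\pm_\lambda)\phi\,\mathrm{d}v$ is exactly $\mathcal{A}_1^\lambda\phi$. The terms linear in $A_\varphi$ assemble into $(\mathcal{B}^\lambda)^*A_\varphi$, and the terms linear in $\tilde\vA$ give $\sum_\pm\int_{\RR^3}\mu_e^\pm\mathcal{Q}^\pm_\lambda(\hat v\cdot\tilde\vA)\,\mathrm{d}v$, which equals $(\tilde{\mathcal{T}}_1^\lambda)^*\tilde\vA$ once the Coulomb gauge $\nabla\cdot\tilde\vA = 0$ is invoked to kill the $\lambda\nabla\cdot\tilde\vA$ term in the definition of $(\tilde{\mathcal{T}}_1^\lambda)^*$; collecting, row~1 is $\mathcal{A}_1^\lambda\phi + (\mathcal{B}^\lambda)^*A_\varphi + (\tilde{\mathcal{T}}_1^\lambda)^*\tilde\vA = 0$. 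The $\varphi$-component $j_\varphi = \vj\cdot e_\varphi$ is handled identically after multiplying the bracket by $\hat v_\varphi$: the $\phi$-terms give $\mathcal{B}^\lambda\phi$, the $A_\varphi$-terms together with $(\lambda^2 - \Delta + \tfrac{1}{(a+r\cos\theta)^2})A_\varphi$ from the left side give $\mathcal{A}_2^\lambda A_\varphi$ (note that $\int_{\RR^3}(a+r\cos\theta)\hat v_\varphi\mu_p^\pm\,\mathrm{d}v$ survives, since $\mu_p^\pm$ genuinely depends on $v_\varphi$), and the $\tilde\vA$-terms give $(\tilde{\mathcal{T}}_2^\lambda)^*\tilde\vA$; this is row~2.

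For the transverse equation $(\lambda^2 - \Delta)\tilde\vA + \lambda\nabla\phi = \tilde\vj$ I would multiply the bracket by $\tilde v/\langle v\rangle$ and integrate. Here the two contributions that do not pass through $\mathcal{Q}^\pm_\lambda$, namely $(a+r\cos\theta)\mu_p^\pm A_\varphi$ and $\mu_e^\pm\phi$, are even in $v_r$ and in $v_\theta$, so after multiplication by the odd factor $\tilde v/\langle v\rangle$ their $v$-integrals vanish; only the $\mathcal{Q}^\pm_\lambda$ terms remain. Moving the $\mathcal{Q}^\pm_\lambda(\hat v\cdot\tilde\vA)$ term to the left produces $-\tilde{\mathcal{S}}^\lambda\tilde\vA + \lambda\nabla\phi$, the $\mathcal{Q}^\pm_\lambda(\hat v_\varphi A_\varphi)$ term gives $\tilde{\mathcal{T}}_2^\lambda A_\varphi$, and the $-\mathcal{Q}^\pm_\lambda(\phi)$ term gives $\tilde{\mathcal{T}}_1^\lambda\phi + \lambda\nabla\phi$; the two copies of $\lambda\nabla\phi$ cancel, leaving row~3: $\tilde{\mathcal{T}}_1^\lambda\phi + \tilde{\mathcal{T}}_2^\lambda A_\varphi + \tilde{\mathcal{S}}^\lambda\tilde\vA = 0$. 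For the converse direction, given any solution $(\phi, A_\varphi, \tilde\vA)$ of \eqref{matrix-Maxwell} one defines $f^\pm$ by \eqref{def-f}; since $\mathcal{Q}^\pm_\lambda$ inverts $\lambda + \oD^\pm$ (the identity $(\lambda + \oD^\pm)\mathcal{Q}^\pm_\lambda = \lambda I$ being justified in Lemma~\ref{lem-PropQ}), this $f^\pm$ solves \eqref{Lap-VMsys}, and running the three substitutions backwards recovers \eqref{Lap-Maxwell}.

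The argument is mostly bookkeeping; the steps needing care are the $\pm$-sign conventions in \eqref{def-f}, and the two ``gauge-type'' cancellations — the Coulomb gauge in row~1 and the cancellation of $\lambda\nabla\phi$ in row~3 — which are precisely what make $\tilde{\mathcal{T}}_1^\lambda$ and $(\tilde{\mathcal{T}}_1^\lambda)^*$ transposes of one another. The one genuinely substantive point, and the main potential pitfall, is the parity discussion in row~3: it is essential that $\tilde v/\langle v\rangle$ multiplies only functions that bypass $\mathcal{Q}^\pm_\lambda$, because $\mathcal{Q}^\pm_\lambda$ does \emph{not} preserve parity in $(v_r, v_\theta)$ — the equilibrium trajectories mix the velocity components, all the more so at the specular reflections — so one must resist the temptation to discard, for instance, the $\mathcal{Q}^\pm_\lambda(\hat v\cdot\tilde\vA)$ terms in rows~1 and~2; it is exactly this non-commutation that forces the off-diagonal operators $(\tilde{\mathcal{T}}_1^\lambda)^*$ and $(\tilde{\mathcal{T}}_2^\lambda)^*$ to be present.
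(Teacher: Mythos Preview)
Your proposal is correct and follows essentially the same route as the paper: substitute the representation \eqref{def-f} into $\rho$ and $\vj$, split $\hat v\cdot\vA$ into its $\varphi$- and $\tilde{\ }$-parts, use the Coulomb gauge to supply the $\lambda\nabla\cdot\tilde\vA$ term in row~1, and use the evenness of $\mu_e^\pm\phi$ and $(a+r\cos\theta)\mu_p^\pm A_\varphi$ in $(v_r,v_\theta)$ to discard the non-$\mathcal{Q}^\pm_\lambda$ contributions in row~3. Your added remarks on the converse direction and on why $\mathcal{Q}^\pm_\lambda$ fails to preserve $(v_r,v_\theta)$-parity are correct and go slightly beyond what the paper writes out here.
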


\begin{proof} We recall that $\phi$ and $A_\varphi$ are scalars while $\tilde\vA$ is a 2-vector.  
By use of the integral formula \eqref{def-f}, the first equation in \eqref{Lap-Maxwell} becomes
$$\begin{aligned} - \Delta \phi  &= \int_{\RR^3}(f^+ - f^-) (x,v) \;\mathrm{d}v  \\& = \sum_\pm \int_{\RR^3}\Big[  \mu^\pm_e (1 - \mathcal{Q}^\pm_\lambda)\phi + (a+r\cos \theta) \mu_p^\pm   A_\varphi   + \mu^\pm_e \mathcal{Q}^\pm_\lambda(\hat v\cdot \vA) \Big] \; \mathrm{d}v  + \lambda \nabla \cdot \tilde\vA ,\end{aligned}$$
which immediately yields the first identity in \eqref{matrix-Maxwell}. Here, we have added the term $\lambda \nabla \cdot \tilde\vA = \lambda \nabla \cdot \vA$ on the right hand side, which of course vanishes due to the Coulomb gauge. The reason for this addition is to guarantee the self-adjointness of the matrix operators, as will be checked shortly. The second equation in \eqref{Lap-Maxwell} now reads
$$\begin{aligned} \Big(\lambda^2 - \Delta +  \frac{1}{(a+r\cos \theta)^2}\Big)  A_\varphi  
& =   \sum_\pm \int_{\RR^3}\hat v_\varphi\Big[  \mu^\pm_e (1 - \mathcal{Q}^\pm_\lambda)\phi + (a+r\cos \theta) \mu_p^\pm   A_\varphi   + \mu^\pm_e \mathcal{Q}^\pm_\lambda(\hat v\cdot \vA) \Big] \; \mathrm{d}v,
\end{aligned}$$ which is again the second identity in \eqref{matrix-Maxwell}. Similarly, the last vector equation in \eqref{Lap-Maxwell} becomes
$$\begin{aligned}
 (\lambda^2 - \Delta ) \tilde \vA  +  \lambda \nabla \phi & =   \sum_\pm \int_{\RR^3}\frac{\tilde v}{\langle v\rangle} \Big[  \mu^\pm_e \phi  - \mu^\pm_e \mathcal{Q}^\pm_\lambda\phi + (a+r\cos \theta) \mu_p^\pm   A_\varphi   + \mu^\pm_e \mathcal{Q}^\pm_\lambda(\hat v\cdot \vA) \Big] \; \mathrm{d}v,
\end{aligned}$$
 which gives the last identity in \eqref{matrix-Maxwell},  upon noting that the first and third integrals vanish due to evenness of $\mu^\pm_p$ in $v_r$ and $v_\theta$. This proves the lemma. \end{proof}

\begin{lemma}\label{op-bounds} 
Let  $0 <  \lambda < \infty$.  
  
 (i) $\mathcal{Q}^\pm_\lambda$ is bounded from $\mathcal{H}$ to itself with operator norm equal to one.

(ii) $\A_1^\lb-\Delta,\ \A_2^\lb-\lambda^2+\Delta$ and $\B^\lb$ are bounded from $L^2_\tau(\Omega)$ into  $L^2_\tau(\Omega)$.  

(iii) ${\tilde\S}^\lb+\lambda^2-\Delta$ is bounded  from $L^2_\tau(\Omega;\tilde\RR^2)$ into $L^2_\tau(\Omega;\tilde\RR^2)$.  

(iv) $\tilde\T_1^\lb +\lb \nabla$ and $\tilde\T_2^\lb$ are bounded from $L^2_\tau(\Omega)$ into $L^2_\tau(\Omega;\tilde\RR^2)$.
\\
In each case  the operator norm is independent of $\lb$.  
\end{lemma}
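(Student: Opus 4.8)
The plan is to observe that the lemma concerns each operator \emph{after} subtracting its differential part, so only integral and multiplication terms need to be bounded, and that all the $\lambda$-dependence is funneled through $\mathcal{Q}^\pm_\lambda$, whose norm is handled by part (i).

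For (i) I would argue directly from \eqref{def-Q}. Since $\int_{-\infty}^0\lambda e^{\lambda s}\,\mathrm{d}s=1$, the quantity $\mathcal{Q}^\pm_\lambda(g)(x,v)$ is an average of $g$ along the trajectory against the probability density $\lambda e^{\lambda s}\,\mathrm{d}s$, so Jensen gives $|\mathcal{Q}^\pm_\lambda(g)(x,v)|^2\le\int_{-\infty}^0\lambda e^{\lambda s}\,|g(X^\pm(s;x,v),V^\pm(s;x,v))|^2\,\mathrm{d}s$. Multiplying by $|\mu_e^\pm(x,v)|$, integrating over $\Omega\times\RR^3$ and using Tonelli to move the $s$-integral outside, the key point is that $e^\pm$ and $p^\pm$ are constant along the trajectories, including across specular reflections — by \eqref{ep} they depend on $v$ only through $\langle v\rangle$ and $v_\varphi$, which are unchanged by $v_r\mapsto-v_r$ — so $|\mu_e^\pm|$ is itself invariant along the flow. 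Combined with the volume-preservation and change-of-variable identity \eqref{change-variable} of Lemma \ref{lem-trajectory}, this gives $\int_\Omega\int_{\RR^3}|\mu_e^\pm(x,v)|\,|g(X^\pm(s;x,v),V^\pm(s;x,v))|^2\,\mathrm{d}v\mathrm{d}x=\|g\|_\cH^2$ for every $s\in\RR$, hence $\|\mathcal{Q}^\pm_\lambda g\|_\cH^2\le\int_{-\infty}^0\lambda e^{\lambda s}\|g\|_\cH^2\,\mathrm{d}s=\|g\|_\cH^2$. That the norm equals one follows by taking $g\equiv 1$, which lies in $\cH$ since $|\mu_e^\pm|$ is $v$-integrable by \eqref{mu-cond}, and noting $\mathcal{Q}^\pm_\lambda 1=1$.

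For (ii)--(iv) I would isolate two elementary, $\lambda$-independent estimates. First, if $\psi=\psi(v)$ is measurable with $|\psi|\le 1$ (e.g.\ $\psi=\hat v_\varphi$, a component of $\tilde v/\langle v\rangle$, or $\psi\equiv 1$), then, viewing $\psi h$ as a function on $\Omega\times\RR^3$, $\|\psi h\|_{\cH^\pm}^2=\int_\Omega|h|^2\big(\int_{\RR^3}|\mu_e^\pm|\psi^2\,\mathrm{d}v\big)\,\mathrm{d}x\le C_\mu\|h\|_{L^2}^2$, using $\sup_x\int_{\RR^3}|\mu_e^\pm|\,\mathrm{d}v\le C_\mu$ from \eqref{mu-cond}; the same holds for $\tilde\RR^2$-valued $\vh$ with $\hat v\cdot\vh$ in place of $\psi h$, since $|\hat v_re_r+\hat v_\theta e_\theta|\le 1$. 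Second, by Cauchy--Schwarz in $v$, for $g\in\cH^\pm$ and $|\psi|\le 1$, $\big|\int_{\RR^3}\psi\mu_e^\pm g\,\mathrm{d}v\big|^2\le\big(\int_{\RR^3}|\mu_e^\pm|\,\mathrm{d}v\big)\int_{\RR^3}|\mu_e^\pm||g|^2\,\mathrm{d}v$, whence $\big\|\int_{\RR^3}\psi\mu_e^\pm g\,\mathrm{d}v\big\|_{L^2(\Omega)}^2\le C_\mu\|g\|_{\cH^\pm}^2$. Now every term occurring in $\A_1^\lambda-\Delta$, $\B^\lambda$, the last summand of $\A_2^\lambda-\lambda^2+\Delta$, $\tilde\S^\lambda+\lambda^2-\Delta$, $\tilde\T_1^\lambda+\lambda\nabla$ and $\tilde\T_2^\lambda$ has the form $\sum_\pm\int_{\RR^3}\psi_1\,\mu_e^\pm\,\mathcal{R}^\pm(\psi_2 h)\,\mathrm{d}v$ with $|\psi_1|,|\psi_2|\le 1$ and $\mathcal{R}^\pm\in\{\mathcal{Q}^\pm_\lambda,\,1-\mathcal{Q}^\pm_\lambda,\,I\}$ (for the vector operators $\psi_2 h$ is replaced by $\hat v\cdot\vh$); since $\|\mathcal{R}^\pm\|_{\cH\to\cH}\le 2$ by (i), chaining the first estimate, this bound, and the second estimate controls each such term by $2C_\mu\|h\|_{L^2}$ (resp.\ $2C_\mu\|\vh\|_{L^2}$), uniformly in $\lambda$. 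The only leftover pieces are the two multiplication operators in $\A_2^\lambda$, whose coefficients are obviously bounded: $1/(a+r\cos\theta)^2\le 1/(a-1)^2$ since $a>1$, and $\int_{\RR^3}(a+r\cos\theta)\hat v_\varphi\mu_p^\pm\,\mathrm{d}v$ is bounded because $a+r\cos\theta\le a+1$ and $\int_{\RR^3}|\mu_p^\pm|\,\mathrm{d}v\le C_\mu$.

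The one point needing genuine care is part (i): making the Jensen/Tonelli argument rigorous requires joint measurability of $(s,x,v)\mapsto(X^\pm(s;x,v),V^\pm(s;x,v))$ and the validity of \eqref{change-variable} for each fixed $s$, both furnished by Lemma \ref{lem-trajectory}, together with the trajectory-invariance of $|\mu_e^\pm|$ through reflections noted above; if preferred, one first proves the inequality for $C^1$ functions with $v$-compact support and then extends to all of $\cH$ by density. Everything in (ii)--(iv) is then routine operator bookkeeping.
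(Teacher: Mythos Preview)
Your proposal is correct and follows essentially the same approach as the paper. For (i) you use Jensen's inequality on the probability measure $\lambda e^{\lambda s}\,\mathrm{d}s$ where the paper instead applies Cauchy--Schwarz to the bilinear form $\langle \mathcal{Q}^\pm_\lambda h,g\rangle_\cH$, but both arguments rest on the same two facts---the invariance of $|\mu_e^\pm|$ along trajectories (including reflections) and the change-of-variables identity \eqref{change-variable}---and reach the same conclusion; for (ii)--(iv) your explicit factorization $L^2\to\cH\to\cH\to L^2$ is a slightly more systematic packaging of the paper's inner-product estimates, with the advantage that it yields the operator norm directly without implicitly invoking self-adjointness.
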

\begin{proof} For all $h,g \in \cH$, we have 
$$
\begin{aligned}
\Big| &\int_\Omega  \int_{\RR^3}\mu_e^\pm g \mathcal{Q}^\pm_\lambda h \; \mathrm{d}v\mathrm{d}x\Big| 
 = \Big| \int_{-\infty}^0\int_\Omega  \int_{\RR^3}\lambda e^{\lambda s}\mu_e^\pm g(x,v) h(X^\pm(s),V^\pm(s)) \; \mathrm{d}v\mathrm{d}x\mathrm{d}s\Big|      \\&
\le \Big(\int_{-\infty}^0\lambda e^{\lambda s} \int_\Omega  \int_{\RR^3}|\mu_e^\pm||g(x,v)|^2\; \mathrm{d}v\mathrm{d}x\mathrm{d}s\Big)^{1/2}   
\Big(\int_{-\infty}^0\lambda e^{\lambda s}\int_\Omega  \int_{\RR^3}|\mu_e^\pm| |h(X^\pm(s),V^\pm(s))|^2 \; \mathrm{d}v\mathrm{d}x\mathrm{d}s\Big)^{1/2}     \\&
\le \|g \|_{\cH} \|h\|_{\cH}  ,
 \end{aligned}
$$
in which in the last step we  made the change of variables $(x,v) = (X^+(s;x,v),V^+(s;x,v))$ in the integral for $h$. Also, $\mathcal{Q}^\pm_\lambda (1) = 1$. This proves {\em (i)}. 

Next, by definition, we have for $h \in L^2_\tau(\Omega)$ 
$$
|\langle (\mathcal{A}^\lambda_1 - \Delta)h,h \rangle_{L^2} | = \Big| \sum_\pm \int_\Omega\int_{\RR^3}\mu^\pm_e (1 - \mathcal{Q}^\pm_\lambda)h h \; \mathrm{d}v\mathrm{d}x\Big|  \le 2 \sum_\pm \|h \|_{\cH^\pm}^2 \le 2\sup_{x} \Big (\sum_\pm \int_{\RR^3}|\mu^\pm_e| \; \mathrm{d}v\Big) \| h \|_{L^2}^2.$$
Thus, together with the decay assumption on $\mu_e^\pm$, $\mathcal{A}^\lambda_1 - \Delta$ is a bounded operator. The boundedness of $\mathcal{A}^\lambda_2+\Delta$ and $\mathcal{B}^\lambda$ follows similarly, yielding {\em (ii)}. Also, from the definition, we can write the $L^2$ product $$\langle ( \tilde{\mathcal{S}^\lambda}  - \Delta)\vh, \vh \rangle_{L^2} = - \lambda^2 \|\vh\|_{L^2}^2 - \sum_\pm \langle\mathcal{Q}^\pm_\lambda (\hat v \cdot \vh), \hat v \cdot \vh\rangle_{\cH^\pm},$$ 
and 
$$\langle ( \tilde{\mathcal{T}}_1^\lambda +\lambda \nabla ) k, \vh\rangle_{L^2} =  \sum_\pm \langle\mathcal{Q}^\pm_\lambda k , \hat v \cdot \vh  \rangle_{\cH^\pm}, \qquad \langle \tilde{\mathcal{T}}_2^\lambda k, \vh\rangle_{L^2} = - \sum_\pm \langle\mathcal{Q}^\pm_\lambda (\hat v_\varphi k) , \hat v\cdot \vh \rangle_{\cH^\pm}, $$ 
for each $k\in L^2_\tau(\Omega)$ and each $\vh \in L^2_\tau(\Omega;\tilde\RR^2)$. {\em (iii)} and {\em (iv)} are thus clear, following from the boundedness of $\mathcal{Q}^\pm_\lambda$. 
\end{proof}

\begin{corollary}\label{cor-PropAT} Let  $0 <  \lambda < \infty$.  

(i) $\mathcal{A}^\lambda_1$ and $\mathcal{A}^\lambda_2$ are well-defined operators from $\mathcal{X}\subset L^2_\tau(\Omega)$ into  $L^2_\tau(\Omega)$. 

(ii) ${\tilde\S}^\lb$ is well-defined from $\tilde{\mathcal{Y}}\subset L^2_\tau(\Omega;\tilde\RR^2)$ into $L^2_\tau(\Omega;\tilde\RR^2)$.

(iii) $\tilde{\mathcal{T}}^\lambda_1$ is well-defined from $\mathcal{X}_1 : = \{h \in H^1_\tau(\Omega)~:~ h = 0 \text{  on  }\partial\Omega\}$ into $L^2_\tau(\Omega;\tilde\RR^2)$. 

\end{corollary}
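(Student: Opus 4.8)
The plan is to read the corollary off Lemma \ref{op-bounds} by splitting each operator into a principal differential part plus a remainder that the lemma already shows is bounded on the appropriate $L^2$ space; one then only has to check that the principal part maps the indicated domain into the target space. For (i), write $\mathcal{A}_1^\lambda = \Delta + (\mathcal{A}_1^\lambda - \Delta)$ and $\mathcal{A}_2^\lambda = (\lambda^2 - \Delta) + (\mathcal{A}_2^\lambda - \lambda^2 + \Delta)$. By Lemma \ref{op-bounds}(ii) the parenthesized remainders are bounded from $L^2_\tau(\Omega)$ into itself, and for $h \in \mathcal{X} = H^2_\tau(\Omega) \cap \{h|_{\partial\Omega} = 0\}$ the terms $\Delta h$ and $(\lambda^2 - \Delta)h$ lie in $L^2_\tau(\Omega)$; hence $\mathcal{A}_1^\lambda h, \mathcal{A}_2^\lambda h \in L^2_\tau(\Omega)$. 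I would also remark here that, although the velocity integrals in the definitions are a priori defined only weakly (tested against smooth functions of $x$), the boundedness assertions of Lemma \ref{op-bounds} — whose proof runs through Fubini and the measure-preserving change of variables along the equilibrium trajectories (Lemma \ref{lem-trajectory}) — already guarantee that these integrals genuinely produce $L^2_\tau$ functions, so nothing new is needed on that score.

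For (ii), write $\tilde{\mathcal{S}}^\lambda = (\Delta - \lambda^2) + (\tilde{\mathcal{S}}^\lambda + \lambda^2 - \Delta)$; Lemma \ref{op-bounds}(iii) handles the remainder as a bounded operator on $L^2_\tau(\Omega;\tilde\RR^2)$. The one point that must be checked is that the principal part sends $\tilde{\mathcal{Y}} \subset H^2_\tau(\Omega;\tilde\RR^2)$ back into $L^2_\tau(\Omega;\tilde\RR^2)$; equivalently, that the vector Laplacian of a toroidally symmetric field orthogonal to $e_\varphi$ again has no $e_\varphi$ component. This is exactly the identity $\Delta\tilde\vA \in \mbox{span}\{e_r,e_\theta\}$ recorded in Section 2 and proved in Appendix \ref{app-cal} (the same fact that decouples the Maxwell system into \eqref{Maxwell-system}). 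Granting it, $\tilde{\mathcal{S}}^\lambda$ is well-defined on $\tilde{\mathcal{Y}}$. For (iii), split $\tilde{\mathcal{T}}_1^\lambda = -\lambda\nabla + (\tilde{\mathcal{T}}_1^\lambda + \lambda\nabla)$; by Lemma \ref{op-bounds}(iv) the remainder is bounded from $L^2_\tau(\Omega)$ into $L^2_\tau(\Omega;\tilde\RR^2)$, and for $h \in \mathcal{X}_1 = H^1_\tau(\Omega) \cap \{h|_{\partial\Omega} = 0\}$ toroidal symmetry gives $\nabla h = (\partial_r h)e_r + \frac1r(\partial_\theta h)e_\theta \in L^2_\tau(\Omega;\tilde\RR^2)$, so $\tilde{\mathcal{T}}_1^\lambda h \in L^2_\tau(\Omega;\tilde\RR^2)$.

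I do not expect a real obstacle: once Lemma \ref{op-bounds} is in hand, this corollary is essentially bookkeeping. The only step that is not purely automatic is confirming that the $\tilde\RR^2$-valued structure is preserved by the differential principal parts, i.e., that $\Delta$ and $\nabla$ create no $e_\varphi$ component when acting on toroidally symmetric fields perpendicular to $e_\varphi$; but that is precisely the vector-calculus computation already carried out in Appendix \ref{app-cal}, so the proof simply cites it.
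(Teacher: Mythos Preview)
Your proposal is correct and matches the paper's approach: the paper states this result as a corollary with no separate proof, treating it as an immediate consequence of Lemma~\ref{op-bounds} via exactly the ``principal part plus bounded remainder'' splitting you describe. Your additional remarks about the $\tilde\RR^2$-structure being preserved by $\Delta$ and $\nabla$ (citing Appendix~\ref{app-cal}) are the right way to make this explicit.
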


\begin{lemma}\label{lem-self-adj} Let $0<\lambda<\infty$. 

(i) $\mathcal{A}^\lambda_1$ and $\mathcal{A}^\lambda_2$ are self-adjoint operators on $L_\tau^2(\Omega)$. 

(ii) ${\tilde\S}^\lb$ is self-adjoint on $L^2_\tau(\Omega;\tilde\RR^2)$.

(iii) The adjoints of $\tilde{\mathcal{T}}^\lambda_1, \tilde{\mathcal{T}}^\lambda_2, \mathcal{B}^\lambda$ are as stated in the beginning of Section \ref{sec-defOp}. The domains of $(\tilde{\mathcal{T}}^\lambda_1)^*, (\tilde{\mathcal{T}}^\lambda_2)^*, (\mathcal{B}^\lambda)^*$ are $\{ \vh \in L^2_\tau(\Omega; \tilde \RR^2) ~:~\nabla \cdot \vh =0\}$, $L^2_\tau(\Omega; \tilde \RR^2)$, and $L^2_\tau(\Omega)$, respectively. In addition, the last two adjoints and $(\tilde\T_1^\lb +\lb \nabla)^*$ are bounded operators. 

(iv) The matrix operator on the left hand side of \eqref{matrix-Maxwell} is self-adjoint on $L_\tau^2(\Omega) \times L_\tau^2(\Omega)\times L_\tau^2(\Omega; \tilde \RR^2 )$ when considered with the domain $\mathcal{X} \times \mathcal{X} \times \tilde{\mathcal{Y}}$.
\end{lemma}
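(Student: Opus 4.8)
The plan is to deduce all four parts from a single structural identity for $\mathcal{Q}^\pm_\lambda$, together with the uniform bounds of Lemma \ref{op-bounds} and the Kato--Rellich theorem. First I would record the behavior of $\mathcal{Q}^\pm_\lambda$ under velocity reflection. Let $\mathcal{R}$ be the map $(x,v_r,v_\theta,v_\varphi)\mapsto(x,-v_r,-v_\theta,v_\varphi)$. Since $\mu^\pm=\mu^\pm(e^\pm,p^\pm)$, the weight $|\mu_e^\pm|$ is $\mathcal{R}$-invariant, so $\mathcal{R}$ is an isometric involution of $\cH^\pm$, and by \eqref{def-opD} the operator $\oD^\pm$ is odd under $\mathcal{R}$. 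Using the skew-adjointness of $\oD^\pm$ from Subsection \ref{ss-Vlasov} (or directly from \eqref{def-Q} and the reversibility of trajectories in Lemma \ref{lem-reflection}) one obtains $(\mathcal{Q}^\pm_\lambda)^\ast=\mathcal{R}\,\mathcal{Q}^\pm_\lambda\,\mathcal{R}$ on $\cH^\pm$, and hence $\langle\mathcal{Q}^\pm_\lambda h,k\rangle_{\cH}=\epsilon_h\epsilon_k\langle h,\mathcal{Q}^\pm_\lambda k\rangle_{\cH}$ whenever $\mathcal{R}h=\epsilon_h h$ and $\mathcal{R}k=\epsilon_k k$ with $\epsilon_h,\epsilon_k\in\{\pm1\}$. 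In every term occurring in the operators the relevant arguments are $\mathcal{R}$-even (a function of $x$ alone, possibly times $\hat v_\varphi$) or $\mathcal{R}$-odd (times $\hat v\cdot\vh$ with $\vh\in\tilde\RR^2$), so this one identity supplies all the symmetry needed.

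For parts (i) and (ii) I would note that Lemma \ref{op-bounds} makes $\mathcal{A}_1^\lambda-\Delta$, $\mathcal{A}_2^\lambda-\lambda^2+\Delta$, and $\tilde{\mathcal{S}}^\lambda+\lambda^2-\Delta$ bounded on the respective $L^2_\tau$ spaces, while the reflection identity (together with the parity of $\hat v_\varphi$ and $\hat v\cdot\vh$ and the identity $\int_{\RR^3}\partial_{v_\varphi}\mu^\pm\,\mathrm{d}v=0$) shows each of them is symmetric. Since the scalar Dirichlet Laplacian is self-adjoint on $L^2_\tau(\Omega)$ with domain $\mathcal{X}$, and --- see the last paragraph --- the vector operator $-\Delta$ with the $\tilde{\mathcal{Y}}$-boundary conditions is self-adjoint on $L^2_\tau(\Omega;\tilde\RR^2)$ with domain $\tilde{\mathcal{Y}}$, the Kato--Rellich theorem yields that $\mathcal{A}_1^\lambda,\mathcal{A}_2^\lambda$ are self-adjoint with domain $\mathcal{X}$ and $\tilde{\mathcal{S}}^\lambda$ with domain $\tilde{\mathcal{Y}}$. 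For part (iii) I would compute $\langle\mathcal{B}^\lambda h,k\rangle_{L^2}$ and $\langle\tilde{\mathcal{T}}^\lambda_i h,\vh\rangle_{L^2}$ directly, moving $\mathcal{Q}^\pm_\lambda$ across by the reflection identity; for $\mathcal{B}^\lambda$ I would use $\int_{\RR^3}\mu_e^\pm\hat v_\varphi\,\mathrm{d}v=-(a+r\cos\theta)\int_{\RR^3}\mu_p^\pm\,\mathrm{d}v$ (from $\partial_{v_\varphi}\mu^\pm=\mu_e^\pm\hat v_\varphi+(a+r\cos\theta)\mu_p^\pm$); for $\tilde{\mathcal{T}}^\lambda_1$, defined on $\mathcal{X}_1$, I would integrate by parts in $x$ on the $-\lambda\nabla h$ term, the boundary term $\int_{\partial\Omega}h\,\vh\cdot n$ dropping because $h|_{\partial\Omega}=0$; this produces the term $\lambda\nabla\cdot\vh$ and forces the domain of $(\tilde{\mathcal{T}}^\lambda_1)^\ast$ onto $\{\nabla\cdot\vh=0\}$, on which $\tilde{\mathcal{T}}^\lambda_1+\lambda\nabla$ and hence its adjoint are bounded by Lemma \ref{op-bounds}. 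For $\tilde{\mathcal{T}}^\lambda_2$ there is no boundary term, so $(\tilde{\mathcal{T}}^\lambda_2)^\ast$ is everywhere defined and bounded. This reproduces the stated formulas and domains, and in particular $\tilde{\mathcal{Y}}$ lies inside the domains of all three adjoints.

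For part (iv) the plan is to write the matrix operator in \eqref{matrix-Maxwell} as $D+N$, with $D=\mathrm{diag}(\mathcal{A}_1^\lambda,\mathcal{A}_2^\lambda,\tilde{\mathcal{S}}^\lambda)$ self-adjoint on $\mathcal{X}\times\mathcal{X}\times\tilde{\mathcal{Y}}$ by parts (i),(ii), and $N$ the off-diagonal part. On that domain $N$ is symmetric by part (iii): the pairs $(\mathcal{B}^\lambda,(\mathcal{B}^\lambda)^\ast)$ and $(\tilde{\mathcal{T}}^\lambda_i,(\tilde{\mathcal{T}}^\lambda_i)^\ast)$ are genuine adjoints and $\mathcal{X}\times\mathcal{X}\times\tilde{\mathcal{Y}}$ sits inside the domain of every entry (crucially $\tilde{\mathcal{Y}}\subset\{\nabla\cdot\vh=0\}$, so on the domain $(\tilde{\mathcal{T}}^\lambda_1)^\ast$ reduces to a bounded operator). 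Moreover $N$ is infinitesimally $D$-bounded: every entry of $N$ is $L^2$-bounded on $\mathcal{X}\times\mathcal{X}\times\tilde{\mathcal{Y}}$ except the term $-\lambda\nabla$ inside $\tilde{\mathcal{T}}^\lambda_1$, and $\|\nabla\phi\|_{L^2}\le\|\Delta\phi\|_{L^2}^{1/2}\|\phi\|_{L^2}^{1/2}\le\varepsilon\|\Delta\phi\|_{L^2}+\varepsilon^{-1}\|\phi\|_{L^2}$ for all $\varepsilon>0$, with $\|\Delta\phi\|_{L^2}$ controlled by $\|\mathcal{A}_1^\lambda\phi\|_{L^2}+C\|\phi\|_{L^2}$ since $\mathcal{A}_1^\lambda-\Delta$ is bounded. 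Kato--Rellich then gives self-adjointness of $D+N$ on $\mathcal{X}\times\mathcal{X}\times\tilde{\mathcal{Y}}$.

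The step I expect to be the real obstacle is the self-adjointness of the vector operator $-\Delta$ on $\tilde{\mathcal{Y}}$, which is what the rest hangs on. For symmetry: fields in $\tilde{\mathcal{Y}}$ are parallel to $n$ on $\partial\Omega$ (because $e_\theta\cdot\vh=0$ there) and divergence-free, so writing $-\Delta\vh=\nabla\times\nabla\times\vh$ and integrating by parts, the boundary term is $\int_{\partial\Omega}(\nabla\times\vh)\cdot(\vec g\times n)\,\mathrm{d}S$ which vanishes because $\vec g\times n=0$ on $\partial\Omega$. The genuinely delicate part is the range condition: one must solve the div--curl boundary value problem $(1+\lambda^2-\Delta)\vh=\vec f$ in $\tilde{\mathcal{Y}}$ for every $\vec f\in L^2_\tau(\Omega;\tilde\RR^2)$, where the non-standard boundary conditions in \eqref{Y-space} (especially $\nabla\cdot((e_r\cdot\vh)e_r)=0$) enter and the preservation of $\nabla\cdot\vh=0$ must be checked. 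Everything else is routine bookkeeping around the identity $(\mathcal{Q}^\pm_\lambda)^\ast=\mathcal{R}\,\mathcal{Q}^\pm_\lambda\,\mathcal{R}$ and the uniform bounds of Lemma \ref{op-bounds}.
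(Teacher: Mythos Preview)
Your proposal is correct and follows essentially the same strategy as the paper: both hinge on the reflection identity $(\mathcal{Q}^\pm_\lambda)^\ast=\mathcal{R}\,\mathcal{Q}^\pm_\lambda\,\mathcal{R}$ in $\cH^\pm$ (the paper states it as \eqref{adjointP}), then reduce to self-adjointness of the Laplacian with the appropriate boundary conditions via a bounded symmetric perturbation. The differences are stylistic rather than substantive. For the vector Laplacian on $\tilde{\mathcal{Y}}$, the paper checks symmetry by a direct component-wise integration by parts using $h_\theta=g_\theta=0$ and the Robin condition on $h_r,g_r$, whereas you use the curl--curl identity and the fact that $\vg\times n=0$ on $\partial\Omega$; both give the same boundary cancellation, and both leave the full self-adjointness (the range condition you flag) to standard elliptic theory without spelling it out. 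For part (iv) the paper simply asserts that it follows from (i)--(iii), while you supply the missing justification by decomposing into diagonal plus off-diagonal and checking that the unbounded off-diagonal piece $-\lambda\nabla$ is infinitesimally $\mathcal{A}_1^\lambda$-bounded; this is a genuine improvement in rigor over the paper's one-line claim.
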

  
\begin{proof}  We first check  the adjoint formula for $\mathcal{Q}_\lambda^\pm$: 
\begin{equation}\label{adjointP}
\int_\Omega   \int_{\RR^3}\mu_e^\pm h(x,v) \mathcal{Q}^\pm_\lambda (g(x,v)) \; \mathrm{d}v\mathrm{d}x 
= \int_\Omega   \int_{\RR^3}\mu_e^\pm g(x,\mathcal{R} v) \mathcal{Q}^\pm_\lambda (h(x,\mathcal{R} v)) \; \mathrm{d}v\mathrm{d}x,          
 \end{equation} where $\mathcal{R} v := - v_r e_r - v_\theta e_\theta + v_\varphi e_\varphi$ for $v = v_re_r + v_\theta e_\theta + v_\varphi e_\varphi$. 
We shall prove \eqref{adjointP} for the $+$ case; the $-$ case is similar. We recall the definition of $\mathcal{Q}^+_\lambda$ from \eqref{def-Q} and  use the change of variables  
 $$(y,w):=(X^+(s;x,v),V^+(s;x,v)), \qquad (x,v) := (X^+(-s;y,w),V^+(-s;y,w)),$$
which has Jacobian one where it is defined. So, we can write the left side of \eqref{adjointP} as  
$$\begin{aligned}
\int_{-\infty}^0\int_\Omega   \int_{\RR^3}\lambda &e^{\lambda s}\mu_e^+h(x,v)\  g(X^+(s;x,v),V^+(s;x,v)) \; \mathrm{d}v\mathrm{d}x\mathrm{d}s
\\ &= \int_{-\infty}^0\int_\Omega   \int_{\RR^3}\lambda e^{\lambda s}\mu_e^+h(X^+(-s;y,w),V^+(-s;y,w))\  g(y,w) \; \mathrm{d}w\mathrm{d}y\mathrm{d}s.
\end{aligned}$$
Observe that the characteristics defined by the ODE \eqref{trajectory} and the specular boundary condition \eqref{traj-reflection1} 
are invariant under the time-reversal transformation   
 $s\mapsto -s$, $r \mapsto r$, $\theta \mapsto \theta$, $v_r\mapsto -v_r$, and $v_\theta \mapsto -v_\theta$, and $v_\varphi \mapsto v_\varphi$.  
Thus   
$$ X^+(-s;x,v) = X^+(s;x,\mathcal{R} v), \qquad V^+(-s;x,v) = \mathcal{R} V^+ (s;x,\mathcal{R}v), $$
at least if we avoid the boundary.  Changing variable in the $\mathrm{d}v\mathrm{d}x$ integral and using the invariance, we obtain 
$$\begin{aligned}
\int_{-\infty}^0\int_\Omega   \int_{\RR^3}\lambda &e^{\lambda s}\mu_e^+h(x,v)\ g(X^+(s;x,v),V^+(s;x,v)) \; \mathrm{d}v\mathrm{d}x\mathrm{d}s
\\ &= \int_{-\infty}^0\int_\Omega   \int_{\RR^3}\lambda e^{\lambda s}\mu_e^+h(X^+(s;y,\mathcal{R} w),\mathcal{R} V^+(s;y,\mathcal{R} w))\ g(y,w) \; \mathrm{d}w\mathrm{d}y\mathrm{d}s
\\ &= \int_{-\infty}^0\int_\Omega   \int_{\RR^3}\lambda e^{\lambda s}\mu_e^+h(X^+(s;x,v),\mathcal{R} V^+(s;x,v))\ g(x,\mathcal{R} v) \; \mathrm{d}v\mathrm{d}x\mathrm{d}s,
\end{aligned}$$
in which the last identity comes from the change of notation   $(x,v): = (y,\mathcal{R} w)$. 
By  definition of $\mathcal{Q}^+_\lambda$, this result is precisely the identity \eqref{adjointP}.

Thanks to the adjoint identity \eqref{adjointP} and the fact that $v_\varphi$ does not change under the mapping $\mathcal{R}$, the self-adjointness of the operators $\mathcal{A}_{1}^\lambda - \Delta $ and $\mathcal{A}_2^\lambda+\Delta$ is now clear.  
The self-adjointness of $\mathcal{A}_{1}^\lambda$ and $\mathcal{A}_{2}^\lambda$ now 
  follows from that of $\Delta$ with the Dirichlet boundary condition. Recall that the Dirichlet condition is built in the function space $\mathcal{X}$.  Similarly, the adjointness formula for $\mathcal{B}^\lambda$ and $(\mathcal{B}^\lambda)^*$ is also clear from \eqref{adjointP} together with the fact that $\int_{\RR^3}[(a+r\cos \theta)\mu^\pm_p + \hat v_\varphi \mu_e^\pm ] \; \mathrm{d}v = \int_{\RR^3}\D_{v_\varphi} [\mu]\; \mathrm{d}v = 0$.  

Next, by definition we have 
$$\langle ( \tilde{\mathcal{S}^\lambda}  - \Delta)\vh, \vg \rangle_{L^2} = - \lambda^2 \langle \vh,\vg \rangle_{L^2} - \sum_\pm \langle\mathcal{Q}^\pm_\lambda (\hat v \cdot \vh), \hat v \cdot \vg\rangle_{\cH^\pm},$$ 
and thus $ \tilde{\mathcal{S}^\lambda}  - \Delta$ is clearly self-adjoint in $L^2_\tau(\Omega; \tilde \RR^2)$ thanks to the identity \eqref{adjointP}. Let us check the self-adjointness of $\Delta$ with the boundary conditions built in $\tilde{\mathcal{Y}}$. Indeed, for $\vh,\vg \in \tilde{\mathcal{Y}}$, integration by parts yields
$$ \begin{aligned}
\langle \Delta \vh , \vg \rangle_{L^2} &= \langle \vh , \Delta \vg \rangle _{L^2}+ \int_{\D\Omega} (\D_r \vh \cdot \vg - \vh \cdot \D_r \vg)\; \mathrm{d}S_x \\
&= \langle \vh , \Delta \vg \rangle _{L^2}
+ \int_{\D\Omega} \{(\D_r h_r) g_r  - h_r (\D_r g_r ) + (\D_rh_\theta)g_\theta - h_\theta(\D_rg_\theta)\}\ \mathrm{d}S_x \\
&= \langle \vh , \Delta \vg \rangle_{L^2},
\end{aligned}$$
due to the boundary conditions $h_\theta = g_\theta =0$, and  $\D_r h_r + \frac{a+2\cos \theta}{a+\cos \theta} h_r   
=  \D_r g_r + \frac{a+2\cos \theta}{a+\cos \theta} g_r =0$. 
This proves that $\tilde{\mathcal{S}}^\lambda$ is self-adjoint in $L^2_\tau(\Omega; \tilde \RR^2)$ with domain $\tilde{\mathcal{Y}}$.

Finally, let us check the adjoint formulas for $\tilde{\mathcal{T}}^\lambda_j$. Again, by definition and the identity \eqref{adjointP}, we have
$$\begin{aligned}
\langle ( \tilde{\mathcal{T}}_1^\lambda +\lambda \nabla ) h, \vg\rangle_{L^2} &=  ~~\sum_\pm \langle\mathcal{Q}^\pm_\lambda h , \hat v \cdot \vg  \rangle_{\cH^\pm} = -  \sum_\pm \langle h , \mathcal{Q}^\pm_\lambda (\hat v \cdot \vg)  \rangle_{\cH^\pm}, 
\\
\langle \tilde{\mathcal{T}}_2^\lambda h, \vg\rangle_{L^2} &= - \sum_\pm \langle\mathcal{Q}^\pm_\lambda (\hat v_\varphi h) , \hat v\cdot \vg \rangle_{\cH^\pm} =  \sum_\pm \langle  h , \hat v_\varphi \mathcal{Q}^\pm_\lambda (\hat v\cdot \vg) \rangle_{\cH^\pm}. 
\end{aligned}$$ 
In addition, for each $(h,\vg) \in \mathcal{X}_1\times L^2(\Omega;\tilde\RR^2)$, integration by parts gives
$$ \langle \nabla h , \vg \rangle_{L^2} = -  \langle h, \nabla \cdot  \vg \rangle_{L^2} + \int_{\D\Omega} h \vg \cdot e_r \; \mathrm{d}S_x = -  \langle h, \nabla \cdot  \vg \rangle_{L^2} ,$$
in which the boundary term vanishes due to the Dirichlet boundary condition on $h$ (as an element in $\mathcal{X}_1$; see Corollary \ref{cor-PropAT}). This verifies the adjoint formulas of $\tilde{\mathcal{T}}^\lambda_j$ and $(\tilde{\mathcal{T}}^\lambda_j)^*$. The boundedness of these adjoint operators follows directly from the definition and the boundedness of $\mathcal{Q}^\pm_\lambda$. 

The adjoint property {\em (iv)} now follows from {\em (i)--(iii)}.   
\end{proof}

We now have the following lemma concerning the signs of two of these operators. 
\begin{lemma}\label{lem-AS}  

 (i) Let $0<\lambda<\infty$.  The operator $\mathcal{A}^\lambda_{1}$ is negative definite on $L^2_\tau  (\Omega)$ 
 with  domain $\mathcal{X}$ and it is a one-one map of $\mathcal{X}$   onto $L^2_\tau (\Omega)$.   Its inverse 
 $(\mathcal{A}^\lambda_{1})^{-1} $ maps $L^2_\tau(\Omega) $ into $ \mathcal X$ with operator bound independent of $\lb$.  
 
 (ii) For $\lb$ sufficiently large, the operator  $\tilde{\mathcal{S}}^\lambda$  is negative definite on $L^2(\Omega; \tilde \RR^2 )$ with its domain $\tilde{\mathcal{Y}}$.  The same is true of the operator $\tilde{\mathcal{S}}^0$.  
\end{lemma}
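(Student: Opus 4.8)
The plan is to compute the quadratic forms $\langle\mathcal{A}_1^\lambda h,h\rangle_{L^2}$ and $\langle\tilde{\mathcal{S}}^\lambda\vh,\vh\rangle_{L^2}$ explicitly and read off negativity, exactly as was done for $\mathcal{A}_1^0$ in Lemma~\ref{lem-AB0property}(i). For part~(i), I would note that for $h\in\mathcal{X}$ (so $h=0$ on $\partial\Omega$), integrating by parts and using $\mu_e^\pm<0$ together with the definition of the weighted inner product on $\cH^\pm$ gives
$$\langle\mathcal{A}_1^\lambda h,h\rangle_{L^2}\ =\ -\|\nabla h\|_{L^2}^2\ -\ \sum_\pm\big\langle(1-\mathcal{Q}_\lambda^\pm)h,\,h\big\rangle_{\cH^\pm}.$$
Since $\mathcal{Q}_\lambda^\pm$ has operator norm one on $\cH^\pm$ by Lemma~\ref{op-bounds}(i), each term $\langle(1-\mathcal{Q}_\lambda^\pm)h,h\rangle_{\cH^\pm}$ is $\ge0$, so $\langle\mathcal{A}_1^\lambda h,h\rangle_{L^2}\le-\|\nabla h\|_{L^2}^2\le-c_0\|h\|_{L^2}^2$ by the Poincar\'e inequality, $c_0$ being the Poincar\'e constant of $\Omega$ and therefore independent of $\lambda$. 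Combined with the self-adjointness of $\mathcal{A}_1^\lambda$ (Lemma~\ref{lem-self-adj}(i)), this uniform lower bound makes $\mathcal{A}_1^\lambda$ a bijection of $\mathcal{X}$ onto $L^2_\tau(\Omega)$ with $\|(\mathcal{A}_1^\lambda)^{-1}\|_{L^2\to L^2}\le c_0^{-1}$; and since $\Delta h=\mathcal{A}_1^\lambda h-\sum_\pm\int_{\RR^3}\mu_e^\pm(1-\mathcal{Q}_\lambda^\pm)h\,\mathrm{d}v$ has right-hand side bounded in $L^2$ uniformly in $\lambda$ by Lemma~\ref{op-bounds}(ii), elliptic regularity gives $(\mathcal{A}_1^\lambda)^{-1}f\in\mathcal{X}$ with $\|(\mathcal{A}_1^\lambda)^{-1}f\|_{H^2}\le C\|f\|_{L^2}$, $C$ independent of $\lambda$. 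This is the proof of Lemma~\ref{lem-AB0property}(i) verbatim with $\mathcal{P}^\pm$ replaced by $\mathcal{Q}_\lambda^\pm$.

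For part~(ii), the algebraic heart is an integration-by-parts identity for the Laplacian on $\tilde{\mathcal{Y}}$: since $\nabla\cdot\vh=0$ in $\Omega$ one has $\Delta\vh=-\nabla\times(\nabla\times\vh)$, and integrating by parts once, the boundary term $\int_{\partial\Omega}\big((\nabla\times\vh)\times\vh\big)\cdot n\,\mathrm{d}S_x$ equals $\int_{\partial\Omega}\big((\nabla\times\vh)\cdot e_\varphi\big)\,(e_\theta\cdot\vh)\,\mathrm{d}S_x$ (using $\nabla\times\vh\in\mathrm{span}\{e_\varphi\}$ and the frame relations), which vanishes since $e_\theta\cdot\vh=0$ on $\partial\Omega$. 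Hence $-\langle\Delta\vh,\vh\rangle_{L^2}=\|\nabla\times\vh\|_{L^2}^2$. Next, since $\vh$ has no $e_\varphi$-component, $(\tilde v/\langle v\rangle)\cdot\vh=\hat v\cdot\vh$, so
$$\langle\tilde{\mathcal{S}}^\lambda\vh,\vh\rangle_{L^2}\ =\ -\lambda^2\|\vh\|_{L^2}^2\ -\ \|\nabla\times\vh\|_{L^2}^2\ -\ \sum_\pm\big\langle\mathcal{Q}_\lambda^\pm(\hat v\cdot\vh),\,\hat v\cdot\vh\big\rangle_{\cH^\pm}.$$
For $\lambda$ large, the last sum is bounded in absolute value by $\sum_\pm\|\hat v\cdot\vh\|_{\cH^\pm}^2\le C_\mu\|\vh\|_{L^2}^2$ (using $|\hat v|\le1$ and \eqref{mu-cond}), so $\langle\tilde{\mathcal{S}}^\lambda\vh,\vh\rangle_{L^2}\le(-\lambda^2+C_\mu)\|\vh\|_{L^2}^2<0$ once $\lambda^2>C_\mu$; with the self-adjointness of $\tilde{\mathcal{S}}^\lambda$ on $\tilde{\mathcal{Y}}$ (Lemma~\ref{lem-self-adj}(ii)) this gives negative definiteness for large $\lambda$.

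For $\lambda=0$, $\mathcal{Q}_0^\pm=\mathcal{P}^\pm$ is an orthogonal projection, so $\langle\mathcal{P}^\pm(\hat v\cdot\vh),\hat v\cdot\vh\rangle_{\cH^\pm}=\|\mathcal{P}^\pm(\hat v\cdot\vh)\|_{\cH^\pm}^2\ge0$ and therefore $\langle\tilde{\mathcal{S}}^0\vh,\vh\rangle_{L^2}\le-\|\nabla\times\vh\|_{L^2}^2$; it then remains to establish the Poincar\'e-type coercivity $\|\nabla\times\vh\|_{L^2}^2\ge c_0\|\vh\|_{L^2}^2$ on $\tilde{\mathcal{Y}}$. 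For this I would use that $\Delta$ with domain $\tilde{\mathcal{Y}}$ is self-adjoint (shown, for $\lambda=0$ as well, in the proof of Lemma~\ref{lem-self-adj}) with compact resolvent --- because $\tilde{\mathcal{Y}}\subset H^2(\Omega)$ embeds compactly in $L^2(\Omega)$ --- and, by the identity just derived, nonnegative; hence its spectrum is discrete and contained in $[0,\infty)$, and $0$ is not an eigenvalue, for $\Delta\vh=0$ with $\vh\in\tilde{\mathcal{Y}}$ forces $\nabla\times\vh=0$, and a toroidally symmetric, $e_\varphi$-free, divergence-free, curl-free field with vanishing tangential part on $\partial\Omega$ must vanish (the cross-section being simply connected). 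Thus the bottom of the spectrum is a positive constant $c_0$, completing part~(ii).

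The hard part is precisely this last coercivity step at $\lambda=0$: one must rule out nontrivial ``harmonic fields'' $\vh$ --- divergence-free, curl-free, with tangential part zero on $\partial\Omega$ --- within the present symmetry class, which is the one place where the geometry and topology of $\Omega$ enter essentially. The rest is a routine quadratic-form computation together with the self-adjointness and uniform-boundedness facts already furnished by Lemmas~\ref{op-bounds} and \ref{lem-self-adj}.
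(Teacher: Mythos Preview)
Your proof of part~(i) is correct and essentially identical to the paper's; you are simply more explicit about the Poincar\'e constant and the elliptic regularity step, whereas the paper just appeals to discreteness of the spectrum.

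For part~(ii) your argument is correct but takes a genuinely different route. The paper integrates $-\langle\Delta\vh,\vh\rangle$ by parts in the ordinary way, obtaining
\[
-\langle\Delta\vh,\vh\rangle_{L^2}\;=\;\|\nabla\vh\|_{L^2}^2\;+\;\int_{\partial\Omega}\frac{a+2\cos\theta}{a+\cos\theta}\,|h_r|^2\,\mathrm{d}S_x,
\]
and then, for $\lambda=0$, uses only the nonnegativity of the projection terms $\|\mathcal P^\pm(\hat v_r h_r)-\mathcal P^\pm(\hat v_\theta h_\theta)\|_{\cH}^2\ge0$, so that negative definiteness of $\tilde{\mathcal S}^0$ reduces directly to the positivity of the quadratic form $\|\nabla\vh\|^2+\text{(boundary term)}$ on $\tilde{\mathcal Y}\setminus\{0\}$. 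You instead exploit the divergence constraint to write $\Delta\vh=-\nabla\times(\nabla\times\vh)$, obtaining the cleaner identity $-\langle\Delta\vh,\vh\rangle=\|\nabla\times\vh\|^2$ with no boundary contribution (your computation of the boundary term is correct: $\nabla\times\vh\parallel e_\varphi$ and $e_\theta\cdot\vh=0$ on $\partial\Omega$). The price is that you must then establish the Poincar\'e-type coercivity $\|\nabla\times\vh\|^2\ge c_0\|\vh\|^2$ on $\tilde{\mathcal Y}$, which you do by ruling out nontrivial harmonic fields; this step is sound --- on the simply connected cross-section, a curl-free $\vh$ is a gradient $\nabla\psi$, divergence-free makes $\psi$ harmonic, and $e_\theta\cdot\vh=0$ on $\partial\Omega$ forces $\psi$ constant there, hence constant everywhere. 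Your approach is arguably more conceptual (it isolates exactly where the topology enters), whereas the paper's is more computational and keeps the full $H^1$-seminorm $\|\nabla\vh\|^2$, which it reuses verbatim in the later coercivity estimate for $\tilde{\mathcal U}^\lambda$ in Lemma~\ref{lem-Ulambda}.
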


\begin{proof} 
For the first assertion, Lemma \ref{op-bounds} {\em (i)} yields   
 $$
 \Big| \int_\Omega  \int_{\RR^3}\mu_e^\pm h \mathcal{Q}^\pm_\lambda h \; \mathrm{d}v\mathrm{d}x\Big|  \le  \int_\Omega  \int_{\RR^3}|\mu_e^\pm||h|^2\; \mathrm{d}v\mathrm{d}x $$
so that 
$$  
\sum_\pm \int_\Omega  \int_{\RR^3}\mu^\pm_e h (1 - \mathcal{Q}^\pm_\lambda)h \; \mathrm{d}v   
\le 0.  $$
Thus from its definition, $\mathcal{A}_{1}^\lambda \le 0$, and $\mathcal{A}_1^\lambda h =0$ if and only if $h$ is a constant. The constant is necessarily zero due to the Dirichlet boundary condition.  
Since $\mathcal{A}_{1}^\lambda$ has discrete spectrum, it is invertible on the set orthogonal  to the 
kernel of  $\mathcal{A}_{1}^\lambda$, which is the entire $L^2_\tau  (\Omega)$ space.  

 
Next we check {\em (ii)}. By definition, for $\vh =  h_r e_r + h_\theta e_\theta \in \tilde{\mathcal{Y}}$, a direct calculation shows that 
\begin{equation}\label{hSh} \begin{aligned}
-\langle \mathcal{S}^\lambda \vh,\vh\rangle_{L^2} 
&= \langle (\lambda^2 - \Delta)\vh, \vh \rangle_{L^2}  \\&\quad +   \sum_\pm \Big[  \langle \mathcal{Q}^\pm_\lambda(\hat v_r h_r), \hat v_r h_r\rangle_{\cH}   +  \langle \mathcal{Q}^\pm_\lambda(\hat v_\theta h_\theta), \hat v_\theta h_\theta\rangle_{\cH}   -  2\langle \mathcal{Q}^\pm_\lambda(\hat v_r h_r), \hat v_\theta h_\theta\rangle_{\cH}   \Big],
 \end{aligned}\end{equation} for all $\lambda \ge 0$. Now the function space $\tilde{\mathcal{Y}}$ incorporates the boundary conditions $(a+\cos\theta)\D_r h_r + (a+2\cos \theta) h_r =0$ and $h_\theta =0$. Thus for $\vh\in \tilde{\mathcal{Y}}$ one has $$\begin{aligned}
  \langle (\lambda^2 - \Delta)\vh, \vh\rangle_{L^2}  &=  \lambda^2  \| \vh \|^2_{L^2} + \|\nabla \vh \|^2_{L^2}  - \int_{\D\Omega} \D_r (h_r e_r + h_\theta e_\theta) \cdot (h_r e_r + h_\theta e_\theta) \; \mathrm{d}S_x
  \\
  &=  \lambda^2  \| \vh \|^2_{L^2} + \|\nabla \vh\|^2_{L^2}  + \int_{\D\Omega} \frac{a+2\cos\theta}{a+\cos\theta} |h_r|^2 \; \mathrm{d}S_x.
  \end{aligned}$$
Using the boundedness of $\mathcal{Q}^\pm_\lambda$, whose operator norm is equal to one, we immediately obtain from \eqref{hSh} the inequality
$$ \begin{aligned}
-\langle \mathcal{S}^\lambda \vh,\vh\rangle_{L^2} &\ge (\lambda^2 -C_\mu) \| \vh \|^2_{L^2}  + \|\nabla \vh \|^2_{L^2}  + \int_{\D\Omega} \frac{a+2\cos\theta}{a+\cos\theta} |h_r|^2 \; \mathrm{d}S_x
 \end{aligned}
$$ for some constant $C_\mu$ that depends only on the decay assumption \eqref{mu-cond} on $|\mu_e^\pm|$. 
This proves the positive definiteness of $-\mathcal{S}^\lambda$ for $\lambda$ large.   
On the other hand, at $\lambda =0$, we recall that $\mathcal{P}^\pm$ are orthogonal projections on $\cH$,   
so that the bracketed terms in \eqref{hSh} are equal to  
$$\|\mathcal{P}^\pm(\hat v_r h_r)\|_{\cH} ^2  + \|\mathcal{P}^\pm(\hat v_\theta h_\theta)\|_{\cH}^2   -  2\langle \mathcal{P}^\pm(\hat v_r h_r), \mathcal{P}^\pm (\hat v_\theta h_\theta)\rangle_{\cH}. $$
This expression is clearly non-negative, which proves that $\mathcal{S}^0$ is also negative definite. 
\end{proof}

It will be crucial to understand the limiting behaviors at $\lambda =0$ and $\lambda \to  \infty$.  

\begin{lemma} \label{lem-PropQ} Let $\lambda,\mu > 0$ and $g \in \cH$. 


(i)  $\lim_{\lambda \to 0^+} \|\mathcal{Q}^\pm_\lambda g - \mathcal{P}^\pm g\|_{\cH} = 0$. 

(ii)  $\lim_{\lambda \to \infty} \|\mathcal{Q}^\pm_\lambda g - g\|_{\cH} =0$. 

(iii) There holds the uniform bound \begin{equation}\label{bound-opQ}\|\mathcal{Q}^\pm_\lambda - \mathcal{Q}^\pm_\mu\|_{\cH \mapsto \cH} \le 2 |\log \lambda - \log \mu|.\end{equation}


\end{lemma}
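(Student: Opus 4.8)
The plan is to realize $\mathcal{Q}^\pm_\lambda$ as a Laplace transform in time of the flow along the equilibrium trajectories and to exploit that this flow is unitary on $\cH$. For $s\in\RR$ set $(U^\pm_s g)(x,v):=g(X^\pm(s;x,v),V^\pm(s;x,v))$. Since $e^\pm$ and $p^\pm$ are conserved along the trajectories --- including across the specular reflections, where $v_r\mapsto-v_r$ leaves $\langle v\rangle$, $r$, $\theta$, $v_\varphi$ fixed --- the weight $|\mu_e^\pm|=|\mu_e^\pm(e^\pm,p^\pm)|$ is constant along trajectories; together with the unit Jacobian and the change-of-variables formula \eqref{change-variable} of Lemma \ref{lem-trajectory} this shows each $U^\pm_s$ is an isometry of $\cH$, with inverse $U^\pm_{-s}$, hence a unitary operator. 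Strong continuity of $s\mapsto U^\pm_s$ follows routinely (pointwise a.e.\ convergence for $C^1$ functions of $v$-compact support, dense in $\cH$, together with $\|U^\pm_s\|=1$), and the chain rule shows its generator acts as $\oD^\pm$ on such test functions, so --- $\oD^\pm$ of \eqref{domainD} being skew-adjoint --- the generator is exactly $\oD^\pm$. Substituting $t=-s$ in \eqref{def-Q} gives $\mathcal{Q}^\pm_\lambda g=\lambda\int_0^\infty e^{-\lambda t}U^\pm_{-t}g\,\mathrm{d}t$, and the standard resolvent formula for the contraction semigroup $\{U^\pm_{-t}\}_{t\ge0}$ yields $\mathcal{Q}^\pm_\lambda=\lambda(\lambda+\oD^\pm)^{-1}$ for $\lambda>0$. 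Establishing this identity --- equivalently, that $\mathcal{Q}^\pm_\lambda g$ lies in $\text{dom}(\oD^\pm)$ (the weak specular condition) and solves $(\lambda+\oD^\pm)\mathcal{Q}^\pm_\lambda g=\lambda g$ --- is the step I expect to require the most care, since the trajectories reflect at $\partial\Omega$ countably often; its ingredients are essentially Lemmas \ref{lem-trajectory} and \ref{lem-reflection}.

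For (iii), the isometry property reduces the operator-norm estimate to the scalar bound $\|\mathcal{Q}^\pm_\lambda g-\mathcal{Q}^\pm_\mu g\|_\cH\le\big(\int_0^\infty|\lambda e^{-\lambda t}-\mu e^{-\mu t}|\,\mathrm{d}t\big)\|g\|_\cH$. Assuming $\lambda<\mu$ without loss of generality, the integrand changes sign exactly once, at $t_*=(\log\mu-\log\lambda)/(\mu-\lambda)$, and evaluating the two resulting pieces gives $\int_0^\infty|\lambda e^{-\lambda t}-\mu e^{-\mu t}|\,\mathrm{d}t=2(e^{-\lambda t_*}-e^{-\mu t_*})$. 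Using $e^{-\lambda t_*}\le1$ and $1-e^{-x}\le x$ one gets $e^{-\lambda t_*}-e^{-\mu t_*}=e^{-\lambda t_*}(1-e^{-(\mu-\lambda)t_*})\le(\mu-\lambda)t_*=\log\mu-\log\lambda$, which is \eqref{bound-opQ}. For (ii), the resolvent identity gives, for $g\in\text{dom}(\oD^\pm)$,
\[
\mathcal{Q}^\pm_\lambda g-g=\lambda(\lambda+\oD^\pm)^{-1}g-(\lambda+\oD^\pm)^{-1}(\lambda+\oD^\pm)g=-(\lambda+\oD^\pm)^{-1}\oD^\pm g,
\]
with $\cH$-norm $\le\lambda^{-1}\|\oD^\pm g\|_\cH\to0$ as $\lambda\to\infty$; the general case follows by density of $\text{dom}(\oD^\pm)$ in $\cH$ and the uniform bound $\|\mathcal{Q}^\pm_\lambda\|\le1$ from Lemma \ref{op-bounds}(i).

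For (i), skew-adjointness of $\oD^\pm$ gives the orthogonal splitting $\cH=\ker\oD^\pm\oplus\overline{\mathrm{Ran}\,\oD^\pm}$, with $\mathcal{P}^\pm$ the projection onto $\ker\oD^\pm$. If $g\in\ker\oD^\pm$ then $(\lambda+\oD^\pm)g=\lambda g$, hence $\mathcal{Q}^\pm_\lambda g=\lambda(\lambda+\oD^\pm)^{-1}g=g=\mathcal{P}^\pm g$. If $g=\oD^\pm h$ with $h\in\text{dom}(\oD^\pm)$, then $\mathcal{Q}^\pm_\lambda g=\lambda(\lambda+\oD^\pm)^{-1}[(\lambda+\oD^\pm)h-\lambda h]=\lambda h-\lambda^2(\lambda+\oD^\pm)^{-1}h$, whose $\cH$-norm is $\le2\lambda\|h\|_\cH\to0$, while $\mathcal{P}^\pm g=0$. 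For general $g$, write $g=\mathcal{P}^\pm g+g_1$ with $g_1\in\overline{\mathrm{Ran}\,\oD^\pm}$; given $\ep>0$ choose $h$ with $\|g_1-\oD^\pm h\|_\cH<\ep$, so that $\|\mathcal{Q}^\pm_\lambda g-\mathcal{P}^\pm g\|_\cH=\|\mathcal{Q}^\pm_\lambda g_1\|_\cH\le\ep+\|\mathcal{Q}^\pm_\lambda\oD^\pm h\|_\cH$ by $\|\mathcal{Q}^\pm_\lambda\|\le1$; letting $\lambda\to0^+$ and then $\ep\to0$ yields (i).
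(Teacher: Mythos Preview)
Your proof is correct. For part (iii) you and the paper take the same route --- reducing, via unitarity of the trajectory flow on $\cH$, to the scalar quantity $\int_0^\infty|\lambda e^{-\lambda t}-\mu e^{-\mu t}|\,\mathrm{d}t$ --- though you evaluate that integral exactly through its unique sign change at $t_*$, whereas the paper (working with $\lambda>\mu$) uses the cruder split $|\lambda e^{\lambda s}-\mu e^{\mu s}|\le|\lambda-\mu|e^{\lambda s}+\mu|e^{\lambda s}-e^{\mu s}|$ to reach $2|\lambda-\mu|/\lambda\le2|\log\lambda-\log\mu|$. For (i) and (ii) the paper does not argue directly but defers to the spectral-measure proofs in \cite{LS1,LS3}: writing $\oD^\pm=i\int_\RR\xi\,\mathrm{d}E(\xi)$, one has $\mathcal{Q}^\pm_\lambda=\int_\RR\frac{\lambda}{\lambda+i\xi}\,\mathrm{d}E(\xi)$, and the two limits follow from $\frac{\lambda}{\lambda+i\xi}\to\mathbf{1}_{\{0\}}(\xi)$, respectively $\to1$, by dominated convergence in the spectral integral. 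Your route --- the resolvent identity $\mathcal{Q}^\pm_\lambda=\lambda(\lambda+\oD^\pm)^{-1}$ together with the splitting $\cH=\ker\oD^\pm\oplus\overline{\mathrm{Ran}\,\oD^\pm}$ and a density argument --- is the standard elementary proof of the Abelian mean ergodic theorem; it buys self-containment (no appeal to the spectral theorem for unbounded operators) at the cost of the generator identification you rightly flag as delicate. That identification goes through because the domain \eqref{domainD} is constructed precisely so that $\oD^\pm=-\big(\oD^\pm\big|_{\mathcal{C}}\big)^{\!*}$: once the Stone generator $G$ is shown to agree with $\oD^\pm$ on the test class $\mathcal{C}$, taking adjoints forces $G\subseteq\oD^\pm$, and two skew-adjoint operators related by inclusion must coincide.
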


\begin{proof} We do not include the proof of {\em (i)} and {\em (ii)} as they are essentially the same as that in \cite[Lemma 4.1]{LS3} or in \cite[Lemma 2.6]{LS1} using the spectral measure. As for {\em (iii)}, it suffices to obtain the estimate \eqref{bound-opQ} for $\lambda>\mu$. We observe that 
$$\begin{aligned}
\Big| \langle (&\mathcal{Q}^\pm_\lambda - \mathcal{Q}^\pm_\mu ) h, g \rangle_\cH\Big|  
\\&= \Big| \int_{-\infty}^0\int_\Omega  \int_{\RR^3}\Big(\lambda e^{\lambda s} - \mu e^{\mu s}\Big) \mu_e^\pm h (X(s),V(s)) g(x,v)\; \mathrm{d}v\mathrm{d}x\mathrm{d}s\Big|
\\&\le  \int_{-\infty}^0 |\lambda e^{\lambda s} - \mu e^{\mu s}|\Big( \int_\Omega  \int_{\RR^3}|\mu_e^\pm|| g (x,v)|^2\; \mathrm{d}v\mathrm{d}x \Big)^{1/2} \Big( \int_\Omega \int_{\RR^3}|\mu^\pm_e || h (X(s),V(s)) |^2\; \mathrm{d}v\mathrm{d}x\Big)^{1/2} \mathrm{d}s
\\&
\le \Big(\int_{-\infty}^0|\lambda e^{\lambda s} - \mu e^{\mu s}|\; \mathrm{d}s \Big) \| h  \|_{\cH}\|g\|_{\cH}
\\&
\le \Big(\int_{-\infty}^0|\lambda -\mu| e^{\lambda s} + \mu |e^{\lambda s} - e^{\mu s}|\; \mathrm{d}s \Big) \| h  \|_{\cH}\|g\|_{\cH}
\\&
\le \frac{2|\lambda - \mu|}{\lambda}   \| h  \|_{\cH}\|g\|_{\cH} \le 2 |\log \lambda - \log \mu|  \| h  \|_{\cH}\|g\|_{\cH} 
.\end{aligned}     $$
\end{proof}

\subsection{Reduced matrix equation} 
Since the operator $\mathcal{A}_{1}^\lambda$ is invertible, we can eliminate $\phi$ in the matrix equation \eqref{matrix-Maxwell} by defining 
$$\phi := - (\mathcal{A}_{1}^\lambda)^{-1} \Big[(\mathcal{B}^\lambda)^* A_\varphi + (\tilde{\mathcal{T}}_{1}^\lambda)^* \tilde\vA\Big].$$ Thus  we introduce the operators 
$$ \begin{aligned}
\mathcal{L}^\lambda : &= \mathcal{A}_{2}^\lambda -  \mathcal{B}^\lambda (\mathcal{A}_{1}^\lambda)^{-1} (\mathcal{B}^\lambda)^*, 
\end{aligned}$$
$$\tilde{\mathcal{V}}^\lambda : = \tilde{\mathcal{T}}_2^\lambda - \tilde{\mathcal{T}}_1^\lambda (\mathcal{A}_1^\lambda)^{-1} (\mathcal{B}^\lambda)^* , \qquad\tilde{\mathcal{U}}^\lambda : =  \tilde{\mathcal{S}}^\lambda -  \tilde{\mathcal{T}}_1^\lambda (\mathcal{A}_1^\lambda)^{-1} (\tilde{\mathcal{T}}_1^\lambda)^*
 $$
and the reduced matrix operator  
\begin{equation}\label{def-M}\mathcal{M}^\lambda:= \begin{pmatrix} 
\mathcal{L}^\lambda &  (\tilde{\mathcal{V}}^\lambda)^*\\
\tilde{\mathcal{V}}^\lambda &  \tilde{\mathcal{U}}^\lambda
\end{pmatrix}.\end{equation}
By Lemmas \ref{op-bounds}, \ref{lem-self-adj}, \ref{lem-AS}, and Corollary \ref{cor-PropAT}, the operators $ \mathcal{B}^\lambda (\mathcal{A}_{1}^\lambda)^{-1} (\mathcal{B}^\lambda)^*$, $(\tilde{\mathcal{T}}_1^\lambda+\lambda\nabla) (\mathcal{A}_1^\lambda)^{-1} (\tilde{\mathcal{T}}_1^\lambda)^*$, and $(\tilde{\mathcal{T}}_1^\lambda+ \lambda \nabla) (\mathcal{A}_1^\lambda)^{-1} (\mathcal{B}^\lambda)^* $ are bounded. Consequently, the operator $\mathcal{L}^\lambda$ is well-defined from $\mathcal{X}$ to $L^2_\tau(\Omega)$,  
$\tilde{\mathcal{U}}^\lambda $  from $\tilde{\mathcal{Y}}$ to $L^2_\tau(\Omega; \tilde \RR^2 )$, 
and $\tilde{\mathcal{V}}^\lambda$  from $L^2_\tau  (\Omega)$ to $L^2_\tau(\Omega; \tilde \RR^2 )$.   
In addition, $\mathcal{L}^\lambda$ is self-adjoint on $L^2_\tau(\Omega)$, $\tilde{\mathcal{U}}^\lambda$ is self-adjoint on $L^2_\tau(\Omega; \tilde \RR^2 )$, and so the reduced matrix $\mathcal{M}^\lambda$ is self-adjoint on $L_\tau^2(\Omega) \times L_\tau^2(\Omega; \tilde \RR^2 )$ when considered with the domain $\mathcal{X} \times \tilde{\mathcal{Y}}$.


\begin{lemma}\label{lem-Mprop} Let $0<\lambda<\infty$.   

(i) The matrix equation \eqref{matrix-Maxwell} is equivalent to the reduced equation 
\begin{equation}\label{matrix-Maxwell01} 
\mathcal{M}^\lambda \begin{pmatrix} A_\varphi \\ \tilde \vA \end{pmatrix}  =  \begin{pmatrix} 
\mathcal{L}^\lambda &  (\tilde{\mathcal{V}}^\lambda)^*\\
\tilde{\mathcal{V}}^\lambda &  \tilde{\mathcal{U}}^\lambda
\end{pmatrix} \begin{pmatrix} A_\varphi \\ \tilde \vA \end{pmatrix} =0 .\end{equation} 

(ii) $\mathcal{L}^\lambda \ge 0$ for $\lambda$ large, and for each $h \in \mathcal{X}$, $\lim_{\lambda\to 0^+}\|( \mathcal{L}^\lambda - \mathcal{L}^0)h\|_{L^2}  =0$.

(iii) The smallest eigenvalue $ \kappa^\lambda: = \inf _{h} ~\langle \mathcal{L}^\lambda h, h \rangle_{L^2}$ 
 of $\mathcal{L}^\lambda$ is continuous in $\lambda>0$, where the infimum is taken over $h \in \mathcal{X}$ with $\|h\|_{L^2} =1$.

\end{lemma}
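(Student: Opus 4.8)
The plan is a Schur-complement (block-elimination) computation. Since $\mathcal{A}_1^\lambda$ is invertible on $\mathcal{X}$ by Lemma~\ref{lem-AS}(i), I would solve the first row of \eqref{matrix-Maxwell} for $\phi$, obtaining $\phi=-(\mathcal{A}_1^\lambda)^{-1}\big[(\mathcal{B}^\lambda)^*A_\varphi+(\tilde{\mathcal{T}}_1^\lambda)^*\tilde\vA\big]$, where $(\tilde{\mathcal{T}}_1^\lambda)^*\tilde\vA$ is meaningful because $\tilde\vA\in\tilde{\mathcal{Y}}$ is divergence-free, i.e.\ lies in the domain of $(\tilde{\mathcal{T}}_1^\lambda)^*$ (Lemma~\ref{lem-self-adj}(iii)). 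Substituting this expression into the second and third rows and regrouping produces exactly $\mathcal{L}^\lambda A_\varphi+(\tilde{\mathcal{V}}^\lambda)^*\tilde\vA=0$ and $\tilde{\mathcal{V}}^\lambda A_\varphi+\tilde{\mathcal{U}}^\lambda\tilde\vA=0$; the one identity to check is that $\mathcal{B}^\lambda(\mathcal{A}_1^\lambda)^{-1}(\tilde{\mathcal{T}}_1^\lambda)^*$ is the adjoint of $\tilde{\mathcal{T}}_1^\lambda(\mathcal{A}_1^\lambda)^{-1}(\mathcal{B}^\lambda)^*$, which holds since $\mathcal{A}_1^\lambda$, hence $(\mathcal{A}_1^\lambda)^{-1}$, is self-adjoint. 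The converse direction uses the same formula for $\phi$. This step is routine given the adjoint identities of Lemma~\ref{lem-self-adj}.

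\textbf{Part (ii).} For positivity at large $\lambda$: since $\mathcal{A}_1^\lambda$ is negative definite, so is $(\mathcal{A}_1^\lambda)^{-1}$, hence $-\langle\mathcal{B}^\lambda(\mathcal{A}_1^\lambda)^{-1}(\mathcal{B}^\lambda)^*h,h\rangle_{L^2}=-\langle(\mathcal{A}_1^\lambda)^{-1}(\mathcal{B}^\lambda)^*h,(\mathcal{B}^\lambda)^*h\rangle_{L^2}\ge0$, so $\langle\mathcal{L}^\lambda h,h\rangle_{L^2}\ge\langle\mathcal{A}_2^\lambda h,h\rangle_{L^2}$ for $h\in\mathcal{X}$. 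Integrating by parts (using the Dirichlet condition built into $\mathcal{X}$), $\langle\mathcal{A}_2^\lambda h,h\rangle_{L^2}$ equals $\lambda^2\|h\|_{L^2}^2+\|\nabla h\|_{L^2}^2$ plus the nonnegative term $\int_\Omega|h|^2/(a+r\cos\theta)^2\,\mathrm{d}x$ minus two terms that are bounded by $C_\mu\|h\|_{L^2}^2$ (using \eqref{mu-cond} and $\|\mathcal{Q}^\pm_\lambda\|_{\cH\to\cH}=1$ from Lemma~\ref{op-bounds}(i)); discarding the nonnegative contributions gives $\langle\mathcal{A}_2^\lambda h,h\rangle_{L^2}\ge(\lambda^2-C_\mu)\|h\|_{L^2}^2\ge0$ once $\lambda^2\ge C_\mu$, hence $\mathcal{L}^\lambda\ge0$. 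For $\mathcal{L}^\lambda\to\mathcal{L}^0$ strongly on $\mathcal{X}$ I would split $\mathcal{L}^\lambda-\mathcal{L}^0=(\mathcal{A}_2^\lambda-\mathcal{A}_2^0)-\big[\mathcal{B}^\lambda(\mathcal{A}_1^\lambda)^{-1}(\mathcal{B}^\lambda)^*-\mathcal{B}^0(\mathcal{A}_1^0)^{-1}(\mathcal{B}^0)^*\big]$; each building-block difference $\mathcal{A}_2^\lambda-\mathcal{A}_2^0$, $\mathcal{B}^\lambda-\mathcal{B}^0$, $(\mathcal{B}^\lambda)^*-(\mathcal{B}^0)^*$ converges strongly to $0$ in $L^2$ (the $\lambda^2$ piece trivially, the rest because they are integrals against $\mu_e^\pm(\mathcal{Q}^\pm_\lambda-\mathcal{P}^\pm)(\cdot)$, which tends to $0$ in $\cH$ by Lemma~\ref{lem-PropQ}(i), with Cauchy--Schwarz in $v$ converting $\cH$-smallness into $L^2_x$-smallness), and $(\mathcal{A}_1^\lambda)^{-1}-(\mathcal{A}_1^0)^{-1}=(\mathcal{A}_1^\lambda)^{-1}(\mathcal{A}_1^0-\mathcal{A}_1^\lambda)(\mathcal{A}_1^0)^{-1}\to0$ strongly by the same lemma plus the uniform bound on $(\mathcal{A}_1^\lambda)^{-1}$ from Lemma~\ref{lem-AS}(i); since all factors are uniformly bounded, a standard telescoping argument gives strong convergence of the products, so $\|(\mathcal{L}^\lambda-\mathcal{L}^0)h\|_{L^2}\to0$.

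\textbf{Part (iii).} First, $\kappa^\lambda$ is finite: the estimate above gives $\langle\mathcal{L}^\lambda h,h\rangle_{L^2}\ge(\lambda^2-C_\mu)\|h\|_{L^2}^2$, so $\mathcal{L}^\lambda$ is bounded below, and being $-\Delta$ plus a bounded operator it has compact resolvent, so $\kappa^\lambda$ is its smallest eigenvalue. For continuity, the key tool is the quantitative bound $\|\mathcal{Q}^\pm_\lambda-\mathcal{Q}^\pm_\mu\|_{\cH\to\cH}\le2|\log\lambda-\log\mu|$ from Lemma~\ref{lem-PropQ}(iii). Since $\mathcal{Q}^\pm_\lambda$ enters $\mathcal{A}_2^\lambda,\mathcal{B}^\lambda,(\mathcal{B}^\lambda)^*$ only through bounded integral operators in $v$, while the genuinely unbounded part $-\Delta$ (on the $\lambda$-independent domain $\mathcal{X}$), the potential $\frac{1}{(a+r\cos\theta)^2}$, and the $\mu_p^\pm$-terms are $\lambda$-independent and cancel in $\mathcal{L}^\lambda-\mathcal{L}^\mu$, this bound propagates — via the uniform operator bounds of Lemmas~\ref{op-bounds} and \ref{lem-AS} and the resolvent identity $(\mathcal{A}_1^\lambda)^{-1}-(\mathcal{A}_1^\mu)^{-1}=(\mathcal{A}_1^\lambda)^{-1}(\mathcal{A}_1^\mu-\mathcal{A}_1^\lambda)(\mathcal{A}_1^\mu)^{-1}$ — to a bound of the form
$$\|\mathcal{L}^\lambda-\mathcal{L}^\mu\|_{L^2\to L^2}\ \le\ |\lambda^2-\mu^2|+C\,|\log\lambda-\log\mu|.$$
Therefore $\big|\langle\mathcal{L}^\lambda h,h\rangle_{L^2}-\langle\mathcal{L}^\mu h,h\rangle_{L^2}\big|\le|\lambda^2-\mu^2|+C|\log\lambda-\log\mu|$ uniformly over all $h\in\mathcal{X}$ with $\|h\|_{L^2}=1$, and taking the infimum gives $|\kappa^\lambda-\kappa^\mu|\le|\lambda^2-\mu^2|+C|\log\lambda-\log\mu|$, which is continuous (indeed locally Lipschitz) on $(0,\infty)$.

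\textbf{Where the difficulty lies.} Part (i) is routine. The real work is in (ii) and (iii), and in both the crux is to push the control on $\mathcal{Q}^\pm_\lambda$ — the strong convergence $\mathcal{Q}^\pm_\lambda\to\mathcal{P}^\pm$ for (ii), the $\log$-Lipschitz bound for (iii) — through the composition $\mathcal{B}^\lambda(\mathcal{A}_1^\lambda)^{-1}(\mathcal{B}^\lambda)^*$; this requires carefully separating off the $\lambda$-independent unbounded piece $-\Delta$, invoking the uniform-in-$\lambda$ operator bounds, and using the resolvent telescoping identity. In (i) one must also respect the divergence-free domain constraint on $(\tilde{\mathcal{T}}_1^\lambda)^*$.
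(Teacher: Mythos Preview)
Your proposal is correct and follows essentially the same approach as the paper's proof: Part~(i) is the Schur--complement elimination of $\phi$ via $(\mathcal{A}_1^\lambda)^{-1}$; Part~(ii) splits $\langle\mathcal{L}^\lambda h,h\rangle$ into $\lambda^2\|h\|^2$, the nonnegative $\langle(-\Delta)h,h\rangle$ and $-\langle(\mathcal{A}_1^\lambda)^{-1}(\mathcal{B}^\lambda)^*h,(\mathcal{B}^\lambda)^*h\rangle$ pieces, and a uniformly bounded remainder, together with strong convergence of each building block via Lemma~\ref{lem-PropQ}(i); Part~(iii) uses the $|\log\lambda-\log\mu|$ bound of Lemma~\ref{lem-PropQ}(iii) to get operator--norm continuity of $\mathcal{L}^\lambda$ and hence of $\kappa^\lambda$. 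Your additional remarks about the resolvent identity and telescoping make explicit what the paper leaves implicit, but the argument is the same.
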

\begin{proof} Directly from the definitions, we have {\em (i)}. For each $h \in \mathcal{X}$, we have 
$$\begin{aligned}
\langle \mathcal{L}^\lambda h,h\rangle_{L^2} 
&=\langle \mathcal{A}_{2}^\lambda h,h\rangle_{L^2} -   \langle (\mathcal{A}_{1}^\lambda)^{-1} (\mathcal{B}^\lambda)^* h, (\mathcal{B}^\lambda)^* h \rangle_{L^2}
\\
&=\lambda^2 \|h\|_{L^2}^2 + \langle (-\Delta)h,h\rangle_{L^2} + \langle (\mathcal{A}_{2}^\lambda - \lambda^2 + \Delta) h,h\rangle_{L^2} -   \langle (\mathcal{A}_{1}^\lambda)^{-1} (\mathcal{B}^\lambda)^* h, (\mathcal{B}^\lambda)^* h \rangle_{L^2},
\end{aligned}$$
in which the second and fourth terms are nonnegative by Lemma \ref{lem-AS}, and the third term is bounded thanks to Lemma \ref{op-bounds} {\em (ii)} by $C_0\|h\|_{L^2}^2$ for some constant $C_0$ independent of $\lambda$.  Thus, when taking $\lambda$ large, the first term dominates and so $\mathcal{L}^\lambda \ge 0$. In addition, it follows directly from the definition that 
$$( \mathcal{A}_{2}^\lambda - \mathcal{A}_{2}^0) h = \lambda^2 h -\sum_\pm \int_{\RR^3}\mu_e^\pm \hat v_\varphi  ( \mathcal{Q}^\pm_\lambda - \mathcal{P}^\pm ) (\hat v_\varphi h) \; \mathrm{d}v$$
and so $\|( \mathcal{A}_{2}^\lambda - \mathcal{A}_{2}^0) h\|_{L^2} \le \lambda^2 \|h\|_{L^2} +C_0 \sum_\pm\|( \mathcal{Q}^\pm_\lambda - \mathcal{P}^\pm ) (\hat v_\varphi h) \|_{\cH} \to 0$ as $\lambda \to 0$, by Lemma \ref{lem-PropQ} {\em (i)}. Similarly, we have the same convergence for $\mathcal{A}_{1}^\lambda $ and $\mathcal{B}^\lambda$ as $\lambda \to 0$. Therefore, $\mathcal{L}^\lambda h$ converges strongly in the $L^2$ norm to $\mathcal{L}^0h$, for each $h \in \mathcal{X}$. Here, we note that the operator norms of $(\mathcal{A}^\lambda_1)^{-1}$ and $\mathcal{B}^\lambda$ are independent of $\lambda$. This proves {\em (ii)}. 
  
Let us check {\em (iii)}. Let $\lambda,\mu>0$. For all $h,g \in \mathcal{X}$, we write 
$$\langle ( \mathcal{A}_{2}^\lambda - \mathcal{A}_{2}^\mu) h ,g\rangle_{L^2}= (\lambda^2-\mu^2) \langle h,g\rangle_{L^2} + \sum_\pm \langle ( \mathcal{Q}^\pm_\lambda - \mathcal{Q}^\pm _\mu) (\hat v_\varphi h), \hat v_\varphi g \rangle_{\cH} ,$$
which together with Lemma \ref{lem-PropQ} {\em (iii)} yields
$\| \mathcal{A}_{2}^\lambda - \mathcal{A}_{2}^\mu \|_{L^2 \mapsto L^2} \le  |\lambda^2-\mu^2| +C_0 |\log \lambda - \log \mu|.$ Similar estimates hold for $\mathcal{B}^\lambda$ and $(\mathcal{A}^\lambda_1)^{-1}$. Consequently, we obtain 
\begin{equation}\label{norm-opL}\| \mathcal{L}^\lambda - \mathcal{L}^\mu \|_{L^2 \mapsto L^2} \le  C_0 \Big (|\lambda^2-\mu^2| +|\log \lambda - \log \mu|\Big ), \qquad \forall ~\lambda,\mu >0. \end{equation}
This proves that $\mathcal{L}^\lambda$ is continuous in the operator norm for $\lambda\in (0,\infty)$. In particular so is the lowest eigenvalue $\kappa^\lambda$ of $\mathcal{L}^\lambda$.   
\end{proof}

\begin{lemma}[Continuity of limits in $\lambda$]\label{lem-UV} Fix $\mu>0$. 

(i) $\lim_{\lambda \to \mu} \| \mathcal{L}^\lambda-  \mathcal{L}^\mu\|_{L^2\mapsto L^2} =0$. The same convergence holds for $\tilde{\mathcal{S}}^\lambda, \tilde{\mathcal{T}}^\lambda_1 + \lambda \nabla$, and $\tilde{\mathcal{T}}^\lambda_2$. 

(ii) For $\vh,\vg \in \tilde{\mathcal{Y}}$, $\lim_{\lambda \to \mu}  \langle (\tilde{\mathcal{U}}^\lambda-\tilde{\mathcal{U}}^\mu) \vh, \vg \rangle_{L^2}=0$ and $\lim_{\lambda \to \mu}  \|(\tilde{\mathcal{V}}^\lambda -\tilde{\mathcal{V}}^\mu )^*\vh \| _{L^2} =0$. The same convergence holds for the case $\mu=0$ with $\tilde{\mathcal{V}}^0 =0$.  
 

(iii) For $h\in \mathcal{X}, \vg\in \tilde{\mathcal{Y}}$, $\lim_{\lambda \to \infty} \langle \tilde{\mathcal{V}}^\lambda h,\vg \rangle _{L^2} = 0$.  

\end{lemma}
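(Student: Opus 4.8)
The strategy is to rewrite each of $\mathcal{L}^\lambda,\tilde{\mathcal{S}}^\lambda,\tilde{\mathcal{T}}^\lambda_1+\lambda\nabla,\tilde{\mathcal{T}}^\lambda_2,\tilde{\mathcal{U}}^\lambda$ and $(\tilde{\mathcal{V}}^\lambda)^*$ through the weak identities already used in the proofs of Lemmas~\ref{op-bounds} and \ref{lem-self-adj}, so that the only $\lambda$-dependence sits inside the operators $\mathcal{Q}^\pm_\lambda$. With that done, part~(i) and the finite-$\mu$ half of~(ii) reduce to the logarithmic Lipschitz bound \eqref{bound-opQ} together with the $\lambda$-uniform operator bounds of Lemma~\ref{op-bounds} and the uniform inverse bound of Lemma~\ref{lem-AS}~(i); the endpoints $\mu=0$ in~(ii) and $\lambda\to\infty$ in~(iii) instead use the strong limits $\mathcal{Q}^\pm_\lambda\to\mathcal{P}^\pm$ (as $\lambda\to0^+$) and $\mathcal{Q}^\pm_\lambda\to I$ (as $\lambda\to\infty$) from Lemma~\ref{lem-PropQ}, combined with the $(v_r,v_\theta)$-parity of the various $v$-integrands.

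For part~(i) the $\mathcal{L}^\lambda$ statement is precisely \eqref{norm-opL}. For $\tilde{\mathcal{S}}^\lambda,\tilde{\mathcal{T}}^\lambda_1+\lambda\nabla$ and $\tilde{\mathcal{T}}^\lambda_2$ I would subtract the $\mu$-versions of $\langle(\tilde{\mathcal{S}}^\lambda-\Delta)\vh,\vg\rangle_{L^2}=-\lambda^2\langle\vh,\vg\rangle_{L^2}-\sum_\pm\langle\mathcal{Q}^\pm_\lambda(\hat v\cdot\vh),\hat v\cdot\vg\rangle_{\cH^\pm}$, $\langle(\tilde{\mathcal{T}}^\lambda_1+\lambda\nabla)k,\vh\rangle_{L^2}=\sum_\pm\langle\mathcal{Q}^\pm_\lambda k,\hat v\cdot\vh\rangle_{\cH^\pm}$ and $\langle\tilde{\mathcal{T}}^\lambda_2 k,\vh\rangle_{L^2}=-\sum_\pm\langle\mathcal{Q}^\pm_\lambda(\hat v_\varphi k),\hat v\cdot\vh\rangle_{\cH^\pm}$, then estimate the resulting $\mathcal{Q}^\pm_\lambda-\mathcal{Q}^\pm_\mu$ terms by \eqref{bound-opQ}, using $\|\hat v\cdot\vh\|_{\cH}\le C_\mu\|\vh\|_{L^2}$ and $\|\hat v_\varphi k\|_{\cH}\le C_\mu\|k\|_{L^2}$ from the decay of $\mu_e^\pm$; this yields operator-norm differences of size $O(|\lambda^2-\mu^2|+|\log\lambda-\log\mu|)\to0$ as $\lambda\to\mu>0$. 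For part~(ii) with $\mu>0$, the crucial observation is that fields in $\tilde{\mathcal{Y}}$ are divergence-free, so the singular term $\lambda\nabla\cdot$ in $(\tilde{\mathcal{T}}_1^\lambda)^*$ drops out and $(\tilde{\mathcal{T}}_1^\lambda)^*\vh=\sum_\pm\int_{\RR^3}\mu_e^\pm\mathcal{Q}^\pm_\lambda(\hat v\cdot\vh)\,\mathrm{d}v$; by \eqref{bound-opQ} this and $(\tilde{\mathcal{T}}_2^\lambda)^*\vh$ converge in $L^2$ to their $\mu$-values, while $\mathcal{B}^\lambda$ and, via the resolvent identity $(\mathcal{A}_1^\lambda)^{-1}-(\mathcal{A}_1^\mu)^{-1}=(\mathcal{A}_1^\mu)^{-1}(\mathcal{A}_1^\mu-\mathcal{A}_1^\lambda)(\mathcal{A}_1^\lambda)^{-1}$ and the uniform inverse bound, $(\mathcal{A}_1^\lambda)^{-1}$ converge in operator norm. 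Inserting these into $\langle\tilde{\mathcal{U}}^\lambda\vh,\vg\rangle=\langle\tilde{\mathcal{S}}^\lambda\vh,\vg\rangle-\langle(\mathcal{A}_1^\lambda)^{-1}(\tilde{\mathcal{T}}_1^\lambda)^*\vh,(\tilde{\mathcal{T}}_1^\lambda)^*\vg\rangle_{L^2}$ and into $(\tilde{\mathcal{V}}^\lambda)^*\vh=(\tilde{\mathcal{T}}_2^\lambda)^*\vh-\mathcal{B}^\lambda(\mathcal{A}_1^\lambda)^{-1}(\tilde{\mathcal{T}}_1^\lambda)^*\vh$ and passing to the limit term by term settles the case $\mu>0$.

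For part~(ii) with $\mu=0$ and for part~(iii) one exploits parity. First $\tilde{\mathcal{T}}_1^0=\tilde{\mathcal{T}}_2^0=0$, hence $\tilde{\mathcal{V}}^0=0$ and $\tilde{\mathcal{U}}^0=\tilde{\mathcal{S}}^0$: since $\oD^\pm$ interchanges even and odd functions of $(v_r,v_\theta)$, its kernel and so $\mathcal{P}^\pm$ preserve that parity, hence $\mathcal{P}^\pm h$ and $\mathcal{P}^\pm(\hat v_\varphi h)$ are even, $\tilde v/\langle v\rangle$ is odd, and the $v$-integral vanishes. For $\vh,\vg\in\tilde{\mathcal{Y}}$, Lemma~\ref{lem-PropQ}~(i) gives $\mathcal{Q}^\pm_\lambda(\hat v\cdot\vh)\to\mathcal{P}^\pm(\hat v\cdot\vh)$ in $\cH$, so $(\tilde{\mathcal{T}}_1^\lambda)^*\vh$ and $(\tilde{\mathcal{T}}_2^\lambda)^*\vh$ converge in $L^2$ to $\sum_\pm\int_{\RR^3}\mu_e^\pm\mathcal{P}^\pm(\hat v\cdot\vh)\,\mathrm{d}v$ and $-\sum_\pm\int_{\RR^3}\hat v_\varphi\mu_e^\pm\mathcal{P}^\pm(\hat v\cdot\vh)\,\mathrm{d}v$, both of which vanish since $\hat v\cdot\vh$ is odd in $(v_r,v_\theta)$ while $\mu_e^\pm$ and $\hat v_\varphi$ are even; with the uniform bound on $\mathcal{B}^\lambda(\mathcal{A}_1^\lambda)^{-1}$ and Lemma~\ref{lem-PropQ}~(i) applied in $\langle\tilde{\mathcal{S}}^\lambda\vh,\vg\rangle$ one then gets $\|(\tilde{\mathcal{V}}^\lambda)^*\vh\|_{L^2}\to0$ and $\langle\tilde{\mathcal{U}}^\lambda\vh,\vg\rangle\to\langle\tilde{\mathcal{S}}^0\vh,\vg\rangle=\langle\tilde{\mathcal{U}}^0\vh,\vg\rangle$. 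For part~(iii), with $h\in\mathcal{X},\vg\in\tilde{\mathcal{Y}}$ I write $\langle\tilde{\mathcal{V}}^\lambda h,\vg\rangle=\langle\tilde{\mathcal{T}}_2^\lambda h,\vg\rangle-\langle\tilde{\mathcal{T}}_1^\lambda w^\lambda,\vg\rangle$ with $w^\lambda:=(\mathcal{A}_1^\lambda)^{-1}(\mathcal{B}^\lambda)^* h\in\mathcal{X}$; the first term equals $-\sum_\pm\langle\mathcal{Q}^\pm_\lambda(\hat v_\varphi h),\hat v\cdot\vg\rangle_{\cH^\pm}\to-\sum_\pm\langle\hat v_\varphi h,\hat v\cdot\vg\rangle_{\cH^\pm}=0$ by Lemma~\ref{lem-PropQ}~(ii) and the $(v_r,v_\theta)$-oddness of $|\mu_e^\pm|\hat v_\varphi\hat v_r$ and $|\mu_e^\pm|\hat v_\varphi\hat v_\theta$, and for the second, since $w^\lambda=0$ on $\partial\Omega$ and $\nabla\cdot\vg=0$ we have $\langle\nabla w^\lambda,\vg\rangle_{L^2}=0$, so $\langle\tilde{\mathcal{T}}_1^\lambda w^\lambda,\vg\rangle=\langle(\tilde{\mathcal{T}}_1^\lambda+\lambda\nabla)w^\lambda,\vg\rangle=\sum_\pm\langle\mathcal{Q}^\pm_\lambda w^\lambda,\hat v\cdot\vg\rangle_{\cH^\pm}$, bounded by $C\|w^\lambda\|_{L^2}\|\vg\|_{L^2}$; finally $(\mathcal{B}^\lambda)^* h\to0$ in $L^2$ as $\lambda\to\infty$, since $\mathcal{Q}^\pm_\lambda(\hat v_\varphi h)\to\hat v_\varphi h$ in $\cH$ forces $(\mathcal{B}^\lambda)^* h\to\sum_\pm h\int_{\RR^3}[(a+r\cos\theta)\mu_p^\pm+\hat v_\varphi\mu_e^\pm]\,\mathrm{d}v=\sum_\pm h\int_{\RR^3}\partial_{v_\varphi}\mu^\pm\,\mathrm{d}v=0$, whence $\|w^\lambda\|_{L^2}\le\|(\mathcal{A}_1^\lambda)^{-1}\|\,\|(\mathcal{B}^\lambda)^* h\|_{L^2}\to0$ and the second term vanishes.

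The two genuinely non-mechanical points are: (a) realizing that the argument must be run on the divergence-free space $\tilde{\mathcal{Y}}$, so that the $\lambda\nabla\cdot$ part of $(\tilde{\mathcal{T}}_1^\lambda)^*$ is inert — without this even the uniform boundedness of the compound operators $\tilde{\mathcal{U}}^\lambda$ and $(\tilde{\mathcal{V}}^\lambda)^*$ fails; and (b) the endpoint cancellations, namely the parity identities forcing $\tilde{\mathcal{T}}_1^0=\tilde{\mathcal{T}}_2^0=0$ and killing the $\lambda\to\infty$ limit in~(iii), together with $\int_{\RR^3}\partial_{v_\varphi}\mu^\pm\,\mathrm{d}v=0$ which drives $(\mathcal{B}^\lambda)^* h\to0$. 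Everything else is a routine combination of the uniform operator bounds with the limit statements of Lemma~\ref{lem-PropQ}.
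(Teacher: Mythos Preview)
Your proof is correct and follows essentially the same approach as the paper: both rewrite the compound operators via the weak identities so that all $\lambda$-dependence sits in $\mathcal{Q}^\pm_\lambda$, invoke the divergence-free constraint on $\tilde{\mathcal{Y}}$ to neutralize the $\lambda\nabla\cdot$ term, and then appeal to \eqref{bound-opQ} for $\mu>0$ and to the strong limits of Lemma~\ref{lem-PropQ} at the endpoints. The only notable deviation is in part~(iii): the paper handles the term $\langle\tilde{\mathcal{T}}_1^\lambda w^\lambda,\vg\rangle$ by passing to the adjoint and using $(\tilde{\mathcal{T}}_1^\lambda)^*\vg\to0$ (via oddness of $\hat v\cdot\vg$), whereas you instead show $(\mathcal{B}^\lambda)^* h\to0$ using $\int_{\RR^3}\partial_{v_\varphi}\mu^\pm\,\mathrm{d}v=0$, so that $w^\lambda\to0$; both routes are valid, and yours is in fact more explicit than the paper's rather terse ``follows by the definition.''
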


\begin{proof} Of course, the estimate \eqref{norm-opL} proves the convergence of $\mathcal{L}^\lambda$ as claimed in {\em (i)}. As for the other operators, we take $h\in \mathcal{X}, \vh \in \tilde{\mathcal{Y}}, \vg \in L^2(\Omega; \tilde \RR^2)$, and write 
$$\langle ( \tilde{\mathcal{S}}^\lambda  - \tilde{\mathcal{S}}^\mu)\vh, \vg \rangle_{L^2} = - (\lambda^2 - \mu^2)\langle \vh,\vg \rangle_{L^2} - \sum_\pm \langle(\mathcal{Q}^\pm_\lambda - \mathcal{Q}^\pm_\mu)(\hat v \cdot \vh), \hat v \cdot \vg\rangle_{\cH^\pm},$$ 
and $$\begin{aligned}
\langle ( \tilde{\mathcal{T}}_1^\lambda - \tilde{\mathcal{T}}_1^\mu) h, \vg\rangle_{L^2} + (\lambda - \mu)\langle \nabla  h, \vg\rangle_{L^2} &= ~~\sum_\pm \langle(\mathcal{Q}^\pm_\lambda - \mathcal{Q}^\pm_\mu) h , \hat v \cdot \vg  \rangle_{\cH^\pm} ,\\
\langle ( \tilde{\mathcal{T}}_2^\lambda - \tilde{\mathcal{T}}_2^\mu) h, \vg\rangle_{L^2} &= - \sum_\pm \langle( \mathcal{Q}^\pm_\lambda - \mathcal{Q}^\pm_\mu) (\hat v_\varphi h) , \hat v\cdot \vg \rangle_{\cH^\pm} . 
\end{aligned}$$
Now it is clear that estimate \eqref{bound-opQ} yields the same bound as in \eqref{norm-opL} for $\tilde{\mathcal{S}}^\lambda, \tilde{\mathcal{T}}^\lambda_1 + \lambda \nabla$, and $\tilde{\mathcal{T}}^\lambda_2$. This proves {\em (i)}. 

The failure of the continuity of $\tilde{\mathcal{U}}^\lambda,(\tilde{\mathcal{V}}^\lambda)^* $ in the operator norm is due to the presence of $\lambda \nabla \cdot $, which comes from $\tilde{\mathcal{T}}^\lambda_1$. However, this term vanishes when the operator acts on functions in the function space $\tilde{\mathcal{Y}}$ due to the Coulomb gauge constraint. Precisely, for each $\vh,\vg\in \tilde{\mathcal{Y}}$, we have 
\begin{equation}\label{Id-Ulambda}\begin{aligned}
\langle \tilde{\mathcal{U}}^\lambda \vh,\vg\rangle_{L^2} &= \langle  \tilde{\mathcal{S}}^\lambda \vh,\vg\rangle_{L^2} - \langle (\mathcal{A}_1^\lambda)^{-1} (\tilde{\mathcal{T}}_1^\lambda)^* \vh, (\tilde{\mathcal{T}}_1^\lambda )^*\vg\rangle_{L^2} 
\\
&= \langle  \tilde{\mathcal{S}}^\lambda \vh,\vg\rangle_{L^2} - \langle (\mathcal{A}_1^\lambda)^{-1} (\tilde{\mathcal{T}}_1^\lambda + \lambda \nabla)^* \vh, (\tilde{\mathcal{T}}_1^\lambda +\lambda \nabla)^*\vg\rangle_{L^2} .
\end{aligned}\end{equation}
The claimed convergence now follows directly from {\em (i)}. A similar observation applies to $\tilde{\mathcal{V}}^\lambda$. For the limit when $\lambda \to 0$, we use the strong convergence of $\mathcal{Q}^\pm_\lambda $ in Lemma \ref{lem-PropQ} {\em (i)}, instead of {\em (iii)}. We thus obtain the last statement in {\em (ii)}. 

As for {\em (iii)}, we write for $h \in \mathcal{X}$ and $\vg \in \tilde{\mathcal{Y}}$, 
$$\begin{aligned}
\langle \tilde{\mathcal{T}}_1^\lambda h, \vg\rangle_{L^2} &= - \lambda \langle \nabla  h, \vg\rangle_{L^2} + \sum_\pm \langle \mathcal{Q}^\pm_\lambda h , \hat v \cdot \vg  \rangle_{\cH^\pm}  = \sum_\pm \langle \mathcal{Q}^\pm_\lambda h , \hat v \cdot \vg  \rangle_{\cH^\pm} ,\\
\langle \tilde{\mathcal{T}}_2^\lambda  h, \vg\rangle_{L^2} &= - \sum_\pm \langle \mathcal{Q}^\pm_\lambda (\hat v_\varphi h) , \hat v\cdot \vg \rangle_{\cH^\pm} . 
\end{aligned}$$
Lemma \ref{lem-PropQ} {\em (ii)} thus yields $\langle \tilde{\mathcal{T}}_1^\lambda h, \vg\rangle_{L^2} \to \sum_\pm \langle h , \hat v \cdot \vg  \rangle_{\cH^\pm} $, which vanishes due to the oddness of $\hat v \cdot \vg$ in $(v_r,v_\theta)$, as $\lambda\to \infty$. By the same reason, we also have $\langle \tilde{\mathcal{T}}_2^\lambda  h, \vg\rangle_{L^2} \to 0$ as $\lambda \to \infty$. The claim {\em (iii)} then follows by the definition. 
\end{proof}

\begin{lemma} \label{lem-Ulambda}   
(i) There exist fixed positive numbers $\lambda_1$ and $\lambda_2$ so that for all $0<\lambda \le \lambda_1$ and all $\lambda \ge \lambda_2$, the operator $\tilde{\mathcal{U}}^\lambda$ is one to one and onto from $\tilde{\mathcal{Y}}$ to $L^2(\Omega; \tilde \RR^2 )$.  
(ii) Furthermore, there holds \begin{equation}\label{lower-boundU} - \langle \tilde{\mathcal{U}}^\lambda \vh,\vh \rangle \ge C_0 \Big( \|\vh\|^2_{L^2} + \|\nabla \vh \|_{L^2}^2 \Big),\qquad\quad \forall ~ \vh \in \tilde{\mathcal{Y}},\end{equation}
for some positive constant $C_0$ that is independent of $\lambda$ within these intervals. 
\end{lemma}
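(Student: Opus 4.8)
The plan is to establish the coercivity estimate \eqref{lower-boundU} first, since the bijectivity in (i) follows from it by standard Hilbert-space arguments. Start from the identity \eqref{Id-Ulambda}, which for $\vh\in\tilde{\mathcal Y}$ reads
\[
\langle \tilde{\mathcal U}^\lambda \vh,\vh\rangle_{L^2}
= \langle \tilde{\mathcal S}^\lambda \vh,\vh\rangle_{L^2}
- \langle (\mathcal A_1^\lambda)^{-1}(\tilde{\mathcal T}_1^\lambda+\lambda\nabla)^*\vh,\ (\tilde{\mathcal T}_1^\lambda+\lambda\nabla)^*\vh\rangle_{L^2}.
\]
Since $\mathcal A_1^\lambda\le 0$ (Lemma \ref{lem-AS}(i)), its inverse is negative definite, so the second term on the right is $\le 0$; hence $-\langle \tilde{\mathcal U}^\lambda \vh,\vh\rangle_{L^2}\ge -\langle \tilde{\mathcal S}^\lambda \vh,\vh\rangle_{L^2}$. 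This reduces everything to a lower bound on $-\langle \tilde{\mathcal S}^\lambda \vh,\vh\rangle_{L^2}$, which is exactly what was computed inside the proof of Lemma \ref{lem-AS}(ii): for $\vh\in\tilde{\mathcal Y}$,
\[
-\langle \tilde{\mathcal S}^\lambda \vh,\vh\rangle_{L^2}
\ \ge\ (\lambda^2-C_\mu)\|\vh\|_{L^2}^2 + \|\nabla\vh\|_{L^2}^2 + \int_{\D\Omega}\frac{a+2\cos\theta}{a+\cos\theta}|h_r|^2\,\mathrm dS_x,
\]
with the boundary integral nonnegative because $a>1$. For $\lambda\ge\lambda_2$ with $\lambda_2^2\ge C_\mu+1$ this immediately gives \eqref{lower-boundU} with a $\lambda$-independent constant.

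The remaining case $0<\lambda\le\lambda_1$ is the one that needs a little more care, because $\lambda^2-C_\mu$ can be negative, so the zeroth-order term is no longer controlled by $\tilde{\mathcal S}^\lambda$ alone. Here I would instead use the $\lambda=0$ form of \eqref{hSh}: the bracketed terms are $\sum_\pm\big(\|\mathcal P^\pm(\hat v_r h_r)\|_\cH^2+\|\mathcal P^\pm(\hat v_\theta h_\theta)\|_\cH^2-2\langle\mathcal P^\pm(\hat v_r h_r),\mathcal P^\pm(\hat v_\theta h_\theta)\rangle_\cH\big)\ge 0$, so $-\langle\tilde{\mathcal S}^0\vh,\vh\rangle_{L^2}\ge\|\nabla\vh\|_{L^2}^2+\int_{\D\Omega}\frac{a+2\cos\theta}{a+\cos\theta}|h_r|^2\,\mathrm dS_x$; then the Poincar\'e inequality on the bounded domain $\Omega$ (using the boundary condition $h_\theta=0$ and, for the $e_r$-component, the Robin condition, just as in Lemma \ref{lem-AS}) upgrades $\|\nabla\vh\|_{L^2}^2$ to control $\|\vh\|_{L^2}^2+\|\nabla\vh\|_{L^2}^2$, giving \eqref{lower-boundU} at $\lambda=0$ with some $C_0>0$. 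To extend this to small $\lambda>0$ I would use Lemma \ref{lem-UV}(ii): for $\vh\in\tilde{\mathcal Y}$, $\langle(\tilde{\mathcal U}^\lambda-\tilde{\mathcal U}^0)\vh,\vh\rangle_{L^2}\to 0$ as $\lambda\to 0$. A slight subtlety is that Lemma \ref{lem-UV}(ii) is stated as convergence of the quadratic form for each fixed pair $\vh,\vg$, not in operator norm, so to get a uniform-in-$\lambda$ constant I would instead track the explicit bound behind that convergence: from \eqref{Id-Ulambda} and \eqref{bound-opQ}, $\langle(\tilde{\mathcal U}^\lambda-\tilde{\mathcal U}^0)\vh,\vh\rangle_{L^2}$ is bounded by $C_0(\lambda^2+|\log\lambda|)$ times $\big(\|\vh\|_{L^2}^2+\|\nabla\vh\|_{L^2}^2\big)$ — wait, that does not tend to zero uniformly. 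The correct route is: the only genuinely problematic piece is the $\mathcal Q^\pm_\lambda-\mathcal P^\pm$ difference, and applying Lemma \ref{lem-PropQ}(i) (strong convergence) together with a compactness argument, or more simply absorbing the zeroth-order defect $\lambda^2\|\vh\|_{L^2}^2$ and the $\langle(\mathcal Q_\lambda-\mathcal P)(\hat v\cdot\vh),\hat v\cdot\vh\rangle$ defect into half of the $C_0\|\nabla\vh\|_{L^2}^2$ coming from the $\tilde{\mathcal S}$-part via Poincar\'e, for $\lambda$ below a threshold $\lambda_1$ chosen so that the defect is at most $\tfrac12 C_0$ in form-norm. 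This is where the real work of the proof lies, and I would phrase it carefully rather than invoke a soft limit.

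Finally, for part (i): on the intervals $(0,\lambda_1]$ and $[\lambda_2,\infty)$ the estimate \eqref{lower-boundU} shows $\tilde{\mathcal U}^\lambda$ is injective on $\tilde{\mathcal Y}$ and that it is bounded below, hence has closed range; since $\tilde{\mathcal U}^\lambda$ is self-adjoint on $L^2_\tau(\Omega;\tilde\RR^2)$ with domain $\tilde{\mathcal Y}$ (established after \eqref{def-M}), its range is the orthogonal complement of its kernel, which is $\{0\}$, so the range is dense; being also closed, it is all of $L^2_\tau(\Omega;\tilde\RR^2)$. Thus $\tilde{\mathcal U}^\lambda:\tilde{\mathcal Y}\to L^2_\tau(\Omega;\tilde\RR^2)$ is one-to-one and onto, proving (i). The hardest point, as indicated, is obtaining a constant $C_0$ in \eqref{lower-boundU} that is genuinely uniform as $\lambda\to 0^+$; everything else is a combination of the already-established sign of $\mathcal A_1^\lambda$, the computation behind Lemma \ref{lem-AS}(ii), Poincar\'e's inequality, and the spectral-norm estimates of Lemmas \ref{lem-PropQ} and \ref{lem-UV}.
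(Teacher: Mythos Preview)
Your proposal has the right overall shape but contains two genuine gaps.

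\textbf{Sign error in the reduction to $\tilde{\mathcal S}^\lambda$.} From \eqref{Id-Ulambda},
\[
-\langle\tilde{\mathcal U}^\lambda\vh,\vh\rangle_{L^2} \;=\; -\langle\tilde{\mathcal S}^\lambda\vh,\vh\rangle_{L^2} \;+\; \langle(\mathcal A_1^\lambda)^{-1}w,w\rangle_{L^2},\qquad w=(\tilde{\mathcal T}_1^\lambda+\lambda\nabla)^*\vh.
\]
Since $(\mathcal A_1^\lambda)^{-1}\le 0$, the last term is $\le 0$, so $-\langle\tilde{\mathcal U}^\lambda\vh,\vh\rangle \le -\langle\tilde{\mathcal S}^\lambda\vh,\vh\rangle$, the \emph{opposite} of what you claim. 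You cannot simply drop this term. The fix for large $\lambda$ is to observe that $\|w\|_{L^2}\le C\|\vh\|_{L^2}$ (Lemma~\ref{op-bounds}) and $\|(\mathcal A_1^\lambda)^{-1}\|$ is bounded independently of $\lambda$ (Lemma~\ref{lem-AS}(i)), so $|\langle(\mathcal A_1^\lambda)^{-1}w,w\rangle|\le C'\|\vh\|_{L^2}^2$ is absorbed into $\lambda^2\|\vh\|_{L^2}^2$. This is exactly what the paper does, via the full identity \eqref{hUh} and the bound \eqref{boundU-infty}. (Incidentally, the boundary integral need not be nonnegative: $a+2\cos\theta$ can be negative when $1<a<2$; but it too is controlled by $C\|\vh\|_{H^1}^2$ through a trace estimate and is absorbed the same way.)

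\textbf{The small-$\lambda$ perturbation.} You correctly note that $\mathcal Q^\pm_\lambda\to\mathcal P^\pm$ only strongly, not in operator norm, so the defect $\langle(\mathcal Q_\lambda-\mathcal P)(\hat v\cdot\vh),\hat v\cdot\vh\rangle_\cH$ cannot be made $\le\epsilon\|\vh\|_{H^1}^2$ uniformly in $\vh$ by a direct ``absorption''; your final proposed fix does not work. The compactness argument you mention in passing is in fact the key: the map $\vh\mapsto\hat v\cdot\vh$ from $H^1(\Omega;\tilde\RR^2)$ to $\cH$ factors through the compact embedding $H^1\hookrightarrow L^2$, so on $H^1$-bounded sets its image is precompact in $\cH$, and strong convergence of $\mathcal Q_\lambda$ becomes uniform there. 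The paper implements precisely this idea, packaged as a contradiction argument: take $\lambda_n\to 0$, $\vh_n\in\tilde{\mathcal Y}$ with $\|\vh_n\|_{H^1}=1$ and $\langle\tilde{\mathcal U}^{\lambda_n}\vh_n,\vh_n\rangle\to 0$; extract a weak-$H^1$/strong-$L^2$ limit $\vh_0$; then $\mathcal Q_{\lambda_n}(\hat v\cdot\vh_n)\to\mathcal P(\hat v\cdot\vh_0)$ in $\cH$ and $(\tilde{\mathcal T}_1^{\lambda_n})^*\vh_n\to 0$, so the $(\mathcal A_1)^{-1}$ term vanishes in the limit; passing to the limit in \eqref{hUh} forces $\|\nabla\vh_n\|_{L^2}\to 0$ and hence $\vh_0=0$, contradicting $\|\vh_0\|_{L^2}=1$. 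Your Poincar\'e step at $\lambda=0$ followed by a properly executed compactness argument would be equivalent, but you must actually carry it out.

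Your argument for (i) via self-adjointness, injectivity, and closed range is correct; the paper uses Lax--Milgram instead, but either route works once \eqref{lower-boundU} is established.
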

\begin{proof} 
{\em (i)}  Let us assume for a moment that \eqref{lower-boundU} is proved. It is then clear that the operator $\tilde{\mathcal{U}}^\lambda$ is one to one from $\tilde{\mathcal{Y}}$ to $L^2(\Omega; \tilde \RR^2 )$. We shall use the standard Lax-Milgram theorem to prove that the operator is onto. Let us first denote by $\tilde{\mathcal{Y}}_1$ the space of vector functions $\vh = h_r e_r + h_\theta e_\theta$ in $H^1(\Omega; \tilde \RR^2 )$ such that {\em (i)} the Coulomb gauge constraint $\nabla \cdot \vh =0$ is valid on $\Omega$ and {\em (ii)} the boundary conditions  $h_\theta=0$ and $ \D_r h_r + \frac{a+2\cos \theta }{a+\cos \theta}h_r  =0$ are also valid (in the weak sense).  We recall that by definition,
\begin{equation}\label{def-Uh} \tilde{\mathcal{U}}^\lambda \vh = (-\lambda^2 + \Delta ) \vh  + \sum_\pm \int_{\RR^3}\frac{\tilde v}{\langle v \rangle}\mu^\pm_e \mathcal{Q}^\pm_\lambda(\hat v\cdot \vh) \; \mathrm{d}v  -  \tilde{\mathcal{T}}_1^\lambda (\mathcal{A}_1^\lambda)^{-1} (\tilde{\mathcal{T}}_1^\lambda)^* \vh ,\end{equation}
and thus let us introduce a bilinear operator
$$\begin{aligned}
\mathbb{B}^\lambda(\vh,\vg): &=  \int_\Omega \Big[ \lambda^2\vh \cdot \vg + \nabla \vh \cdot \nabla \vg \Big] \; \mathrm{d}x   +  \int_{\D\Omega} \frac{a+2\cos\theta}{a+\cos\theta} h_r g_r \; \mathrm{d}S_x
\\
& \quad + \sum_\pm \langle \mathcal{Q}^\pm_\lambda(\hat v\cdot \vh), \hat v \cdot \vg \rangle_{\cH}   + \langle (\mathcal{A}_1^\lambda)^{-1} (\tilde{\mathcal{T}}_1^\lambda)^* \vh , (\tilde{\mathcal{T}}_1^\lambda)^* \vg \rangle   ,  
\end{aligned}$$
for all $\vh,\vg \in \tilde{\mathcal{Y}}_1$. By \eqref{lower-boundU}, $\mathbb{B}^\lambda$ is coercive on $\tilde{\mathcal{Y}}_1 \times \tilde{\mathcal{Y}}_1$ when $\lambda$ is small or large, and thus by the Lax-Milgram theorem, for each $f \in L^2(\Omega; \tilde \RR^2 )$, there exists an $\vh \in \tilde{\mathcal{Y}}_1$ so that $\tilde{\mathcal{U}}^\lambda \vh = f$ in the distributional sense.  Furthermore, by the equation \eqref{def-Uh}, it follows that $\Delta \vh \in L^2 (\Omega; \tilde \RR^2 )$. Thus, $\vh \in H^2(\Omega; \tilde\RR^2)\cap \tilde{\mathcal{Y}}_1 = \tilde{\mathcal{Y}}$, and the operator $\tilde{\mathcal{U}}^\lambda$ is one to one and onto from $\tilde{\mathcal{Y}}$ to $L^2(\Omega; \tilde \RR^2 )$.  

{\em (ii)}  It remains to prove the inequality \eqref{lower-boundU}. For all $\vh = h_r e_r + h_\theta e_\theta \in \tilde{\mathcal{Y}}$, similar calculations as done in \eqref{hSh} using the boundary conditions incorporated in $\tilde{\mathcal{Y}}$ yield
\begin{equation}\label{hUh} \begin{aligned}
- \langle \tilde{\mathcal{U}}^\lambda \vh,\vh\rangle_{L^2}  &=  \lambda^2  \| \vh \|^2_{L^2} + \|\nabla \vh \|^2_{L^2}  + \int_{\D\Omega} \frac{a+2\cos\theta}{a+\cos\theta} |h_r|^2 \; \mathrm{d}S_x + \langle (\mathcal{A}_{1}^\lambda)^{-1}  (\tilde{\mathcal{T}}_1^\lambda)^* \vh, (\tilde{\mathcal{T}}_1^\lambda)^* \vh \rangle 
\\&\quad +   \sum_\pm \Big[  \langle \mathcal{Q}^\pm_\lambda(\hat v_r h_r), \hat v_r h_r\rangle_{\cH}   +  \langle \mathcal{Q}^\pm_\lambda(\hat v_\theta h_\theta), \hat v_\theta h_\theta\rangle_{\cH}   -  2\langle \mathcal{Q}^\pm_\lambda(\hat v_r h_r), \hat v_\theta h_\theta\rangle_{\cH}   \Big]
 \end{aligned}\end{equation} for all $\lambda \ge 0$. Thanks to the boundedness of the operators $\mathcal{Q}^\pm_\lambda $ and $ (\mathcal{A}_{1}^\lambda)^{-1}$, we therefore obtain  
\begin{equation}\label{boundU-infty}- \langle \tilde{\mathcal{U}}^\lambda \vh,\vh\rangle_{L^2}  \ge  (\lambda^2 -C_0) \| \vh \|^2_{L^2} + \|\nabla \vh\|^2_{L^2}  + \int_{\D\Omega} \frac{a+2\cos\theta}{a+\cos\theta} |h_r|^2 \; \mathrm{d}S_x,
\end{equation}
for some fixed constant $C_0$. This proves the lower bound \eqref{lower-boundU} in the case when $\lambda$ is large. 

For the case of small $\lambda$, we prove the estimate \eqref{lower-boundU} by contradiction; that is, assume that there are sequences $\lambda_n \to 0$ and $\vh_n \in \tilde{\mathcal{Y}}$ such that $\|\vh_n\|_{L^2}^2 + \|\nabla \vh_n \|_{L^2} ^2=1$ but $\langle \tilde{\mathcal{U}}^{\lambda_n} \vh_n,\vh_n \rangle \to 0$.  We note that, up to a subsequence, the sequence $\vh_n$ converges weakly to $\vh_{0}$ in $H^1(\Omega; \tilde \RR^2 )$ and $\vh_n$ converges strongly to $\vh_0$ in $L^2(\Omega; \tilde \RR^2 )$ as $n \to \infty$. In particular, $\vh_0\in \tilde{\mathcal{Y}}_1$ and $\|\vh_0\|_{L^2}^2 +  \|\nabla \vh_0\|_{L^2}^2 =1$. Furthermore, $ \|(\tilde{\mathcal{T}}_1^{\lambda_n})^* \vh_n\|_{L^2} \to 0$ and the bracket in \eqref{hUh} with $\lambda = \lambda_n$ and $\vh = \vh_n$ converges to 
 $$\|\mathcal{P}^\pm(\hat v_r h_{r0})\|_{\cH} ^2  + \|\mathcal{P}^\pm(\hat v_\theta h_{\theta 0})\|_{\cH}^2   -  2\langle \mathcal{P}^\pm(\hat v_r h_{r0}), \mathcal{P}^\pm (\hat v_\theta h_{\theta 0})\rangle_{\cH} \ge 0, $$ as $n \to \infty$. Here, $h_{r0} = \vh_0 \cdot e_r$ and $h_{\theta 0} = \vh_0 \cdot e_\theta$. By view of the identity \eqref{hUh}, we must then have 
$$ \|\nabla\vh_0 \|^2_{L^2}  + \int_{\D\Omega} \frac{a+2\cos\theta}{a+\cos\theta} |h_{r0}|^2 \; \mathrm{d}S_x =0,$$
 which together with its boundary conditions yields $\vh_0 = 0$. This contradicts the fact that $\|\vh_0\|_{L^2}^2 +  \|\nabla \vh_0\|_{L^2}^2 =1$, and so completes the proof of \eqref{lower-boundU} and therefore of the lemma.
 \end{proof}

\begin{corollary}\label{lem-negL} If $\mathcal{L}^0 \not \ge 0$, there exists $\lambda_3>0$ so that $\mathcal{L}^\lambda\not \ge 0$ for any $\lambda \in [0,\lambda_3]$.
\end{corollary}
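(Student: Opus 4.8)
The plan is to produce a single test function that simultaneously witnesses the failure of $\mathcal{L}^\lambda \ge 0$ for all sufficiently small $\lambda \ge 0$. Since $\mathcal{L}^0 \not\ge 0$, by definition of the quadratic-form ordering there is a function $h \in \mathcal{X}$ with $\|h\|_{L^2} = 1$ and $\langle \mathcal{L}^0 h, h\rangle_{L^2} =: -2\delta < 0$. I would fix this $h$ throughout the argument.

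Next I would invoke the strong convergence established in Lemma \ref{lem-Mprop}~(ii), which states precisely that for this fixed $h \in \mathcal{X}$ one has $\|(\mathcal{L}^\lambda - \mathcal{L}^0) h\|_{L^2} \to 0$ as $\lambda \to 0^+$. By the Cauchy--Schwarz inequality, $|\langle (\mathcal{L}^\lambda - \mathcal{L}^0) h, h\rangle_{L^2}| \le \|(\mathcal{L}^\lambda - \mathcal{L}^0) h\|_{L^2}\,\|h\|_{L^2} \to 0$, hence $\langle \mathcal{L}^\lambda h, h\rangle_{L^2} \to \langle \mathcal{L}^0 h, h\rangle_{L^2} = -2\delta$ as $\lambda \to 0^+$. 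Consequently there exists $\lambda_3 > 0$ such that $\langle \mathcal{L}^\lambda h, h\rangle_{L^2} < -\delta$ for every $\lambda \in (0, \lambda_3]$; combined with the case $\lambda = 0$ this gives $\langle \mathcal{L}^\lambda h, h\rangle_{L^2} \le -\delta < 0$ for all $\lambda \in [0, \lambda_3]$, so that $\mathcal{L}^\lambda \not\ge 0$ on this whole interval, which is the claim.

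There is essentially no obstacle here: the corollary is a soft consequence of Lemma \ref{lem-Mprop}~(ii). The one point to handle with care is that the argument must use the \emph{same} $h$ for every $\lambda \in [0,\lambda_3]$ (which it does), and that one should \emph{not} instead appeal to operator-norm continuity of $\lambda \mapsto \mathcal{L}^\lambda$ down to $\lambda = 0$, since the bound \eqref{norm-opL} degenerates logarithmically as $\lambda \to 0$. Equivalently, one may phrase the conclusion through the lowest eigenvalue: the computation above shows $\limsup_{\lambda \to 0^+} \kappa^\lambda \le \langle \mathcal{L}^0 h, h\rangle_{L^2} < 0$, and since $\kappa^\lambda$ is continuous on $(0,\infty)$ by Lemma \ref{lem-Mprop}~(iii), it follows that $\kappa^\lambda < 0$ throughout some interval $(0,\lambda_3]$, together with $\kappa^0 < 0$.
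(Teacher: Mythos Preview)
Your proof is correct and follows exactly the approach of the paper, which simply cites the strong convergence of $\mathcal{L}^\lambda$ to $\mathcal{L}^0$ in Lemma~\ref{lem-Mprop}(ii); your argument is just a careful unpacking of that one-line reference. Your additional remark that one must rely on strong convergence rather than the operator-norm bound \eqref{norm-opL} (which degenerates as $\lambda\to 0$) is well taken and makes explicit why Lemma~\ref{lem-Mprop}(ii), rather than (iii), is the relevant input.
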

\begin{proof} It follows directly from the strong convergence of $\mathcal{L}^\lambda$ to $\mathcal{L}^0$; see Lemma \ref{lem-Mprop} {\em (ii)}.  
\end{proof}


\subsection{Solution of the matrix equation}
We wish to construct a nonzero solution $(k ,\vh)$ in $\mathcal{X}\times \tilde{\mathcal{Y}}$ to the reduced matrix equation \eqref{matrix-Maxwell01}:
\begin{equation}\label{matrix-Maxwell02} 
\mathcal{M}^\lambda \begin{pmatrix} k  \\ \vh \end{pmatrix} =0 ,\end{equation} 
 for some $\lambda>0$, with $\mathcal{M}^\lambda$ defined as in \eqref{def-M}. In order to solve this equation, we will count the number of negative eigenvalues and therefore we are forced to truncate the second component to finite dimensions.  
 
 
 We recall the definition \eqref{Y-space} of the space $\tilde{\mathcal{Y}}$, which consists of functions $\vh$ with values in $\tilde \RR^2$.    
 The Laplacian $\Delta$ defined on this space is elliptic with elliptic boundary conditions.  
 Let $\{\tilde\psi_j\}_{j=1}^\infty$ be the eigenfunctions of $-\Delta$  in $\tilde{\mathcal{Y}}$, 
 chosen to be an orthonormal basis of $L^2(\Omega;\tilde \RR^2 )$, where the eigenvalues are $\sigma_j$.   
 Thus $-\Delta \tilde\psi_j = \sigma_j  \tilde\psi_j$ for $j=1,2,....$
Let $\mathbb{P}_n: = \tilde{\mathcal{Y}}^* \to \RR^n$ be the following projection 
and let its adjoint $\mathbb{P}_n^* = \RR^n \to \tilde{\mathcal{Y}}$ be defined by 
 $$ 
 \mathbb{P}_n \vh = \{ \langle \vh, \tilde\psi_j \rangle \} _{j=1}^n, \qquad \mathbb{P}_n^* b = \sum _{j=1}^n b_j \tilde\psi_j $$ 
  for $\vh \in \tilde{\mathcal{Y}}^*$ and $b = (b_1,\cdots,b_n)\in \RR^n$. 
 Here $\langle\ ,\ \rangle$ denotes the inner product in $L^2(\Omega;\tilde \RR^2 )$.  
 Then for each $n$ and $\lambda$, the operator $ \mathbb{P}_n \tilde{\mathcal{U}}^\lambda \mathbb{P}_n^* $ 
 is a symmetric $n\times n$ matrix with $(j,k)$ component equal to  
 $\langle \tilde{\mathcal{U}}^\lambda \psi_k, \psi_j \rangle$.  
We denote by $\mathcal{M}_n^\lambda$ the truncated matrix operator  
$$\mathcal{M}^\lambda_n:= \begin{pmatrix} 
\mathcal{L}^\lambda &  (\tilde{\mathcal{V}}^\lambda)^* \mathbb{P}_n^*\\
\mathbb{P}_n\tilde{\mathcal{V}}^\lambda & \mathbb{P}_n\tilde{\mathcal{U}}^\lambda \mathbb{P}_n^*
\end{pmatrix}$$
which is a well-defined self-adjoint operator from $\mathcal{X} \times \RR^n$ to $L^2(\Omega) \times \RR^n$ 
with discrete spectrum.   We first show that for each $n$ the truncated equation can be solved.  

\begin{lemma} \label{lem-kerMn}   
Assume that $\mathcal{L}^0\not \ge 0$.   There exist fixed numbers $0<\lb_4 < \lb_5 <\infty$ such that, 
for all $n \ge 1$, there exist a number $\lambda_n\in [\lambda_4,\lambda_5]$   
 and a nonzero vector function $(k_n ,b_n) \in \mathcal{X}\times \RR^n$ such that 
 $$\mathcal{M}_n^{\lambda_n} \begin{pmatrix} k_n  \\ b_n \end{pmatrix} =0 .$$
\end{lemma}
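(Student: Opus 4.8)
The plan is to fix two endpoints $0<\lambda_4<\lambda_5<\infty$ (independent of $n$) and, for each $n$, run a continuity argument in $\lambda\in[\lambda_4,\lambda_5]$ that counts the negative eigenvalues of $\mathcal{M}^\lambda_n$. Since its $(1,1)$-block $\mathcal{L}^\lambda=-\Delta+\lambda^2+(\text{bounded})$ has compact resolvent on $\mathcal{X}$ and the $(2,2)$-block acts on the finite-dimensional space $\RR^n$, the operator $\mathcal{M}^\lambda_n$ on $\mathcal{X}\times\RR^n$ has discrete spectrum bounded below; write its eigenvalues as $e_1(\lambda)\le e_2(\lambda)\le\cdots$. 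By the bound \eqref{norm-opL} together with Lemma \ref{lem-UV}, $\mathcal{M}^\lambda_n$ differs from the fixed operator $\mathrm{diag}(-\Delta,0)$ by a bounded self-adjoint term depending norm-continuously on $\lambda\in(0,\infty)$, so each $e_j(\cdot)$ is continuous there by min-max. I will show that $\mathcal{M}^{\lambda_5}_n$ has exactly $n$ negative eigenvalues and is invertible, while $\mathcal{M}^{\lambda_4}_n$ has at least $n+1$ negative eigenvalues. Then $e_{n+1}(\lambda_5)>0>e_{n+1}(\lambda_4)$, so the intermediate value theorem produces $\lambda_n\in(\lambda_4,\lambda_5)$ with $e_{n+1}(\lambda_n)=0$; since the spectrum is discrete, $0$ is then an eigenvalue, i.e.\ there is a nonzero $(k_n,b_n)\in\mathcal{X}\times\RR^n$ with $\mathcal{M}^{\lambda_n}_n(k_n,b_n)=0$.

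The engine of both endpoint estimates is the Schur complement obtained by eliminating the finite-dimensional block: whenever $D^\lambda_n:=\mathbb{P}_n\tilde{\mathcal{U}}^\lambda\mathbb{P}_n^*$ is invertible, Sylvester's law of inertia applied to the standard block factorization gives $n^-(\mathcal{M}^\lambda_n)=n^-(D^\lambda_n)+n^-(\mathcal{R}^\lambda_n)$, where $\mathcal{R}^\lambda_n:=\mathcal{L}^\lambda-(\tilde{\mathcal{V}}^\lambda)^*\mathbb{P}_n^*(D^\lambda_n)^{-1}\mathbb{P}_n\tilde{\mathcal{V}}^\lambda$. Moreover, applying the coercivity bound \eqref{lower-boundU} to $\vh=\mathbb{P}_n^* b=\sum_j b_j\tilde\psi_j$ and using orthonormality of the $\tilde\psi_j$ shows that $D^\lambda_n\le-C_0 I_n$ whenever $0<\lambda\le\lambda_1$ or $\lambda\ge\lambda_2$, with $C_0$ independent of $n$; hence $n^-(D^\lambda_n)=n$ and $(D^\lambda_n)^{-1}\le-C_0^{-1}I_n<0$ in those ranges. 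For the large endpoint, choose $\lambda_5\ge\lambda_2$ so large that $\lambda_5^2$ exceeds the constant $C_0$ in the inequality $\langle\mathcal{L}^\lambda h,h\rangle\ge(\lambda^2-C_0)\|h\|_{L^2}^2+\|\nabla h\|_{L^2}^2$ from the proof of Lemma \ref{lem-Mprop}(ii), so that $\mathcal{L}^{\lambda_5}$ is positive definite; since $(D^{\lambda_5}_n)^{-1}<0$, we get $\mathcal{R}^{\lambda_5}_n\ge\mathcal{L}^{\lambda_5}>0$, whence $n^-(\mathcal{M}^{\lambda_5}_n)=n$ exactly and $\mathcal{M}^{\lambda_5}_n$ is nonsingular.

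For the small endpoint, use $\mathcal{L}^0\not\ge0$ to fix once and for all a single $h_0\in\mathcal{X}$ with $\langle\mathcal{L}^0 h_0,h_0\rangle<0$. By the strong convergence $\mathcal{L}^\lambda h_0\to\mathcal{L}^0 h_0$ of Lemma \ref{lem-Mprop}(ii), and by $\tilde{\mathcal{V}}^\lambda h_0\to0$ in $L^2$ as $\lambda\to0^+$ (which, arguing as in the proof of Lemma \ref{lem-Mprop}(ii), follows from Lemma \ref{lem-PropQ}(i) together with the oddness in $(v_r,v_\theta)$ that forces $\tilde{\mathcal{T}}^0_1=\tilde{\mathcal{T}}^0_2=0$, hence $\tilde{\mathcal{V}}^0=0$), we may pick $\delta>0$ and then $\lambda_4\in(0,\lambda_1]$ small (depending only on $h_0$, not on $n$) so that $\langle\mathcal{L}^{\lambda_4}h_0,h_0\rangle<-\delta$ and $C_0^{-1}\|\tilde{\mathcal{V}}^{\lambda_4}h_0\|_{L^2}^2<\delta/2$. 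Testing $\mathcal{R}^{\lambda_4}_n$ against $h_0$ and using $(D^{\lambda_4}_n)^{-1}\ge-C_0^{-1}I_n$ and $\|\mathbb{P}_n\tilde{\mathcal{V}}^{\lambda_4}h_0\|\le\|\tilde{\mathcal{V}}^{\lambda_4}h_0\|_{L^2}$,
$$\langle\mathcal{R}^{\lambda_4}_n h_0,h_0\rangle\le\langle\mathcal{L}^{\lambda_4}h_0,h_0\rangle+C_0^{-1}\|\tilde{\mathcal{V}}^{\lambda_4}h_0\|_{L^2}^2<-\delta+\delta/2<0,$$
so $n^-(\mathcal{R}^{\lambda_4}_n)\ge1$ and $n^-(\mathcal{M}^{\lambda_4}_n)=n+n^-(\mathcal{R}^{\lambda_4}_n)\ge n+1$ for every $n$. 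Combined with the continuity of $e_{n+1}(\cdot)$, this finishes the proof.

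I expect the main obstacle to be the uniformity of $\lambda_4$ and $\lambda_5$ in $n$. The large endpoint is uniform because the coercivity constants in Lemmas \ref{lem-Mprop}(ii) and \ref{lem-Ulambda} do not depend on $n$; the small endpoint is uniform precisely because the negative direction of $\mathcal{R}^{\lambda_4}_n$ is realized by the single fixed function $h_0$ and because $(D^\lambda_n)^{-1}$ carries the $n$-independent bound $C_0^{-1}$ inherited from \eqref{lower-boundU}; this is exactly what the finite-dimensional truncation buys. A secondary point is to verify that the Schur-complement/Sylvester inertia bookkeeping is legitimate in this mixed (infinite-dimensional self-adjoint)-by-(finite-dimensional) setting; this is standard, since the eliminated block is finite-rank and boundedly invertible and $\mathcal{L}^\lambda$ has compact resolvent, so that the number of negative eigenvalues is finite and stable under bounded invertible congruence.
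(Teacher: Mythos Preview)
Your proof is correct and follows essentially the same route as the paper: both use the Schur complement with respect to the finite-dimensional block $D^\lambda_n=\mathbb{P}_n\tilde{\mathcal{U}}^\lambda\mathbb{P}_n^*$ together with Sylvester's inertia law, the uniform negativity of $D^\lambda_n$ from \eqref{lower-boundU}, the positivity of $\mathcal{L}^\lambda$ for large $\lambda$, and the fact that $\|\tilde{\mathcal{V}}^\lambda h_0\|_{L^2}\to0$ to produce a negative direction of the Schur complement at small $\lambda$, followed by continuity of the eigenvalues. Your treatment is if anything slightly tidier in making the $n$-independence of $\lambda_4$ explicit via the single fixed test function $h_0$ and in taking $\mathcal{L}^{\lambda_5}$ strictly positive so that $\mathcal{M}^{\lambda_5}_n$ is nonsingular.
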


\begin{proof} By Lemma \ref{lem-Mprop} {\em (ii)}, Lemma \ref{lem-Ulambda}, and Corollary \ref{lem-negL}, we can choose $\lambda_4, \lambda_5>0$ independent of $n$ so that $\mathcal{L}^\lambda \not \ge 0$ for all $\lambda \in (0,\lambda_4]$, $\mathcal{L}^\lambda \ge 0$ for $\lambda > \lambda_5$, and $\tilde{\mathcal{U}}^\lambda$ is invertible for $\lambda \in (0,\lambda_4]$ and $\lambda \ge \lambda_5$. It follows that $ - \mathbb{P}_n\tilde{\mathcal{U}}^\lambda \mathbb{P}_n^*$ is a symmetric positive matrix and is therefore invertible for each $\lambda$ in $(0,\lambda_4] \cup [\lambda_5,\infty)$. 
So, the matrix $\mathcal{M}^\lambda_n$ has the same number of negative eigenvalues as 
$$ 
(\mathcal{G}^\lambda_n)^*\mathcal{M}_n^\lambda \mathcal{G}^\lambda_n:=  \begin{pmatrix} 
\mathcal{L}^\lambda - (\mathbb{P}_n\tilde{\mathcal{V}}^\lambda)^*(\mathbb{P}_n\tilde{\mathcal{U}}^\lambda \mathbb{P}_n^*)^{-1} \mathbb{P}_n\tilde{\mathcal{V}}^\lambda &0\\0 & \mathbb{P}_n\tilde{\mathcal{U}}^\lambda \mathbb{P}_n^*
\end{pmatrix} , $$ 
where $\mathcal{G}^\lambda_n:=\begin{pmatrix} 
I & 0\\ -(\mathbb{P}_n\tilde{\mathcal{U}}^\lambda \mathbb{P}_n^*)^{-1} \mathbb{P}_n\tilde{\mathcal{V}}^\lambda 
& I  \end{pmatrix}.$ 
In addition, since the matrix $ \mathbb{P}_n\tilde{\mathcal{U}}^\lambda \mathbb{P}_n^*$ is negative definite, it has exactly $n$ negative eigenvalues for each $\lambda$ in $(0,\lambda_4] \cup [\lambda_5,\infty)$.  
Now for $\lambda \ge \lambda_5$ and for all $h \in \mathcal{X}$, Lemma \ref{lem-Mprop} {\em (ii)} yields  
$$\langle \mathcal{L}^\lambda h- (\mathbb{P}_n\tilde{\mathcal{V}}^\lambda)^*(\mathbb{P}_n\tilde{\mathcal{U}}^\lambda \mathbb{P}_n^*)^{-1} \mathbb{P}_n\tilde{\mathcal{V}}^\lambda h ,h\rangle  = \langle \mathcal{L}^\lambda h,h\rangle  - \langle (\mathbb{P}_n\tilde{\mathcal{U}}^\lambda \mathbb{P}_n^*)^{-1} \mathbb{P}_n\tilde{\mathcal{V}}^\lambda h, \mathbb{P}_n\tilde{\mathcal{V}}^\lambda h \rangle \ge 0.$$ 
Thus $\mathcal{M}^\lambda_n$ has exactly $n$ negative eigenvalues for $\lambda \ge \lambda_5$,  
all of which come  from the lower right corner of the matrix.  

Next we study the behavior of the matrix when $\lambda$ is small. We claim that 
 for each $h\in \mathcal{X}$ and each $n\ge1$, the convergence
 \begin{equation}\label{conv-LU} 
 \langle \mathcal{L}^\lambda h,h\rangle  
- \langle (\mathbb{P}_n\tilde{\mathcal{U}}^\lambda \mathbb{P}_n^*)^{-1} \mathbb{P}_n\tilde{\mathcal{V}}^\lambda h,  
 \mathbb{P}_n\tilde{\mathcal{V}}^\lambda h \rangle  
 \to \langle \mathcal{L}^0 h,h \rangle
 \end{equation} 
holds as $\lambda \to 0$.    By Lemma \ref{lem-Mprop}, $\mathcal{L}^\lambda h$ converges 
 strongly in the $L^2$ norm to $\mathcal{L}^0h$ as $\lambda \to 0$.  
 Thus to prove the claim it remains to show that the second term on the left of \eqref{conv-LU} tends to zero 
 for each $h \in \mathcal{X}$. 
Indeed, 
taking account of the lower bound \eqref{lower-boundU} on $-\tilde{\mathcal{U}}^\lambda$, we have 
$$ -\langle \mathbb{P}_n\tilde{\mathcal{U}}^\lambda \mathbb{P}_n^* b , b \rangle  = -\langle \tilde{\mathcal{U}}^\lambda (\mathbb{P}_n^* b ) ,   \mathbb{P}_n^* b  \rangle \ge C_0 \|  \mathbb{P}_n^* b  \|_{L^2}^2,$$  
for the same $C_0$ as defined in \eqref{lower-boundU}, for arbitrary $b \in \RR^n$.   
If we now take $b = (\mathbb{P}_n\tilde{\mathcal{U}}^\lambda \mathbb{P}_n^*)^{-1} a$ in this inequality for arbitrary $a\in \RR^n$, it follows that 
$$  \| \mathbb{P}_n^* (\mathbb{P}_n\tilde{\mathcal{U}}^\lambda \mathbb{P}_n^*)^{-1} a \|_{L^2}^2   
\le C_0^{-1}| \langle a  ,  (\mathbb{P}_n\tilde{\mathcal{U}}^\lambda \mathbb{P}_n^*)^{-1} a \rangle|  
= C_0^{-1}| \langle \mathbb{P}_n^*a  ,  \mathbb{P}_n^* (\mathbb{P}_n\tilde{\mathcal{U}}^\lambda \mathbb{P}_n^*)^{-1} a \rangle| ,$$
and therefore 
$$  \| \mathbb{P}_n^* (\mathbb{P}_n\tilde{\mathcal{U}}^\lambda \mathbb{P}_n^*)^{-1} a \|_{L^2}  
\le C_0^{-1} \|\mathbb{P}_n^*a \|_{L^2} $$
for all $a\in \RR^n$.  Substituting $a = \mathbb{P}_n\tilde{\mathcal{V}}^\lambda h$, for any $h\in \mathcal{X}$, we have 
$$\begin{aligned} 
|\langle \mathbb{P}_n^*(\mathbb{P}_n\tilde{\mathcal{U}}^\lambda 
\mathbb{P}_n^*)^{-1} \mathbb{P}_n\tilde{\mathcal{V}}^\lambda h,\tilde{\mathcal{V}}^\lambda h \rangle |  
\le C_0^{-1} \| \tilde{\mathcal{V}}^\lambda h\|_{L^2}^2 ,
\end{aligned}$$
which converges to zero as $\lambda \to 0$ by Lemma \ref{lem-UV} {\em (ii)}. This proves the claim \eqref{conv-LU} for each $n\ge 1$. 

It follows from \eqref{conv-LU} and the assumption that $\mathcal{L}^0 \not \ge 0$, 
that the operator 
$ \mathcal{L}^\lambda - (\mathbb{P}_n\tilde{\mathcal{V}}^\lambda)^*(\mathbb{P}_n\tilde{\mathcal{U}}^\lambda \mathbb{P}_n^*)^{-1} \mathbb{P}_n\tilde{\mathcal{V}}^\lambda$ 
must have at least one negative eigenvalue for small $\lambda$.  
Thus  taking $\lambda_4$ even smaller if necessary but still independent of $n$, 
we have  shown that for all $\lambda \in (0,\lambda_4]$, 
the operator $\mathcal{L}^\lambda - (\mathbb{P}_n\tilde{\mathcal{V}}^\lambda)^*(\mathbb{P}_n\tilde{\mathcal{U}}^\lambda \mathbb{P}_n^*)^{-1} \mathbb{P}_n\tilde{\mathcal{V}}^\lambda$ 
has at least one negative eigenvalue.  
Therefore the whole matrix $\mathcal{M}^\lambda_n$ has at least $n+1$ negative eigenvalues 
for $\lambda \in (0,\lambda_4]$ but has exactly $n$ negative eigenvalues for $\lambda \ge \lambda_5$.   
The lemma follows by continuity of the least eigenvalue of $\mathcal{M}^\lambda_n$, which comes directly from Lemmas \ref{lem-Mprop} and \ref{lem-UV}. 
\end{proof}

We are now ready to construct a nontrivial solution to the matrix equation \eqref{matrix-Maxwell02} 
by passage to the limit as $n \to \infty$.   

\begin{lemma}\label{lem-kerM} 
Assume that $\mathcal{L}^0\not \ge 0$. There exist a number $\lambda_0>0$ and a nonzero vector function 
$(k_0,\vh_0) \in \mathcal{X} \times \tilde{\mathcal{Y}}$ such that 
 \begin{equation}\label{matrix-eqMaxwell}\mathcal{M}^{\lambda_0} \begin{pmatrix} k_0 \\ \vh_0 \end{pmatrix} =0 .\end{equation}
\end{lemma}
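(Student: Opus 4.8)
The plan is to produce the solution by passing to the limit $n\to\infty$ in the finite-dimensional solutions furnished by Lemma \ref{lem-kerMn}. Write $\vh_n:=\mathbb P_n^*b_n\in\tilde{\mathcal Y}$; then $\mathcal M_n^{\lambda_n}(k_n,b_n)^T=0$ becomes
\begin{equation*}
\mathcal L^{\lambda_n}k_n+(\tilde{\mathcal V}^{\lambda_n})^*\vh_n=0,\qquad
\mathbb P_n\bigl(\tilde{\mathcal V}^{\lambda_n}k_n+\tilde{\mathcal U}^{\lambda_n}\vh_n\bigr)=0 .
\end{equation*}
Normalize so that $\|k_n\|_{L^2}^2+\|\vh_n\|_{L^2}^2=1$ and pass to a subsequence along which $\lambda_n\to\lambda_0\in[\lambda_4,\lambda_5]$; in particular $\lambda_0>0$, so the convergence statements of Lemmas \ref{lem-PropQ} and \ref{lem-UV} are available at $\lambda_0$.

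\emph{A priori bounds.} Pair the first equation with $k_n$, pair the second (an element of $\RR^n$) with $b_n$ — equivalently pair $\tilde{\mathcal V}^{\lambda_n}k_n+\tilde{\mathcal U}^{\lambda_n}\vh_n$ with $\vh_n\in\mathrm{range}(\mathbb P_n^*)$ — and subtract; the cross terms cancel and we obtain $\langle\mathcal L^{\lambda_n}k_n,k_n\rangle_{L^2}=\langle\tilde{\mathcal U}^{\lambda_n}\vh_n,\vh_n\rangle_{L^2}$. Using $\mathcal L^\lambda=\mathcal A_2^\lambda-\mathcal B^\lambda(\mathcal A_1^\lambda)^{-1}(\mathcal B^\lambda)^*$ together with $(\mathcal A_1^\lambda)^{-1}\le 0$ and the $\lambda$-uniform boundedness of the lower-order operators (Lemma \ref{op-bounds}), one has $\langle\mathcal L^{\lambda_n}k_n,k_n\rangle\ge\|\nabla k_n\|_{L^2}^2-C$; from the identity \eqref{hUh}, whose boundary term is nonnegative since $a>1$ and whose $(\mathcal A_1^\lambda)^{-1}$ term is $\ge -C\|\vh_n\|_{L^2}^2$ on $\tilde{\mathcal Y}$, one has $\langle\tilde{\mathcal U}^{\lambda_n}\vh_n,\vh_n\rangle\le-\|\nabla\vh_n\|_{L^2}^2+C$. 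Combining these gives uniform $H^1$ bounds on $k_n$ and $\vh_n$. Next bootstrap to $H^2$: the first equation gives $\Delta k_n=(\mathcal L^{\lambda_n}+\Delta)k_n+(\tilde{\mathcal V}^{\lambda_n})^*\vh_n$, which is bounded in $L^2$, so $k_n$ is bounded in $\mathcal X$ by elliptic regularity with the Dirichlet condition. For $\vh_n$, the device is that $-\Delta\vh_n=\sum_{j\le n}\sigma_j(b_n)_j\tilde\psi_j$ again lies in $\mathrm{range}(\mathbb P_n^*)$; pairing the second equation with $-\Delta\vh_n$ therefore eliminates $\mathbb P_n$ and yields $\|\Delta\vh_n\|_{L^2}\le\|\tilde{\mathcal U}^{\lambda_n}\vh_n-\Delta\vh_n\|_{L^2}+\|\tilde{\mathcal V}^{\lambda_n}k_n\|_{L^2}\le C$, each term on the right being $L^2$-bounded by Lemma \ref{op-bounds}, the $\lambda$-uniform bound on $(\mathcal A_1^\lambda)^{-1}$ from Lemma \ref{lem-AS}, and the $H^1$ bounds just obtained. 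Elliptic regularity with the boundary conditions built into $\tilde{\mathcal Y}$ then bounds $\vh_n$ in $\tilde{\mathcal Y}$.

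\emph{Passage to the limit.} By the $H^2$ bounds and Rellich's theorem, extract a further subsequence with $k_n\rightharpoonup k_0$ in $\mathcal X$, $\vh_n\rightharpoonup\vh_0$ in $\tilde{\mathcal Y}$, and $k_n\to k_0$, $\vh_n\to\vh_0$ strongly in $L^2$. Since the Dirichlet condition on $\mathcal X$ and the constraints defining $\tilde{\mathcal Y}$ are preserved under weak $H^2$ limits (trace continuity), $(k_0,\vh_0)\in\mathcal X\times\tilde{\mathcal Y}$; and the strong $L^2$ convergence gives $\|k_0\|_{L^2}^2+\|\vh_0\|_{L^2}^2=1$, so $(k_0,\vh_0)\ne0$. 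To pass to the limit in the first equation, test against a fixed $h\in\mathcal X$, use self-adjointness of $\mathcal L^{\lambda_n}$ and the $L^2$-norm convergences $\mathcal L^{\lambda_n}h\to\mathcal L^{\lambda_0}h$ and $\tilde{\mathcal V}^{\lambda_n}h\to\tilde{\mathcal V}^{\lambda_0}h$ (from Lemma \ref{lem-UV}(i) and Lemma \ref{lem-PropQ}(iii), using $\lambda_0>0$) together with $k_n\rightharpoonup k_0$, $\vh_n\rightharpoonup\vh_0$; this gives $\mathcal L^{\lambda_0}k_0+(\tilde{\mathcal V}^{\lambda_0})^*\vh_0=0$. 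For the second equation, fix $m$: for every $n\ge m$ and every $\vg\in\mathrm{span}\{\tilde\psi_1,\dots,\tilde\psi_m\}$ one has $\langle\tilde{\mathcal V}^{\lambda_n}k_n+\tilde{\mathcal U}^{\lambda_n}\vh_n,\vg\rangle=0$; letting $n\to\infty$, using self-adjointness of $\tilde{\mathcal U}^{\lambda_n}$, the identity \eqref{Id-Ulambda}, and Lemma \ref{lem-UV}(ii) so that $\tilde{\mathcal U}^{\lambda_n}\vg\to\tilde{\mathcal U}^{\lambda_0}\vg$ and $(\tilde{\mathcal V}^{\lambda_n})^*\vg\to(\tilde{\mathcal V}^{\lambda_0})^*\vg$ in $L^2$, gives $\langle\tilde{\mathcal V}^{\lambda_0}k_0+\tilde{\mathcal U}^{\lambda_0}\vh_0,\vg\rangle=0$; since $\{\tilde\psi_j\}$ is an orthonormal basis of $L^2(\Omega;\tilde\RR^2)$, letting $m\to\infty$ gives $\tilde{\mathcal V}^{\lambda_0}k_0+\tilde{\mathcal U}^{\lambda_0}\vh_0=0$. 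Hence $\mathcal M^{\lambda_0}(k_0,\vh_0)^T=0$, which is \eqref{matrix-eqMaxwell}.

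The main obstacle is the uniform $H^2$ a priori bound, and within it the treatment of the finite-rank truncation $\mathbb P_n$ in the second equation: the key is that both $\vh_n$ and $\Delta\vh_n$ lie in $\mathrm{range}(\mathbb P_n^*)$, so pairing the truncated equation with either of them makes $\mathbb P_n$ disappear, after which only the $\lambda$-uniform operator bounds of Lemmas \ref{op-bounds} and \ref{lem-AS} are needed. Once the $H^2$ bounds are secured, the compactness — and with it the nonvanishing of $(k_0,\vh_0)$ — is automatic, and the limit passage relies only on the continuity statements of Lemmas \ref{lem-PropQ} and \ref{lem-UV}.
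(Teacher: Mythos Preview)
Your proposal is correct and follows essentially the same route as the paper: invoke Lemma~\ref{lem-kerMn}, normalize, obtain uniform $H^2$ bounds on $(k_n,\vh_n)$ via the key observation that $\Delta\vh_n\in\mathrm{range}(\mathbb P_n^*)$, extract subsequential limits by compactness, and pass to the limit using the continuity of the operators in $\lambda$. The only organizational differences are that you derive the $H^1$ bounds on $k_n$ and $\vh_n$ simultaneously via the identity $\langle\mathcal L^{\lambda_n}k_n,k_n\rangle=\langle\tilde{\mathcal U}^{\lambda_n}\vh_n,\vh_n\rangle$ (whereas the paper bounds $\vh_n$ first and then goes directly to $H^2$ for $k_n$ from the first equation), and that in the limit for the second equation you test against a fixed finite span $\{\tilde\psi_1,\dots,\tilde\psi_m\}$ rather than approximating an arbitrary $\vg\in\tilde{\mathcal Y}$ by $\mathbb P_n^*c_n$; both are minor rearrangements of the same argument.
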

\begin{proof} 
By Lemma \ref{lem-kerMn}, for each $n \ge 1$, there exist $\lambda_n \in [\lambda_4,\lambda_5]$ 
and nonzero functions $(k_n , b_n) \in \mathcal{X}\times \RR^n$ such that 
\begin{equation}\label{approx-soln}
\begin{aligned}
\mathcal{L}^{\lambda_n} k_n +  (\tilde{\mathcal{V}}^{\lambda_n})^* \mathbb{P}_n^* b_n  &= 0\\
\mathbb{P}_n\tilde{\mathcal{V}}^{\lambda_n} k_n +  \mathbb{P}_n\tilde{\mathcal{U}}^{\lambda_n} \mathbb{P}_n^* b_n &=0 .
\end{aligned}
\end{equation}
We normalize 
$(k_n ,b_n)$  so that $\|k_n  \|_{L^2} + \|\mathbb{P}_n^* b_n \|_{L^2} =1$.   
Taking the standard inner product in $\RR^n$ of the second equation in \eqref{approx-soln} against $b_n$, 
 and applying estimate \eqref{boundU-infty}, which holds for all $\lambda> 0$,    we obtain 
\begin{equation}\label{Id-Vn}  \langle \tilde{\mathcal{V}}^{\lambda_n} k_n , \mathbb{P}_n^*b_n\rangle_{L^2}  = -\langle  \tilde{\mathcal{U}}^{\lambda_n} \mathbb{P}_n^* b_n, \mathbb{P}_n^* b_n\rangle_{L^2}  \ge  (\lambda^2_n -C_0) \| \mathbb{P}_n^*b_n \|^2_{L^2} + \|\nabla \mathbb{P}_n^*b_n \|^2_{L^2} ,\end{equation}
with the same constant $C_0$ independent of $n$ as in \eqref{boundU-infty}. By definition, the left-hand side is 
\begin{equation}\label{bound-Vn} \begin{aligned} \langle \tilde{\mathcal{V}}^{\lambda_n} k_n , \mathbb{P}_n^*b_n\rangle_{L^2}  &= \langle \tilde{\mathcal{T}}_2^{\lambda_n} k_n, \mathbb{P}_n^*b_n\rangle_{L^2} - \langle  (\mathcal{A}_1^{\lambda_n})^{-1} (\mathcal{B}^{\lambda_n})^*k_n , (\tilde{\mathcal{T}}_1^{\lambda_n})^*\mathbb{P}_n^*b_n\rangle_{L^2} 
\\ &= \langle \tilde{\mathcal{T}}_2^{\lambda_n} k_n, \mathbb{P}_n^*b_n\rangle_{L^2} - \langle  (\mathcal{A}_1^{\lambda_n})^{-1} (\mathcal{B}^{\lambda_n})^* k_n, (\tilde{\mathcal{T}}_1^{\lambda_n} + \lambda_n \nabla)^*\mathbb{P}_n^*b_n\rangle_{L^2}, 
\end{aligned}
\end{equation}
in which the last line is due to the fact that $\nabla \cdot \mathbb{P}_n^*b_n =0$. By Lemmas \ref{op-bounds} and \ref{lem-AS}, the operators $\tilde{\mathcal{T}}_2^{\lambda_n}, (\mathcal{A}_1^{\lambda_n})^{-1} (\mathcal{B}^{\lambda_n})^*, (\tilde{\mathcal{T}}_1^{\lambda_n} + \lambda_n \nabla)^*$ are bounded with their norm independent of $\lambda_n$. Thus, $ \langle \tilde{\mathcal{V}}^{\lambda_n} k_n , \mathbb{P}_n^*b_n\rangle_{L^2}  $ is bounded by $C_0 \|k_n\|_{L^2}\|\mathbb{P}_n^* b_n\|_{L^2}$.  By the normalization and the bounds on $\lb_n$, 
 the estimate \eqref{Id-Vn} yields that $\mathbb{P}_n^* b_n$ is bounded in $H^1(\Omega;\tilde \RR^2 )$.   
 Now we take the inner product 
 of the second equation  in  \eqref{approx-soln} with an arbitrary vector $c_n\in \RR^n$ 
 and rewrite the result as 
$$ \begin{aligned}
-  \langle \Delta \mathbb{P}_n^* b_n ,  \mathbb{P}_n^* c_n \rangle _{L^2}  =  
 \langle  \tilde{\mathcal{V}}^{\lambda_n} k_n + (\tilde{\mathcal{U}}^{\lambda_n} - \Delta) \mathbb{P}_n^* b_n , 
 \mathbb{P}_n^* c_n \rangle _{L^2}.  \end{aligned}$$
The first term $\langle  \tilde{\mathcal{V}}^{\lambda_n} k_n , \mathbb{P}_n^* c_n \rangle $ on the right can be estimated as in \eqref{bound-Vn}. For the second term, we write 
\begin{equation}\label{id-Ubound}\begin{aligned}
\langle (\tilde{\mathcal{U}}^{\lambda_n} - \Delta) \vh,\vg\rangle_{L^2} = \langle (\tilde{\mathcal{S}}^{\lambda_n} - \Delta) \vh,\vg\rangle_{L^2} - \langle (\mathcal{A}_1^{\lambda_n})^{-1} (\tilde{\mathcal{T}}_1^{\lambda_n} + \lambda_n \nabla)^* \vh, (\tilde{\mathcal{T}}_1^{\lambda_n} +\lambda_n \nabla)^*\vg\rangle_{L^2} ,
\end{aligned}\end{equation}
for $\vh , \vg \in \tilde{\mathcal{Y}}$. Again, the operators appearing on the right are uniformly bounded by Lemmas  \ref{op-bounds} and \ref{lem-AS}. Thus, we have 
\begin{equation}\label{bound-DPn} \begin{aligned}
| \langle \Delta \mathbb{P}_n^* b_n ,  \mathbb{P}_n^* c_n \rangle _{L^2} | \le  C_0 (\|k_n\|_{L^2} + \| \mathbb{P}_n^* b_n \|_{L^2}) \|  \mathbb{P}_n^* c_n\|_{L^2}  \end{aligned}\end{equation}
 for some $C_0$ independent of $n$. Additionally, we choose the components of $c_n$ as $c_{nj} = -\sigma_j b_{nj}$ for $j=1, ...,n$, so that 
 $$ \mathbb{P}_n^* c_n  = -\sum_1^n b_{nj} \sigma_j \tilde\psi_j    = \Delta \sum_1^n b_{nj} \tilde\psi_j  =  \Delta  \mathbb{P}_n^* b_n.$$ 
The last two relations  show that $\Delta \mathbb{P}_n^* b_n $ is uniformly bounded in $L^2(\Omega;\tilde \RR^2 )$. Because of the ellipticity of $\Delta$ with its boundary conditions coming from the space $\tilde{\mathcal{Y}}$, we deduce that 
 $\mathbb{P}_n^* b_n$ is bounded in $H^2(\Omega;\tilde \RR^2 )$.

Therefore  the first equation in \eqref{approx-soln} implies the boundedness of $\mathcal{L}^{\lambda_n} k_n$ 
in $L^2(\Omega)$.    Since $ \mathcal{L}^{\lambda_n} -\lambda_n^2 + \Delta $ is bounded from $L^2(\Omega)$ to $L^2(\Omega)$ 
uniformly in $\lb_n$, we deduce that $\Delta k_n$ is bounded in $L^2(\Omega)$.  
But $k_n$ satisfies the Dirichlet boundary condition and therefore $k_n$ is bounded in $H^2(\Omega)$.  
Thus, up to subsequences, we can assume that 
$\lambda_n \to \lambda_0 \in [\lambda_4,\lambda_5]$, $k_n  \to k_0  $ strongly in $H^1(\Omega)$ 
and weakly in $H^2(\Omega)$, 
while $\mathbb{P}_n^*b_n \to \vh_0$ strongly in $H^1(\Omega;\tilde \RR^2 )$ and weakly in $H^2(\Omega; \tilde \RR^2 )$.   
Clearly $(k_0 ,\vh_0)$ is nonzero since by the strong convergence, $\|k_0 \|_{L^2} + \|\vh_0\|_{L^2} =1$.


We will prove that the triple $(\lambda_0,k_0 ,\vh_0)$ solves \eqref{matrix-eqMaxwell} 
and that $(k_0 ,\vh_0)$ indeed belongs to $\mathcal{X}\times \tilde{\mathcal{Y}}$.   
We will check that \eqref{matrix-eqMaxwell} is valid in the distributional sense. 
In order to do so, take any $g \in \mathcal{X}$.    Then  
$$ \begin{aligned}
|\langle (\mathcal{L}^{\lambda_n} k_n - \mathcal{L}^{\lambda_0} k_0), g \rangle_{L^2}| 
&\le |\langle \mathcal{L}^{\lambda_n} (k_n - k_0) , g \rangle_{L^2}|  
+ |\langle (\mathcal{L}^{\lambda_n} - \mathcal{L}^{\lambda_0}) k_0  , g \rangle_{L^2}|    
\\
&\le  \| k_n - k_0 \|_{L^2} \|\mathcal{L}^{\lambda_n} g\|_{L^2}  
+ \|(\mathcal{L}^{\lambda_n} - \mathcal{L}^{\lambda_0})\|_{L^2\to L^2} \|k_0 \|_{L^2} \|  g\|_{L^2} ,   
\end{aligned}$$
which converges to zero as $n\to \infty$ by the facts (see Lemma \ref{lem-UV} {\em (i)}) 
that $k_n \to k_0 $ strongly in $L^2(\Omega)$ and $\mathcal{L}^{\lambda_n}  \to \mathcal{L}^{\lambda_0}$ in the operator norm.  
 Similarly, we have 
$$\begin{aligned} 
|\langle ( \tilde{\mathcal{V}}^{\lambda_n})^* \mathbb{P}_n^* b_n- (\tilde{\mathcal{V}}^{\lambda_0})^* \vh_0, g \rangle_{L^2}| 
&\le |\langle ( \tilde{\mathcal{V}}^{\lambda_n})^* ( \mathbb{P}_n^* b_n- \vh_0) , g \rangle_{L^2}|  + |\langle ( \tilde{\mathcal{V}}^{\lambda_n} - \tilde{\mathcal{V}}^{\lambda_0})^* \vh_0, g \rangle_{L^2}|  
\\
&\le \|\mathbb{P}_n^* b_n- \vh_0 \|_{L^2} \| \tilde{\mathcal{V}}^{\lambda_n} g\|_{L^2} + \|( \tilde{\mathcal{V}}^{\lambda_n} - \tilde{\mathcal{V}}^{\lambda_0})^* \vh_0 \|_{L^2} \| g\|_{L^2} ,   
\end{aligned} $$   
which again converges to zero as $n\to \infty$.  {\it This implies the first equation} in \eqref{matrix-eqMaxwell}. 

Next, for any  $\vg \in \tilde{\mathcal{Y}}$, since $\{\tilde\psi_j\}_{j=1}^\infty$ is a basis in $\tilde{\mathcal{Y}}$ 
which is orthonormal with respect to the usual $L^2$ norm, 
there exists a sequence   $c_n  = (c_{n1},c_{n2},\cdots, c_{nn}) \in \RR^n$ 
such that $\vg_n = \mathbb{P}_n^* c_n= \sum_{j=1}^n c_{nj} \psi_j   \to \vg$ strongly in $\tilde{\mathcal{Y}}$.   
Now we write (using the $\RR^n$ and $L^2$ inner products)  
$$\begin{aligned}
\langle \mathbb{P}_n \tilde{\mathcal{U}}^{\lambda_n} \mathbb{P}_n^* b_n, c_n \rangle - \langle \tilde{\mathcal{U}}^{\lambda_0} \vh_0, \vg \rangle 
& = \langle \tilde{\mathcal{U}}^{\lambda_n} \mathbb{P}_n^* b_n, \vg_n  \rangle - \langle \tilde{\mathcal{U}}^{\lambda_0} \vh_0, \vg \rangle 
\\ & = \langle \tilde{\mathcal{U}}^{\lambda_n} \mathbb{P}_n^* b_n, \vg_n - \vg \rangle 
+  \langle  (\mathbb{P}_n^* b_n - \vh_0),  (\tilde{\mathcal{U}}^{\lambda_n})^* \vg \rangle  
+ \langle  ( \tilde{\mathcal{U}}^{\lambda_n}  - \tilde{\mathcal{U}}^{\lambda_0}) \vh_0, \vg\rangle . 
\end{aligned}$$
The last term 
converges to zero as $n\to \infty$ thanks to Lemma \ref{lem-UV} {\em (ii)}. 
The next-to-last term goes to zero because $\mathbb{P}_n^* b_n \to \vh_0$ strongly in $L^2$ and 
$ (\tilde{\mathcal{U}}^{\lambda_n})^* \vg \in L^2 $.  
We write the first term on the right as  
$$\langle \tilde{\mathcal{U}}^{\lambda_n} \mathbb{P}_n^* b_n, \vg_n - \vg \rangle  
=  \langle (\tilde{\mathcal{U}}^{\lambda_n } - \Delta) \mathbb{P}_n^* b_n, \vg_n - \vg \rangle  
+  \langle \mathbb{P}_n^* b_n, \Delta(\vg_n - \vg )\rangle,$$
and use the expression \eqref{id-Ubound} for the first term on the right. We conclude that $\langle \tilde{\mathcal{U}}^{\lambda_n} \mathbb{P}_n^* b_n, \vg_n - \vg \rangle  $ also tends to zero because  $\|\vg_n - \vg\|_{L^2}+ \|\Delta(\vg_n - \vg )\|_{L^2} \to 0$, as $n \to \infty$.

Finally, by writing 
$$\begin{aligned}
\langle \mathbb{P}_n \tilde{\mathcal{V}}^{\lambda_n} k_n, c_n \rangle - \langle \tilde{\mathcal{V}}^{\lambda_0} k_0, \vg \rangle 
& = \langle \tilde{\mathcal{V}}^{\lambda_n} k_n , \vg_n  \rangle - \langle \tilde{\mathcal{V}}^{\lambda_0} k_0, \vg \rangle 
\\ & = \langle \tilde{\mathcal{V}}^{\lambda_n} k_n, \vg_n - \vg \rangle +  \langle \tilde{\mathcal{V}}^{\lambda_n} (k_n - k_0), \vg \rangle  + \langle  ( \tilde{\mathcal{V}}^{\lambda_n}  - \tilde{\mathcal{V}}^{\lambda_0}) k_0, \vg\rangle , 
\end{aligned}$$
we easily obtain the convergence of $\langle \mathbb{P}_n \tilde{\mathcal{V}}^{\lambda_n} k_n, c_n \rangle $ to  $\langle \tilde{\mathcal{V}}^{\lambda_0} k_0, \vg \rangle$. 

Putting all the limits together, we have found that the triple $(\lambda_0,k_0,\vh_0)$ solves the matrix equation \eqref{matrix-eqMaxwell} in the distributional sense, with $k_0 \in H^2(\Omega)$ and $\vh_0 \in H^2(\Omega;\tilde \RR^2 )$. 
Because $\mathbb{P}_n^*b_n \to \vh_0$ strongly in $H^{3/2}(\partial\Omega;\tilde \RR^2 )$, it follows that 
$\vh_0$ satisfies the boundary conditions $0 = e_\theta\cdot\vh  =  \nabla_x\cdot((e_r\cdot\vh)e_r)$ on $\partial\Omega$.  
It also follows 
that $(k_0,\vh_0)$ indeed belongs to $\mathcal{X}\times \tilde{\mathcal{Y}}$.   
\end{proof}

\subsection{Existence of a growing mode}
We are now ready to construct a growing mode of the linearized Vlasov-Maxwell systems \eqref{lin-VM} and \eqref{Maxwell-system} with the boundary conditions.  
\begin{lemma}\label{lem-growingmode}  Assume that $\mathcal{L}^0\not \ge 0$. There exists a growing mode $(e^{\lambda_0 t}f^\pm,e^{\lambda_0 t}\vE,e^{\lambda_0 t}\vB)$ of the linearized Vlasov-Maxwell system, for some $\lambda_0>0$, $f^\pm \in \cH^\pm$  
and  $\vE,\vB \in H^1(\Omega; \RR^3)$.  
\end{lemma}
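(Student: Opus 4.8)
Let $(\lambda_0,k_0,\vh_0)\in(0,\infty)\times\mathcal{X}\times\tilde{\mathcal{Y}}$, $(k_0,\vh_0)\neq 0$, be the solution of the reduced matrix equation \eqref{matrix-eqMaxwell} produced by Lemma \ref{lem-kerM}. The plan is to set $A_\varphi:=k_0$, $\tilde\vA:=\vh_0$, $\vA:=A_\varphi e_\varphi+\tilde\vA$, and $\phi:=-(\mathcal{A}_1^{\lambda_0})^{-1}\big[(\mathcal{B}^{\lambda_0})^*A_\varphi+(\tilde{\mathcal{T}}_1^{\lambda_0})^*\tilde\vA\big]\in\mathcal{X}$; then to define the particle densities $f^\pm$ by the trajectory integral \eqref{def-f} and the fields by $\vE:=-\nabla\phi-\lambda_0\vA$, $\vB:=\nabla\times\vA$, and to check that $(e^{\lambda_0 t}f^\pm,e^{\lambda_0 t}\vE,e^{\lambda_0 t}\vB)$ is a growing mode. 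Since $(A_\varphi,\tilde\vA)$ solves \eqref{matrix-eqMaxwell} and $\phi$ has the indicated form, Lemma \ref{lem-Mprop}(i) shows $(\phi,A_\varphi,\tilde\vA)$ solves the full matrix system \eqref{matrix-Maxwell}; because $\nabla\cdot\tilde\vA=0$ (built into $\tilde{\mathcal{Y}}$) the added $\lambda_0\nabla\cdot\tilde\vA$ term vanishes, so, with $f^\pm$ defined by \eqref{def-f}, system \eqref{matrix-Maxwell} is equivalent to the elliptic Maxwell system \eqref{Lap-Maxwell}, which therefore holds. The conditions $\phi=A_\varphi=A_\theta=0$ and $\partial_r A_r+\tfrac{a+2\cos\theta}{a+\cos\theta}A_r=0$ on $\partial\Omega$ are precisely those encoded in $\mathcal{X}$ and $\tilde{\mathcal{Y}}$, so the perfect-conductor conditions $E_\theta=E_\varphi=B_r=0$ on $\partial\Omega$ follow.

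Next I would verify the Vlasov equation. Applying $\lambda_0+\oD^\pm$ to \eqref{def-f}, using $\oD^\pm\mu_e^\pm=\oD^\pm\mu_p^\pm=0$ (functions of the invariants $e^\pm,p^\pm$), $\oD^\pm\phi=\hat v\cdot\nabla\phi$, and the relation $(\lambda_0+\oD^\pm)\mathcal{Q}^\pm_{\lambda_0}=\lambda_0 I$ of Subsection \ref{ss-rep}, a short computation gives $(\lambda_0+\oD^\pm)F^\pm=\mp\mu_e^\pm\hat v\cdot\vE$ with $F^\pm=f^\pm\mp(a+r\cos\theta)\mu_p^\pm A_\varphi$, i.e.\ exactly \eqref{lin-VMF} with $\partial_t=\lambda_0$. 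To read this weakly in the sense of \eqref{domainD} one checks that $F^\pm$ and $f^\pm$ lie in $\mathrm{dom}(\oD^\pm)$: from \eqref{def-f} one has $F^\pm=\pm\mu_e^\pm\big(\phi+\mathcal{Q}^\pm_{\lambda_0}(\hat v\cdot\vA-\phi)\big)$, and since $\hat v\cdot\vA-\phi\in\cH^\pm$ (as $\phi,\vA\in L^2$, $|\hat v|\le1$, $\int_{\RR^3}|\mu_e^\pm|\,\mathrm{d}v<\infty$), Lemma \ref{op-bounds}(i) puts $\mathcal{Q}^\pm_{\lambda_0}(\hat v\cdot\vA-\phi)$ in $\cH^\pm$, while for a function depending on $x$ and $\hat v$ only $\mathcal{Q}^\pm_{\lambda_0}$ of it is even in $v_r$ on $\partial\Omega$, because at a boundary point the characteristics issuing from $v$ and $\overline v$ agree for almost every $s$ by \eqref{traj-reflection2}. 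The first two terms of \eqref{def-f} are manifestly even in $v_r$ since $\mu_e^\pm$ and $\mu_p^\pm$ are, so $f^\pm$ is specular.

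The regularity is then routine. By \eqref{mu-cond} the weights $|\mu_e^\pm|$, $|\mu_e^\pm||\mu_p^\pm|^2$, $|\mu_e^\pm|^3$ are $v$-integrable and $|\mu_e^\pm|\le C_\mu$; together with $\phi,\vA\in L^2$ and the $\cH^\pm$-boundedness of $\mathcal{Q}^\pm_{\lambda_0}$, each of the three terms of \eqref{def-f} lies in $\cH^\pm$, so $f^\pm\in\cH^\pm$. Hence $\rho=\int(f^+-f^-)\,\mathrm{d}v$ and $\vj=\int\hat v(f^+-f^-)\,\mathrm{d}v$ lie in $L^2_\tau(\Omega)$, and elliptic regularity for \eqref{Lap-Maxwell} with its boundary conditions (as in Lemma \ref{growprops}) gives $\phi,A_\varphi\in H^2$ and $\tilde\vA\in H^2(\Omega;\tilde\RR^2)$ --- in fact this is immediate since $k_0,\vh_0\in H^2$ and $\phi\in\mathcal{X}$ --- so $\vE,\vB\in H^1(\Omega;\RR^3)$. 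Finally, the mode is nonzero: if $\vE\equiv\vB\equiv0$ then $\vA=-\lambda_0^{-1}\nabla\phi$, so the Coulomb gauge forces $\Delta\phi=0$, whence $\phi\equiv0$ by the Dirichlet condition in $\mathcal{X}$ and therefore $\vA\equiv0$, contradicting $(k_0,\vh_0)\neq0$.

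\textbf{Expected main obstacle.} The delicate point is the rigorous treatment of the identity $(\lambda_0+\oD^\pm)\mathcal{Q}^\pm_{\lambda_0}=\lambda_0 I$ together with the membership $f^\pm\in\mathrm{dom}(\oD^\pm)$, i.e.\ the weak specular condition: both must be established for the weak operator $\oD^\pm$ of \eqref{domainD}, tested against specular functions, and this hinges on controlling the measure-zero set of reflection times and the almost-everywhere agreement of the reflected characteristics from Lemmas \ref{lem-trajectory}--\ref{lem-reflection} and \eqref{traj-reflection2}. Once the earlier lemmas are in hand, the equivalence with the matrix equation, the elliptic regularity, and the nontriviality are essentially bookkeeping.
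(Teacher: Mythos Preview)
Your construction of $(\phi,A_\varphi,\tilde\vA,f^\pm,\vE,\vB)$ from the null vector $(k_0,\vh_0)$ of $\mathcal{M}^{\lambda_0}$, and your verification of the Maxwell system, the boundary conditions, the regularity, and the nontriviality, all match the paper's argument essentially word for word.

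The one place where the paper proceeds differently is precisely the point you flag as the ``expected main obstacle'': the weak verification of the Vlasov equation $(\lambda_0+\oD^\pm)g^\pm=\lambda_0 h^\pm$ with $g^\pm=\mathcal{Q}^\pm_{\lambda_0}h^\pm$. You propose to apply $(\lambda_0+\oD^\pm)$ to $\mathcal{Q}^\pm_{\lambda_0}h^\pm$ and to invoke the (only formally stated) identity $(\lambda_0+\oD^\pm)\mathcal{Q}^\pm_{\lambda_0}=\lambda_0 I$; making this rigorous would require differentiating the trajectory integral through the reflection times, which is doable but somewhat unpleasant. The paper sidesteps this entirely by a duality trick: for any smooth specular test function $k$, the skew-adjointness of $\oD^+$ and the adjoint formula $(\mathcal{Q}^+_{\lambda_0})^*=\mathcal{R}\mathcal{Q}^+_{\lambda_0}\mathcal{R}$ (from \eqref{adjointP}, with $\mathcal{R}v=(-v_r,-v_\theta,v_\varphi)$) give
\[
\langle(\lambda_0+\oD^+)g^+,k\rangle_\cH
=\langle \mathcal{Q}^+_{\lambda_0}h^+,(\lambda_0-\oD^+)k\rangle_\cH
=\langle \mathcal{R}h^+,\mathcal{Q}^+_{\lambda_0}(\lambda_0+\oD^+)\mathcal{R}k\rangle_\cH
=\langle \lambda_0 h^+,k\rangle_\cH,
\]
where the last step uses the easy identity $\mathcal{Q}^+_{\lambda_0}(\lambda_0+\oD^+)k=\lambda_0 k$, valid because $k$ is $C^1$ with compact $v$-support (so the fundamental theorem of calculus along trajectories applies without difficulty). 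In other words, the paper moves $\oD^+$ onto the smooth test function \emph{before} interacting with $\mathcal{Q}^+_{\lambda_0}$, and then uses the time-reversal symmetry $\mathcal{R}\oD^+\mathcal{R}=-\oD^+$ together with the already-established adjoint of $\mathcal{Q}^+_{\lambda_0}$. This simultaneously handles the weak form of the equation and the specular membership $f^\pm\in\mathrm{dom}(\oD^\pm)$, so the obstacle you anticipated dissolves once Lemma~\ref{lem-self-adj} (specifically \eqref{adjointP}) is available.
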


\begin{proof} Let $(\lambda_0,k_0,\vh_0)$ be the triple constructed in Lemma \ref{lem-kerM}. We then define the electric and magnetic potentials: 
$$\phi:= - (\mathcal{A}_{1}^{\lambda_0})^{-1} \Big[(\mathcal{B}^{\lambda_0})^* k_0 + (\tilde{\mathcal{T}}_{1}^{\lambda_0})^* \vh_0\Big], \qquad \vA := \vh_0 + k_0 e_\varphi,$$
and the particle distribution:
\begin{equation}\label{constr-f}f^\pm(x,v)  := \pm \mu^\pm_e (1-\mathcal{Q}^\pm_{\lambda_0})\phi  
 \pm  (a+r\cos \theta)  \mu_p^\pm   A_\varphi   \pm \mu^\pm_e \mathcal{Q}^\pm_{\lambda_0}(\hat v\cdot \vA). 
\end{equation}
We also define electromagnetic fields by $\vE := -\nabla \phi - \lambda_0 \vA$ and  $\vB := \nabla \times \vA $. It is then clear by our construction that $(e^{\lambda_0 t} \phi, e^{\lambda_0 t}\vA)$ solves the potential form of the Maxwell equations \eqref{Maxwell-pots}, and so $(e^{\lambda_0 t} \vE, e^{\lambda_0 t}\vB)$ solves the linearized Maxwell system. In addition, since $\phi,A_\varphi \in \mathcal{X} $ and $\tilde\vA \in \tilde{\mathcal{Y}}$, it follows that $\vE$ and $\vB$ belong to $H^1(\Omega; \RR^3)$ and satisfy the specular boundary conditions. The specular boundary condition for $f^\pm$ also follows directly by definition \eqref{def-f} and the fact that $\mathcal{Q}^\pm_\lambda(g)$ is specular on the boundary if $g$ is.  

It therefore remains to check the Vlasov equations for $e^{\lambda_0 t}f^\pm$. Let us verify the equation for $f^+$; the 
verification for $f^-$ is  similar. For sake of brevity, let us denote $g^+: = f^+ - \mu_e^+ \phi - (a + r \cos\theta) \mu_p^+ A_\varphi$ and $h^+: = \mu_e^+ (\hat v \cdot \vA - \phi)$. The identity \eqref{constr-f} for $f^+$ simply reads $$ g^+ = \mathcal{Q}^+_{\lambda_0} h^+. $$
We shall verify the Vlasov equation for $f^+$, which can be restated as  
\begin{equation}\label{Vlasov-g} (\lambda_0 + \oD^+) g^+ = \lambda_0 h^+ \end{equation}
 in the distributional sense. For each $v = v_re_r + v_\theta e_\theta + v_\varphi e_\varphi$, we write $\mathcal{R} v  := - v_r e_r - v_\theta e_\theta + v_\varphi e_\varphi$ and $(\mathcal{R} g)(x,v) := g(x,\mathcal{R} v)$ 
 as in  \eqref{adjointP}.    Then $\mathcal{R}^2 = Id$, $\mathcal{R}\oD^+ \mathcal{R} = -\oD^+$ and, in view of \eqref{adjointP}, the adjoint of $\mathcal{Q}^+_{\lambda_0}$ in the space $\cH$ is identical to $\mathcal{R} \mathcal{Q}_{\lambda_0}^+ \mathcal{R}$.  
 Thus for each test function $k = k(x,v)\in C_c^1(\Omega\times \RR^3)$ that has toroidal symmetry and satisfies the specular condition,  we have 
  $$\begin{aligned} 
 \langle (\lambda_0 &+ \oD^+) g^+ , k\rangle_{\cH}  =  \langle g^+ , (\lambda_0 - \oD^+) k\rangle_{\cH}  
 =  \langle \mathcal{Q}^+_{\lambda_0} h^+ , (\lambda_0 - \oD^+) k\rangle_{\cH} 
 \\
& =  \langle \mathcal{R} h^+ , \mathcal{Q}^+_{\lambda_0} \mathcal{R} ({\lambda_0} - \oD^+) k\rangle_{\cH}
  =  \langle \mathcal{R} h^+ , \mathcal{Q}^+_{\lambda_0}  ({\lambda_0} + \oD^+) \mathcal{R} k\rangle_{\cH}
 =  \langle \mathcal{R} h^+ , {\lambda_0}  \mathcal{R} k\rangle_{\cH}  
=  \langle {\lambda_0}  h^+ , k\rangle_{\cH}, 
\end{aligned}$$
where we have used the fact that $\mathcal{Q}^+_{\lambda_0}({\lambda_0} + \oD^+) k = {\lambda_0} k$, which follows directly from the definition of $\mathcal{Q}^+_{\lambda_0}$.   This proves the identity \eqref{Vlasov-g}.  
That is, the Vlasov equation for $f^+$ is now verified.    
\end{proof}

\section{Examples} \label{sec-examples}
The purpose of this section is to exhibit some explicit examples of stable and unstable equilibria, that is to find examples so that $\mathcal{L}^0 \ge 0$ (which implies the stability) or $\mathcal{L}^0\not \ge 0$ (instability). We recall that 
\begin{equation}\label{def-L0}
\mathcal{L}^0 = \mathcal{A}_2^0 -  \mathcal{B}^0 (\mathcal{A}_1^0)^{-1} (\mathcal{B}^0)^*  \end{equation}
and its domain is $\mathcal{X}$. The operators $\mathcal{A}^0_j$ and $\mathcal{B}^0$ are defined as in \eqref{operators-0}. For each $h \in \mathcal{X}$, we have
\begin{equation}\label{cond-L0}
\langle \mathcal{L}^0 h,h\rangle_{L^2} = \langle \mathcal{A}_2^0 h,h\rangle_{L^2} - \langle (\mathcal{A}_1^0)^{-1}(\mathcal{B}^0)^* h,(\mathcal{B}^0)^* h \rangle_{L^2} .
\end{equation}
Since $-\mathcal{A}^0_1$ is positive definite with respect to the $L^2$ norm, the second term is nonnegative. In order to investigate the sign of the first term, we recall that 
\begin{equation}\label{def-A2}\mathcal{A}_2^0 h = - \Delta h + \frac{1}{(a+r\cos\theta)^2} h  - \sum_\pm  \int_{\RR^3}\hat v_\varphi \Big [ (a+r\cos\theta) \mu_p^\pm   h  
 +  \mu_e^\pm \mathcal{P}^\pm(\hat v_\varphi h)  \Big] \; \mathrm{d}v , \end{equation}
and thus by taking integration by parts (see \eqref{product-A2}) and using $h=0$ on $\partial \Omega$, we have 
\begin{equation}\label{prod-hAh}\begin{aligned} 
\langle \mathcal{A}_2^0 h,h\rangle_{L^2} & = \int_\Omega   \Big( |\nabla h|^2 
+ \frac 1 {(a+r\cos\theta)^2} | h|^2 \Big)\; \mathrm{d}x
 - \sum_\pm \int _\Omega   \int_{\RR^3}(a+r\cos\theta)\mu^\pm_p \hat v_\varphi|  h|^2\; \mathrm{d}v\mathrm{d}x  \\&\quad+\sum_\pm \|\mathcal{P}^\pm(\hat v_\varphi h)\|^2_{\cH}
.\end{aligned}\end{equation}
We observe that only the second term on the right of \eqref{prod-hAh} does not have a definite sign. For the rest of this section, we will provide examples so that this term dominates the other two terms, which are always nonnegative. 

\subsection{Stable equilibria}
We begin with some simple examples of stable equilibria.  


\begin{theorem}\label{lem-stab} 
Let $(\mu^\pm,\phi^0,A_\varphi^0)$ be an inhomogenous equilibrium.  

(i) If 
\begin{equation}\label{suff-stab}
p \mu^\pm_p(e,p)\le 0, \qquad \quad \forall ~ e,p, 
 \end{equation} 
then the equilibrium is spectrally stable provided that $A_\varphi^0$ is sufficiently small in $L^\infty(\Omega)$. 

(ii) If 
\begin{equation}\label {suff-stab2}
|\mu_p^\pm (e,p)|  \le  \frac \epsilon {1+|e|^\gamma} , \end{equation}
for some $\gamma>3$, with $\epsilon$ sufficiently small but $A_\varphi^0$  not necessarily small, 
then the equilibrium is spectrally stable.  

\end{theorem}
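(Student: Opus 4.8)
The plan is to establish $\mathcal{L}^0\ge 0$ under either set of hypotheses and then invoke Theorem~\ref{theo-main}(i), which already rules out growing modes once $\mathcal{L}^0\ge 0$. By the identity~\eqref{cond-L0} and the negative definiteness of $\mathcal{A}_1^0$ (Lemma~\ref{lem-AB0property}, so that $-(\mathcal{A}_1^0)^{-1}\ge 0$), the term $-\langle(\mathcal{A}_1^0)^{-1}(\mathcal{B}^0)^*h,(\mathcal{B}^0)^*h\rangle_{L^2}$ is nonnegative; hence it suffices to prove that
\[
\langle \mathcal{A}_2^0 h,h\rangle_{L^2}\ \ge\ 0\qquad\text{for all }h\in\mathcal{X}.
\]
By~\eqref{prod-hAh}, $\langle\mathcal{A}_2^0 h,h\rangle_{L^2}$ is the sum of $\int_\Omega\big(|\nabla h|^2+(a+r\cos\theta)^{-2}|h|^2\big)\,\mathrm{d}x$, of $\sum_\pm\|\mathcal{P}^\pm(\hat v_\varphi h)\|_{\cH}^2$ (both nonnegative), and of the single indefinite term
\[
\mathcal{N}(h):=-\sum_\pm\int_\Omega\int_{\RR^3}(a+r\cos\theta)\,\mu_p^\pm\,\hat v_\varphi\,|h|^2\;\mathrm{d}v\mathrm{d}x .
\]
Since $h$ vanishes on $\partial\Omega$, Poincaré's inequality gives $\|h\|_{L^2}^2\le C_P\|\nabla h\|_{L^2}^2$ with $C_P$ depending only on $\Omega$, so the whole matter reduces to showing that $|\mathcal{N}(h)|$ is bounded by a small multiple of $\|\nabla h\|_{L^2}^2$.

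For part~(i) I would use the definition~\eqref{ep} of the angular momentum to rewrite $(a+r\cos\theta)\hat v_\varphi=\langle v\rangle^{-1}\big(p^\pm\mp(a+r\cos\theta)A_\varphi^0\big)$, which splits $\mathcal{N}(h)$ into a momentum part $-\sum_\pm\int_\Omega\int_{\RR^3}\langle v\rangle^{-1}\,p^\pm\mu_p^\pm\,|h|^2$ and an $A_\varphi^0$ part. The momentum part is $\ge 0$ by the sign hypothesis~\eqref{suff-stab}, while the $A_\varphi^0$ part is bounded in absolute value by $C\|A_\varphi^0\|_{L^\infty}\sum_\pm\int_\Omega|h|^2\big(\int_{\RR^3}|\mu_p^\pm|\,\mathrm{d}v\big)\mathrm{d}x$. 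The inner $v$-integral is finite uniformly in $x$ by the decay bound~\eqref{mu-cond} (here $\gamma>3$ is exactly what is needed, using $|e^\pm|\ge\langle v\rangle-\|\phi^0\|_{L^\infty}$ and the boundedness of $\phi^0$). Thus $\mathcal{N}(h)\ge -C\|A_\varphi^0\|_{L^\infty}\|h\|_{L^2}^2\ge -CC_P\|A_\varphi^0\|_{L^\infty}\|\nabla h\|_{L^2}^2$, hence $\langle\mathcal{A}_2^0 h,h\rangle_{L^2}\ge\big(1-CC_P\|A_\varphi^0\|_{L^\infty}\big)\|\nabla h\|_{L^2}^2\ge 0$ as soon as $\|A_\varphi^0\|_{L^\infty}$ is below the explicit threshold $(CC_P)^{-1}$.

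For part~(ii) the decomposition is unnecessary: since $|(a+r\cos\theta)\hat v_\varphi|=(a+r\cos\theta)|v_\varphi|/\langle v\rangle\le a+1$, one estimates directly $|\mathcal{N}(h)|\le(a+1)\sum_\pm\int_\Omega|h|^2\big(\int_{\RR^3}|\mu_p^\pm|\,\mathrm{d}v\big)\mathrm{d}x$, and the hypothesis~\eqref{suff-stab2} gives $\int_{\RR^3}|\mu_p^\pm(e^\pm,p^\pm)|\,\mathrm{d}v\le\epsilon\int_{\RR^3}(1+|e^\pm|^\gamma)^{-1}\,\mathrm{d}v\le C\epsilon$ uniformly in $x$ (again using $\gamma>3$ and the boundedness of $\phi^0$ on $\overline\Omega$). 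Hence $|\mathcal{N}(h)|\le C\epsilon\|h\|_{L^2}^2\le CC_P\epsilon\|\nabla h\|_{L^2}^2$ and $\langle\mathcal{A}_2^0 h,h\rangle_{L^2}\ge(1-CC_P\epsilon)\|\nabla h\|_{L^2}^2\ge 0$ once $\epsilon<(CC_P)^{-1}$; here $A_\varphi^0$ plays no role, since the factor $(a+r\cos\theta)\hat v_\varphi$ is already bounded. This is precisely why no smallness of $A_\varphi^0$ is required in~(ii) but is required in~(i).

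There is no deep obstacle: the heart of the argument is the algebraic rearrangement of $\mathcal{N}(h)$ through the angular momentum variable in part~(i), and in both parts the only point needing a little care is the uniform-in-$x$ integrability of $|\mu_p^\pm(e^\pm,p^\pm)|$ over $v$, which rests on the decay assumptions~\eqref{mu-cond}, resp.~\eqref{suff-stab2}, together with the $L^\infty$ bound on $\phi^0$. One should also note that $\|A_\varphi^0\|_{L^\infty}$ is controlled a priori by $C_\mu$ and $\Omega$ through the equilibrium equation~\eqref{Maxwell-equi}, so that the threshold "$\epsilon$ small" in part~(ii) is genuinely independent of the particular equilibrium; beyond that, everything is a routine combination of Poincaré's inequality with the prescribed sign conditions.
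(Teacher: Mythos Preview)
Your proposal is correct and follows essentially the same route as the paper's own proof: reduce $\mathcal{L}^0\ge 0$ to $\mathcal{A}_2^0\ge 0$ via the nonnegativity of $-\mathcal{B}^0(\mathcal{A}_1^0)^{-1}(\mathcal{B}^0)^*$, then control the single indefinite term in~\eqref{prod-hAh} by the angular-momentum substitution $(a+r\cos\theta)\hat v_\varphi=\langle v\rangle^{-1}(p^\pm\mp(a+r\cos\theta)A_\varphi^0)$ in part~(i) and by the direct bound $|(a+r\cos\theta)\hat v_\varphi|\le a+1$ in part~(ii), finishing with Poincar\'e. The paper's argument is identical in structure and in the key algebraic step; your additional remarks on the uniform-in-$x$ integrability of $|\mu_p^\pm|$ and on the role of $\|\phi^0\|_{L^\infty}$ are correct elaborations of points the paper leaves implicit.
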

\begin{proof} It suffices to show that $\mathcal{A}_2^0 \ge 0$. 
Let us look at the second integral  of $ \langle \mathcal{A}_2^0 h,h\rangle_{L^2}$ in \eqref{prod-hAh}, for each $h \in \mathcal{X}$. 
By the definition of $p^\pm = (a+r\cos \theta)(v_\varphi \pm  A_\varphi^0(r,\theta))$, we may write  
$$\begin{aligned}
 \int _\Omega   \int_{\RR^3}(a+r\cos\theta)\mu^\pm_p \hat v_\varphi|  h|^2\; \mathrm{d}v\mathrm{d}x  =  \int _\Omega   \int_{\RR^3}\frac{p^\pm  \mu^\pm_p}{ \langle v \rangle} |  h|^2\; \mathrm{d}v\mathrm{d}x \mp  \int _\Omega   \int_{\RR^3}\frac{(a+r\cos\theta) A^0_\varphi \mu^\pm_p}{ \langle v \rangle} | h|^2\; \mathrm{d}v\mathrm{d}x .
\end{aligned}$$
Let us consider case {\it (i)}. Since $p\mu^\pm_p \le 0$, the above yields 
$$ - \sum_\pm \int _\Omega   \int_{\RR^3}(a+r\cos\theta)\mu^\pm_p \hat v_\varphi|  h|^2\; \mathrm{d}v\mathrm{d}x  \ge 
  - (1+a) \sup_{x } |A^0_\varphi| \Big( \sup_x \int_{\RR^3}\frac{ |\mu_p^+|+ |\mu_p^-| }{\langle v \rangle} \; \mathrm{d}v \Big)\int_{\Omega} |h|^2 \; \mathrm{d}x 
. $$
Now by the Poincar\'e inequality, we have $\|h\|_{L^2} \le c_0 \|\nabla h \|_{L^2}$ for $h \in \mathcal{X}$ and for some fixed constant $c_0$. In addition, thanks to the decay assumption \eqref{mu-cond}, the supremum over $x\in \Omega$ of 
$\int_{\RR^3}\langle v \rangle^{-1}(|\mu_p^+|+ |\mu_p^-| ) \; \mathrm{d}v$ is finite. 
Thus if the sup norm of $A_\varphi^0$ is sufficiently small, or more precisely if $A_\varphi^0$ satisfies
\begin{equation}\label{sup-psibound} c_0 (1+a)\sup _x |A^0_\varphi|\Big( \sup_x \int_{\RR^3}\langle v \rangle^{-1}(|\mu_p^+|+ |\mu_p^-| ) \; \mathrm{d}v \Big) \le 1,\end{equation}
then the second term in $\langle \mathcal{A}_2^0 h,h \rangle_{L^2}$ is smaller than the first, and so the operator $\mathcal{A}_2^0$ is nonnegative. 

Let us consider case {\it (ii)}. As above, we only have to bound the second term in 
$ \langle \mathcal{A}_2^0h,h\rangle_{L^2}$. The assumption \eqref{suff-stab2} yields
$$
\Big | \int _\Omega   \int_{\RR^3}(a+r\cos\theta)\mu^\pm_p \hat v_\varphi|  h|^2\; \mathrm{d}v\mathrm{d}x   \Big|  
 \le \epsilon\Big(\sup_{x\in \Omega}\int_{\RR^3}\frac{(a+1)}{1+|e|^\gamma} \mathrm{d}v\Big) \|h\|_{L^2}^2 \le  C\epsilon \|h \|_{L^2}^2. $$
If $\epsilon$ is sufficiently small, the second term is smaller than the positive terms.  
\end{proof}

\subsection{Unstable equilibria}\label{sec-unstab-ex}
Let us now turn to some examples of unstable equilibria. It certainly suffices to find a single function $h\in \mathcal{X}$ such that 
$\langle \mathcal{L}^0 h,h\rangle_{L^2} <0$, or specifically to show that the second term in $\langle\mathcal{A}^0_2 h,h\rangle_{L^2}$ dominates the remaining terms. 
For the rest of this section, we limit ourselves to a purely magnetic equilibrium $(\mu^\pm,0,A^0_\varphi)$ 
with $\phi=0$. Furthermore we assume that 
\begin{equation}\label{simplified-cond} 
\mu^+(e ,p) = \mu^-(e ,-p),\qquad \forall e,p.\end{equation}
This assumption holds for example if $\mu^+ = \mu^- = \mu$ and $\mu$ is an even function of $p$. As will be seen below, the assumption greatly simplifies the verification of the spectral condition on $\mathcal{L}^0$. 

Let us recall 
$$e = \langle v \rangle , \qquad p^\pm = (a+r\cos \theta) (v_\varphi \pm A^0_\varphi).$$
We begin with some useful properties of the projection $\mathcal{P}^\pm$. 

\begin{lemma}\label{lem-projP} There hold

(i) For $k\in \ker\oD^\pm$ and $h\in \cH$ so that $kh \in \cH$, we have $ \mathcal{P}^\pm (k h) = k \mathcal{P}^\pm h.$

(ii) Assume \eqref{simplified-cond}. Let  $\mathcal{R}_\varphi v$ denote the reflected point of $v$ across the hyperplane $\{e_r,e_\theta\}$ in $\RR^3$, and define $\mathcal{R}_\varphi g(x,v) = g(x,\mathcal{R}_\varphi v)$. For each function $g  \in \cH$, we have $$\mathcal{R}_\varphi\mathcal{P}^+ (\mathcal{R}_\varphi g) = \mathcal{P}^-g.$$
In particular, $\mathcal{P}^+ h = \mathcal{P}^-h$ for $h = h(r,\theta)$. 

\end{lemma}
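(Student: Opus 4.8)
The plan is to prove Lemma \ref{lem-projP} by using the characterization of $\mathcal{P}^\pm$ as the orthogonal projection in $\cH^\pm$ onto $\ker \oD^\pm$, and exploiting the algebraic symmetries of $\oD^\pm$ together with the relation \eqref{simplified-cond}. Recall that $\mathcal{P}^\pm g$ is the unique element of $\ker\oD^\pm$ such that $g - \mathcal{P}^\pm g \perp \ker\oD^\pm$ in the weighted inner product $\langle f,g\rangle_{\cH} = \int\int \mu_e^\pm f g\, \mathrm{d}v\mathrm{d}x$ (up to sign on $\mu_e$, but $|\mu_e^\pm|$ is the weight). Note that both assertions only concern the $\pm$ cases with $\phi^0=0$ as in this subsection, though (i) should hold generally.

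For part (i), first I would observe that if $k\in\ker\oD^\pm$ then multiplication by $k$ maps $\ker\oD^\pm$ into itself, since $\oD^\pm$ is a first-order derivation: $\oD^\pm(k\ell) = (\oD^\pm k)\ell + k(\oD^\pm \ell) = 0$ whenever $\oD^\pm k = \oD^\pm \ell = 0$. One must be slightly careful about domains and integrability (the hypothesis $kh\in\cH$ and $k\mathcal{P}^\pm h\in\cH$ is exactly what is needed), and about the boundary/specular condition encoded in $\mathrm{dom}(\oD^\pm)$; but since $k$ is constant along trajectories and specular, multiplying a specular trajectory-invariant function by $k$ preserves both properties. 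Then to show $\mathcal{P}^\pm(kh) = k\mathcal{P}^\pm h$ it suffices to verify that $k\mathcal{P}^\pm h \in \ker\oD^\pm$ (just argued) and that $kh - k\mathcal{P}^\pm h = k(h-\mathcal{P}^\pm h)$ is orthogonal to $\ker\oD^\pm$: for any $\ell\in\ker\oD^\pm$, $\langle k(h-\mathcal{P}^\pm h),\ell\rangle_\cH = \langle h-\mathcal{P}^\pm h, k\ell\rangle_\cH = 0$ since $k\ell\in\ker\oD^\pm$. By uniqueness of the orthogonal projection this gives the claim.

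For part (ii), the key is to identify how the reflection $\mathcal{R}_\varphi$ (i.e. $v_\varphi \mapsto -v_\varphi$, fixing $v_r,v_\theta$) conjugates $\oD^+$ into $-\oD^-$, and how it transforms the weight $\mu_e^\pm$. Inspecting the explicit form \eqref{def-opD} with $\vE^0=0$ and $\vB^0 = B_r^0 e_r + B_\theta^0 e_\theta$, one checks term by term that $\mathcal{R}_\varphi \oD^+ \mathcal{R}_\varphi = -\oD^-$: the terms linear in $v_\varphi\hat v_\varphi$ are even under $\mathcal{R}_\varphi$ while the $\D_{v_\varphi}$ derivative picks up a sign, the magnetic terms $\hat v_\varphi B^0_\theta\D_{v_r}$ etc. flip sign appropriately, the centrifugal terms $\frac1r v_\theta\hat v_\theta$ are untouched, and the overall $\pm$ in front of the force terms accounts for the rest. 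Simultaneously, since $e^+ = e^- = \langle v\rangle$ (as $\phi^0=0$) is $\mathcal{R}_\varphi$-invariant and $p^+ = (a+r\cos\theta)(v_\varphi + A^0_\varphi)$ satisfies $\mathcal{R}_\varphi p^+ = -p^-$, assumption \eqref{simplified-cond} gives $\mathcal{R}_\varphi \mu_e^+ = \mu_e^-$ (taking the $e$-derivative of $\mu^+(e,p) = \mu^-(e,-p)$). Consequently $\mathcal{R}_\varphi$ is a (weighted) isometry from $\cH^+$ onto $\cH^-$ that maps $\ker\oD^+$ onto $\ker\oD^-$. Since conjugation by $\mathcal{R}_\varphi$ of the orthogonal projection onto $\ker\oD^+$ is precisely the orthogonal projection onto the image $\mathcal{R}_\varphi(\ker\oD^+) = \ker\oD^-$, one concludes $\mathcal{R}_\varphi \mathcal{P}^+ \mathcal{R}_\varphi = \mathcal{P}^-$, which is the stated identity (using $\mathcal{R}_\varphi^2 = \mathrm{Id}$). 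The final remark, $\mathcal{P}^+ h = \mathcal{P}^- h$ for $h=h(r,\theta)$, follows since such $h$ is $\mathcal{R}_\varphi$-invariant and, because the toroidal geometry is symmetric under $v_\varphi\mapsto -v_\varphi$, applying $\mathcal{R}_\varphi$ to $\mathcal{P}^+ h$ must return something of the same form; more directly, $\mathcal{P}^- h = \mathcal{R}_\varphi\mathcal{P}^+\mathcal{R}_\varphi h = \mathcal{R}_\varphi\mathcal{P}^+ h$, and one then argues $\mathcal{P}^+h$ is itself $\mathcal{R}_\varphi$-invariant (e.g. because $-\oD^+$ and the weight $|\mu_e^+|$ are related to $\oD^+$ by a further symmetry, or by noting $\mathcal{P}^+h$ can be taken even in $v_\varphi$ by averaging). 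The main obstacle I anticipate is the careful bookkeeping in verifying $\mathcal{R}_\varphi\oD^+\mathcal{R}_\varphi = -\oD^-$ term by term and confirming that $\mathcal{R}_\varphi$ respects the specular boundary condition built into $\mathrm{dom}(\oD^\pm)$ — everything else is soft functional analysis once that conjugation identity is in hand.
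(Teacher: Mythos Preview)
Your approach is essentially the same as the paper's for both parts: for (i) you use that multiplication by $k\in\ker\oD^\pm$ preserves $\ker\oD^\pm$ together with the orthogonality characterization of $\mathcal{P}^\pm$, and for (ii) you use the conjugation identity for $\oD^\pm$ under $\mathcal{R}_\varphi$ combined with the weight relation coming from \eqref{simplified-cond}. One correction: a direct computation from \eqref{def-opD} (with $\phi^0=0$) gives $\mathcal{R}_\varphi\oD^+\mathcal{R}_\varphi=\oD^-$, not $-\oD^-$ --- the transport terms $\hat v_r\partial_r+\tfrac1r\hat v_\theta\partial_\theta$ are fixed by $\mathcal{R}_\varphi$, so no overall sign flip occurs. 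This is harmless for your argument since $\ker\oD^-=\ker(-\oD^-)$, but it is worth stating correctly.

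For the ``In particular'' clause, note that what is actually used downstream (in Lemma~\ref{lem-simplified}) is only the intertwining relation $\mathcal{R}_\varphi\mathcal{P}^+=\mathcal{P}^-\mathcal{R}_\varphi$, which for $h=h(r,\theta)$ gives $\mathcal{R}_\varphi\mathcal{P}^+h=\mathcal{P}^-h$; your additional step of arguing that $\mathcal{P}^+h$ itself is $\mathcal{R}_\varphi$-invariant is not needed there and your suggested justifications for it (averaging, or a ``further symmetry'') do not quite work as stated, since the orthogonal projection is unique.
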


\begin{proof} We note that $k \mathcal{P}^\pm h \in \ker \oD^\pm$ since both $k$ and $\mathcal{P}^\pm h$ belong to $\ker\oD^\pm$. Now, for all $m \in \ker\oD^\pm$, we have 
$$ \langle \mathcal{P}^\pm (kh), m\rangle _{\cH} = \langle  kh,  \mathcal{P}^\pm m\rangle _{\cH} = \langle kh, m\rangle _{\cH} =  \langle  \mathcal{P}^\pm h, km\rangle _{\cH} = \langle  k \mathcal{P}^\pm h, m\rangle _{\cH} .$$
By taking $m = \mathcal{P}^\pm (k h) - k \mathcal{P}^\pm h$, we obtain the identity in {\em (i)}. 

Next, let us prove {\em (ii)}. In view of the assumption \eqref{simplified-cond}, we have
\begin{equation}\label{cal-mu} \mathcal{R}_\varphi \mu^+(e,p^+) = \mu^+(e,-p^-) = \mu^- (e,p^-), \qquad \mathcal{R}_\varphi \mu^-(e,p^-) = \mu^-(e,-p^+) = \mu^+ (e,p^+).\end{equation}
 In addition, from the definition of $\oD^\pm$ in \eqref{def-opD}, we observe that $ \mathcal{R}_\varphi \oD^+ \mathcal{R}_\varphi = \oD^-$. That is, the differential operator $\oD^-$acting on $g$ is the same as the operator $\mathcal{R}_\varphi \oD^+$ acting on $\mathcal{R}_\varphi g$. This together with \eqref{cal-mu} proves {\em (ii)}.
\end{proof}

\begin{lemma}\label{lem-simplified} 
If $(\mu^\pm,0,A^0_\varphi)$ is an equilibrium such that $\mu^\pm$ satisfies \eqref{simplified-cond}, then $\mathcal{B}^0 =0$ and so for all $h \in \mathcal{X}$ 
\begin{equation}\label{def-newA2}
\mathcal{L}^0h = \mathcal{A}_2^0 h = - \Delta h + \frac{1}{(a+r\cos\theta)^2} h  - 2 \int_{\RR^3}\hat v_\varphi \Big [ (a+r\cos\theta) \mu_p^-   h  
 +  \mu_e^- \mathcal{P}^-(\hat v_\varphi h)  \Big] \; \mathrm{d}v .\end{equation}
\end{lemma}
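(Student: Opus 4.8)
\textbf{Proof proposal for Lemma \ref{lem-simplified}.}

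The plan is to exploit the symmetry hypothesis \eqref{simplified-cond} to identify the $+$ and $-$ contributions in the definitions \eqref{operators-0} of $\mathcal{A}_1^0$, $\mathcal{A}_2^0$, and $\mathcal{B}^0$, after applying the reflection $\mathcal{R}_\varphi$ to the $+$ integrals. First I would record the elementary observations that for the purely magnetic equilibrium ($\phi^0=0$) one has $e^+=e^-=\langle v\rangle$, and that under the change of variable $v\mapsto \mathcal{R}_\varphi v$ (which has unit Jacobian on $\RR^3$) the quantities transform as in \eqref{cal-mu}, namely $\mu^\pm_e(e,p^\pm)\circ\mathcal{R}_\varphi = \mu^\mp_e(e,p^\mp)$ and $\mu^\pm_p(e,p^\pm)\circ\mathcal{R}_\varphi = -\mu^\mp_p(e,p^\mp)$, while $\hat v_\varphi\circ\mathcal{R}_\varphi = \hat v_\varphi$ and $(a+r\cos\theta)$ is unchanged. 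Combined with part (ii) of Lemma \ref{lem-projP}, which gives $\mathcal{R}_\varphi\mathcal{P}^+\mathcal{R}_\varphi = \mathcal{P}^-$, and the fact that a scalar $h=h(r,\theta)$ satisfies $\mathcal{R}_\varphi h = h$, this will let me rewrite every $+$ integral over $\RR^3$ as the corresponding $-$ integral.

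Next I would treat $\mathcal{B}^0$ directly. In $\mathcal{B}^0 h = -\sum_\pm\int_{\RR^3}\hat v_\varphi\mu^\pm_e(1-\mathcal{P}^\pm)h\,\mathrm{d}v$ with $h=h(r,\theta)$, apply $v\mapsto\mathcal{R}_\varphi v$ to the $+$ term. Since $\hat v_\varphi$ and $h$ are invariant, $\mu^+_e\mapsto\mu^-_e$, and $\mathcal{P}^+h = \mathcal{R}_\varphi\mathcal{P}^-\mathcal{R}_\varphi h = \mathcal{P}^-h$ (using the last sentence of Lemma \ref{lem-projP}(ii)), the $+$ integral becomes exactly $\int_{\RR^3}\hat v_\varphi\mu^-_e(1-\mathcal{P}^-)h\,\mathrm{d}v$. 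Hmm — that would make the two terms add rather than cancel, so I must be careful: in fact $\hat v_\varphi$ is \emph{odd} under $\mathcal{R}_\varphi$ since $\mathcal{R}_\varphi$ flips $v_\varphi$ while fixing $v_r,v_\theta$. So the $+$ integral picks up a minus sign and cancels the $-$ integral, giving $\mathcal{B}^0 = 0$. I would state this carefully: $\mathcal{R}_\varphi$ is reflection across the $\{e_r,e_\theta\}$-hyperplane, so $v_\varphi\mapsto -v_\varphi$, hence $\hat v_\varphi\circ\mathcal{R}_\varphi = -\hat v_\varphi$, and then the computation above yields $-\sum_\pm(\cdots) = 0$. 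Consequently $(\mathcal{B}^0)^* = 0$ as well, so $\mathcal{L}^0 = \mathcal{A}_2^0$ on $\mathcal{X}$ by definition \eqref{operator-L0}.

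Finally I would identify $\mathcal{A}_2^0 h$ with the right side of \eqref{def-newA2}. Starting from \eqref{def-A2}, the term $-\Delta h + (a+r\cos\theta)^{-2}h$ is untouched. In the sum $\sum_\pm\int_{\RR^3}\hat v_\varphi[(a+r\cos\theta)\mu^\pm_p h + \mu^\pm_e\mathcal{P}^\pm(\hat v_\varphi h)]\,\mathrm{d}v$, apply $v\mapsto\mathcal{R}_\varphi v$ to the $+$ summand: the prefactor $\hat v_\varphi$ flips sign, $(a+r\cos\theta)\mu^+_p$ flips sign (so the product $\hat v_\varphi\mu^+_p$ is invariant and becomes $\hat v_\varphi\mu^-_p$), and for the second piece $\hat v_\varphi\mathcal{P}^+(\hat v_\varphi h)$ I use Lemma \ref{lem-projP}(ii): $\mathcal{P}^+(\hat v_\varphi h) = \mathcal{R}_\varphi\mathcal{P}^-\mathcal{R}_\varphi(\hat v_\varphi h) = \mathcal{R}_\varphi\mathcal{P}^-(-\hat v_\varphi h)$ since $h$ is reflection-invariant, and after the overall $\mathcal{R}_\varphi$ change of variable together with $\mu^+_e\mapsto\mu^-_e$ and the outer $\hat v_\varphi$ flipping sign, the $+$ summand equals the $-$ summand exactly. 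Hence the sum equals $2\int_{\RR^3}\hat v_\varphi[(a+r\cos\theta)\mu^-_p h + \mu^-_e\mathcal{P}^-(\hat v_\varphi h)]\,\mathrm{d}v$, which gives \eqref{def-newA2}. The main obstacle — really the only subtle point — is keeping the parities straight: one must verify that $\mathcal{R}_\varphi$ sends $v_\varphi\mapsto -v_\varphi$ but fixes $v_r,v_\theta$ (so it is a reflection, not a rotation), hence $\hat v_\varphi$ is odd while $p^\pm$ and $\mu^\pm_p$ transform with the extra sign from \eqref{cal-mu}; once these signs are pinned down, both the vanishing of $\mathcal{B}^0$ and the factor $2$ in $\mathcal{A}_2^0$ fall out mechanically, and one should double-check that the cross term $\mathcal{P}^\pm(\hat v_r A_r + \hat v_\theta A_\theta)$-type issues do not arise here since $h$ is scalar.
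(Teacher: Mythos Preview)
Your proposal is correct and follows essentially the same route as the paper: both arguments rest on the reflection $\mathcal{R}_\varphi$ (flipping $v_\varphi$), the identities \eqref{cal-mu}, and Lemma~\ref{lem-projP}(ii) to convert every $+$ contribution into the corresponding $-$ contribution. The paper phrases the vanishing of $\mathcal{B}^0$ as ``the integrand $k=\sum_\pm\mu_e^\pm(1-\mathcal{P}^\pm)h$ is even in $v_\varphi$, so $\int\hat v_\varphi k\,\mathrm{d}v=0$,'' whereas you change variable in the $+$ integral and observe it cancels the $-$ integral; these are the same computation. For $\mathcal{A}_2^0$ the two write-ups are virtually identical. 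Your initial slip (listing $\hat v_\varphi\circ\mathcal{R}_\varphi=\hat v_\varphi$ before correcting it to $-\hat v_\varphi$) should simply be cleaned up in the final version, but the argument is sound.
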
 

\begin{proof} By definition, we may write 
$$\mathcal{B}^0 h  = - \int_{\RR^3}\hat v_\varphi k(x,v) \; \mathrm{d}v , \qquad  k(x,v): = \mu_e^+ (e,p^+) ( 1 - \mathcal{P}^+) h +  \mu_e^- (e,p^-) ( 1 - \mathcal{P}^-) h .$$
We will show that $k(x,v)$ is in fact even in $v_\varphi$, and thus $\mathcal{B}^0$ must vanish by integration. Indeed, by \eqref{cal-mu} and Lemma \ref{lem-projP}, {\em (ii)}, we have 
$$\begin{aligned}k(x,\mathcal{R}_\varphi v) &=  \mathcal{R}_\varphi\mu_e^+ (e,p^+  ) ( 1 - \mathcal{P}^+) h + \mathcal{R}_\varphi  \mu_e^- (e, p^-) ( 1 - \mathcal{P}^-) h
\\
 &=  \mu_e^- (e,p^-) ( 1 - \mathcal{P}^-) h + \mu_e^+ (e,p^+) ( 1 - \mathcal{P}^+) h
 \\&= k(x,v).
 \end{aligned}$$  
This proves the first identity in \eqref{def-newA2}. For the second identity, we perform the change of variable 
 $v \to \mathcal{R}_\varphi v$ in the integral terms of $\mathcal{A}^0_2$ in \eqref{def-A2}. We get 
$$ \begin{aligned}
\int_{\RR^3}\hat v_\varphi  \mu_p^+ (e,p^+)   \; \mathrm{d}v
& = -  \int_{\RR^3} \hat v_\varphi  \mu_p^+ (e,\mathcal{R}_\varphi p^+)    \; \mathrm{d}v = \int_{\RR^3} \hat v_\varphi  \mu_p^- (e,p^-)     \; \mathrm{d}v
\\
\int_{\RR^3}\hat v_\varphi  \mu_e^+(e,p^+) \mathcal{P}^+(\hat v_\varphi h)  \; \mathrm{d}v  &= -\int_{\RR^3}\hat v_\varphi  \mu_e^+(e,\mathcal{R}_\varphi p^+) \mathcal{R}_\varphi\mathcal{P}^+(\hat v_\varphi h)  \; \mathrm{d}v   =\int_{\RR^3}\hat v_\varphi  \mu_e^-(e,p^-) \mathcal{P}^-(\hat v_\varphi h)  \; \mathrm{d}v . 
 \end{aligned}$$
This proves \eqref{def-newA2} and  completes the proof of the lemma. 
\end{proof}

Thanks to Lemma \ref{lem-simplified}, the problem now depends only on the $-$ particles (electrons), and thus we shall drop the minus superscript in $p^-, \mu^-, \oD^-$, and $\mathcal{P}^-$ for the rest of this section. Integrating by parts, we have 
$$
( \mathcal{L}^0 h,h)_{L^2} = \int_\Omega   \Big( |\nabla h|^2 
+ \frac 1 {(a+r\cos\theta)^2} | h|^2 \Big)\; \mathrm{d}x
 - 2\int _\Omega   \int_{\RR^3}(a+r\cos\theta)\mu_p \hat v_\varphi|  h|^2\; \mathrm{d}v\mathrm{d}x +2\|\mathcal{P}(\hat v_\varphi h)\|^2_{\cH}
$$
for any $h \in \mathcal{X}$. Let us take $h = h_*(r,\theta)$ to be a function in $\mathcal{X}$ such that the first term in the above calculation is identical to one. Such a function $h_*$ exists in $\mathcal{X}$; for example, a normalized toroidal eigenfunction that is associated with the least eigenvalue of $-\Delta$ with the Dirichlet boundary condition will do. Thus, recalling that $e = \langle v \rangle $ and $ p = (a+r\cos\theta) (v_\varphi - A^0_\varphi)$, we write $( \mathcal{L}^0 h_*,h_*)_{L^2}$ as
\begin{equation}\label{key-prodL}
\begin{aligned}
( \mathcal{L}^0h_*,h _*)_{L^2} &= 1 
 - 2\int _\Omega   \int_{\RR^3}\frac{p \mu_p}{e} | h_*|^2\; \mathrm{d}v\mathrm{d}x  - 2 \int_\Omega \int_{\RR^3} \frac{\mu_p}{e}  (a+r\cos\theta)A^0_\varphi   | h_*|^2 \; \mathrm{d}v\mathrm{d}x +2 \|\mathcal{P} (\hat v_\varphi h_*) \|^2_{\cH}
 \\
 &= 1 + I + II + III.
 \end{aligned}
\end{equation}

We now scale in the variable $p$ to get the following result.  
\begin{theorem}\label{lem-unstab-inhomo} Let $\mu^\pm$ satisfy \eqref{simplified-cond} and let $\mu = \mu^-$. Assume that
\begin{equation}\label{mg-unstab-in}
p \mu_p(e,p)\ge c_0 p^2   \nu(e), \qquad \quad \forall ~ e,p,
 \end{equation} 
for some positive constant $c_0$ and some nonnegative function $\nu(e)$ such that $\nu \not \equiv 0$. For each $K>0$, 
define $\mu^{(K),\pm}(e,p): = \mu^\pm(e, K p)$.  
Assume that $A_\varphi^{(K),0}$ is a bounded solution of the equation 
 \begin{equation}\label{equilibrium-psi} \begin{aligned}
\Big(- \Delta+\frac{1}{(a+r\cos\theta)^2}\Big) A_\varphi^{(K),0} &=  \int_{\RR^3} \hat v_\theta\Big[\mu^{(K),+}(e,p^{(K),+}) - \mu^{(K),-}(e,p^{(K),-}) \Big]\;\mathrm{d}v,
\end{aligned}
\end{equation}
with $p^{(K),\pm} = (a+r\cos\theta) (v_\varphi \pm A_\varphi^{(K),0})$ and with $A_\varphi^{(K),0} =0$ on the boundary $\D\Omega$. 
Then there exists a positive number $K_0$ such that the purely magnetic equilibria $(\mu^{(K),\pm},0,A_\varphi^{(K),0})$ are spectrally unstable for all $K\ge K_0$.
\end{theorem}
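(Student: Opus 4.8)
## Proof Proposal for Theorem 5.5

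\medskip

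The plan is to exploit the scaling $\mu^{(K),\pm}(e,p) = \mu^\pm(e,Kp)$ together with the decomposition \eqref{key-prodL} of $\langle \mathcal{L}^0 h_*, h_*\rangle_{L^2}$ evaluated at the fixed test function $h_* = h_*(r,\theta)$, and to show that the term $I = -2\int_\Omega\int_{\RR^3} \frac{p\mu_p}{e}|h_*|^2\,\mathrm{d}v\mathrm{d}x$ becomes large and negative of order $K$ (roughly speaking), while the other terms $II$, $III$, and $2\|\mathcal{P}(\hat v_\varphi h_*)\|^2_\cH$ remain of lower order as $K \to \infty$. First I would track how each quantity in \eqref{key-prodL} depends on $K$. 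For the scaled equilibrium, $p^{(K),-} = (a+r\cos\theta)(v_\varphi - A_\varphi^{(K),0})$ and $\mu^{(K),-}_p(e,p^{(K),-}) = K\,\mu_p^-(e, Kp^{(K),-})$. Performing the change of variables $w_\varphi = K v_\varphi$ (keeping $v_r, v_\theta$ fixed) inside the $v$-integrals converts these integrals into expressions where the explicit $K$ factors can be extracted. Under the hypothesis \eqref{mg-unstab-in}, $p\mu_p^-(e,p) \ge c_0 p^2\nu(e) \ge 0$, so $I \le 0$ always; the point is to get a quantitative lower bound on $|I|$. After the substitution, one finds that $I$ is bounded above (i.e., $-I$ bounded below) by a positive quantity that grows like $K$ — more precisely, one isolates the leading contribution $-2c_0 \int_\Omega \int \frac{p^2\nu(e)}{e}|h_*|^2$ in the scaled variables, which after undoing the scaling carries a net positive power of $K$ because $p^2$ scaling against the single $\mathrm{d}v_\varphi$ Jacobian factor $K^{-1}$ leaves an overall growth. (The precise power has to be computed carefully, but since $\nu\not\equiv 0$ and $h_*\not\equiv 0$ the constant in front is strictly positive.)

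\medskip

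Next I would control the remaining terms. The term $III = 2\|\mathcal{P}(\hat v_\varphi h_*)\|^2_\cH$: since $\cH = \cH^{(K),-} = L^2_{|\mu^{(K),-}_e|}$ and $\|\mathcal{P}(\hat v_\varphi h_*)\|_\cH \le \|\hat v_\varphi h_*\|_\cH$, this is bounded by $\int_\Omega\int |\mu^{(K),-}_e| \hat v_\varphi^2 |h_*|^2\,\mathrm{d}v\mathrm{d}x$; after the substitution $w_\varphi = Kv_\varphi$ and using $\mu^{(K),-}_e(e,p) = \mu_e^-(e,Kp)$ together with the decay \eqref{mu-cond}, this is $O(1)$ or $o(K)$ — in any case strictly lower order than $|I|$. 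The term $II = -2\int_\Omega\int \frac{\mu_p^{(K),-}}{e}(a+r\cos\theta)A_\varphi^{(K),0}|h_*|^2$: here I would need the bound $\|A_\varphi^{(K),0}\|_{L^\infty(\Omega)}$ to stay controlled (indeed to vanish, or at least not blow up too fast) as $K\to\infty$. This requires a short auxiliary argument using the equation \eqref{equilibrium-psi}: the right-hand side is $\int \hat v_\theta[\mu^{(K),+} - \mu^{(K),-}]\,\mathrm{d}v$, which by the symmetry \eqref{simplified-cond} and an odd-in-$(v_r,v_\theta)$ type cancellation should be small — in fact I expect the right-hand side to be $O(A_\varphi^{(K),0})$ times a bounded factor plus lower-order terms, so that a fixed-point or maximum-principle estimate on the elliptic operator $-\Delta + (a+r\cos\theta)^{-2}$ (which is invertible with Dirichlet data) gives $\|A_\varphi^{(K),0}\|_{L^\infty} \to 0$ as $K\to\infty$. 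Granting this, $II = o(K)$ as well, in fact $o(1)$ relative to the leading part.

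\medskip

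Putting these estimates together: for $K$ large, $\langle \mathcal{L}^{(K),0} h_*, h_*\rangle_{L^2} = 1 + I + II + III$ with $I \le -c K^\alpha$ for some $\alpha > 0$ and $|II| + |III| + 1 = o(K^\alpha)$, hence the whole expression is strictly negative once $K \ge K_0$ for a suitable threshold $K_0$. Therefore $\mathcal{L}^{(K),0} \not\ge 0$, and by Theorem \ref{theo-main}(iii) the equilibrium $(\mu^{(K),\pm}, 0, A_\varphi^{(K),0})$ admits a growing mode, i.e., is spectrally unstable. One should also check that the scaled data genuinely defines an admissible equilibrium: $\mu^{(K),\pm}$ still satisfies \eqref{mu-cond} (the decay in $e$ is untouched by scaling in $p$, and $\mu^{(K),\pm}_p = K\mu_p^\pm$ still obeys the bound after perhaps absorbing $K$ into $C_\mu$ for each fixed $K$), $\mu^{(K),-}_e < 0$, the symmetry \eqref{simplified-cond} is preserved, and \eqref{equilibrium-psi} is exactly the equilibrium Maxwell equation \eqref{Maxwell-equi} for $A_\varphi$ with $\phi^0 = 0$ (using that the $v_r$-component integrals vanish by evenness so only $\hat v_\theta$ — or rather $\hat v_\varphi$ — survives; one should double-check the index is consistent with \eqref{Maxwell-equi}).

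\medskip

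\emph{Main obstacle.} The delicate point is making the exponent $\alpha$ in the growth of $|I|$ explicit and simultaneously proving that $\|A_\varphi^{(K),0}\|_{L^\infty}$ stays bounded (ideally $\to 0$) as $K \to \infty$ — the latter is a genuine solvability/a-priori-estimate issue for the nonlinear elliptic problem \eqref{equilibrium-psi}, since $A_\varphi^{(K),0}$ appears inside $p^{(K),\pm}$ on the right-hand side. If that estimate degrades with $K$, the term $II$ could in principle compete with $I$, so the bookkeeping of powers of $K$ across all four terms in \eqref{key-prodL} is where the real care is needed. A secondary technical nuisance is keeping the change of variables $w_\varphi = K v_\varphi$ clean in the presence of the shift by $A_\varphi^{(K),0}$ and the weight $(a+r\cos\theta)$, but that is routine once the $L^\infty$ bound on $A_\varphi^{(K),0}$ is in hand.
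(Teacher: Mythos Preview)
Your overall strategy—use the decomposition \eqref{key-prodL} and show that $I$ dominates—matches the paper's. However, your execution has a gap: the change of variables $w_\varphi = Kv_\varphi$ is a detour that hides the true power of $K$ in $I$, and this in turn forces you to demand more of $A_\varphi^{(K),0}$ than is actually available.

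The paper bypasses any change of variables. Since $\mu^{(K)}_p(e,p) = K\mu_p(e,Kp)$, one has directly
\[
p^{(K)}\mu^{(K)}_p\bigl(e,p^{(K)}\bigr) \;=\; (Kp^{(K)})\,\mu_p\bigl(e,Kp^{(K)}\bigr) \;\ge\; c_0\,(Kp^{(K)})^2\,\nu(e) \;=\; c_0 K^2\,(p^{(K)})^2\,\nu(e),
\]
by applying the hypothesis \eqref{mg-unstab-in} with argument $Kp^{(K)}$. Plugging this in and using that $\nu(e)/e$ is even in $v_\varphi$ (so the cross term in $(v_\varphi - A_\varphi^{(K),0})^2$ integrates to zero and the square term is nonnegative), one obtains $I \le -c_1 K^2$ with $c_1>0$ independent of $K$. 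Your proposal only claims order $K$ for $I$; that is too weak to close the argument the way you set it up.

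For $II$, the paper gets $|II| \le C K \|A_\varphi^{(K),0}\|_{L^\infty}$ and for $III$ simply $|III|\le C$, both by the decay hypothesis \eqref{mu-cond} applied to $\mu_p(e,Kp)$ and $\mu_e(e,Kp)$ respectively (the bounds in \eqref{mu-cond} are in $e$ only, so the $K$ inside the second slot costs nothing). The crucial point you flagged as the ``main obstacle'' then disappears: one does \emph{not} need $\|A_\varphi^{(K),0}\|_{L^\infty}\to 0$, only a uniform bound in $K$. That bound is immediate from the maximum principle, since the right-hand side of \eqref{equilibrium-psi} satisfies $\bigl|\int \hat v_\varphi[\mu^{(K),+}-\mu^{(K),-}]\,\mathrm{d}v\bigr| \le C_\mu \int (1+\langle v\rangle^\gamma)^{-1}\,\mathrm{d}v$, independently of $K$ and of $A_\varphi^{(K),0}$ itself. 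With $I\sim -K^2$, $II\sim K$, and $III\sim 1$, the conclusion follows at once.

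In short: drop the change of variables, apply \eqref{mg-unstab-in} directly to the scaled argument to extract $K^2$, and replace the attempted $\|A_\varphi^{(K),0}\|\to 0$ by the (easy) uniform maximum-principle bound.
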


\begin{proof} It suffices to show that $ \langle \mathcal{L}^0 h_*,h_*\rangle_{L^2}<0$.  Let us give bounds on $I,II,III$ defined as above in $ \langle \mathcal{L}^0 h_*,h_*\rangle_{L^2}$. By a view of the assumption \eqref{mg-unstab-in} and the fact that $\nu(e)$ is even in $v_\varphi$, we have
$$
\begin{aligned}  I &= 
 -  2\int _\Omega  \Big(\int_{\RR^3}e^{-1} Kp  \mu_p (e, Kp) \; \mathrm{d}v \Big)  |h_*|^2\; \mathrm{d}x \\& \le -  2c_0 K^2 \int _\Omega \Big(\int_{\RR^3}e^{-1}(a+r\cos\theta)^2 (v_\varphi - A_\varphi^{(K),0})^2   \nu(e) \; \mathrm{d}v \Big)  |h_*|^2\; \mathrm{d}x
 \\
 & \le -  2c_0 K^2 \int _\Omega \Big(\int_{\RR^3}e^{-1}v^2_\varphi  \nu(e) \; \mathrm{d}v \Big)   (a+r\cos\theta)^2 |h_*|^2\; \mathrm{d}x 
 \\&\le  -c_1 K^2  \| (a+r\cos\theta) h_*\|^2_{L^2(\Omega)} ,   
 \end{aligned}$$
 where $c_1>0$ is independent of $K$.  Next, by the decay assumption \eqref{mu-cond} on $\mu_p$, we obtain 
$$\begin{aligned} 
II \quad &\le\quad  C_0K\|A_\varphi^{(K),0}\|_{L^\infty} 
 \sup_{r\in [0,1]} \Big( \int_{\RR^3} e^{-1}|\mu_p(e,Kp)| \; \mathrm{d}v\Big)
 \\&  \le\quad C_0 C_\mu K\|A_\varphi^{(K),0}\|_{L^\infty}  \int_{\RR^3}\frac{1}{\langle v \rangle(1 + \langle v \rangle^\gamma)} \; \mathrm{d}v \le C_0 C_\mu K\|A_\varphi^{(K),0}\|_{L^\infty} ,
\end{aligned}$$ with $C_0 = 2(1+a)\|h_*\|_{L^2}^2$. Similarly, $$\begin{aligned}
III \quad &\le \quad C_0
 \sup_{r\in [0,1]} \Big( \int_{\RR^3} |\mu_e(e,Kp)| \; \mathrm{d}v\Big)
\\
&\le \quad  C_0 C_\mu\int_{\RR^3}\frac{1}{1 + \langle v \rangle^\gamma} \; \mathrm{d}v \le C_0 C_\mu,
\end{aligned}$$
with $\gamma>2$ and for some constant $C_\mu$ independent of $K$. 

Combining these estimates, we have therefore obtained
$$
\begin{aligned}
 \langle \mathcal{L}^0 h_*,h_*\rangle_{L^2}  &\le 1 -  c_1 K^2   \| (a+r\cos\theta) h_*\|^2_{L^2(\Omega)}
 + C_0C_\mu ( 1+ K\|A_\varphi^{(K),0}\|_{L^\infty}).
 \end{aligned}
 $$
The $L^2$ norm of $(a+r\cos\theta) h_*$ is clearly nonzero.  We claim that $A_\varphi^{(K),0}$ is uniformly bounded 
 independently of  $K$. 
  Indeed, recalling that  $A_\varphi^{(K),0}$ satisfies the elliptic equation \eqref{equilibrium-psi} and using the decay assumption \eqref{mu-cond} on $\mu^\pm$, we have 
$$ \Big| \Big(-\Delta+\frac 1{(a+r\cos\theta)^2}\Big) A_\varphi^{(K),0} \Big |  \le C_\mu\int_{\RR^3}\frac{1}{1+\langle v \rangle ^{\gamma}} \; \mathrm{d}v \le C_\mu  ,$$
for some constant $C_\mu$  independent of $K$, for some $\gamma>3$. By the standard maximum principle for the elliptic operator, $A_\varphi^{(K),0}$ is bounded uniformly in $K$ since $C_\mu$ is independent of $K$. This proves the claim.  Summarizing, we conclude that  $ \langle \mathcal{L}^0 h_*,h_*\rangle_{L^2} $ is dominated 
for large $K$ by $I$ and it is therefore strictly negative. 
\end{proof}

Additionally, we have the following result for homogenous equilibria, meaning that $\vE^0 = \vB^0 =0$. 
\begin{theorem} \label{lem-mg-unstab} 
Let $\mu^\pm = \mu^\pm(e,p)$ be an homogenous equilibrium satisfying \eqref{simplified-cond}, and let $\mu = \mu^-$. Assume that 
\begin{equation}\label{mg-unstab}
p \mu_p(e,p) + e\mu_e(e,p) > 0, \qquad \quad \forall ~ e,p.
 \end{equation} 
Then there exists a positive number $K_0$ such that the rescaled homogenous equilibria $\mu^{(K),\pm}(e,p): = K \mu^\pm(e, p)$ are spectrally unstable, for all $K\ge K_0$. 
\end{theorem}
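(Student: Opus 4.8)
The plan is to mimic the proof of Theorem \ref{lem-unstab-inhomo}: exhibit one test function $h_*\in\mathcal{X}$ with $\langle\mathcal{L}^0 h_*,h_*\rangle_{L^2}<0$ once $K$ is large, and then invoke Theorem \ref{theo-main}(iii). Since the equilibrium is homogeneous we have $\phi^0=A_\varphi^0=0$, so $e=\langle v\rangle$ and $p=(a+r\cos\theta)v_\varphi$, and since \eqref{simplified-cond} holds Lemma \ref{lem-simplified} applies: $\mathcal{B}^0=0$ and $\mathcal{L}^0=\mathcal{A}_2^0$ with only the $\mu=\mu^-$ contribution, as in \eqref{def-newA2}. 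I would first record how the relevant objects transform under the amplitude rescaling $\mu\mapsto\mu^{(K)}:=K\mu$. The fields stay zero, so the operator $\oD^\pm$ — and hence its kernel — is unchanged; since the inner product of $\cH^{(K),\pm}=L^2_{K|\mu_e^\pm|}$ is exactly $K$ times that of $\cH^\pm$, the orthogonal projection onto that kernel is also unchanged, i.e.\ $\mathcal{P}^{(K),\pm}=\mathcal{P}^\pm$, while $\|\cdot\|_{\cH^{(K)}}^2=K\|\cdot\|_{\cH}^2$. Finally the Maxwell compatibility relations $\rho^0=\vj^0=0$ are linear in $\mu^\pm$, hence preserved, so $(\mu^{(K),\pm},0,0)$ is again a homogeneous equilibrium satisfying \eqref{mu-cond} (with constant $KC_\mu$).

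Then I would fix any nonzero $h_*=h_*(r,\theta)\in\mathcal{X}$ — for instance a least Dirichlet eigenfunction, as noted before \eqref{key-prodL} — normalized so that $\int_\Omega\big(|\nabla h_*|^2+(a+r\cos\theta)^{-2}|h_*|^2\big)\,dx=1$. With $A_\varphi^0=0$ the term $II$ in \eqref{key-prodL} drops, and tracking the powers of $K$ from the previous paragraph gives
\begin{equation*}
\langle\mathcal{L}^{0,(K)}h_*,h_*\rangle_{L^2}=1+K\Big(-2\int_\Omega\int_{\RR^3}\frac{p\,\mu_p}{e}\,|h_*|^2\,\mathrm dv\,\mathrm dx+2\,\|\mathcal{P}(\hat v_\varphi h_*)\|_{\cH}^2\Big),
\end{equation*}
where $\cH=\cH^-$ carries the original (un-rescaled) weight. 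The decisive step is a pair of crude bounds for the obstruction term: since $\mathcal{P}$ is an orthogonal projection and $\hat v_\varphi^2\le1$,
\begin{equation*}
\|\mathcal{P}(\hat v_\varphi h_*)\|_{\cH}^2\ \le\ \|\hat v_\varphi h_*\|_{\cH}^2\ =\ \int_\Omega\int_{\RR^3}|\mu_e|\,\hat v_\varphi^2\,|h_*|^2\,\mathrm dv\,\mathrm dx\ \le\ \int_\Omega\int_{\RR^3}|\mu_e|\,|h_*|^2\,\mathrm dv\,\mathrm dx .
\end{equation*}
Using $|\mu_e|=-\mu_e$ (valid because $\mu_e<0$ by \eqref{mu-cond}) this collapses everything onto the pointwise hypothesis \eqref{mg-unstab}:
\begin{equation*}
\langle\mathcal{L}^{0,(K)}h_*,h_*\rangle_{L^2}\ \le\ 1-2K\int_\Omega\int_{\RR^3}\Big(\frac{p\,\mu_p}{e}-|\mu_e|\Big)|h_*|^2\,\mathrm dv\,\mathrm dx\ =\ 1-2K\,\delta_0,\qquad \delta_0:=\int_\Omega\int_{\RR^3}\frac{p\,\mu_p+e\,\mu_e}{e}\,|h_*|^2\,\mathrm dv\,\mathrm dx .
\end{equation*}
By \eqref{mg-unstab} the integrand is pointwise strictly positive, and by the decay in \eqref{mu-cond} (note $|p\mu_p/e|\le(a+1)|\mu_p|$ and $e\ge1$) it is $v$-integrable, so $0<\delta_0<\infty$, and $\delta_0$ does not depend on $K$. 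Hence for every $K>K_0:=1/(2\delta_0)$ one has $\langle\mathcal{L}^{0,(K)}h_*,h_*\rangle_{L^2}<0$, i.e.\ $\mathcal{L}^{0,(K)}\not\ge0$; Theorem \ref{theo-main}(iii) then produces a growing mode, so $(\mu^{(K),\pm},0,0)$ is spectrally unstable.

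This argument is short and essentially algebraic, so there is no single hard step; the points requiring care are (a) the bookkeeping of how $\mathcal{L}^0$, $\mathcal{P}^\pm$ and the weighted $\cH$-norms rescale under $\mu\mapsto K\mu$, so that $\langle\mathcal{L}^{0,(K)}h_*,h_*\rangle_{L^2}$ is genuinely of the form $1+K\times(\text{a }K\text{-independent number})$, and (b) checking that $\delta_0$ is finite and strictly positive. The conceptual content is simply that the two wasteful inequalities $\|\mathcal{P}g\|_{\cH}\le\|g\|_{\cH}$ and $\hat v_\varphi^2\le1$ are exactly strong enough to absorb the positive term $2K\|\mathcal{P}(\hat v_\varphi h_*)\|_{\cH}^2$ into the piece $2K\int\!\int|\mu_e||h_*|^2$ extracted from $-2K\int\!\int(p\mu_p/e)|h_*|^2$, leaving precisely the Euler-type expression $e\mu_e+p\mu_p$ that the hypothesis \eqref{mg-unstab} controls; no finer information about $\ker\oD^\pm$ or $\mathcal{P}^\pm$ is needed. (One could instead integrate $\int(p\mu_p/e)\,\mathrm dv$ by parts in $v_\varphi$, but the bounds above are cleaner and avoid any parity discussion.)
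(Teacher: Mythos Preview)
Your proposal is correct and follows essentially the same route as the paper: drop term $II$ since $A_\varphi^0=0$, bound $\|\mathcal{P}(\hat v_\varphi h_*)\|_{\cH}^2\le\|\hat v_\varphi h_*\|_{\cH}^2\le -\int\!\int \mu_e|h_*|^2$, and combine with the $p\mu_p/e$ term to land on the hypothesis \eqref{mg-unstab}. You supply more explicit bookkeeping (the invariance of $\mathcal{P}^\pm$ under the amplitude rescaling, finiteness of $\delta_0$) than the paper, which simply writes the one-line chain of inequalities and asserts positivity of the integral.
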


\begin{proof} In the homogenous case $A^0_\varphi =0$, \eqref{key-prodL} becomes
$$\begin{aligned}
( \mathcal{L}^0h_*,h _*)_{L^2} &= 1 
 - 2 K \int _\Omega   \int_{\RR^3}\frac{p \mu_p}{e} | h_*|^2\; \mathrm{d}v\mathrm{d}x  +2 K \|\mathcal{P} (\hat v_\varphi h_*) \|^2_{\cH}
\\
&\le 1 
 - 2 K \int _\Omega   \int_{\RR^3}\Big[ \frac{p \mu_p}{e} + \mu_e\Big] | h_*|^2\; \mathrm{d}v\mathrm{d}x
  \end{aligned}$$
The integral is clearly positive thanks to the assumption \eqref{mg-unstab}. Thus, $( \mathcal{L}^0\psi _*,\psi_*)_{L^2}$ is strictly negative for large $K$.  
\end{proof}

Because the projections $\mathcal{P}^\pm$ play such a prominent role in our analysis, 
we present an explicit calculation of them, at least in the homogeneous case for which  $e=\langle v \rangle$ and 
$p=(a+r\cos \theta)v_\varphi$.   Let $\mathbb{D}$ be the unit disk in the plane  
and let $\Theta$ be the usual change of variables from cartesian coordinates $y=(y_1,y_2)$ on the disk 
to polar coordinates $(r,\theta)$. 

\begin{lemma}\label{lem-Ph} 
Assume that $\vE^0 = \vB^0 = 0$. Let $h = h(r,\theta)\in L_\tau^\infty(\Omega)$. Then $$\mathcal{P}^\pm h  =  g(\langle v\rangle, (a+r\cos\theta)v_\varphi), $$
where  $g(e,p)$ is the average value of $h \circ \Theta$ on $\mathbb{S}_{e,p}$ and the set $\mathbb{S}_{e,p}$ is the intersection of the disk $\mathbb{D}$ and the half-plane $\{ y_1 > |p|/\sqrt{e^2-1} - a \}$. 
\end{lemma}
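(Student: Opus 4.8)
The idea is to identify the kernel of $\oD^\pm$ explicitly when $\vE^0=\vB^0=0$ and then recognize $\mathcal{P}^\pm$ as conditional averaging over the invariant sets of the free flow. First I would note that in the homogeneous case the trajectories solve $\dot X^\pm = \hat V^\pm$, $\dot V^\pm = 0$, so $V^\pm$ is constant and the particle moves in a straight line in $\RR^3$ — reflecting specularly off $\partial\Omega$. Along such a trajectory the conserved quantities are the energy $e=\langle v\rangle$ (equivalently $|v|$) and the angular momentum $p=(a+r\cos\theta)v_\varphi$; note $v_\varphi = v\cdot e_\varphi$ is the component of the (constant cartesian) velocity in the azimuthal direction, and $(a+r\cos\theta)$ is the distance from the $x_3$-axis, so $p$ is exactly the $x_3$-component of the cartesian angular momentum $x\times v$, which is of course conserved by straight-line motion. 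Thus $g(e,p)\in\ker\oD^\pm$ for any $g$, and the core claim is that these are \emph{all} the kernel elements, i.e. that the flow is ergodic on (almost every) level set $\{e=\mathrm{const},\ p=\mathrm{const}\}$ inside $\Omega\times\RR^3$.

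Next I would set up the geometry. Passing to the cross-sectional disk $\mathbb{D}$ via $\Theta$, a point $x\in\Omega$ is determined by $y=(y_1,y_2)=(r\cos\theta,r\sin\theta)$ together with $\varphi$, and the distance to the axis is $a+y_1 = a+r\cos\theta$. Fix $e$ (hence $|v|=\sqrt{e^2-1}$) and $p$. The constraint $p = (a+y_1)v_\varphi$ with $|v_\varphi|\le|v|=\sqrt{e^2-1}$ forces $a+y_1 \ge |p|/\sqrt{e^2-1}$, i.e. $y_1 > |p|/\sqrt{e^2-1}-a$; intersecting with $\mathbb{D}$ gives precisely the set $\mathbb{S}_{e,p}$ in the statement. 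The plan is then to show that a straight-line orbit (with reflections) with fixed $|v|$ and fixed axial angular momentum $p$, projected to the $(y_1,y_2)$-plane, equidistributes with respect to two-dimensional Lebesgue measure on $\mathbb{S}_{e,p}$, independently of $\theta$, $\varphi$, and of the remaining velocity data. Granting this, a function $f\in\ker\oD^\pm$, being constant along orbits and (by Lemma~\ref{lem-reflection}) insensitive to the reflection structure, must be constant on each $\mathbb{S}_{e,p}$-fibre; hence $f = g(e,p)$, and since $\mathcal{P}^\pm$ is the orthogonal projection onto this closed subspace, $\mathcal{P}^\pm h$ is the fibrewise average of $h\circ\Theta$ over $\mathbb{S}_{e,p}$. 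The weight $|\mu_e^\pm|$ depends only on $e$, so it is constant on each fibre and the weighted projection coincides with the unweighted average; the reflected-point subtleties in the $v_r,v_\theta$ variables are irrelevant because $h$ depends on $x$ only.

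For the equidistribution step I would argue as follows. Work on a fixed energy sphere and use cylindrical symmetry: the azimuthal angle $\varphi$ is cyclic, so one can quotient by rotation about the $x_3$-axis and reduce to a \emph{two-degree-of-freedom} billiard in the meridian half-plane, where the effective potential coming from conservation of $p$ confines the motion to the region $\{a+y_1 > |p|/\sqrt{e^2-1}\}$. Except for a measure-zero set of $(e,p)$ (rational-type resonances, orbits that are periodic or that hit the corner-free but tangential boundary configurations), this reduced billiard flow in the convex-after-the-cut region $\mathbb{S}_{e,p}$ is ergodic with respect to Liouville measure — this is where I would invoke the standard change-of-variables/Liouville-invariance already recorded in Lemma~\ref{lem-trajectory} together with a density argument, or alternatively cite ergodicity of the straight-line flow with specular reflection in a bounded region. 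Projecting the Liouville measure down to the configuration variable $y$ and integrating out the velocity directions compatible with the fixed $(e,p)$ yields exactly a multiple of Lebesgue measure on $\mathbb{S}_{e,p}$ (the Jacobian factors from ``integrating out the angle of $\tilde v$'' are constant in $y$ once $e$ and $|v_\varphi|=|p|/(a+y_1)$ are fixed — this is the one short computation I would actually carry out).

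\textbf{Main obstacle.} The delicate point is the ergodicity/equidistribution claim: one must rule out invariant subsets of $\mathbb{S}_{e,p}$ of positive measure for almost every $(e,p)$, and in particular handle the behaviour near the cut line $\{a+y_1 = |p|/\sqrt{e^2-1}\}$ where orbits become tangent to the boundary of $\mathbb{S}_{e,p}$ and the reduced billiard is non-smooth. I expect the cleanest route is not to prove full ergodicity from scratch but to exploit that $\mathcal{P}^\pm h$ need only be characterized for $h=h(r,\theta)$ bounded: it suffices that the time-averages of $h\circ\Theta$ along orbits converge a.e.\ to the spatial average over $\mathbb{S}_{e,p}$, which follows from the Birkhoff ergodic theorem once one knows the flow restricted to a fixed $(e,p)$ (after reduction by the $\varphi$-rotation) is ergodic; and for that the standard fact that the geodesic/straight-line billiard in a planar domain with the appropriate reflection law is ergodic, combined with absolute continuity of the reduction, should suffice after discarding the exceptional null set of $(e,p)$.
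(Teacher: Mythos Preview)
Your plan diverges substantially from the paper's argument, and the place where it diverges is also where it breaks.

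The paper's proof is short and computational. It simply \emph{asserts} at the outset that $\mathcal{P}^\pm h$ is a function of $(e,p)$ (equivalently, that $\ker\oD^\pm$ consists of the functions of $e$ and $p$), and then determines \emph{which} function by testing the orthogonality relation $\langle(1-\mathcal{P}^\pm)h,\xi\rangle_\cH=0$ against arbitrary bounded $\xi=\xi(e,p)$. After the change of variables $(r,\theta,\tilde v,v_\varphi)\mapsto(r,\theta,e,p,\omega)$ with Jacobian $r(a+r\cos\theta)\,\mathrm{d}r\,\mathrm{d}\theta\,\mathrm{d}v_\varphi\,|\tilde v|\,\mathrm{d}|\tilde v|\,\mathrm{d}\omega = r\,\mathrm{d}r\,\mathrm{d}\theta\,\mathrm{d}p\,e\,\mathrm{d}e\,\mathrm{d}\omega$, the arbitrariness of $\xi$ forces $\int_{I_{e,p}}(1-\mathcal{P}^\pm)h\,r\,\mathrm{d}r\,\mathrm{d}\theta=0$ for each $(e,p)$; since $\mathcal{P}^\pm h$ is constant on $I_{e,p}$, the averaging formula drops out. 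No dynamics, no ergodic theorem.

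Your route instead tries to \emph{prove} the kernel characterization via ergodicity of the billiard flow on level sets of $(e,p)$. That is a much harder statement than what the paper uses, and your sketch of it has two concrete problems. First, after quotienting by the $\varphi$-rotation the reduced motion in the meridian variables $(y_1,y_2)$ is \emph{not} a straight-line billiard: conservation of $p$ introduces an effective centrifugal potential proportional to $p^2/(a+y_1)^2$, so the trajectories are curved and the ``standard fact'' you want to cite about planar billiards does not apply. Second, even for genuine straight-line billiards, ergodicity in a bounded region is not a standard fact at all---billiards in smooth convex domains (the disk, the ellipse) are integrable, not ergodic, and the cut region $\mathbb{S}_{e,p}$ is a convex circular segment. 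So the equidistribution step, which you correctly flag as the main obstacle, is not merely delicate: the argument you outline for it is wrong as stated.

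If your goal is to match the paper, drop the ergodicity program entirely: take the kernel description as given (the paper does), and carry out the orthogonality-plus-change-of-variables computation, which is essentially the ``one short computation'' you already anticipated doing for the Jacobian. If instead you want a self-contained proof that $\ker\oD^\pm$ is exactly the functions of $(e,p)$, be aware that this is a genuine dynamical statement about the toroidal billiard that neither your sketch nor the paper's proof actually establishes.
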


		\begin{proof} 
We note that the kernel of $\oD^\pm$ contains all functions of $e$ and $p$, and in particular, for each $h\in \cH$, $ \mathcal{P}^\pm h$ is a function of $e$ and $p$. Now for $h  = h(r,\theta)\in L^\infty(\Omega)$ and for an arbitrary bounded function $\xi = \xi(e,p)$, it follows from the orthogonality of $\mathcal{P}^\pm$ and $1-\mathcal{P}^\pm$ that 
\begin{equation}
\label{cal-Ph01}\begin{aligned}
 0 = \langle (1-\mathcal{P}^\pm)h , \xi \rangle_{\cH}  &= 
 2\pi \int_{\RR} \int_{\tilde \RR^2} \int_0^{2\pi} \int_0^1 
 \xi(e,p)\  |\mu^\pm_e(e,p)|\  \{(1-\mathcal{P}^\pm)h \} \ 
 r (a+r\cos\theta)\mathrm{d}r \mathrm{d}\theta \mathrm{d}\tv \mathrm{d} v_\varphi .
 \end{aligned}
 \end{equation}
Here $\tv = v_r e_r + v_\theta e_\theta, e = \sqrt{1+|\tv|^2+ |v_\varphi|^2}$ and $p = (a+r\cos\theta) v_\varphi$. 
We make the change of variables $(r,\theta,\tv,v_\varphi) \mapsto (r,\theta,e,p,\omega)$, 
where $\omega\in [0,2\pi]$ denotes the angle between $\tv$ and $e_r$. It follows that 
$$r (a+r\cos\theta) \mathrm{d}r \mathrm{d}\theta \mathrm{d} v_\varphi  \mathrm{d}\tv  = r (a+r\cos\theta)\mathrm{d}r \mathrm{d}\theta ~\mathrm{d} v_\varphi ~|\tv| \mathrm{d}|\tv| \mathrm{d}\omega =  r  \mathrm{d}r \mathrm{d}\theta ~\mathrm{d} p ~e \mathrm{d}e \mathrm{d}\omega.$$ 
The identity \eqref{cal-Ph01} then yields 
$$ 0 = 4 \pi^2\int_1^\infty \int_{|p| <  \sqrt{(a+1)(e^2-1)}}\xi(e,p)\ |\mu^\pm_e(e,p)|\Big( \int_{I_{e,p}} (1-\mathcal{P}^\pm)h ~r  \mathrm{d}r \mathrm{d}\theta \Big)\mathrm{d} p \; e \mathrm{d}e,$$
where $I_{e,p}$ 
denotes the subset of $(r,\theta )\in (0,1)\times (0,2\pi)$ such that $a+r\cos\theta > |p|/\sqrt{e^2-1}$. 
Since $\xi = \xi(e,p)$ is an arbitrary function of $(e,p)$, 
it follows that the integral in $(r,\theta)$ must vanish for each $(e,p)$.  Hence  
$$(\mathcal{P}^\pm h)(e,p) = \frac{1}{ \int_{I_{e,p}} r  \mathrm{d}r \mathrm{d}\theta} 
 \int_{I_{e,p}} h(r,\theta) ~r  \mathrm{d}r \mathrm{d}\theta.$$
Considering $I_{e,p}$ in cartesian coordinates in the disk, we have $I_{e,p} = \Theta(\mathbb{S}_{e,p})$.  
\end{proof}

In the inhomogenous case $\vB^0 \not =0$, a similar calculation yields the same formula for the projection $\mathcal{P}^\pm$ 
except that the subset  $\mathbb{S}_{e,p}$ is no longer as explicit as  in Lemma \ref{lem-Ph}.   
In fact, $\mathbb{S}_{e,p}$ is a very complicated set.   See \cite{LS1} for the 1.5D case on the circle.

 \appendix 
\section{Toroidal coordinates}\label{app-cal}
We compute derivatives in the toroidal coordinates  $ x_1 = (a+ r\cos \theta ) \cos \varphi $,  $x_2 = (a+r \cos \theta) \sin \varphi$,  $x_3=  r \sin \theta
$. We recall the corresponding unit vectors  
$$\left\{ \begin{aligned} e_r &= (\cos \theta \cos \varphi,\cos \theta \sin \varphi, \sin \theta),
\\e_\theta &= (-\sin \theta \cos \varphi, -\sin \theta \sin \varphi, \cos \theta), 
\\
e_\varphi &= (-\sin \varphi,\cos \varphi,0). \end{aligned}\right.$$
Easy calculations show  $$ \frac{\D}{\D r} = e_r \cdot \nabla _x , \qquad \frac{\D}{\partial \theta} = r e_\theta \cdot \nabla _x, \qquad  \frac{\D}{\partial \varphi} = (a+r\cos \theta) e_\varphi
\cdot \nabla_x .$$
Using this and noting that $\{e_r,e_\theta,e_\varphi\}$ forms a basis in $\RR^3$, we get for any function $\psi(r,\theta,\varphi)$ and vector function $\vA(r,\theta,\varphi)$ 
$$ \begin{aligned}
\nabla_x \psi &= e_r  \frac {\D \psi}{\D r} + \frac 1r e_\theta \frac {\D \psi}{\D \theta}  + \frac{1}{a+r\cos \theta} e_\varphi \frac{\D \psi}{\D \varphi},
\\
\nabla_x \cdot \vA &= \frac{1}{r(a+r\cos \theta)} \frac {\D}{\D r} (r (a+r\cos \theta) A_r) + \frac{1}{r(a+r\cos \theta)} \frac{\D}{\D \theta} ((a+r\cos \theta)A_\theta) \\&\quad + \frac{1}{a+r\cos \theta} \frac{\D  A_\varphi}{\D\varphi}  ,
\\ \nabla_x \times \vA &= \frac{e_r}{a+r\cos \theta} \Big[ \frac{\D A_\theta}{\D \varphi} - \frac 1r \frac {\D}{\D \theta}( (a+r\cos \theta)   A_\varphi )\Big] 
\\&\quad + \frac{e_\theta}{a+r\cos \theta} \Big[  \frac {\D}{\D r} ((a+r\cos \theta) A_\varphi) - \frac {\D}{\D \varphi} A_r \Big] + \frac{e_\varphi}{r} \Big[ \frac {\D A_r}{\D \theta} - \frac {\D (rA_\theta)}{\D r}\Big ].
\end{aligned}$$
In addition, we also have
$$\begin{aligned}
\Delta_x  \psi &= \frac{1}{r(a+r\cos \theta)} \frac{\D}{\D r}(r(a+r\cos \theta)\D_r \psi) + \frac{1}{r^2(a+r\cos \theta)} \frac{\D}{\D \theta} ((a+r\cos \theta) \D_\theta \psi ) \\& \quad + \frac{1}{(a+r\cos \theta)^2 } \frac{\D ^2 \psi}{\D \varphi^2},
\\
 \Delta \vA &= ~~ \Big[ \Delta A_r - \frac{1}{r^2} A_r- \frac{\cos^2 \theta}{(a+r\cos \theta)^2}A_r -  \frac{2}{r^2} \D_\theta A_\theta  + \frac{\sin \theta }{r (a+r\cos \theta)} A_\theta + \frac{\cos \theta \sin \theta}{(a+r\cos \theta)^2}A_\theta \Big] e_r 
\\&\quad  + \Big[\Delta A_\theta - \frac{1}{r^2}A_\theta  - \frac{\sin^2 \theta}{(a+r\cos \theta)^2}A_\theta+  \frac{2}{r^2} \D_\theta A_r -  \frac{\sin \theta }{r (a+r\cos \theta)} A_r + \frac{\cos \theta \sin \theta}{(a+r\cos \theta)^2}A_r \Big] e_\theta 
\\&\quad + \Big[\Delta  A_\varphi  -  \frac{1}{(a+r\cos \theta)^2} A_\varphi \Big] e_\varphi .
\end{aligned}$$

\section{Scalar operators}
For sake of completeness, we record here details of the operators $\tilde{\mathcal{S}}^\lambda$,  $\tilde{\mathcal{T}}^\lambda_1$, $\tilde{\mathcal{T}}^\lambda_2$, and their adjoints, which are introduced in Section \ref{sec-defOp}. If we introduce scalar operators $\tilde{\mathcal{S}}^\lambda_{jk} h:= \tilde{\mathcal{S}}^\lambda (h e_j) \cdot e_k$ for $j,k \in  \{r, \theta\}$, then we readily get 
  
$$\begin{aligned}
 \tilde{\mathcal{S}}_{rr}^\lambda h  & =  \Big (\lambda^2 - \Delta + \frac 1 {r^2} + \frac{\cos^2 \theta}{(a+r\cos \theta)^2} \Big)   h  -  \sum_\pm \int_{\RR^3}\hat v_r \mu^\pm_e \mathcal{Q}^\pm_\lambda(\hat v_r h)\; \mathrm{d}v  
\\ \tilde{\mathcal{S}}_{\theta\theta}^\lambda h  & =\Big ( \lambda^2  - \Delta + \frac 1 {r^2} +  \frac{\sin^2 \theta}{(a+r\cos \theta)^2} \Big)   h  -  \sum_\pm  \int_{\RR^3}\hat v_\theta \mu^\pm_e \mathcal{Q}^\pm_\lambda(\hat v_\theta h)\; \mathrm{d}v 
\\
\tilde{\mathcal{S}}^\lambda_{r\theta} h   & =  - \Big( \frac{2}{r^2} \D_\theta  -  \frac{\sin \theta }{r (a+r\cos \theta)}  - \frac{\cos \theta \sin \theta}{(a+r\cos \theta)^2}\Big) h   +  \sum_\pm \int_{\RR^3}\hat v_r \mu^\pm_e \mathcal{Q}^\pm_\lambda(\hat v_\theta h)\; \mathrm{d}v ,
\\
\tilde{\mathcal{S}}^\lambda_{\theta r} h   & =    \Big( \frac{2}{r^2} \D_\theta  -  \frac{\sin \theta }{r (a+r\cos \theta)}  +  \frac{\cos \theta \sin \theta}{(a+r\cos \theta)^2}\Big)  h   + \sum_\pm \int_{\RR^3}\hat v_\theta \mu^\pm_e \mathcal{Q}^\pm_\lambda(\hat v_r h)\; \mathrm{d}v .
\end{aligned}$$
Similarly, if we introduce $\tilde{\mathcal{T}}^\lambda_{jk} h := \tilde{\mathcal{T}}^\lambda_j h \cdot e_k$ for $j = 1,2$ and $k = r, \theta$, then we get

$$\begin{aligned}
\tilde{\mathcal{T}}^\lambda_{1r} h   &=  \frac{\lambda}{r(a+r\cos \theta)} \frac {\D}{\D r} (r (a+r\cos \theta) h ) + \sum_\pm  \int_{\RR^3}\mu^\pm_e \mathcal{Q}^\pm_\lambda(\hat v_r h)\; \mathrm{d}v ,
\\
\tilde{\mathcal{T}}^\lambda_{1\theta } h   &= \frac{\lambda}{r(a+r\cos \theta)} \frac{\D}{\D \theta} ((a+r\cos \theta) h)  + \sum_\pm \int_{\RR^3}\mu^\pm_e \mathcal{Q}^\pm_\lambda(\hat v_\theta h)\; \mathrm{d}v ,
\\
\tilde{\mathcal{T}}^\lambda_{2r} h   &=- \sum_\pm  \int_{\RR^3}\hat v_\varphi \mu^\pm_e \mathcal{Q}^\pm_\lambda(\hat v_r h)\; \mathrm{d}v 
,\qquad \tilde{\mathcal{T}}^\lambda_{2\theta} h   = - \sum_\pm \int_{\RR^3}\hat v_\varphi \mu^\pm_e \mathcal{Q}^\pm_\lambda(\hat v_\theta h)\; \mathrm{d}v ,
\end{aligned}$$
whose adjoints read
$$\begin{aligned}
(\tilde{\mathcal{T}}^\lambda_{1r})^* h   &=  - \lambda \D_r h  - \sum_\pm  \int_{\RR^3}\hat v_r\mu^\pm_e \mathcal{Q}^\pm_\lambda h  \; \mathrm{d}v, \qquad 
(\tilde{\mathcal{T}}^\lambda_{2r})^* h   = \sum_\pm  \int_{\RR^3}\hat v_r \mu^\pm_e \mathcal{Q}^\pm_\lambda(\hat v_\varphi h)\; \mathrm{d}v, 
\\
(\tilde{\mathcal{T}}^\lambda_{1\theta})^* h &= - \frac 1r \lambda \D_\theta h  - \sum_\pm  \int_{\RR^3}\hat v_\theta \mu^\pm_e  \mathcal{Q}^\pm_\lambda h  \; \mathrm{d}v, \qquad 
(\tilde{\mathcal{T}}^\lambda_{2\theta})^* h   =
  \sum_\pm \int_{\RR^3}\hat v_\theta \mu^\pm_e \mathcal{Q}^\pm_\lambda(\hat v_\varphi h)\; \mathrm{d}v .
\end{aligned}$$

\section{Equilibria} \label{App-equilibria}
In this appendix, we very easily prove the regularity of any toroidally symmetric equilibrium of the Vlasov-Maxwell system. 
For the sake of completeness,  we then very easily prove the existence of equilibria under certain  conditions 
which are clearly not optimal.  
An alternative existence theorem without a smallness assumption can be found in \cite{FB}. 

As discussed in Section \ref{sec-equilibria}, the potentials of any toroidally symmetric equilibrium satisfy the elliptic system 
\begin{equation}\label{equilibria}\begin{aligned} - \Delta \phi^0 &= F_1(x,\phi^0,A^0_\varphi): =\int_{\RR^3} (\mu^+(e^+,p^+) - \mu^-(e^-,p^-))\; \mathrm{d}v ,\\
\Big(-\Delta +  \frac{1}{(a+r\cos \theta)^2} \Big) A^0_\varphi  &=  F_2(x,\phi^0,A^0_\varphi): = \int_{\RR^3}\hat v_\varphi (\mu^+(e^+,p^+) - \mu^-(e^-,p^-))\; \mathrm{d}v ,\end{aligned}\end{equation}
with $e^\pm= \langle v \rangle  \pm \phi^0$ and $p^\pm= (a+r\cos \theta)(v_\varphi \pm  A_\varphi^0)$, together with the boundary conditions
\begin{equation}\label{equil-bcs} 
\phi^0 = const., \qquad  A_\varphi = \frac{const.}{a+\cos \theta}, \qquad \quad x\in \D\Omega.\end{equation}
Because  $\phi^0 = const.$ and $A^0_\varphi = \frac{const.}{a+r\cos\theta}$ 
are solutions of the homogeneous system \eqref{equilibria}-\eqref{equil-bcs} 
(that is, with the right hand sides of \eqref{equilibria} being zero),  
we can assume without loss of generality that the constants in \eqref{equil-bcs} are zero.  

\begin{lemma}[Regularity of equilibria]  
If  $\mu^\pm(e,p)$ are nonnegative $C^1$ functions of $e,p$ that satisfy the decay assumption \eqref{mu-cond} 
and  $(\phi^0,A^0_\varphi)\in C(\overline\Omega)$ is a solution of \eqref{equilibria}, 
then $(\phi^0,A^0_\varphi)\in C^{2+\alpha}(\overline\Omega)$ and $\vE^0, \vB^0 \in C^{1+\alpha}(\overline\Omega)$ 
for all $0<\alpha<1$.  
\end{lemma}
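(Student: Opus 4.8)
The plan is a routine elliptic bootstrap applied to the system \eqref{equilibria}, the only non-cosmetic point being that the velocity integrals on the right-hand sides inherit the regularity of $\phi^0$ and $A^0_\varphi$. As noted just before the statement, we may take the boundary constants to be zero, so $\phi^0$ and $A^0_\varphi$ solve the Dirichlet problems for $-\Delta$ and for $-\Delta+(a+r\cos\theta)^{-2}$ respectively. Since $a>1$, the solid torus $\Omega$ stays away from the symmetry axis, so $\partial\Omega$ is a smooth (indeed real-analytic) compact hypersurface and the coefficient $(a+r\cos\theta)^{-2}=(x_1^2+x_2^2)^{-1}$ is smooth and bounded on $\overline\Omega$; thus the standard $L^p$ and Schauder estimates up to the boundary are available once the forcing terms are known to lie in the relevant spaces. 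I would also first record the elementary reparametrization: because $v$ is integrated over all of $\RR^3$ and $\{e_r,e_\theta,e_\varphi\}$ is orthonormal, a rotation of unit Jacobian rewrites $\int_{\RR^3}\psi(\hat v)\,\mu^\pm(e^\pm,p^\pm)\,\mathrm{d}v$ as $\int_{\RR^3}\psi(w/\langle w\rangle)\,\mu^\pm(\langle w\rangle\pm\phi^0(x),\ m(x)(w_3\pm A^0_\varphi(x)))\,\mathrm{d}w$, where $m(x)=\sqrt{x_1^2+x_2^2}\in[a-1,a+1]$ is smooth on $\overline\Omega$ and $\psi$ is $1$ or $w_3/\langle w\rangle$; so $F_1,F_2$ depend on $x$ only through the values $\phi^0(x),A^0_\varphi(x)$ and the smooth coefficient $m(x)$.

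The bootstrap then runs as follows. First, with only $\phi^0,A^0_\varphi\in C(\overline\Omega)$, the integrands are dominated uniformly in $x$ by a constant multiple of $C_\mu(1+|w|^\gamma)^{-1}$, since $|e^\pm|\ge\langle w\rangle-\|\phi^0\|_{L^\infty}\gtrsim|w|$ and $\gamma>3$; dominated convergence gives $F_1,F_2\in C(\overline\Omega)\subset L^p(\Omega)$ for every $p<\infty$, so by elliptic $L^p$ theory $\phi^0,A^0_\varphi\in W^{2,p}(\Omega)$ for all $p$, hence $\phi^0,A^0_\varphi\in C^{1,\alpha}(\overline\Omega)$ for all $\alpha<1$. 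Second, upgrading to $\phi^0,A^0_\varphi\in C^{1,\alpha}$ (in particular Lipschitz), I would show $F_1,F_2$ are H\"older continuous: $\mu^\pm\in C^1$ has bounded gradient by \eqref{mu-cond}, hence is globally Lipschitz in $(e,p)$, while \eqref{mu-cond} also gives the pointwise decay $|\mu^\pm(e_x,p_x)-\mu^\pm(e_y,p_y)|\le C(1+|w|^\gamma)^{-1}$; interpolating the Lipschitz bound (used on $|w|\lesssim|x-y|^{-1}$) against the decay bound (used on $|w|\gtrsim|x-y|^{-1}$) and integrating in $w$ yields $|F_i(x)-F_i(y)|\le C|x-y|^{\beta}$ for a positive exponent $\beta$ controlled by $\gamma$. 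Third, Schauder estimates up to the boundary give $\phi^0,A^0_\varphi\in C^{2,\beta}(\overline\Omega)$, and since $\vE^0=-\nabla\phi^0$ and $\vB^0=\nabla\times(A^0_\varphi e_\varphi)$ as in \eqref{def-EB0} involve exactly one derivative of the potentials, $\vE^0,\vB^0\in C^{1,\beta}(\overline\Omega)$; in particular this already gives the $C^2$ regularity of $\phi^0, A^0_\varphi$ used elsewhere in the paper, and for $\gamma$ large enough the exponent reaches every value below $1$.

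The delicate step is the second one — transferring regularity through the $v$-integral. The obstacle is the factor $v_\varphi$ buried inside $p^\pm=(a+r\cos\theta)(v_\varphi\pm A^0_\varphi)$: a formal $x$-derivative of $\mu^\pm(e^\pm,p^\pm)$ produces the term $(w_3\pm A^0_\varphi)\,\partial_{x_i}m\,\mu_p^\pm$, whose size $|w_3|\,|\mu_p^\pm|\le C|w_3|(1+|w|^\gamma)^{-1}$ is only borderline $\mathrm dw$-integrable when $\gamma$ is close to $3$, so one cannot blithely differentiate under the integral sign. I would control this growing moment either by the split-and-interpolate device above, or by integrating by parts in $v_\varphi$ using the identity $\partial_{v_\varphi}\mu^\pm=\mu_e^\pm\hat v_\varphi+(a+r\cos\theta)\mu_p^\pm$ (the same one used in Section~\ref{sec-equilibria} and in the Casimir computation) to trade the extra power of $|v|$ for a gain in decay on the remaining factor. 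Either way, the decay hypothesis \eqref{mu-cond} is precisely what forces the velocity moments of $\mu^\pm$ and of their difference quotients to converge, which is all the regularity mechanism needs; the rest is the classical elliptic iteration sketched above.
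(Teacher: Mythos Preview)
Your approach is the same elliptic bootstrap as the paper's two-line proof, and you are in fact more careful: the paper simply asserts that once $\phi^0,A^0_\varphi\in C^{1+\alpha}$ the right-hand sides $F_1,F_2$ are $C^1$, without addressing the borderline integrability of the $v_\varphi$-moment of $\mu_p^\pm$ that you correctly flag. Your split-and-interpolate device already yields $F_i\in C^{\gamma-3}$ and hence $\phi^0,A^0_\varphi\in C^{2,\gamma-3}$, which is all the rest of the paper actually uses; your integration-by-parts suggestion is the right way to reach the full ``all $\alpha<1$'' conclusion (after trading $\mu_p^\pm$ for $\partial_{v_\varphi}\mu^\pm$ and $\mu_e^\pm$, one further integration by parts in the radial variable $|w'|$ of $w'=(w_1,w_2)$, via $\mu_e^\pm=(\langle w\rangle/|w'|)\,\partial_{|w'|}\mu^\pm$, reduces the offending moment to a one-dimensional integral of $w_3^2\mu^\pm$ that converges precisely when $\gamma>3$).
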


\begin{proof} 
By hypothesis, the right hand sides of \eqref{equilibria} are continuous. 
Standard elliptic theory shows that $\phi^0,A^0_\varphi$ belong to $C^{1+\alpha}$. 
Now the right hand sides of \eqref{equilibria} belong to $C^1$ smooth.  
Again by ellipticity, $\phi^0,A^0_\varphi \in C^{2+\alpha}(\overline \Omega)$.  
\end{proof}

\begin{lemma}[Existence of equilibria]
Let $\mu^\pm(e,p)$ be nonnegative $C^1$ functions of $e,p$ that satisfy the decay assumption \eqref{mu-cond} 
with a constant $C_\mu$.  There exists $\ep_0>0$ such that if $C_\mu<\ep_0$, 
then there exists an equilibrium $(\phi^0,A^0_\varphi)$ which satisfies \eqref{equilibria} and \eqref{equil-bcs} and 
$\vE^0, \vB^0 \in C^{1+\alpha}(\overline\Omega)$ for all $0<\alpha<1$.  
\end{lemma}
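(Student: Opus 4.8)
The plan is to solve the elliptic system \eqref{equilibria} with the homogeneous boundary conditions \eqref{equil-bcs} (the constants having been normalized to zero as explained just above) by a contraction‑mapping argument, exploiting the fact that the nonlinearities $F_1,F_2$ and their derivatives are of size $O(C_\mu)$, so that a smallness hypothesis on $C_\mu$ forces a contraction.

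First I would record the basic estimates on the right‑hand sides. From the decay assumption \eqref{mu-cond} and $\gamma>3$, the function $v\mapsto \langle v\rangle^{-\gamma}$ is integrable on $\RR^3$, so $\|F_1(\cdot,\psi,A)\|_{L^\infty(\Omega)}+\|F_2(\cdot,\psi,A)\|_{L^\infty(\Omega)}\le C C_\mu$ uniformly in the arguments $(\psi,A)$, where $C$ depends only on $\gamma$ (using $|\hat v_\varphi|\le 1$ for $F_2$). Moreover $F_1,F_2$ are $C^1$ in $(\psi,A)$: differentiating under the integral sign, which is justified by dominated convergence with the integrable majorants from \eqref{mu-cond}, and using $\partial_\psi e^\pm=\pm1$, $\partial_A p^\pm=\pm(a+r\cos\theta)$ together with $a+r\cos\theta\le a+1$, one gets $|\partial_\psi F_i|+|\partial_A F_i|\le C C_\mu$ on $\overline\Omega\times\RR^2$; hence each $F_i$ is Lipschitz in $(\psi,A)$ with constant $\le C C_\mu$, uniformly in $x$, and also continuous in $x$ when $\psi,A\in C^0(\overline\Omega)$.

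Next I would set up the solution operator. Since $a>1$, the coefficient $q(x):=(a+r\cos\theta)^{-2}$ is smooth, positive and bounded on $\overline\Omega$, and $\Omega$ is a smooth bounded domain, so $-\Delta$ and $-\Delta+q$ with zero Dirichlet data are invertible and satisfy the usual $W^{2,p}$ and maximum‑principle estimates. Define $T(\psi,A)=(\phi,\tilde A)$, where $-\Delta\phi=F_1(\cdot,\psi,A)$ and $(-\Delta+q)\tilde A=F_2(\cdot,\psi,A)$ with $\phi=\tilde A=0$ on $\partial\Omega$, and work in the closed subspace $X\subset C^0(\overline\Omega)^2$ of pairs of toroidally symmetric functions; $X$ is $T$‑invariant because $-\Delta$, $q$ and the $v$‑integrals defining $F_i$ all commute with rotation in $\varphi$. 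By the maximum principle $\|T(\psi,A)\|_{C^0}\le C C_\mu$ for every $(\psi,A)$, so $T$ maps the closed ball $B_{C C_\mu}\subset X$ into itself; applying the same estimate to the difference of two solutions and using the Lipschitz bound on $F_i$ gives $\|T(\psi_1,A_1)-T(\psi_2,A_2)\|_{C^0}\le C_* C_\mu\,\|(\psi_1,A_1)-(\psi_2,A_2)\|_{C^0}$, with $C_*$ depending only on $\Omega$, $a$ and $\gamma$. Choosing $\epsilon_0:=1/C_*$, for $C_\mu<\epsilon_0$ the map $T$ is a contraction on $B_{C C_\mu}$ and Banach's fixed point theorem yields a (unique) fixed point $(\phi^0,A^0_\varphi)\in X$. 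This fixed point is by construction a continuous solution of \eqref{equilibria}, so the preceding Regularity lemma gives $(\phi^0,A^0_\varphi)\in C^{2+\alpha}(\overline\Omega)$ and $\vE^0,\vB^0\in C^{1+\alpha}(\overline\Omega)$ for all $0<\alpha<1$; undoing the normalization of the boundary constants produces a solution of \eqref{equilibria}–\eqref{equil-bcs}, and together with $f^{0,\pm}=\mu^\pm(e^\pm,p^\pm)$, which solves the Vlasov equations identically (Section \ref{sec-equilibria}), this is the desired equilibrium.

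The only genuinely delicate point is the bookkeeping in the second paragraph: one must check that both the $L^\infty$ size and the Lipschitz constant of $F_1,F_2$ really scale like $C_\mu$ and are independent of $(\psi,A)$, since that is precisely what makes the smallness hypothesis effective; everything else is standard linear elliptic theory. One could, at the cost of replacing the contraction by a compactness (Schauder) argument, using that $T$ maps $B_{C C_\mu}$ into a bounded set of $C^{1,\alpha}(\overline\Omega)^2$, which is compact in $C^0$, dispense with the smallness of $C_\mu$ altogether; but the contraction route is shorter and also yields uniqueness.
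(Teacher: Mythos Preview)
Your approach is essentially the paper's: a contraction-mapping argument exploiting that $F_1,F_2$ and their derivatives are $O(C_\mu)$. One inaccuracy is worth flagging. You assert that the $L^\infty$ and Lipschitz bounds on $F_i$ hold \emph{uniformly in $(\psi,A)$}; this is false. Since $e^\pm=\langle v\rangle\pm\psi$, a large value of $|\psi|$ shifts the region where $|e^\pm|$ is small to a shell $\{\langle v\rangle\approx|\psi|\}$ of large volume, and $\int_{\RR^3}(1+|e^\pm|^\gamma)^{-1}\,dv$ grows like $|\psi|^2$. The paper handles this by building the constraint $\sup_x|\phi^0|\le\tfrac12$ into the space $X$ from the outset, which forces $|e^\pm|\ge\langle v\rangle/2$ and makes the relevant integrals genuinely independent of $(\phi^0,A^0_\varphi)$. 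Your argument is easily repaired the same way: run the contraction on a ball of fixed radius (say $\tfrac12$) in the $\psi$-component rather than on $B_{CC_\mu}$; the smallness of $C_\mu$ then ensures $T$ maps this ball into itself and contracts there. Apart from this, your choice to work in $C^0$ with the maximum principle and then invoke the regularity lemma is a clean variant of the paper's direct use of $C^\alpha$ and Schauder estimates.
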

\begin{proof}
As mentioned above,  it is sufficient to construct a nontrivial solution $(\phi^0,A^0_\varphi)$ 
of the elliptic problem \eqref{equilibria} with homogeneous Dirichlet boundary conditions. 
We denote the right side of  \eqref{equilibria} by $F = (F_1, F_2)$.  
For any $\alpha$, let   
$X = \{(\phi^0,A^0_\varphi)\in C^\alpha (\overline \Omega) \times C^\alpha(\overline \Omega) :  \sup_x|\phi^0(x)|\le\frac12 \}$   
furnished with the norm $\|(\phi^0,A^0_\varphi)\|_X = \|\phi^0\|_{C^\alpha} + \|A^0_\varphi\|_{C^\alpha}$.   
Now we have for all $x, y \in \Omega$ 
$$\begin{aligned}
 | F_1&(x,\phi^0(x),A^0_\varphi(x))  - F_1(y,\phi^0(y),A^0_\varphi(y))  | 
 \\&\le \sum_\pm \int_{\RR^3} \Big |\mu^\pm(e^\pm(x,v),p^\pm(x,v)) - \mu^\pm(e^\pm(y,v),p^\pm(y,v))\Big | \; \mathrm{d}v 
 \\&\le C_\mu   \sum_\pm\Big(\int_{\RR^3}\frac{1}{1+|\tilde e^\pm(x,y,v)|^\gamma} \; \mathrm{d}v \Big)\Big[ |x-y| + |\phi^0(x) - \phi^0(y)| + |A^0_\varphi(x) - A^0_\varphi(y)| \Big],
 \end{aligned}$$
where $\tilde e^\pm(x,y,v) = \min\{e^\pm(x,v), e^\pm(y,v)\}$. 
The same bound is valid for $F_2$.  
For $(\phi^0,A^0_\varphi)\in X$ we have 
$|\tilde e^\pm(x,y,v)|  \ge   \langle v \rangle - \sup_{x}|\phi^0(x)|  \ge  \langle v \rangle - \frac12 \ge \frac12 \langle v \rangle .$  
Thus 
if $(\phi^0,A^0_\varphi)$ belongs to $X$,  so does the right side $F(x,\phi^0,A^0_\varphi)$ of \eqref{equilibria} and 
\begin{equation}\label{ineq-F12} 
\| F(\cdot,\phi^0,A^0_\varphi) \|_X \le C \ep_0  \Big[ 1 + \|(\phi^0,A^0_\varphi)\|_X\Big]   
\end{equation} 
for a fixed constant $C$.
Now if $(\phi^0, A^0_\varphi) \in X$, standard elliptic theory implies that there exists a unique solution $(\phi^1, A^1_\varphi) \in C^{2+\alpha}(\overline \Omega)\times C^{2+\alpha}(\overline \Omega)$ of the {\em linear} problem 
$$ -\Delta \phi^1 = F_1(x,\phi^0,A^0_\varphi), \qquad \Big(-\Delta +  \frac{1}{(a+r\cos \theta)^2} \Big) A^1_\varphi = F_2(x,\phi^0,A^0_\varphi),$$
with $\phi^1 = A^1_\varphi = 0$ on the boundary $\partial\Omega$.  Furthermore, we have 
$$\|(\phi^1,A^1_\varphi)\|_{C^{2+\alpha}} \le C' \| (F_1(\cdot,\phi^0,A^0_\varphi), F_2(\cdot,\phi^0,A^0_\varphi)) \|_{C^\alpha} $$ 
for some universal constant $C'$.  
So if we define $\mathcal{T}(\phi^0,A^0_\varphi) = (\phi^1,A^1_\varphi)$, then $\mathcal{T}$ maps $X$ into itself.  
In the same way we easily obtain 
$$\| F(\phi^1,A^1_\varphi) -  F(\phi^2,A^2_\varphi) \|_X 
\le C''\ep_0  \|(\phi^1,A^1_\varphi)  - (\phi^2,A^2_\varphi)\|_X .  $$
 Taking $\ep_0$ sufficiently small, this proves that $\mathcal{T}$ is a contraction and so it has a unique fixed point.  
\end{proof}

We note that our equilibrium is nontrivial, even if we additionally assume that $\mu^+(e ,p) = \mu^-(e ,-p)$ 
as done in the subsection \ref{sec-unstab-ex}. 
Indeed, with this assumption the function 
$h(x,v) = \mu^+(\langle v\rangle, (a+r\cos\theta)v_\varphi) - \mu^-(\langle v\rangle, (a+r\cos\theta)v_\varphi)$ 
is odd in $v_\varphi$, and so  $F_2(x,0,0)$ does not vanish for generic functions $\mu^\pm$.  
That is, $\phi^0=A^0_\varphi=0$ is not a solution to the elliptic problem \eqref{equilibria}.

\section{Particle trajectories}\label{app-particle}
In this appendix, we will show that for almost every particle in $\Omega \times \mathbb{R}^3$, the corresponding trajectory hits the boundary at most a finite number of times in each finite time interval.  
To accomplish this, we follow the method of \cite{Cerci}. 
For convenience, we denote the particle trajectories by 
$$ \Phi_s(x,v) : = (X(s;x,v), V(s;x,v))$$
for each $(x,v)\in \overline{\Omega} \times \mathbb{R}^3$ and $s\in \mathbb{R}$. 
For each $s$, as long as the map $\Phi_s(\cdot)$ is well-defined, its Jacobian determinant is time-independent 
and so is equal to one. Thus $\Phi_s(\cdot)$ is a measure-preserving map 
in $\overline{\Omega} \times \mathbb{R}^3$, with respect to the usual Lebesgue measure $\mu$. 
We denote by $\sigma$ the induced surface measure on $\partial\Omega \times \RR^3$. 

For each $e_0\ge 0$ we denote by $\Sigma_{0}$ the set of points $(x,v) \in \partial\Omega \times \RR^3$ 
for which $|e(x,v)| \le e_0$ and $v_r\le0$. Since the fields are bounded functions, 
$\Sigma_{0}$ is a bounded set in $\RR^6$. 
In particular, $\sigma(\Sigma_{0}) <\infty$. 
Now considering $(x,v) \in \Sigma_{0}$, the particle trajectory starting from $(x,v)$ at $s=0$ 
can be continued by the ODEs \eqref{trajectory} for a certain time.  
We denote by $\alpha (x,v)$ the first positive time when the trajectory hits the boundary, 
that is,  $X(\alpha(x,v);x,v) \in \partial\Omega$, and we  introduce
$$ \Psi(x,v) : = \overline{\Phi}_{\alpha(x,v)}(x,v), 
\text{ where } \overline{\Phi}_s (x,v) = (X, -V_r, V_\theta, V_\varphi)(s;x,v).$$
 The particles that hit the boundary with $v_r = 0$ have measure zero.    
 Indeed, we denote 
$ \Sigma_1 = \Sigma_0 \setminus \bigcup_{k\ge 0} \Psi^{-k}(\{v_r=0\}) ,$ 
with $\Psi^{0}(\cdot)$ being the identity map. 
We let $s_n = \sum_{k\ge 0}^n \alpha (\Psi^k(x,v))$ be the time of the $n$th collision.
By the time-reversibility of the particle trajectories, 
the set $\Psi^{-n}(\{v_r=0\}) = \Phi_{-s_n}(\{v_r = 0\})$  has $\sigma$-measure zero 
for every $n \ge 0$, since $\{v_r = 0\}$ has $\sigma$-measure zero. 
Thus $\Psi$ is a well-defined map from $\Sigma_{1}$ to $\Sigma_{1}$. 

Since $\Phi_s(\cdot)$ is $\mu$-measure preserving, so is $\Psi(\cdot)$ with respect to the 
surface measure $\sigma$ on $\partial\Omega \times \RR^3$.    Next we define  
$$Z : = \Big\{ (x,v) \in \Sigma_{1} ~:~ \sum_{k\ge 0} \alpha (\Psi^k(x,v)) < \infty \Big\} .$$
That is, $Z$ is the set of particles whose trajectories bounce off the boundary infinitely many times 
within a finite time interval.   
We now claim that $\sigma(Z) = 0$.   Clearly it suffices to prove that 
$$Z_\epsilon : = \Big\{ (x,v) \in \Sigma_1 ~:~ \epsilon < \sum_{k\ge 0} \alpha (\Psi^k(x,v)) < \infty \Big\} $$
has $\sigma$-measure zero, for arbitrary small positive $\epsilon$.  
Since 
$\Psi(\cdot)$ is $\sigma$-measure preserving, the Poincar\'e recurrence theorem (for instance, see \cite{Ba}) 
states that either the set $Z_\epsilon$ has $\sigma$-measure zero 
or there is a point $(x,v) \in Z_\epsilon$ so that $\Psi^{n_j} (x,v) \in Z_\epsilon$ 
for some increasing sequence of integers $n_j$, $j\ge 1$.  
The latter case would yield by definition of $Z_\epsilon$ that 
$$ \epsilon< \sum_{\ell\ge 0} \alpha (\Psi^\ell \Psi^{n_j}(x,v))   = \sum_{k\ge n_j} \alpha (\Psi^k(x,v)) .$$
Thus $\sum_{k\ge 0} \alpha (\Psi^k(x,v))$ diverges, so that $(x,v)\not\in Z_\epsilon$.  
This contradiction proves the claim.  

Finally, let $Z^*$ be the set of points in $\Omega \times \RR^3$ whose trajectories hit $Z\bigcup (\cup_{k\ge 0} \Psi^{-k}(\{v_r=0\}))$ within a finite time interval. Since the fields are uniformly bounded, the trajectories in $Z^*$ have bounded lengths. It follows that $Z^*$ must have $\mu$-measure zero, since $Z\bigcup (\cup_{k\ge 0} \Psi^{-k}(\{v_r=0\}))$ has $\sigma$-measure zero. This proves that for almost every particle in $\Omega \times \RR^3$, the corresponding trajectory hits the boundary at most a finite number of times in each finite time interval.

%

\begin{remark} In case both radial and longitudinal symmetry are assumed, so that the fields depend only on the radial variable $r$ as in \cite{NStr1}, it is particularly easy to see that every particle that
initially hits the boundary with $v_r \not = 0$ will bounce off the boundary at most finite number of times in an arbitrary finite time interval. Indeed, for such a particle, at each bouncing time $s_j$, the velocity $V(s_j; x,v)$ is constant and satisfies 
$$\begin{aligned}
 V_\theta(s_j;x,v) &= p - \psi^0(1) = v_\theta,
 \\ |V_r(s_j; x,v)|^2 &= (e - \varphi^0(1))^2 - 1 - (p - \psi^0(1))^2  = |v_r|^2 \not = 0,
 \end{aligned}$$
where $e =  \sqrt{1+|v|^2} + \varphi^0(r)$ and $p = r (v_\theta + \psi^0(r))$ are the invariant particle energy and angular momentum. Thus the bouncing times $s_j$ depend only on the initial data $(x,v)$ and not on the previous bouncing times. So in this case it is elementary that there are at most a finite number of times when the trajectory hits the boundary within 
a given finite time. The set of particles that satisfy $v_r^2 = (e - \varphi^0(1))^2 - 1 - (p - \psi^0(1))^2   = 0$ clearly has measure zero. 
\end{remark}

\end{document}